\definecolor{highlight}{rgb}{1,0,.6}
\definecolor{extlinkz}{rgb}{0,0,1}
\definecolor{intlinkz}{rgb}{1,0,0}
\definecolor{citlinkz}{rgb}{0,.6,.1}
\newtheorem{theo}{Theorem}
\newtheorem{defi}[theo]{Definition}
\newtheorem{lem}[theo]{Lemma}
\newtheorem{prop}[theo]{Proposition}
\newtheorem{cor}[theo]{Corollary}
\newtheorem{rem}[theo]{Remark}
\newcommand{\wh}[1]{\widehat{#1}}
\newcommand{\wt}[1]{\widetilde{#1}}
\newcommand{\ol}[1]{\overline{#1}}
\newcommand{\myatop}[2]{\genfrac{}{}{0pt}{}{#1}{#2}}
\newcommand{\R}{\mathbb{R}}
\newcommand{\Z}{\mathbb{Z}}
\newcommand{\T}{\mathbb{T}}
\newcommand{\C}{\mathbb{C}}
\newcommand{\N}{\mathbb{N}}
\newcommand{\Hil}{\mathcal{H}}
\newcommand{\Kil}{\mathcal{K}}
\newcommand{\meas}{\mathcal X}
\newcommand{\M}{\mathcal{M}}
\newcommand{\ALG}{\mathscr{M}}
\newcommand{\E}{\mathfrak{X}}
\newcommand{\ind}{\mathcal{I}}
\newcommand{\psis}[1]{\langle #1\rangle_\Gamma}
\newcommand{\vsp}{\textnormal{span}}
\newcommand{\osp}{\textnormal{outerspan}}
\newcommand{\csp}{\ol{\textnormal{span}}}
\newcommand{\Ran}{\textnormal{Ran}}
\newcommand{\Ker}{\textnormal{Ker}}
\newcommand{\rr}{\rho}
\newcommand{\vn}{\mathscr{R}}
\newcommand{\lr}{\lambda}
\newcommand{\vnL}{\mathscr{L}}
\newcommand{\id}{{\mathrm{e}}}
\newcommand{\orb}{{\mathcal{O}_{\Gamma}(\psi)}}
\newcommand{\g}{{g_{{}_\orb}}}
\newcommand{\Sy}{\textnormal{T}}
\newcommand{\mSy}{\wt\Sy}
\newcommand{\An}{\Sy^*}
\newcommand{\mAn}{\mSy{}^*}
\newcommand{\Grop}{\mathfrak{G}}
\newcommand{\mGrop}{\wt{\Grop}}
\newcommand{\Gram}{\mathcal{G}}
\newcommand{\Frame}{\mathfrak{F}}
\newcommand{\mFrame}{\wt{\mathfrak{F}}}
\newcommand{\Id}{\mathbb{I}}
\newcommand{\Proj}{\mathbb{P}}
\newcommand{\proj}{\mathcal{P}}
\newcommand{\qroj}{\mathcal{P}}
\newcommand{\iso}{\mathcal T}
\newcommand{\isom}[1]{\iso[#1](x)}
\newcommand{\ipr}[2]{\{#1,#2\}}
\newcommand{\norm}[1]{\left\|#1\right\|_{\oplus}}
\newcommand{\normbig}[1]{\bigg\|#1\bigg\|_{\oplus}}
\title{Noncommutative Shift-Invariant Spaces}
\author{Davide Barbieri, Eugenio Hern\'andez, Victoria Paternostro}
\begin{document}

\maketitle

\begin{abstract}
The structure of subspaces of a Hilbert space that are invariant under unitary representations of a discrete group is related to a notion of Hilbert modules endowed with inner products taking values in spaces of unbounded operators. A theory of reproducing systems in such modular structures is developed, providing a general framework that includes fundamental results of shift-invariant spaces. In particular,
general characterizations of Riesz and frame sequences associated to group representations are provided, extending previous results for abelian groups and for cyclic subspaces of unitary representations of noncommutative discrete groups.
\end{abstract}

{
\hypersetup{linkcolor=black}
\tableofcontents
}

\newpage

\section{Introduction}

The aim of this paper is to study the structure of subspaces of a Hilbert space that are invariant under the action of unitary representations of discrete groups, and to characterize Riesz bases and frames generated by these representations. This is a central problem in the study of shift-invariant spaces, whose best known examples are provided by those subspaces of $L^2(\R^d)$ that are invariant under integer translations (see e.g. \cite{Helson64, BoorVoreRon94, RonShen95, BenedettoLi93, BenedettoLi98, Bownik00, HLWW02, RonShen05, ChristensenEldar05}). Shift-invariant spaces have been extensively used in connection with approximation theory, wavelets and multiresolution analysis \cite{BoorVoreRon93, DevoreLorentz, DaubechiesHanRonShen03, RonShen05} or with sampling and interpolation theory \cite{AldroubiGrochenig01, GarciaMedinaVillalon08, NashedSun10, FuhrXian14}, and have multiple applications in image processing and data analysis \cite{CaiDongOsherShen10, Mallat12} and learning \cite{AldroubiCabrelliMolter08}.

Recently, effort has been devoted towards the study of Hilbert spaces invariant under the action of wider classes of groups, both with respect to locally compact abelian (LCA) groups \cite{HSWW10, CabrelliPaternostro10, CabrelliPaternostro11, BownikRoss13, JakobsenLemvig2014, Zak-SIS, Iverson14, Saliani14} and with respect to nonabelian groups \cite{CurreyMayeliOussa14, BHM14, BHP14}. A key role to address shift-invariance is played by Fourier analysis, that in the LCA setting relies on the abstract notion of Pontryagin duality. In order to deal with nonabelian discrete groups, in \cite{BHP14} it was proposed to make use of the Fourier duality provided by the group von Neumann algebra \cite{EnockSchwartz92, Connes94}, and to work with the associated noncommutative operator spaces. On the other hand, general noncommutative notions of Riesz bases and frames in the setting of Hilbert $C^*$-modules were introduced in \cite{FrankLarson2002}, and the interplay of questions on reproducing systems in Hilbert spaces with problems of operator algebras and noncommutative geometry is attracting an increasing interest \cite{Larson97, DaiLarson98, PackerRieffel04, Wood04, DutkayHanLarson09, Luef11}.

In this paper we will provide a general characterization in the spirit of \cite{Bownik00}, by reducing the reproducing conditions in subspaces of a Hilbert space that are invariant under unitary actions of discrete groups to equivalent reproducing conditions in Hilbert modules, expressed in terms of their generators. Such Hilbert modules belong to the class introduced in \cite{JungeSherman05}, and their inner product is defined by the bracket map \cite{BHP14}, for which a new relationship with the Gram operator will be given. General isomorphisms between the original Hilbert spaces and the associated Hilbert modules will be provided by noncommutative analogues of the fiberization mapping and of the Zak transform. Moreover, a full theory of reproducing systems in such Hilbert modules will be developed, in the spirit of \cite{FrankLarson2002}, which is able to deal with the natural appearance of unbounded operators.

\vspace{-4pt}

\subsection{Setting}

\vspace{-2pt}

We will consider the general setting of a discrete and countable group $\Gamma$, and of a unitary representation $\Pi$ on a separable Hilbert space $\Hil$, that is, $\Pi$ is a group homomorphism of $\Gamma$ into the group of linear unitary operators over $\Hil$. A closed subspace $V$ of $\Hil$ will be called $(\Gamma,\Pi)$-\emph{invariant} if $\Pi(\gamma)V \subset V$ for all $\gamma \in \Gamma$. It is easy to see, and it will be proved in Section 3, that for any such space there exists a countable set of \emph{generators} $\{\psi_j\}_{j \in \ind} \subset \Hil$, which means that $V$ can be obtained as the closed linear span of the $(\Gamma,\Pi)$-orbits of $\{\psi_j\}_{j \in \ind}$:\vspace{-2pt}
\begin{equation}\label{eq:shiftinvariantspace}
V = \ol{\vsp\{\Pi(\gamma)\psi_j : \gamma \in \Gamma, j \in \ind\}}^\Hil .\vspace{-2pt}
\end{equation}
The main problem that we will address is then to characterize when a countable family $\{\Pi(\gamma)\psi_j : \gamma \in \Gamma, j \in \ind\}$ is a Riesz basis or a frame for its closed linear span, providing conditions that extend all previous characterizations.
\newpage
Closed subspaces of $L^2(\R^d)$ that are invariant under integer translations $\Pi(k)f(x) = f(x - k)$, with $k \in \Z^d$, were studied in the influential works \cite{BoorVoreRon93, BoorVoreRon94, RonShen05, Bownik00} in terms of range functions and Gramian analysis, based on the linear isometric isomorphism, which we refer to as \emph{fiberization mapping}, given by
\begin{equation}\label{eq:Fourierperiodization}
\begin{array}{rccl}
\iso : & L^2(\R^d) & \to & L^2(\T^d,\ell_2(\Z^d))\vspace{4pt}\\
& f & \mapsto & \Big\{\wh{f}(\cdot + k)\Big\}_{k \in \Z^d}
\end{array}
\end{equation}
where $\wh{f}(\xi) = \int_{\R^d} f(x) e^{-2\pi i x \cdot \xi} dx$ is the usual Fourier transform.
One of the most relevant result from this approach is the possibility to characterize the structure of Riesz and frame sequences of countably generated systems of translates in $L^2(\R^d)$ in terms of the behavior of the image under $\iso$ of the set of generators. More precisely, by \cite[Th. 2.3]{Bownik00} one has that the system $E = \{\psi_j(\cdot - k) \, : \, k \in \Z^d, j \in \ind\}$ is a frame (Riesz) sequence with bounds $0 < A \leq B < \infty$ if and only if the system $\mathcal{E} = \{\iso\psi_j(\alpha) \, : \, j \in \ind\}$ is a frame (Riesz) sequence with uniform bounds $0 < A \leq B < \infty$ for a.e. $\alpha \in \T^d$.

This type of results were then extended for the first time to general LCA groups in \cite{CabrelliPaternostro10, KamyabiRaisi08} with respect to translations by discrete subgroups, while nondiscrete cases as well as the relationship with multiplicatively invariant spaces have been shown to fit within this structure in \cite{BownikRoss13}, as well as more general actions of LCA groups \cite{Zak-SIS, Iverson14}. A nonabelian notion of range function, adapted to the structure of translations by discrete subgroups of nilpotent Lie groups that are square-integrable modulo the center, have also been introduced in \cite{CurreyMayeliOussa14} in terms of the Fourier duality provided by the unitary dual of the Lie group (not of the discrete subgroup), which is parametrized by its center.

Our main tool to consider the problem in full generality will be the noncommutative theory of the bracket map developed in \cite{BHP14} in terms of Fourier analysis over group von Neumann algebras, that we briefly recall here (see also \cite{KadisonRingrose83, Conway00, Wegge93, Connes94, Takesaki03, PisierXu03} and the discussion in \cite{BHP14}). In this paper we will consider the \emph{right} von Neumann algebra of $\Gamma$, defined as follows. Let $\rr : \Gamma \to U(\ell_2(\Gamma))$ be the right regular representation, which acts on the canonical basis $\{\delta_\gamma\}_{\gamma \in \Gamma}$ as $\rr(\gamma)\delta_{\gamma'} = \delta_{\gamma'\gamma^{-1}}$, and let us call \emph{trigonometric
polynomials} the operators obtained by finite linear combinations of $\{\rr(\gamma)\}_{\gamma \in \Gamma}$. The right von Neumann algebra can be defined as the weak operator closure of such trigonometric polynomials
$$
\vn(\Gamma) = \ol{\vsp\{\rr(\gamma)\}_{\gamma \in \Gamma}}^{\textrm{WOT}} .
$$
Given $F \in \vn(\Gamma)$, we will denote by $\tau$ the standard trace
$$
\tau(F) = \langle F \delta_\id, \delta_\id \rangle_{\ell_2(\Gamma)}
$$
where $\id$ denotes the identity element of $\Gamma$, so that $\tau$ defines a normal, finite and faithful tracial linear functional.

Any $F \in \vn(\Gamma)$ has a \emph{Fourier series} 
$$
F = \sum_{\gamma \in \Gamma} \wh{F}(\gamma) \rr(\gamma)^* \, ,
$$
which converges in the weak operator topology, where the Fourier coefficients $\wh{F} \in \ell_2(\Gamma)$ of $F$ are given by
\begin{equation}\label{eq:Fouriercoefficients}
\wh{F}(\gamma) = \tau(F \rr(\gamma)) .
\end{equation}
Any $F \in \vn(\Gamma)$ is a bounded right convolution operator by $\wh{F}$: given $u \in \ell_2(\Gamma)$
$$
Fu = u \ast \wh{F}
$$
where $\ast$ stands for $\Gamma$ group convolution
$$
u \ast v (\gamma) = \sum_{\gamma' \in \Gamma} u(\gamma') v(\gamma'^{-1}\gamma) .
$$
The algebra $\vn(\Gamma)$ corresponds to the right convolution algebra over $\ell_2(\Gamma)$, i.e. it is formed by the bounded right convolution operators over $\ell_2(\Gamma)$. Accordingly, we will denote with $\vnL(\Gamma)$ the \emph{left} von Neumann algebra of $\ell_2(\Gamma)$, that is generated by the left regular representation $\lr: \Gamma \to U(\ell_2(\Gamma))$, defined by $\lr(\gamma)\delta_{\gamma'} = \delta_{\gamma\gamma'}$. The algebra $\vnL(\Gamma)$ coincides with the \emph{commutant} of $\vn(\Gamma)$, that is the algebra of bounded operators on $\ell_2(\Gamma)$ which commute with all the operators of $\vn(\Gamma)$ (more details can be found e.g. in \cite[\S 6.7]{KadisonRingrose83}).

For any $1 \leq p < \infty$ let $\|\cdot\|_p$ be the norm over $\vn(\Gamma)$ given by
$$
\|F\|_p = \tau(|F|^p)^\frac1p
$$
where the absolute value is the selfadjoint operator defined as $|F| = \sqrt{F^*F}$ and the $p$-th power is defined by functional calculus of $|F|$. Following \cite{Nelson74, PisierXu03, BHP14}, we define the noncommutative $L^p(\vn(\Gamma))$ spaces for $1 \leq p < \infty$ as
$$
L^p(\vn(\Gamma)) = \ol{\vsp\{\rr(\gamma)\}_{\gamma \in \Gamma}}^{\|\cdot\|_p}
$$
while for $p = \infty$ we set $L^\infty(\vn(\Gamma)) = \vn(\Gamma)$ endowed with the operator norm.

When $p < \infty$, the elements of $L^p(\vn(\Gamma))$ are the linear operators on $\ell_2(\Gamma)$ that are affiliated to $\vn(\Gamma)$, i.e. the densely defined closed operators that commute with all unitary elements of $\vnL(\Gamma)$, whose $\|\cdot\|_p$ norm is finite (see also \cite{Terp81}). In particular they are not necessarily bounded, while a bounded operator that is affiliated to $\vn(\Gamma)$ automatically belongs to $\vn(\Gamma)$ as a consequence of von Neumann's Double Commutant Theorem (see also \cite[Th. 4.1.7]{KadisonRingrose83}). For $p = 2$ one obtains a separable Hilbert space with scalar product
$$
\langle F_1, F_2\rangle_2 = \tau(F_2^* F_1)
$$
for which the monomials $\{\rr(\gamma)\}_{\gamma \in \Gamma}$ form an orthonormal basis.
For these spaces the usual statement of H\"older inequality still holds, so that in particular for any $F \in L^p(\vn(\Gamma))$ with $1 \leq p \leq \infty$ its Fourier coefficients are well defined, and the finiteness of the trace implies that $L^p(\vn(\Gamma)) \subset L^q(\vn(\Gamma))$ whenever $q < p$. Moreover, fundamental results of Fourier analysis such as $L^1(\vn(\Gamma))$ Uniqueness Theorem, Plancherel Theorem between $L^2(\vn(\Gamma))$ and $\ell_2(\Gamma)$, and Hausdorff-Young inequality still hold in the present setting (see e.g. \cite[\S 2.2]{BHP14}).

A relevant class of operators in von Neumann algebras are the orthogonal projections onto closed subspaces of $\ell_2(\Gamma)$. An orthogonal projection onto the subspace $W \subset \ell_2(\Gamma)$ belongs to $\vn(\Gamma)$ if and only if $W$ is an invariant space for the left regular representation\footnote{A proof is the following. Since $\vn(\Gamma) = \vnL(\Gamma)' = \lr(\Gamma)'$ we have that
$\Proj_W$ belongs to $\vn(\Gamma)$ if and only if \ $\Proj_W \lr(\gamma) = \lr(\gamma) \Proj_W$ for all $\gamma \in \Gamma$. Let us then first assume that $\lr(\Gamma)W \subset W$. Then also $W^\bot$ is invariant, because for all $\gamma \in \Gamma$, $w \in W$, $w' \in W^\bot$ it holds
$$
\langle w , \lr(\gamma) w'\rangle = \langle \lr(\gamma)^*w, w'\rangle = 0
$$
so $\lr(\gamma)w' \bot w$, and hence $\lr(\Gamma)W^\bot \subset W^\bot$. Then $\Proj_W \in \vn(\Gamma)$ because for all $u \in \ell_2(\Gamma)$
$$
\Proj_W \lr(\gamma) u = \Proj_W \lr(\gamma) \Proj_W u + \Proj_W \lr(\gamma) \Proj_{W^\bot} u = \lr(\gamma) \Proj_W u .
$$
Conversely, let $\Proj_W \in \vn(\Gamma)$. Then for all $w \in W$ we have $\lr(\gamma) w = \lr(\gamma) \Proj_W w = \Proj_W \lr(\gamma) w$.
}, i.e. $\lr(\Gamma)W \subset W$. A natural source for these projections are the spectral projections of any selfadjoint operator $F$ that is affiliated to $\vn(\Gamma)$ (see \cite[Prop. 5.3.4]{Sunder97}). A special case is the spectral projection over the set $\R \setminus \{0\}$, that is called the support of $F$. It is the minimal orthogonal projection $s_F$ of $\ell_2(\Gamma)$ such that $F = F s_F = s_F F$, and reads explicitly
$$
s_F = \Proj_{(\Ker(F))^\bot} = \Proj_{\ol{\Ran(F)}} .
$$

Finally, we would like to recall to the readers that are less accustomed to von Neumann algebra theory that, when the group $\Gamma$ is abelian, Pontryagin duality defines a Banach algebra isomorphism, that preserves the involution given by complex conjugation, between $\vn(\Gamma)$ and the usual $L^\infty(\wh{\Gamma})$ (see e.g. \cite{KadisonRingrose83, Connes94} or \cite[\S 5.1]{BHP14}). This isomorphism provides indeed the main motivation for considering Fourier analysis of discrete groups in terms of their group von Neumann algebra, and in particular it allows to translate almost flawlessly the results expressed in terms of operators on $\ell_2(\Gamma)$ into results expressed in terms of functions of $\wh{\Gamma}$, by the rule-of-thumb of replacing $\{\rr(\gamma)\}_{\gamma \in \Gamma}$ with the characters $\{\chi_\gamma : \wh{\Gamma} \to \T\}_{\gamma \in \Gamma}$.

Let us now recall the following definition from \cite{HSWW10, BHP14}.
\begin{defi}
Let $\Pi$ be a unitary representation of a discrete group $\Gamma$ on a separable Hilbert space $\Hil$. We say that $\Pi$ is \emph{dual integrable} if there exists a sesquilinear map $[\cdot,\cdot] : \Hil \times \Hil \to L^1(\vn(\Gamma))$, called \emph{bracket map}, satisfying
$$
\langle \varphi, \Pi(\gamma)\psi\rangle_\Hil = \tau([\varphi,\psi]\rr(\gamma)) \quad \forall \, \varphi, \psi \in \Hil \, , \ \forall \, \gamma \in \Gamma .
$$
In such a case we will call $(\Gamma,\Pi,\Hil)$ a \emph{dual integrable triple}.
\end{defi}
According to \cite[Th. 4.1]{BHP14}, $\Pi$ is dual integrable if and only if it is square integrable, in the sense that there exists a dense subspace $\mathcal{D}$ of $\Hil$ such that
$$
\big\{\langle \varphi, \Pi(\gamma)\psi\rangle_\Hil\big\}_{\gamma \in \Gamma} \in \ell_2(\Gamma) \quad \forall \, \varphi \in \Hil \, , \ \forall \, \psi \in \mathcal{D} .
$$
Moreover we recall that, by \cite[Prop. 3.2]{BHP14} the bracket map satisfies the properties
\begin{itemize}
\item[\textnormal{I)}] $[\psi_1, \psi_2]^* = [\psi_2, \psi_1]$
\item[\textnormal{II)}] $[\psi_1, \Pi(\gamma)\psi_2] = \rr(\gamma)[\psi_1, \psi_2]$ \ , \ \ $[\Pi(\gamma)\psi_1, \psi_2] = [\psi_1, \psi_2]\rr(\gamma)^*$ \ , \ \  $\forall \, \gamma \in \Gamma$
\item[\textnormal{III)}] $[\psi,\psi]$ is nonnegative, and $\|[\psi, \psi]\|_1 = \|\psi\|^2_{\Hil}$
\end{itemize}
for all $\psi, \psi_1, \psi_2 \in \Hil$.
Since we are using here a bracket map in terms of the right regular representation, we provide a proof of Property II). This is a consequence of the definition of the bracket map, the traciality of $\tau$ and the $L^1(\vn(\Gamma))$ uniqueness of Fourier coefficients. Indeed
\begin{align*}
\tau([\psi_1, \Pi(\gamma_0)\psi_2] \rr(\gamma)) & = \langle \psi_1 , \Pi(\gamma) \Pi(\gamma_0) \psi_2 \rangle_{\Hil} = \langle \psi_1 , \Pi(\gamma \gamma_0) \psi_2 \rangle_{\Hil}\\
& = \tau([\psi_1, \psi_2] \rr(\gamma \gamma_0)) = \tau([\psi_1, \psi_2] \rr(\gamma) \rr(\gamma_0))\\
& = \tau(\rr(\gamma_0)[\psi_1, \psi_2] \rr(\gamma)) \, , \quad \forall \, \gamma, \gamma_0 \in \Gamma .
\end{align*}
The other equality is proved from this result and Property I). This notion of bracket map will be our key object in this study, and we recall that it corresponds (by Pontryagin duality) to the notion introduced in \cite{HSWW10} in the LCA setting.
\newpage

\subsection{Results}

We will begin, in Section \ref{sec:basicresults}, by discussing some fundamental issues that lie at the basis of the problem of reproducing systems (Riesz sequences and frames) in $(\Gamma,\Pi)$-invariant spaces. There, we will prove a new basic relationship between the usual Gram operator and the bracket map, that clarifies the nature of this crucial object and allows us to reobtain known characterizations of Riesz and frame sequences for principal invariant subspaces in terms of general arguments and simple proofs (hence avoiding the - as the authors term them - ``surprisingly intricate'' arguments invoked in \cite{BenedettoLi98} and still present in all subsequent works). More precisely, let $\orb = \{\Pi(\gamma)\psi\}_{\gamma \in \Gamma}$ be the orbit of a vector $\psi \in \Hil$, let $\psis{\psi} = \ol{\vsp\,\orb}^\Hil$ be its linearly generated space, and let $\Grop_\orb$ be the linear operator on $\ell_2(\Gamma)$ associated to the Gram matrix
$$
\Gram(\gamma' , \gamma) = \langle \Pi(\gamma)\psi, \Pi(\gamma')\psi\rangle_\Hil \ , \quad \gamma,\gamma' \in \Gamma .
$$
Our first result, which can be deduced from Proposition \ref{prop:G=B}, is that this operator coincides with the bracket map for a large set of $\psi \in \Hil$, in the precise sense given by the following.
\begin{theo}\label{theo:bracket=Gram}
Let $(\Gamma,\Pi,\Hil)$ be a dual integrable triple, and let $\psi \in \Hil$ be such that $[\psi,\psi] \in L^2(\vn(\Gamma))$. Then
$[\psi,\psi] = \Grop_\orb$.
\end{theo}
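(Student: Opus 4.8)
The plan is to identify $[\psi,\psi]$ and $\Grop_\orb$ by comparing their matrix coefficients in the canonical orthonormal basis $\{\delta_\gamma\}_{\gamma \in \Gamma}$ of $\ell_2(\Gamma)$. The starting observation is that the hypothesis $[\psi,\psi] \in L^2(\vn(\Gamma))$ means, via Plancherel, that the Fourier coefficients $\wh{[\psi,\psi]}(\gamma) = \tau([\psi,\psi]\rr(\gamma))$ form an $\ell_2(\Gamma)$ sequence; and by the defining property of the bracket map this sequence equals $\{\langle \psi, \Pi(\gamma)\psi\rangle_\Hil\}_{\gamma \in \Gamma}$. In particular the columns $\{\Gram(\cdot,\gamma)\}$ of the Gram matrix lie in $\ell_2(\Gamma)$, which is precisely what makes $\Grop_\orb$ a well-defined closed densely-defined operator with core the finitely supported sequences.

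First I would record how an operator $F \in L^2(\vn(\Gamma))$ acts on basis vectors: since every such $F$ is the (closed) right convolution operator $Fu = u \ast \wh{F}$, one has $(F\delta_{\gamma'})(\gamma) = (\delta_{\gamma'} \ast \wh{F})(\gamma) = \wh{F}(\gamma'^{-1}\gamma)$, so that $\langle F\delta_{\gamma'}, \delta_\gamma\rangle_{\ell_2(\Gamma)} = \wh{F}(\gamma'^{-1}\gamma)$; moreover each $\delta_{\gamma'}$ lies in the domain of $F$ (because $\delta_{\gamma'} = \lr(\gamma')\delta_\id$, $\lr(\gamma')$ is a unitary of $\vnL(\Gamma)$, $F$ is affiliated to $\vn(\Gamma) = \vnL(\Gamma)'$, and $F\delta_\id = \wh{F} \in \ell_2(\Gamma)$), and the span of the $\delta_\gamma$ is a core for $F$.

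Applying this with $F = [\psi,\psi]$ and using Property II of the bracket (equivalently the unitarity of $\Pi$) gives
$$
\big\langle [\psi,\psi]\delta_{\gamma'}, \delta_\gamma \big\rangle_{\ell_2(\Gamma)} = \wh{[\psi,\psi]}(\gamma'^{-1}\gamma) = \big\langle \psi, \Pi(\gamma'^{-1}\gamma)\psi \big\rangle_\Hil = \big\langle \Pi(\gamma')\psi, \Pi(\gamma)\psi \big\rangle_\Hil = \Gram(\gamma,\gamma'),
$$
which are exactly the matrix entries of $\Grop_\orb$. Since $[\psi,\psi]$ and $\Grop_\orb$ are both closed operators that agree on the common core given by the finitely supported sequences, they coincide.

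The point that requires care is the unbounded-operator bookkeeping: one must be sure that $[\psi,\psi]$ is genuinely defined on each $\delta_\gamma$ and that the finitely supported sequences form a core for it, so that the coincidence of matrix coefficients can be upgraded from ``agreement on a dense subspace'' to ``equality of operators'' — this is exactly where the affiliation of $[\psi,\psi]$ to $\vn(\Gamma)$, its commutation with $\vnL(\Gamma)$, and the square-summability of $\wh{[\psi,\psi]}$ enter. Alternatively, the statement can be obtained as a corollary of Proposition \ref{prop:G=B}, but the direct verification sketched above is self-contained.
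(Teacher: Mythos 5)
Your computation is the right one and your overall plan is sound, but it completes the identification by a different route than the paper, and the decisive step of that route is asserted rather than proved. The paper deduces the theorem from Proposition \ref{prop:G=B}, whose part (ii) is exactly your first step: $[\psi,\psi]\in L^2(\vn(\Gamma))$ together with Plancherel gives square-summability of $\gamma\mapsto\langle\psi,\Pi(\gamma)\psi\rangle_\Hil$, hence a closable, densely defined Gram operator. For the identification itself, however, the paper never looks at the domain of $[\psi,\psi]$: it shows that the closed operator $\Grop_\orb$ is affiliated with $\vn(\Gamma)$ (a right convolution operator commuting with the left shifts), positive and of finite trace, so that $\Grop_\orb\in L^1(\vn(\Gamma))$, and then computes $\tau(\Grop_\orb\rr(\gamma))=\langle\psi,\Pi(\gamma)\psi\rangle_\Hil=\tau([\psi,\psi]\rr(\gamma))$ and concludes by the $L^1(\vn(\Gamma))$ Uniqueness Theorem; this works entirely at the level of Fourier coefficients and bypasses all core and domain questions for $[\psi,\psi]$.

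Your route instead hinges on the claim that $\ell_0(\Gamma)$ is a core for $[\psi,\psi]$ (equivalently, for any element of $L^2(\vn(\Gamma))$), which is precisely the point you flag but do not establish. What your matrix-coefficient comparison gives for free is only the inclusion $\Grop_\orb\subset[\psi,\psi]$: by the paper's convention $\Grop_\orb$ is the closure of the Gram operator on $\ell_0(\Gamma)$, and you have shown that $[\psi,\psi]$ is a closed extension of that restriction. Upgrading this inclusion to equality requires a genuine input from the theory of operators affiliated with a finite von Neumann algebra; for instance: the graph of $\Grop_\orb$ is invariant under $\lr(\gamma)\oplus\lr(\gamma)$ for every $\gamma$, hence $\Grop_\orb$ is affiliated with $\vn(\Gamma)$; a closed symmetric operator affiliated with the finite algebra $\vn(\Gamma)$ is automatically self-adjoint; and a self-adjoint operator contained in the self-adjoint operator $[\psi,\psi]$ must coincide with it. (Alternatively one can prove directly the noncommutative analogue of the fact that trigonometric polynomials form a core for an unbounded multiplication operator with $L^2$ symbol.) Likewise, the facts that $\delta_\id\in\mathrm{dom}([\psi,\psi])$ and that $[\psi,\psi]\delta_\id$ is the $\ell_2$ sequence of Fourier coefficients, for an unbounded element of $L^2(\vn(\Gamma))$, deserve a line of justification (spectral truncation by $e_n=\chi_{[0,n]}([\psi,\psi])$ plus Plancherel). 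With these supplements your argument is complete and is a legitimate, self-contained alternative to the paper's $L^1$-uniqueness argument; as written, the core assertion is the one substantive gap.
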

Observe indeed that the set of $\psi \in \Hil$ such that $[\psi,\psi] \in L^2(\vn(\Gamma))$ is a dense set in $\Hil$, by \cite[Th. 4.1]{BHP14} and Plancherel theorem.
This result allows us, in particular, to prove the characterizations given in \cite[Th. A]{BHP14}, in terms of simple arguments based on the structure of the Gram operator (see Corollary \ref{cor:principalcharacterization}). We will also construct the Gram operator under weaker conditions than the ones generally considered in the literature, and this will allow us to explicitly characterize the Fourier coefficients of a noncommutative weighted Hilbert space that plays a central role in next sections, and that will be described at the beginning of Section \ref{sec:isometries}. 
In that section we will show that dual integrability is equivalent to the existence of a class of isometries, that we will call Helson maps, that are proper generalizations of the fiberization mapping (\ref{eq:Fourierperiodization}) and of the Zak transform.
\begin{defi}\label{def:Helsonmap}
Let $\Gamma$ be a discrete group and $\Pi$ a unitary representation of $\Gamma$ on the separable Hilbert space $\Hil$. We say that the triple $(\Gamma,\Pi,\Hil)$ admits a Helson map if there exists a $\sigma$-finite measure space $(\M,\nu)$ and an isometry
$$
\iso : \Hil \to L^2((\M,\nu),L^2(\vn(\Gamma)))
$$
satisfying
\begin{equation}\label{eq:Helsonproperty}
\iso[\Pi(\gamma)\varphi] = \iso[\varphi] \rr(\gamma)^* \quad \forall \, \gamma \in \Gamma , \ \forall \, \varphi \in \Hil .
\end{equation}
\end{defi}

Observe that for $\varPsi \in L^2((\M,\nu),L^2(\vn(\Gamma)))$ and $F \in \vn(\Gamma)$ we are denoting with $\varPsi F$ the element of $L^2((\M,\nu),L^2(\vn(\Gamma)))$ that for a.e. $x \in \M$ is given by
\begin{equation}\label{eq:rightaction}
(\varPsi F )(x) = \varPsi(x) F .
\end{equation}

The following equivalence between dual integrability and the existence of a Helson map will be proved in Section \ref{sec:isometries} on the basis of Propositions \ref{prop:propertyii} and \ref{prop:periodization}.

\begin{theo}\label{theo:DualHelsonEquivalence}
The triple $(\Gamma,\Pi,\Hil)$ is dual integrable if and only if it admits a Helson map.
\end{theo}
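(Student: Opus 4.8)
The plan is to prove the two implications separately, both resting on the structure of principal $(\Gamma,\Pi)$-invariant subspaces.

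For the implication ``dual integrable $\Rightarrow$ admits a Helson map'', I would first reduce to the cyclic case. Since $\Pi$ is unitary, the orthogonal complement of a $(\Gamma,\Pi)$-invariant subspace is again $(\Gamma,\Pi)$-invariant, so by separability of $\Hil$ there is a countable family $\{\psi_j\}_{j\in\ind}$ with mutually orthogonal principal subspaces and $\Hil=\bigoplus_{j\in\ind}\psis{\psi_j}$. I would then take $\M=\ind$ with the counting measure $\nu$ (which is $\sigma$-finite), so that $L^2((\M,\nu),L^2(\vn(\Gamma)))=\bigoplus_{j\in\ind}L^2(\vn(\Gamma))$, and define $\iso[\varphi]=\big(\iso_j[\Proj_j\varphi]\big)_{j\in\ind}$, where $\Proj_j$ is the orthogonal projection onto $\psis{\psi_j}$ and $\iso_j$ is the linear map on $\vsp\{\Pi(\gamma)\psi_j:\gamma\in\Gamma\}$ prescribed by $\iso_j[\Pi(\gamma)\psi_j]=[\psi_j,\psi_j]^{1/2}\rr(\gamma)^*$.

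The heart of this implication is that $\iso_j$ is a well defined isometry into $L^2(\vn(\Gamma))$. By Property III, $[\psi_j,\psi_j]$ is nonnegative with $\tau([\psi_j,\psi_j])=\|\psi_j\|_\Hil^2$, so its functional-calculus square root lies in $L^2(\vn(\Gamma))$ with $\|[\psi_j,\psi_j]^{1/2}\|_2=\|\psi_j\|_\Hil$, and right multiplication by the unitary $\rr(\gamma)^*$ preserves $\|\cdot\|_2$; hence each $[\psi_j,\psi_j]^{1/2}\rr(\gamma)^*$ lies in $L^2(\vn(\Gamma))$. Using the traciality of $\tau$ and Property II, one checks that on finite linear combinations
$$\Big\|\sum_i c_i\,[\psi_j,\psi_j]^{1/2}\rr(\gamma_i)^*\Big\|_2^2=\sum_{i,i'}c_i\,\ol{c_{i'}}\;\tau\big([\psi_j,\psi_j]\,\rr(\gamma_i)^*\rr(\gamma_{i'})\big)=\Big\|\sum_i c_i\,\Pi(\gamma_i)\psi_j\Big\|_\Hil^2 ,$$
which makes $\iso_j$ well defined and isometric, hence extendable to $\psis{\psi_j}$. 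The Helson identity for $\iso_j$ reduces to $\rr(\gamma)^*\rr(\gamma_0)^*=\rr(\gamma_0\gamma)^*$, and since the $\psis{\psi_j}$ are $(\Gamma,\Pi)$-invariant the $\Proj_j$ commute with all $\Pi(\gamma)$; assembling the pieces, $\iso$ is an isometry satisfying (\ref{eq:Helsonproperty}) for the right action (\ref{eq:rightaction}).

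For the converse, given a Helson map $\iso$, I would set, for $\varphi,\psi\in\Hil$,
$$[\varphi,\psi]:=\int_\M \iso[\psi](x)^*\,\iso[\varphi](x)\,d\nu(x),$$
interpreted as a Bochner integral in $L^1(\vn(\Gamma))$. It converges: by the noncommutative H\"older inequality $\|\iso[\psi](x)^*\iso[\varphi](x)\|_1\le\|\iso[\psi](x)\|_2\,\|\iso[\varphi](x)\|_2$, which is $\nu$-integrable with integral at most $\|\psi\|_\Hil\|\varphi\|_\Hil$ by Cauchy--Schwarz, while the boundedness of $(A,B)\mapsto B^*A$ from $L^2(\vn(\Gamma))\times L^2(\vn(\Gamma))$ to $L^1(\vn(\Gamma))$ yields measurability of the integrand. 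The map is clearly sesquilinear, and the defining identity follows by unfolding the inner product of $L^2((\M,\nu),L^2(\vn(\Gamma)))$, applying (\ref{eq:Helsonproperty}) and then the traciality of $\tau$:
$$\langle\varphi,\Pi(\gamma)\psi\rangle_\Hil=\int_\M\big\langle\iso[\varphi](x),\iso[\psi](x)\rr(\gamma)^*\big\rangle_2\,d\nu(x)=\int_\M\tau\big(\iso[\psi](x)^*\iso[\varphi](x)\,\rr(\gamma)\big)\,d\nu(x)=\tau\big([\varphi,\psi]\,\rr(\gamma)\big),$$
where the continuity of $\tau$ on $L^1(\vn(\Gamma))$ lets it pass through the integral. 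Hence $\Pi$ is dual integrable.

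I expect the main obstacle to be in the first implication, namely verifying that the fiberwise ``square-root bracket'' $[\psi,\psi]^{1/2}\rr(\gamma)^*$ genuinely lands in $L^2(\vn(\Gamma))$ and that the displayed norm identity holds: this forces one to handle the functional calculus for operators affiliated to $\vn(\Gamma)$, the traciality of the (possibly unbounded) $L^1$--$L^2$ pairing, and its interaction with right translations. This is precisely the technical content packaged by Propositions \ref{prop:propertyii} and \ref{prop:periodization}; by contrast the orthogonal reduction to cyclic subspaces and the construction of the bracket from a given Helson map are comparatively routine.
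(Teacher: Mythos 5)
Your proposal is correct and takes essentially the same route as the paper: the forward direction reproduces Proposition \ref{prop:periodization} (orthogonal decomposition into cyclic subspaces via Lemma \ref{lem:generators}, the fiberwise maps $\Pi(\gamma)\psi_j\mapsto[\psi_j,\psi_j]^{1/2}\rr(\gamma)^*$, and $\M=\ind$ with counting measure), while the converse is exactly Proposition \ref{prop:propertyii}. The only difference is presentational: you verify the fiberwise isometry by a direct trace computation, whereas the paper routes it through the weighted space $L^2(\vn(\Gamma),[\psi,\psi])$ of Lemma \ref{lem:weightmap} and the isometry $S_\psi$ quoted from \cite[Prop. 3.4]{BHP14}.
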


The role of Helson maps is captured, in Section \ref{sec:Hilbertmodules}, by Theorem \ref{theo:multiplicatively}, which shows that a closed subspace of $\Hil$ is $(\Gamma,\Pi)$-invariant if and only if its image under a Helson map is invariant under the right composition (\ref{eq:rightaction}) with elements of $\vn(\Gamma)$. This result generalizes the notion of \emph{multiplicatively invariant} space introduced in \cite{Helson86, BownikRoss13} to the present setting, and naturally leads to the notion of modules over $\vn(\Gamma)$, that are vector spaces endowed with a structure of linear combinations with noncommutative coefficients belonging to $\vn(\Gamma)$.

Furthermore, we will show that the $L^1(\vn(\Gamma))$-valued bracket map composed with the inverse of a Helson map $\iso$ endows such modules with an $L^2(\vn(\Gamma))$-Hilbert modular structure, as the one introduced in \cite{JungeSherman05}. More precisely, we can define an operator-valued inner product as a sesquilinear map on $\Kil = \iso(\Hil)$ as
\begin{equation}\label{eq:Kilinnerproduct}
\begin{array}{rccl}
\ipr{\cdot}{\cdot}_\Kil = [\cdot , \cdot]\circ \iso^{-1} : & \Kil \times \Kil & \to & L^1(\vn(\Gamma))\\
& (\varPhi,\varPsi) & \mapsto & \ipr{\varPhi}{\varPsi}_\Kil = [\iso^{-1}\varPhi, \iso^{-1}\varPsi] .
\end{array}
\end{equation}
This map provides an inner product that endows $\Kil$ and its submodules with an $L^2(\vn(\Gamma))$-Hilbert module structure, in the sense of \cite{JungeSherman05}, as follows.

\begin{theo}\label{theo:Hilmodshiftinv}
Let $(\Gamma,\Pi,\Hil)$ be a dual integrable triple with Helson map $\iso$, let $V$ be a closed subspace of $\Hil$, and let $M = \iso(V)$. Then $V$ is $(\Gamma,\Pi)$-invariant if and only if $(M,\ipr{\cdot}{\cdot}_\Kil)$ is an $L^2(\vn(\Gamma))$-Hilbert module with right composition (\ref{eq:rightaction}) with $\vn(\Gamma)$.
\end{theo}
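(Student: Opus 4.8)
The plan is to prove the two implications separately, reducing everything to the bracket-map properties I), II), III) transported through the Helson map $\iso$, and to the definition of an $L^2(\vn(\Gamma))$-Hilbert module in the sense of \cite{JungeSherman05}. Recall that such a module structure on $M$ requires: (a) $M$ is a right $\vn(\Gamma)$-module under the composition (\ref{eq:rightaction}); (b) the inner product $\ipr{\cdot}{\cdot}_\Kil$ is $\vn(\Gamma)$-linear in the appropriate variable, i.e. $\ipr{\varPhi F}{\varPsi}_\Kil = \ipr{\varPhi}{\varPsi}_\Kil F$ (and conjugate-linear on the other side via I)), is positive, and is definite; and (c) $M$ is complete with respect to the norm $\|\varPhi\|_\Kil = \|\ipr{\varPhi}{\varPhi}_\Kil\|_1^{1/2}$. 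The key observation throughout is that, by Property III), $\|\ipr{\varPhi}{\varPhi}_\Kil\|_1 = \|\iso^{-1}\varPhi\|_\Hil^2 = \|\varPhi\|_{L^2((\M,\nu),L^2(\vn(\Gamma)))}^2$ since $\iso$ is an isometry; thus the modular norm on $\Kil$ is just the ambient Hilbert space norm, and completeness of $M$ in the modular sense is the same as $M$ being closed in $L^2((\M,\nu),L^2(\vn(\Gamma)))$.

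For the forward implication, assume $V$ is $(\Gamma,\Pi)$-invariant. First I would check that $M = \iso(V)$ is a right $\vn(\Gamma)$-module. For $\gamma \in \Gamma$ and $\varPhi = \iso v \in M$, the Helson property (\ref{eq:Helsonproperty}) gives $\varPhi \rr(\gamma)^* = \iso[\Pi(\gamma)v] \in M$ since $\Pi(\gamma)v \in V$; by linearity $M$ is invariant under right composition with trigonometric polynomials. To pass to all of $\vn(\Gamma)$ one takes weak-operator (equivalently, here, $L^2$) limits: any $F \in \vn(\Gamma)$ is a WOT-limit of trigonometric polynomials, the right composition (\ref{eq:rightaction}) is continuous in the relevant topology on $L^2((\M,\nu),L^2(\vn(\Gamma)))$ by Hölder / the $L^2$-module estimate $\|\varPsi F\|\le \|F\|_\infty\|\varPsi\|$, and $M$ is closed, so $\varPhi F \in M$. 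The inner product $\ipr{\cdot}{\cdot}_\Kil$ then inherits I) and III) verbatim from the bracket map, and the module-linearity $\ipr{\varPhi \rr(\gamma)^*}{\varPsi}_\Kil = \ipr{\varPhi}{\varPsi}_\Kil \rr(\gamma)^*$ is exactly the second identity of Property II) pulled back through $\iso^{-1}$ (using the Helson property to identify $\iso^{-1}(\varPhi\rr(\gamma)^*) = \Pi(\gamma)\iso^{-1}\varPhi$); extending to general $F \in \vn(\Gamma)$ is again a density/continuity argument, this time using that $[\cdot,\cdot]$ is continuous into $L^1(\vn(\Gamma))$. Positivity and definiteness of $\ipr{\cdot}{\cdot}_\Kil$ on $M$ follow from III) together with the faithfulness of $\tau$. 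Finally, completeness of $M$ holds because $M$ is closed and, as noted, the modular norm equals the restriction of the ambient norm.

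For the converse, assume $(M,\ipr{\cdot}{\cdot}_\Kil)$ is an $L^2(\vn(\Gamma))$-Hilbert module under right composition with $\vn(\Gamma)$; in particular $\varPhi \rr(\gamma)^* \in M$ for every $\varPhi \in M$ and $\gamma \in \Gamma$. Pulling back through $\iso^{-1}$ and using (\ref{eq:Helsonproperty}) in the form $\iso^{-1}(\varPhi\rr(\gamma)^*) = \Pi(\gamma)\iso^{-1}\varPhi$ (valid because $\iso$ intertwines and is an isometry onto $\Kil \supset M$), we get $\Pi(\gamma)v \in V$ for all $v \in V$, i.e. $V$ is $(\Gamma,\Pi)$-invariant. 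We also need $V$ closed; this is immediate since $M$ is complete in the modular norm, which coincides with the ambient $L^2$-norm, so $M$ is closed in $L^2((\M,\nu),L^2(\vn(\Gamma)))$ and hence $V = \iso^{-1}(M)$ is closed in $\Hil$ by the isometry property.

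The main obstacle I anticipate is \emph{not} the algebra — Properties I)--III) hand us module-linearity, symmetry, positivity, and the norm identity almost for free — but rather the careful bookkeeping in extending the structural identities from trigonometric polynomials (where the Helson property is stated) to arbitrary $F \in \vn(\Gamma)$, and in checking that the abstract axioms of \cite{JungeSherman05} for an $L^2(\vn(\Gamma))$-Hilbert module are matched on the nose. One must verify that the topology in which $F$ is approximated (WOT on $\vn(\Gamma)$) interacts correctly with both the module action (\ref{eq:rightaction}) on $L^2((\M,\nu),L^2(\vn(\Gamma)))$ and with the $L^1(\vn(\Gamma))$-valued bracket; here the uniform bound $\|\varPsi(x)F\|_2 \le \|F\|_\infty \|\varPsi(x)\|_2$ together with dominated convergence on $(\M,\nu)$ does the job, but it should be spelled out. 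A secondary point to handle with care is the identification $\iso^{-1}(\varPhi \rr(\gamma)^*) = \Pi(\gamma)\iso^{-1}\varPhi$ for $\varPhi$ ranging over $M$ rather than all of $\Kil$: this requires only that $\iso$ is injective with the stated intertwining property, which is part of Definition \ref{def:Helsonmap}, so it is really just a matter of invoking the right hypothesis.
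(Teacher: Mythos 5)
Your architecture is in substance the same as the paper's: the paper establishes invariance $\Leftrightarrow$ $M\,\vn(\Gamma)\subset M$ as Theorem \ref{theo:multiplicatively}, reads axioms i.--iii.\ of Definition \ref{def:Hmodules} off Properties I)--III) of the bracket, obtains axiom iv.\ from Proposition \ref{prop:fundamental} (namely $[\varphi,\qroj_F\psi]=F^*[\varphi,\psi]$ together with $\iso[\qroj_F\psi]=\iso[\psi]F$), and gets the Banach/Hilbert space statement from the norm identity $\tau(\ipr{\varPsi}{\varPsi}_\Kil)=\norm{\varPsi}^2$. Your proposal reproduces exactly these steps, except that you inline the polynomial-density argument which the paper has packaged once and for all into Proposition \ref{prop:fundamental}; your converse is the same one-line pullback of $\varPhi\rr(\gamma)^*\in M$ through $\iso^{-1}$ using injectivity and (\ref{eq:Helsonproperty}).

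The one step that, as you describe it, would not go through is precisely that density argument. If $F_n\to F$ only in the weak operator topology, the H\"older bound $\|\varPhi(x)F\|_2\le\|F\|_\infty\|\varPhi(x)\|_2$ says nothing about $\|\varPhi(x)(F_n-F)\|_2$, so your proposed ``uniform bound plus dominated convergence'' cannot be applied: bounded WOT convergence only yields $\tau\big(G(F_n-F)\big)\to 0$ for $G\in L^1(\vn(\Gamma))$, i.e.\ \emph{weak} convergence of $\varPhi F_n$ to $\varPhi F$ in $L^2((\M,\nu),L^2(\vn(\Gamma)))$, not the pointwise-in-$x$ norm convergence that dominated convergence requires. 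Two standard repairs: (a) settle for weak convergence and use that $M$, being a norm-closed subspace of a Hilbert space, is weakly closed (you then still need a \emph{bounded} approximating sequence, via Kaplansky density, to pass from WOT to $\sigma$-weak convergence and to invoke normality of $\tau$); or (b) do what the paper does in the proof of Proposition \ref{prop:fundamental}: choose trigonometric polynomials $F_n$ with $F_n^*\to F^*$ strongly and uniformly bounded, so that $\|\varPhi(x)(F_n-F)\|_2^2=\tau\big(|\varPhi(x)|^2\,(F_n-F)(F_n-F)^*\big)\to 0$ by normality of $\tau$, after which your dominated convergence argument does give norm convergence. The same correction is needed where you extend axiom iv.\ from $F=\rr(\gamma)^*$ to general $F\in\vn(\Gamma)$ via continuity of the bracket into $L^1(\vn(\Gamma))$. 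Apart from this point (which you flagged but whose proposed fix is not quite the right one), the proof is correct and matches the paper's.
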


Inner products taking values in noncommutative $L^p$ spaces were already considered in \cite{Popa86, Connes94, Falcone00}. They provide an extension of the theory of Hilbert modules over $C^*$-algebras, which dates back to \cite{Kaplansky53} and \cite{Paschke73, Rieffel74}, and that is clearly reviewed in the monographs \cite{Lance95, ManuilovTroitsky05} (see also \cite{FrankBiblio} for an extensive bibliography on Hilbert $C^*$-modules and their applications). As $L^2(\vn(\Gamma))$-Hilbert modules are far less known objects, we will develop in the rest of Section \ref{sec:Hilbertmodules} a theory of linear combinations that is needed for our purposes, i.e. with coefficients that do not necessarily belong to the von Neumann algebra but may be unbounded operators. Also, we will introduce new modular notions of synthesis operator, as the operator performing linear combinations with operator-valued coefficients, and of analysis operators, as the operator providing sequences of $L^1(\vn(\Gamma))$-valued modular inner products. In particular, the need of using the result of such a modular analysis as coefficients for a modular synthesis in order to define a modular frame operator, is what requires the provided extension of linear combinations to coefficients that lay out of the von Neumann algebra.

In Section \ref{sec:noncommutativereproducing} we will develop a theory of reproducing systems in $L^2(\vn(\Gamma))$-Hilbert modules, providing definitions of \emph{modular Riesz and frame sequences}. In particular, with Theorems \ref{theo:ncnumerical} and  \ref{theo:ncGramianCharacterization} we will recover the same characterizations, known in the usual Hilbert space setting, of modular Riesz and frame sequences in terms of bounds on the modular analogues of the frame and Gram operators. We will also prove a modular reproducing formula for modular frames with Theorem \ref{theo:reproducing}, in terms of a properly defined canonical modular dual frame. Finally, we will prove the following theorem, which shows that the condition for a system as in (\ref{eq:shiftinvariantspace}) to be a Riesz or a frame sequence is equivalent to a condition on the Helson map of the generators to form a modular Riesz or a modular frame sequence in the target Hilbert module.

\begin{theo}\label{theo:main}
Let $(\Gamma,\Pi,\Hil)$ be a dual integrable triple and let $\iso : \Hil \to \Kil$ be a Helson map. Let $\{\phi_j\}_{j \in \ind} \subset \Hil$ be a countable family, denote with $E$ the system
$$
E = \{\Pi(\gamma)\phi_j : \gamma \in \Gamma, j \in \ind\} \subset \Hil
$$
and with $\Phi$ the system
$$
\Phi = \{\iso[\phi_j] : j \in \ind\} \subset \Kil .
$$
Then
\begin{itemize}
\item[i.] The system $E$ is a Riesz sequence if and only if $\Phi$ is a modular Riesz sequence with the same Riesz bounds
\item[ii.] The system $E$ is a frame sequence if and only if $\Phi$ is a modular frame sequence with the same frame bounds.
\end{itemize}
\end{theo}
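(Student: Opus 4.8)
The plan is to reduce both statements to a direct computation that translates the Hilbert-space norm $\|\sum_{\gamma,j} c_{\gamma,j}\,\Pi(\gamma)\phi_j\|_\Hil^2$ into a modular expression involving $\iso[\phi_j]$, using the defining Helson property \eqref{eq:Helsonproperty} together with the relationship \eqref{eq:Kilinnerproduct} between the bracket map and the modular inner product on $\Kil$. The crucial first observation is that for any finitely supported scalar sequence $\{c_{\gamma,j}\}$, writing $f = \sum_{\gamma,j} c_{\gamma,j}\,\Pi(\gamma)\phi_j$, one has by property II) of the bracket map that $[\phi_j, \Pi(\gamma)\phi_k] = \rr(\gamma)[\phi_j,\phi_k]$, hence the Gram-type quantities $\langle \Pi(\gamma)\phi_j, \Pi(\gamma')\phi_k\rangle_\Hil = \tau\big([\Pi(\gamma)\phi_j,\Pi(\gamma')\phi_k]\,\rr(\id)\big) = \tau\big(\rr(\gamma')[\phi_j,\phi_k]\rr(\gamma)^*\big)$ are Fourier coefficients of the operators $[\phi_j,\phi_k]\in L^1(\vn(\Gamma))$. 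The point is that a finite linear combination $f$ with scalar coefficients equals the image, under $\iso^{-1}$, of $\sum_j \iso[\phi_j]\cdot P_j$ where $P_j = \sum_\gamma c_{\gamma,j}\,\rr(\gamma)^*$ is a trigonometric polynomial in $\vn(\Gamma)$; this is immediate from \eqref{eq:Helsonproperty} and linearity. Since $\iso$ is an isometry, $\|f\|_\Hil^2 = \big\|\sum_j \iso[\phi_j]\,P_j\big\|_\oplus^2$, which is exactly the quantity controlled by the modular Riesz/frame conditions on $\Phi$.

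Next I would set up the correspondence between scalar sequence spaces and the modular coefficient spaces precisely. On the Hilbert-space side, a Riesz or frame sequence condition is a two-sided (resp. lower) bound relating $\sum_{\gamma,j}|c_{\gamma,j}|^2$ to $\|f\|_\Hil^2$, ranging over all finitely supported (then, by density, all $\ell_2$) sequences. On the modular side, the definitions of modular Riesz and modular frame sequence from Section \ref{sec:noncommutativereproducing} are phrased in terms of the modular synthesis and analysis operators and bounds of the form $A\,\|\cdot\|^2 \le \|\cdot\|_\oplus^2 \le B\,\|\cdot\|^2$ acting on sequences of operator-valued coefficients in (the completion of) the appropriate coefficient module. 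The key dictionary is: a scalar $\ell_2$ sequence $\{c_{\gamma,j}\}_{\gamma\in\Gamma,\,j\in\ind}$ corresponds bijectively and isometrically to a sequence $\{P_j\}_{j\in\ind}$ with $P_j\in L^2(\vn(\Gamma))$, via $P_j=\sum_\gamma c_{\gamma,j}\rr(\gamma)^*$, because $\{\rr(\gamma)\}_{\gamma\in\Gamma}$ is an orthonormal basis of $L^2(\vn(\Gamma))$ and thus $\sum_\gamma |c_{\gamma,j}|^2 = \|P_j\|_2^2 = \tau(|P_j|^2)$, and summing over $j$ gives $\sum_{\gamma,j}|c_{\gamma,j}|^2 = \sum_j \|P_j\|_2^2$. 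Under this identification the modular synthesis $\sum_j \iso[\phi_j]\,P_j$ matches $\iso[f]$, and the modular analysis of $\iso[f]$ recovers the sequence $\{[\iso^{-1}(\iso[f]),\phi_j]\}_j = \{[f,\phi_j]\}_j$, whose $\ell_2$-norms are $\sum_\gamma|\langle f,\Pi(\gamma)\phi_j\rangle_\Hil|^2$ — exactly the quantities appearing in the frame inequality for $E$. Both claims i.\ and ii.\ then follow by writing down the defining inequalities on both sides and observing they are term-by-term identical under this isometric dictionary, so the bounds (and hence their optimality) transfer without change.

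The step I expect to require the most care — the main obstacle — is the \emph{density / closure argument} needed to pass between finitely supported scalar sequences (where the identification with trigonometric polynomials is transparent) and the full coefficient spaces in which modular Riesz and modular frame sequences are defined. On the Hilbert side this is the standard fact that a Bessel-type bound lets one extend the synthesis map continuously from finite sequences to all of $\ell_2$; on the modular side one must check that the completion of the coefficient module used in the definition of modular frame/Riesz sequence is compatible with the $\|\cdot\|_\oplus$-closure of the trigonometric polynomials, i.e.\ that no coefficient sequence outside the closure of the polynomial ones is needed — this is where the possibly unbounded nature of the modular coefficients enters, and where one must invoke the specific construction of the coefficient space from Section \ref{sec:Hilbertmodules}. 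A secondary point requiring attention is making sure that in case ii.\ the modular frame operator (obtained by composing modular analysis with modular synthesis, per the discussion preceding Theorem \ref{theo:main}) is genuinely the image under $\iso$ of the Hilbert-space frame operator of $E$; this follows from Theorem \ref{theo:reproducing} and the intertwining relation $\iso[\Pi(\gamma)\varphi] = \iso[\varphi]\rr(\gamma)^*$, but the bookkeeping of left/right actions must be done carefully. Once these two technical points are settled, the equivalence of bounds — and therefore the theorem — is essentially a tautology given Theorems \ref{theo:DualHelsonEquivalence} and \ref{theo:ncGramianCharacterization}.
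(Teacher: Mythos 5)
Your dictionary between finitely supported scalar sequences and trigonometric-polynomial coefficients, via Plancherel and the Helson property, is exactly the paper's Proposition \ref{prop:equivalence}: it shows that $E$ is a Riesz (resp.\ frame) sequence with bounds $A,B$ if and only if the \emph{traces} of the relevant modular quantities satisfy the corresponding scalar inequalities, i.e.\ $A\,\tau(\sum_j|C_j|^2)\leq\tau(\ipr{\mSy_\Phi C}{\mSy_\Phi C}_\Kil)\leq B\,\tau(\sum_j|C_j|^2)$ for finite $\vn(\Gamma)$-coefficient sequences (resp.\ $A\,\tau([f,f])\leq\sum_j\tau(|[f,\phi_j]|^2)\leq B\,\tau([f,f])$). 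The genuine gap is what comes next: the modular Riesz and frame conditions \eqref{eq:ncRiesz} and \eqref{eq:ncframe} are \emph{operator} inequalities in $L^1(\vn(\Gamma))$, not norm inequalities, so the two sides are not ``term-by-term identical under the isometric dictionary'' --- the Hilbert-space conditions only give you the traces of the inequalities you must prove. Your description of the modular definitions as bounds of the form $A\|\cdot\|^2\leq\|\cdot\|^2_\oplus\leq B\|\cdot\|^2$ is a misstatement: those numerical bounds are necessary (Lemmas \ref{lem:ncRiesz} and \ref{lem:ncframe}), but their sufficiency is precisely the nontrivial content of Theorem \ref{theo:ncnumerical}, which the paper proves by a positivity-upgrade argument: the families $\alpha\ipr{\mSy_\Phi C}{\mSy_\Phi C}+\beta\sum_j|C_j|^2$ (resp.\ $\alpha\sum_j|\ipr{\varPsi}{\varPhi_j}|^2+\beta\ipr{\varPsi}{\varPsi}$) are invariant under conjugation $F\mapsto p^*Fp$ by elements of $\vn(\Gamma)$ (because $Cp$ is again an admissible coefficient sequence and $\varPsi p\in\E_\Phi$), and Lemma \ref{lem:invariance} (conjugating by the spectral projection onto the negative axis) then converts positivity of traces into positivity of operators. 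Your proposal never addresses this step; instead you locate the ``main obstacle'' in a density/closure issue about coefficient spaces, which is routine here, while the actual crux is the passage from scalar to operator inequalities.

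The final appeal to Theorem \ref{theo:ncGramianCharacterization} could in principle fill the hole, since that theorem already encapsulates the numerical-to-modular equivalence; but to use it you would have to actually carry out a different argument than the one in your body: identify $\ell_2(\Gamma\times\ind)$ with $\ell_2(\ind,L^2(\vn(\Gamma)))$ unitarily, verify that under this identification the ordinary Gram operator of $E$ becomes the modular Gram operator $\mGrop_\Phi$ (and kernels correspond, for the frame case), and check the modular square integrability hypothesis. As written, the proposal proves only the easy direction (modular $\Rightarrow$ Hilbert, by taking traces) plus the translation step, and leaves the converse direction unproved.
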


In Section \ref{sec:noncommutativereproducing} we will also discuss how this result directly contains several previous results concerning discrete groups, such as the characterizations of Riesz and frame sequences associated to integer translations of countable families in $L^2(\R^d)$ given in terms of the so-called \emph{Gramian} analysis, developed in the seminal works \cite{BoorVoreRon93, BoorVoreRon94, RonShen95, Bownik00}, the corresponding results for translations by discrete subgroups in the $L^2$ space of an LCA group in \cite{CabrelliPaternostro10, KamyabiRaisi08}, as well as results concerning single orbits of general unitary representations present in \cite{BenedettoLi93, HSWW10, BHP14}.

\

Finally, we would like to recall that the notions of Riesz bases and frames in Hilbert $C^*$-modules were introduced in \cite{FrankLarson2002}, where the authors considered inner products with values in a $C^*$-algebra. While several issues are common with that formulation, we remark that one of the main differences in the present setting is the need to deal with unbounded operators. We also recall that the idea of a bracket map as an algebra valued inner product was first implicitly considered in \cite{BoorVoreRon94}. A nice discussion on this respect, with many references, is contained in \cite{FrankLarson2002}. Finitely generated projective $C^*$-modules over commutative $C^*$-algebras were introduced for the investigation of multiresolution analysis wavelets already in \cite{PackerRieffel04}, and other related results are those of \cite{CasazzaLammers99, Wood04, Roysland11}.

\

Since the results of this work are addressed to, and hope to meet the interest of, different communities, we have tried to keep the exposition self-contained and to provide detailed proofs or precise references to all statements. In doing this we sometimes have also encountered the need to provide details on known issues that were not explicit in the literature, as well as to fully recall some basic definitions in order to help the readability of the text.

\paragraph{Acknowledgements.} D. Barbieri was supported by a Marie Curie Intra Euro\-pean Fellowship (Prop. N. 626055) within the 7th European Community Framework Programme. D. Barbieri and E. Hern\'andez were supported by Grant MTM2013-40945-P (Ministerio de Econom\'ia y Competitividad, Spain). V. Paternostro was supported by a fellowship for postdoctoral researchers from the Alexander von Humboldt Foundation and by Grants UBACyT  2002013010022BA and CONICET-PIP 11220110101018.
\newpage

\section{Hilbert space setting and unitary shifts}\label{sec:basicresults}

Let $\Hil$ be a separable Hilbert space. For a countable set of indices $\ind$ consider the family  $\Psi = \{\psi_j\}_{j \in \ind} \subset \Hil$ and call $\Hil_\Psi = \ol{\vsp(\Psi)}^\Hil$. The following standard definitions can be found e.g. in \cite{Daubechies92, Meyer93, HW96, Christensen03}. The system $\Psi$ is said to be a \emph{Riesz sequence} with Riesz bounds $0 < A \leq B < \infty$ if it satisfies
\begin{equation}\label{eq:Riesz}
A \|c\|^2_{\ell_2(\ind)} \leq \|\sum_{j \in \ind} c_j\psi_j\|^2_{\Hil} \leq B \|c\|^2_{\ell_2(\ind)}
\end{equation}
for all finite sequence $c = \{c_j\}_{j \in \ind} \in \ell_0(\ind)$. Since finite sequences are dense in $\ell_2(\ind)$, this condition is equivalent to say that (\ref{eq:Riesz}) holds for all $c \in \ell_2(\ind)$. 
The system $\Psi$ is a \emph{frame sequence} with frame bounds $0 < A \leq B < \infty$ if it satisfies
\begin{equation}\label{eq:frames}
A \|\varphi\|^2_{\Hil} \leq \sum_{j \in \ind} |\langle \varphi, \psi_j \rangle_{\Hil} |^2 \leq B \|\varphi\|^2_{\Hil}
\end{equation}
for all $\varphi \in \Hil_\Psi$. Recall also that a Riesz sequence is a \emph{Riesz basis for} $\Hil_\Psi$, and a frame sequence is a \emph{frame for} $\Hil_\Psi$. The system $\Psi$ is a \emph{Bessel sequence} if the right inequality in (\ref{eq:frames}) holds for all $\varphi \in \Hil$.

\subsection{Fundamental operators}

In this subsection we will provide some foundational definitions and results concerning the key operators involved in the study of Riesz bases and frames. We will start with weak conditions on a countable family $\Psi = \{\psi_j\}_{j \in \ind} \subset \Hil$, of norm boundedness and square integrability, and then recall some standard facts under the stronger Bessel condition.

Without any assumption on $\Psi$ one can define the \emph{synthesis operator}, that is a densely defined operator from $\ell_2(\ind)$ to $\Hil$ which, on finite sequences, reads
$$
\begin{array}{rccc}
\Sy_\Psi : & \ell_0(\ind) & \rightarrow & \vsp(\Psi) \subset \Hil\vspace{4pt}\\
& c & \mapsto & \displaystyle\sum_{j \in \ind} c_j \psi_j .
\end{array}
$$

\subsubsection{Norm bounds and the domain of the synthesis operator}\label{sec:boundedness}

Let us consider a family $\Psi = \{\psi_j\}_{j \in \ind}$ that is uniformly bounded, i.e. such that
\begin{equation}\label{eq:hpunifbound}
\|\psi_j\|_\Hil \leq C < \infty \quad \textnormal{for all} \ j \in \ind .
\end{equation}
This condition allows to define other relevant operators and to characterize in a simple way the domain of the synthesis operator.

Under hypothesis (\ref{eq:hpunifbound}) the synthesis operator  $\Sy_\Psi$ extends to a bounded operator $\Sy_\Psi: \ell_1(\ind) \to \Hil$, because
$$
\|\Sy_\Psi c\|_\Hil \leq \displaystyle\sum_{j \in \ind} |c_j| \|\psi_j\|_\Hil \leq C \|c\|_{\ell_1(\ind)} .
$$
Furthermore, one can define its dual operator $\An_\Psi : \Hil^* \rightarrow (\ell_1(\ind))^*$ which, when applied to $\varphi \in \Hil \approx \Hil^*$, acts linearly on $c \in \ell_1(\ind)$ as
$$
\An_\Psi(\varphi)(c) = \langle \Sy_\Psi c, \varphi\rangle_\Hil = \langle \sum_{j \in \ind} c_j \psi_j, \varphi\rangle_\Hil = \sum_{j \in \ind} c_j \langle \psi_j, \varphi\rangle_\Hil .
$$
It is called \emph{analysis operator} associated to $\Psi$, and reads
$$
\begin{array}{rccc}
\An_\Psi : & \Hil & \rightarrow & \ell_{\infty}(\ind)\vspace{4pt}\\
& \varphi & \mapsto & \big\{\langle\varphi,\psi_j\rangle_\Hil\big\}_{j \in \ind}
\end{array}
$$
where we have used that $(\ell_1(\ind))^* \approx \ell_{\infty}(\ind)$. The operator $\An_\Psi$ is bounded from $\Hil$ to $\ell_{\infty}(\ind)$.

Assuming (\ref{eq:hpunifbound}) by composition of analysis and synthesis operators one obtains the \emph{Gram operator} associated to $\Psi$ as a bounded operator from $\ell_1(\ind)$ to $\ell_\infty(\ind)$ that reads
$$
\begin{array}{rccc}
\Grop_\Psi = \An_\Psi \Sy_\Psi : & \ell_1(\ind) & \rightarrow & \ell_\infty(\ind)\vspace{4pt}\\
& c & \mapsto & \displaystyle\Big\{\langle \sum_{j \in \ind} c_j \psi_j,\psi_k\rangle_\Hil\Big\}_{k \in \ind}
\end{array}
$$
whose name is motivated by the observation that
$$
\Big(\Grop_\Psi c\Big)_k = \sum_{j \in \ind} c_j \langle \psi_j,\psi_k\rangle_\Hil  = \sum_{j \in \ind} \Gram_\Psi^{k,j} c_j \ , \quad k \in \ind
$$
where $\Gram_\Psi = (\Gram_\Psi^{k,j}) = (\langle \psi_j, \psi_k\rangle_\Hil)$ is the Gram matrix of $\Psi$ (note the ordering of indices). Such an operator allows to introduce a positive semi-definite Hermitian form over $\ell_1(\ind)$, defined for $f, g \in \ell_1(\ind)$ by
\begin{equation}\label{eq:positiveform}
\langle f, g\rangle_{\Grop_\Psi} = \sum_{k \in \ind} \big(\Grop_\Psi f\big)_k \,\ol{g_k} .
\end{equation}
It is indeed finite, because $g \in \ell_1(\ind)$ and $\Grop_\Psi f \in \ell_\infty(\ind)$, and positive, because
$$
\langle f, f\rangle_{\Grop_\Psi} = \|\Sy_\Psi f\|_\Hil^2 \geq 0 \quad \forall \, f \in \ell_1(\ind).
$$
If we let $\mathcal{N} = \{f \in \ell_1(\ind) : \|f\|_{\Grop_\Psi} = 0\}$ to be the null space of the norm associated to the Hermitian form (\ref{eq:positiveform}), we can obtain a Hilbert space as
\begin{equation}\label{eq:RKHS}
\Hil_{\Grop_\Psi} = \ol{\ell_1(\ind)/\mathcal{N}}^{\|\cdot\|_{\Grop_\Psi}} .
\end{equation}
This construction is analogous to the Mercer-Moore-Aronszajn construction of reproducing kernel Hilbert spaces \cite{Aronszajn50, Saitoh97}, and we will later consider it in the context of group theory where it is associated to the functions of positive type (see e.g. \cite[\S 3.3]{Folland95}). A useful consequence that one can obtain is the following.
\begin{lem}\label{lem:synthesisdomain}
Let $\Psi = \{\psi_j\}_{j \in \ind} \subset \Hil$ be a countable family satisfying (\ref{eq:hpunifbound}). Then the synthesis operator associated to $\Psi$ defines a surjective isometry
$$
\Sy_\Psi : \Hil_{\Grop_\Psi} \to \Hil_\Psi \ .
$$
\end{lem}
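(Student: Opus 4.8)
The plan is to exhibit $\Sy_\Psi$, viewed as a map from $\ell_1(\ind)$ into $\Hil$, as a quotient isometry onto its range, and then simply pass to completions. The key algebraic identity is already built into the definition of $\Hil_{\Grop_\Psi}$: for every $f \in \ell_1(\ind)$ one has $\|f\|_{\Grop_\Psi}^2 = \langle f,f\rangle_{\Grop_\Psi} = \|\Sy_\Psi f\|_\Hil^2$. Consequently the null space $\mathcal{N}$ of $\|\cdot\|_{\Grop_\Psi}$ coincides exactly with $\Ker(\Sy_\Psi \colon \ell_1(\ind) \to \Hil)$, so $\Sy_\Psi$ descends to a well-defined linear map on $\ell_1(\ind)/\mathcal{N}$, and the same identity says that this map is an isometry from $(\ell_1(\ind)/\mathcal{N}, \|\cdot\|_{\Grop_\Psi})$ onto $\Sy_\Psi(\ell_1(\ind)) \subset \Hil$ with the $\Hil$-norm.

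Since this isometry takes values in the complete space $\Hil$, it extends uniquely by continuity to an isometry, still denoted $\Sy_\Psi$, from the completion $\Hil_{\Grop_\Psi}$ of $\ell_1(\ind)/\mathcal{N}$ into $\Hil$; this extension is consistent with the original definition of $\Sy_\Psi$ on $\ell_1(\ind)$ by density of $\ell_1(\ind)/\mathcal{N}$ in $\Hil_{\Grop_\Psi}$. The image of this extended isometry is the closure in $\Hil$ of $\Sy_\Psi(\ell_1(\ind))$, because the isometric image of a complete space is complete, hence closed.

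It then remains to identify $\ol{\Sy_\Psi(\ell_1(\ind))}^\Hil$ with $\Hil_\Psi$. The inclusion $\Hil_\Psi \subseteq \ol{\Sy_\Psi(\ell_1(\ind))}^\Hil$ is immediate: $\Sy_\Psi(\ell_0(\ind)) = \vsp(\Psi) \subseteq \Sy_\Psi(\ell_1(\ind))$, and taking $\Hil$-closures gives $\Hil_\Psi = \ol{\vsp(\Psi)}^\Hil \subseteq \ol{\Sy_\Psi(\ell_1(\ind))}^\Hil$. For the reverse inclusion, fix $f \in \ell_1(\ind)$, enumerate $\ind$, and let $F_n \uparrow \ind$ be finite. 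The uniform bound (\ref{eq:hpunifbound}) yields $\|\Sy_\Psi f - \sum_{j \in F_n} f_j \psi_j\|_\Hil \leq C \sum_{j \notin F_n} |f_j| \to 0$, so $\Sy_\Psi f$ is an $\Hil$-limit of elements of $\vsp(\Psi)$, i.e.\ $\Sy_\Psi f \in \Hil_\Psi$; since $\Hil_\Psi$ is closed, $\ol{\Sy_\Psi(\ell_1(\ind))}^\Hil \subseteq \Hil_\Psi$. Combining the two inclusions, the extended $\Sy_\Psi$ is a surjective isometry from $\Hil_{\Grop_\Psi}$ onto $\Hil_\Psi$.

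The one place where hypothesis (\ref{eq:hpunifbound}) genuinely enters is the final tail estimate, which is what guarantees that $\Sy_\Psi$ sends \emph{all} of $\ell_1(\ind)$ — not merely finite sequences — into $\Hil_\Psi$; I expect this to be the only substantive point, everything else being formal bookkeeping with the quotient $\ell_1(\ind)/\mathcal{N}$ and its completion.
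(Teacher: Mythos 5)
Your proof is correct and follows essentially the same route as the paper: the isometric identity $\|\Sy_\Psi c\|_\Hil^2=\langle c,c\rangle_{\Grop_\Psi}$ on $\ell_1(\ind)$, followed by extension by density to the completion $\Hil_{\Grop_\Psi}$. The only difference is in the surjectivity bookkeeping: the paper argues that any $\varphi\in\Hil_\Psi$ orthogonal to the (closed) range is orthogonal to $\Psi$ and hence zero, while you identify the closed isometric image directly with $\Hil_\Psi$ via two inclusions, your tail estimate making explicit the inclusion $\Sy_\Psi(\ell_1(\ind))\subset\Hil_\Psi$ that the paper takes for granted from the bound (\ref{eq:hpunifbound}).
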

\begin{proof}
Let $c \in \ell_1(\ind)$ and call $\varphi = \Sy_\Psi c = \displaystyle\sum_{j \in \ind} c_j \psi_j \in \Hil_\Psi$.
Then
$$
\|\varphi \|^2_\Hil = \sum_{j, k \in \ind} c_j \ol{c_k} \langle \psi_j, \psi_{k} \rangle_\Hil = \|c\|^2_{\Grop_\Psi}
$$
so that the isometry is proved by density.
In order to prove surjectivity, assume that there exists $\varphi \in \Hil_\Psi$ such that
$\langle \varphi, \Sy_\Psi c\rangle_\Hil = 0$ for all $c \in \Hil_{\Grop_\Psi}$. This means in particular that $\varphi$ is orthogonal to $\Psi$, which implies $\varphi = 0$.
\end{proof}

\subsubsection{Square integrability and closability of the Gram operator}\label{sec:squareintegrability}

Let us now consider a different condition on the family $\Psi = \{\psi_j\}_{j \in \ind}$, namely
\begin{equation}\label{eq:generalsquareintegrability}
t_j = \sum_{k \in \ind} |\langle \psi_j, \psi_k\rangle_\Hil|^2 \ \textnormal{is finite for all} \ j \in \ind .
\end{equation}
This condition does not imply (\ref{eq:hpunifbound}), but it is sufficient to allow to work in $\ell_2(\ind)$, with the previously introduced operators being densely defined and closed. In particular, observe that $\An_\Psi$ can be equivalently obtained as the adjoint of the densely defined operator $\Sy_\Psi$ on $\ell_2(\ind)$, so in particular it is closed (see e.g. \cite[Chapter X, \S 1]{Conway90}).

\begin{lem}\label{lem:generalsquareintegrability}
Let $\Psi = \{\psi_j\}_{j \in \ind} \subset \Hil$ be a countable family. The analysis operator $\An_\Psi$ maps $\vsp(\Psi)$ to $\ell_2(\ind)$ if and only if (\ref{eq:generalsquareintegrability}) holds.
\end{lem}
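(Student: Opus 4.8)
The plan is to unwind both implications directly from the definition of the analysis operator $\An_\Psi$ applied to the generators $\psi_j$ themselves. The key elementary observation is that $\An_\Psi\psi_j = \{\langle \psi_j, \psi_k\rangle_\Hil\}_{k\in\ind}$, so that this sequence lies in $\ell_2(\ind)$ precisely when its squared norm $\sum_{k\in\ind}|\langle\psi_j,\psi_k\rangle_\Hil|^2 = t_j$ is finite.

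For the ``only if'' direction, suppose $\An_\Psi$ maps $\vsp(\Psi)$ into $\ell_2(\ind)$. Since each $\psi_j \in \vsp(\Psi)$, we get $\An_\Psi\psi_j \in \ell_2(\ind)$, and by the observation above $t_j = \|\An_\Psi\psi_j\|^2_{\ell_2(\ind)} < \infty$ for every $j \in \ind$, which is exactly condition (\ref{eq:generalsquareintegrability}). For the ``if'' direction, assume (\ref{eq:generalsquareintegrability}); then $\An_\Psi\psi_j \in \ell_2(\ind)$ with $\|\An_\Psi\psi_j\|^2_{\ell_2(\ind)} = t_j$ for each $j$. Now $\An_\Psi$ is the restriction to $\vsp(\Psi)$ of the linear map $\varphi \mapsto \{\langle\varphi,\psi_j\rangle_\Hil\}_{j\in\ind}$, so for $\varphi = \sum_{j\in S} c_j\psi_j$ with $S \subset \ind$ finite we have $\An_\Psi\varphi = \sum_{j \in S} c_j\,\An_\Psi\psi_j$, a finite linear combination of elements of $\ell_2(\ind)$, hence itself in $\ell_2(\ind)$.

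I do not expect any real obstacle here: both directions are a direct unpacking of definitions together with the fact that $\ell_2(\ind)$ is closed under finite linear combinations. The only point worth stating carefully is that one assumes square integrability only for the generators, and that membership of $\An_\Psi\varphi$ in $\ell_2(\ind)$ for a general $\varphi \in \vsp(\Psi)$ follows purely from linearity and the finiteness of the combination — there is no quantitative estimate of $\|\An_\Psi\varphi\|_{\ell_2(\ind)}$ in terms of $\|\varphi\|_\Hil$ involved, so no uniformity in $j$ is needed.
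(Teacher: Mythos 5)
Your proof is correct and follows essentially the same route as the paper: both directions reduce to the generators, with the ``only if'' part obtained by applying $\An_\Psi$ to each $\psi_j$. The only difference is cosmetic: for the ``if'' direction the paper verifies finiteness of $\|\An_\Psi\varphi\|_{\ell_2(\ind)}$ by an explicit Cauchy--Schwarz estimate, whereas you simply invoke linearity of $\An_\Psi$ and the fact that $\ell_2(\ind)$ is closed under finite linear combinations, which is a perfectly valid (and slightly leaner) way to conclude.
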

\begin{proof}
Let us first assume (\ref{eq:generalsquareintegrability}) and let us take $\varphi \in \vsp(\Psi)$. Then for some finite subset $\Lambda \subset \ind$ we can write $\varphi = \sum_{j \in \Lambda} c_j \psi_j$ and
\begin{align*}
\|\An_\Psi \varphi\|_{\ell_2(\ind)}^2 & = \sum_{k \in \ind} \Big| \sum_{j \in \Lambda} c_j \langle \psi_j, \psi_k\rangle_\Hil\Big|^2 \leq |\Lambda| \sum_{j \in \Lambda} |c_j|^2 \sum_{k \in \ind} |\langle \psi_j, \psi_k\rangle_\Hil|^2 \\
& \leq |\Lambda| \|c\|^2_{\ell_2(\ind)} \max_{j \in \Lambda} t_j \, ,
\end{align*}
where the first inequality is Cauchy-Schwartz inequality applied to the sequences $\{c_j \langle \psi_j, \psi_k\rangle_\Hil\}_{j \in \Lambda}$ and $\{1\}_{j \in \Lambda}$.

Conversely, since $\An_\Psi : \vsp(\Psi) \to \ell_2(\ind)$, then in particular
\begin{displaymath}
\|\An_\Psi \psi_j\|_{\ell_2(\ind)}^2 = t_j < \infty \quad \forall \, j \in \ind. \qedhere
\end{displaymath}
\end{proof}

\begin{cor}\label{cor:closableGram}
Let $\Psi = \{\psi_j\}_{j \in \ind} \subset \Hil$ be a countable family for which  (\ref{eq:generalsquareintegrability}) holds. Then, the Gram operator is a closable densely defined operator on $\ell_2(\ind)$ whose domain contains finite sequences, i.e.
$$
\Grop_\Psi : \ell_0(\ind) \rightarrow \ell_2(\ind) .
$$
\end{cor}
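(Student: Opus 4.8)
The plan is to realize $\Grop_\Psi$ as the composition $\An_\Psi\Sy_\Psi$ and to exploit that, restricted to finite sequences, it is a symmetric operator. First I would verify that the domain is as claimed: if $c \in \ell_0(\ind)$, then $\Sy_\Psi c = \sum_{j\in\ind} c_j\psi_j$ is a finite linear combination, so $\Sy_\Psi c \in \vsp(\Psi)$; by Lemma \ref{lem:generalsquareintegrability}, condition (\ref{eq:generalsquareintegrability}) guarantees that $\An_\Psi$ maps $\vsp(\Psi)$ into $\ell_2(\ind)$, hence $\Grop_\Psi c = \An_\Psi\Sy_\Psi c \in \ell_2(\ind)$. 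Thus $\ell_0(\ind) \subset \mathrm{dom}(\Grop_\Psi)$, and since finite sequences are dense in $\ell_2(\ind)$, the operator $\Grop_\Psi$ is densely defined on $\ell_2(\ind)$.

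Next I would establish closability by showing that $\Grop_\Psi$ is symmetric on $\ell_0(\ind)$. For $c, d \in \ell_0(\ind)$ a direct computation with finite sums gives
$$
\langle \Grop_\Psi c, d\rangle_{\ell_2(\ind)} = \sum_{j,k\in\ind} c_j\,\ol{d_k}\,\langle \psi_j,\psi_k\rangle_\Hil = \langle c, \Grop_\Psi d\rangle_{\ell_2(\ind)},
$$
where the second equality uses the Hermitian symmetry $\langle \psi_j,\psi_k\rangle_\Hil = \ol{\langle \psi_k,\psi_j\rangle_\Hil}$ together with the reordering of a finite double sum. Hence $\Grop_\Psi \subseteq \Grop_\Psi^*$; since $\Grop_\Psi$ is densely defined, $\Grop_\Psi^*$ is a closed operator whose domain contains $\mathrm{dom}(\Grop_\Psi)$ and is therefore dense, which is exactly the criterion for $\Grop_\Psi$ to be closable (see e.g. \cite{Conway90}).

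No serious obstacle is expected: once the identity $\Grop_\Psi = \An_\Psi\Sy_\Psi$ is in place, the statement is immediate from Lemma \ref{lem:generalsquareintegrability} and the standard fact that a densely defined symmetric operator is closable. The only point requiring a little care is that all manipulations are carried out on finite sequences, so that interchanging summations and passing through $\vsp(\Psi)$ are unproblematic — no convergence issue arises because (\ref{eq:generalsquareintegrability}) is precisely what keeps the image inside $\ell_2(\ind)$. Alternatively, one could note that $\Grop_\Psi$ restricted to $\ell_0(\ind)$ coincides with $\An_\Psi\Sy_\Psi$ with $\An_\Psi$ an adjoint (hence closed) and $\Sy_\Psi$ closable, and deduce closability from there; but the symmetry argument is the most direct.
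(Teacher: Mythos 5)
Your proof is correct, and it takes a genuinely different route to closability than the paper. You observe that on $\ell_0(\ind)$ one has $\langle \Grop_\Psi c, d\rangle_{\ell_2(\ind)} = \sum_{j,k} c_j\,\ol{d_k}\,\langle \psi_j,\psi_k\rangle_\Hil = \langle c, \Grop_\Psi d\rangle_{\ell_2(\ind)}$ (equivalently, $\langle \Grop_\Psi c, d\rangle_{\ell_2(\ind)} = \langle \Sy_\Psi c, \Sy_\Psi d\rangle_\Hil$, which even gives positivity), so $\Grop_\Psi$ is a densely defined symmetric operator, hence closable since $\Grop_\Psi^*$ is a closed extension; the identification of the domain via Lemma \ref{lem:generalsquareintegrability} is exactly as in the paper. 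The paper instead argues directly on sequences: if $f^n \in \ell_0(\ind)$ with $f^n \to f$ and $\Grop_\Psi f^n \to g$, the identity $\|\Sy_\Psi(f^n-f^m)\|_\Hil^2 = \langle f^n-f^m, \Grop_\Psi(f^n-f^m)\rangle_{\ell_2(\ind)}$ plus Cauchy--Schwarz shows $\{\Sy_\Psi f^n\}$ is Cauchy, and then closability of $\Sy_\Psi$ together with closedness of $\An_\Psi$ (as an adjoint) yields $g = \An_\Psi \Sy_\Psi^{**} f$, exhibiting the concrete closed extension $\An_\Psi\Sy_\Psi^{**}$. Your argument is shorter and more elementary; the paper's buys an explicit description of a closed extension in terms of the synthesis and analysis operators, which fits the operator-theoretic bookkeeping used later (e.g. in Proposition \ref{prop:G=B}). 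One caution about your closing alternative: closability of $\An_\Psi\Sy_\Psi$ does not follow merely from ``$\An_\Psi$ closed and $\Sy_\Psi$ closable'' --- a composition of a closed with a closable operator need not be closable --- and it is precisely the quadratic-form Cauchy estimate above that makes that route work; so the symmetry argument you actually carried out is the right way to keep your proof self-contained.
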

\begin{proof}
$\Grop_\Psi$ is densely defined by Lemma \ref{lem:generalsquareintegrability}. Moreover, since $\An_\Psi$ is densely defined on $\Hil_\Psi$, then $\Sy_\Psi$ is closable, and its closure is given by $\Sy_\Psi^{**}$ (see e.g. \cite[Chapter X, \S 1]{Conway90}). To see that also $\Grop_\Psi$ is closable, let $\{f^n\}_{n \in \N} \subset \ell_0(\ind)$ be a sequence converging to $f \in \ell_2(\ind)$ such that $\{\Grop_\Psi f^n\}_{n \in \N}$ converges to $g \in \ell_2(\ind)$. This implies that $\{\Sy_\Psi f^n\}_{n \in \N} \subset \Hil$ is convergent, because
\begin{align*}
\|\Sy_\Psi f^n - \Sy_\Psi f^m\|^2_{\Hil} & = \langle f^n - f^m, \Grop_\Psi(f^n - f^m)\rangle_{\ell_2(\ind)}\\
& \leq \|f^n - f^m\|_{\ell_2(\ind)} \|\Grop_\Psi(f^n - f^m)\|_{\ell_2(\ind)} .
\end{align*}
Since $\Sy_\Psi$ is closable, then there exists $\varphi \in \Hil$ such that $\Sy_\Psi^{**} f = \varphi$, while the closedness of $\An_\Psi$ implies that $\An_\Psi \varphi = g$, so the extension of the Gram operator defined by $\An_\Psi\Sy_\Psi^{**}$ is closed.
\end{proof}

\noindent
Since $\Grop_\Psi$ is closable, we will always consider its closed extension and denote it with the same symbol.

\subsubsection{The Bessel condition}

Observe that condition (\ref{eq:generalsquareintegrability}) is less restrictive than Bessel, which would imply that both $\Sy_\Psi$ and its adjoint $\An_\Psi$ are bounded. However, in order to perform the composition of synthesis and analysis operators in reversed order with respect to the one providing the Gram operator, it is well known (see e.g. \cite{Christensen03}) that one needs $\Sy_\Psi$ to be a well defined operator from $\ell_2(\ind)$ to $\Hil$, in the sense that
\begin{equation}\label{eq:hp}
\textnormal{the series} \ \displaystyle\sum_{i \in \ind} c_i \psi_i \ \textnormal{converges in} \ \Hil \ \textnormal{for all} \ c \in \ell_2(\ind) .
\end{equation}
In this case, by uniform boundedness principle, $\Sy_\Psi$ is a bounded operator from $\ell_2(\ind)$ to $\Hil_\Psi$. Its adjoint operator $\An_\Psi : \Hil \to \ell_2(\ind)$ is then also bounded, that is equivalent to say that $\Psi$ is a Bessel sequence. Since assuming $\Psi$ to a Bessel sequence implies that $\Sy_\Psi$ is bounded, one then has that condition (\ref{eq:hp}) is equivalent to the Bessel condition.

Assuming (\ref{eq:hp}), one can then define the \emph{frame operator} as the bounded positive selfadjoint operator
$$
\begin{array}{rccc}
\Frame_\Psi = \Sy_\Psi \An_\Psi : & \Hil & \rightarrow & \Hil_\Psi\vspace{4pt}\\
& \varphi & \mapsto & \displaystyle\sum_{i \in \ind} \langle\varphi,\psi_i\rangle_\Hil \psi_i
\end{array}
$$
and under this hypothesis one also has that $\Grop_\Psi : \ell_2(\ind) \to \ell_2(\ind)$ is a bounded positive selfadjoint operator.

\subsection{Basic characterizations}

It is well known (see e.g. \cite[\S 3 Lem. 2]{Meyer93} or \cite[Th. 3.6.6]{Christensen03}) that $\Psi$ is a Riesz basis for $\Hil_\Psi$ if and only if its Gram operator is a bounded invertible operator on $\ell_2(\ind)$, i.e. if there exist two constants $0 < A \leq B < \infty$ such that
\begin{equation}\label{eq:RieszOperatorcondition}
A \Id_{\ell_2(\ind)} \leq \Grop_\Psi \leq B \Id_{\ell_2(\ind)}
\end{equation}
where $\Id_{\ell_2(\ind)}$ is the identity operator on $\ell_2(\ind)$. This is indeed a direct consequence of the definition of Riesz basis, since the central term in the inequalities (\ref{eq:Riesz}) reads
$$
\|\Sy_\Psi c\|^2_{\Hil} = \|\sum_{j \in \ind} c_j\psi_j\|^2_{\Hil} = \sum_{j,k \in \ind} c_j \ol{c_k} \langle \psi_j, \psi_k\rangle_{\Hil} = \langle c , \Grop_\Psi c\rangle_{\ell_2(\ind)} .
$$
Analogously (see e.g. \cite[\S 3.2]{Daubechies92}, \cite[\S 8.1]{HW96}), the condition of $\Psi$ being a frame for $\Hil_\Psi$ can be equivalently stated as
\begin{equation}\label{eq:frameOperatorcondition}
A \Id_{\Hil_\Psi} \leq \Frame_\Psi \leq B \Id_{\Hil_\Psi}
\end{equation}
where $\Id_{\Hil_\Psi}$ is the identity operator on $\Hil_\Psi$. Again this is a direct consequence of the definition of frames, since the central term in the inequalities (\ref{eq:frames}) reads
$$
\|\An_\Psi \varphi\|^2_{\ell_2(\ind)} = \sum_{j \in \ind} |\langle \varphi, \psi_j \rangle_{\Hil} |^2 = \sum_{j \in \ind} \langle \varphi, \psi_j \rangle_{\Hil}\langle \psi_j , \varphi\rangle_{\Hil} = \langle \varphi, \Frame_\Psi \varphi\rangle_{\Hil} .
$$

The relations between conditions (\ref{eq:RieszOperatorcondition}) and (\ref{eq:frameOperatorcondition}) may be also considered in view the following basic result.
\begin{lem}\label{lem:duallemma}
Let $\Hil_1$ and $\Hil_2$ be separable Hilbert spaces, let $K : \Hil_1 \to \Hil_2$ be a bounded linear operator and denote with $K^* : \Hil_2 \to \Hil_1$ its adjoint. Let us call $G = |K|^2 = K^*K$, and $F = |K^*|^2 = KK^*$. Then for fixed $0 < A \leq B < +\infty$ the following are equivalent
\begin{itemize}
\item[i.] $A \langle G c, c\rangle_{\Hil_1} \leq \langle G^2c, c\rangle_{\Hil_1} \leq B \langle G c, c\rangle_{\Hil_1}$ for all $c \in \Hil_1$
\item[ii.] $A \|\varphi\|_{\Hil_2}^2 \leq \langle F \varphi, \varphi\rangle_{\Hil_2} \leq B \|\varphi\|_{\Hil_2}^2$ for all $\varphi \in \ol{\Ran(K)}$.
\end{itemize}
Moreover, each of the conditions $i.$ and $ii.$ may be equivalently written in terms of the \emph{spectrum} $\sigma$ of the associated operator, as
$$
\begin{array}{ccc}
i. & \iff & \sigma(G) \subset [A,B] \cup \{0\}\vspace{4pt}\\
ii. & \iff & \sigma(F\big|_{\ol{\Ran(K)}}) \subset [A,B]  .
\end{array}
$$
\end{lem}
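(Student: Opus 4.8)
The plan is to reduce everything to a spectral statement about the bounded operator $K$ using the polar‐decomposition identities $G = K^*K$ and $F = KK^*$. The starting observation is that the quadratic form $\langle G c, c\rangle_{\Hil_1} = \|Kc\|_{\Hil_2}^2$, while $\langle G^2 c, c\rangle_{\Hil_1} = \|Gc\|_{\Hil_1}^2 = \langle F(Kc), Kc\rangle_{\Hil_2}$, using $G^2 = K^*KK^*K = K^*FK$. Hence condition i.\ can be rewritten, for all $c \in \Hil_1$, as
$$
A \|Kc\|_{\Hil_2}^2 \leq \langle F(Kc), Kc\rangle_{\Hil_2} \leq B \|Kc\|_{\Hil_2}^2 .
$$
Since as $c$ ranges over $\Hil_1$ the vector $Kc$ ranges over a dense subset of $\ol{\Ran(K)}$, and both sides of this inequality are continuous in the relevant vector (here I would note that $F$ restricted to $\ol{\Ran(K)}$ is bounded, and that $\ol{\Ran(K)}$ is $F$-invariant because $F = KK^*$ maps into $\Ran(K)$), this is equivalent to the same inequality holding for all $\varphi \in \ol{\Ran(K)}$, which is precisely condition ii. This establishes i.\ $\iff$ ii.

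For the spectral reformulations I would argue as follows. The operator $F\big|_{\ol{\Ran(K)}}$ is a bounded positive selfadjoint operator on the Hilbert space $\ol{\Ran(K)}$; condition ii.\ says exactly that its quadratic form is pinched between $A\,\Id$ and $B\,\Id$, which by the spectral theorem for bounded selfadjoint operators is equivalent to $\sigma\big(F\big|_{\ol{\Ran(K)}}\big) \subset [A,B]$. For $G$ I would use that $\sigma(G)\setminus\{0\} = \sigma(F)\setminus\{0\}$ (the nonzero parts of the spectra of $K^*K$ and $KK^*$ coincide), together with the fact that $\ol{\Ran(G)} = \ol{\Ran(K^*)} = (\Ker K)^\perp$ and $\Ran(K^*K)$ lies in $(\Ker K)^\perp$, so that the restriction $G\big|_{(\Ker K)^\perp}$ carries all the nonzero spectrum of $G$. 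Condition i.\ only constrains $G$ on the range of $G$ (both forms vanish on $\Ker G = \Ker K$), so i.\ is equivalent to $\sigma\big(G\big|_{(\Ker K)^\perp}\big) \subset [A,B]$, i.e.\ to $\sigma(G) \subset [A,B] \cup \{0\}$. Since the nonzero spectra of $F\big|_{\ol{\Ran(K)}}$ and $G\big|_{(\Ker K)^\perp}$ agree and $\ol{\Ran(K)}$ carries no zero eigenvalue contribution to the $F$-restriction beyond what is already excised, the two spectral conditions match the two analytic ones.

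The step I expect to be the most delicate is the bookkeeping around the zero spectrum and the precise domains: one must be careful that condition i.\ genuinely says nothing about the kernel of $G$ (hence the $\cup\{0\}$), whereas condition ii.\ is stated on $\ol{\Ran(K)}$ where $F$ may still have $0$ in its spectrum as an approximate eigenvalue without $0$ being an eigenvalue — this is why the formulations are asymmetric ($[A,B]\cup\{0\}$ versus $[A,B]$), and getting this asymmetry exactly right, rather than hand-waving it, is where care is needed. Everything else — the algebraic identities $G^2 = K^*FK$, the density of $\Ran(K)$ in $\ol{\Ran(K)}$, the invariance of $\ol{\Ran(K)}$ under $F$, and the spectral-theorem translation of a pinched quadratic form into a spectral containment — is routine.
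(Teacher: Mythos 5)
Your proposal is correct and takes essentially the same route as the paper: the identities $\langle Gc,c\rangle_{\Hil_1}=\|Kc\|_{\Hil_2}^2$ and $\langle G^2c,c\rangle_{\Hil_1}=\langle FKc,Kc\rangle_{\Hil_2}$, together with the density of $\Ran(K)$ in $\ol{\Ran(K)}$ and the $F$-invariance of $\ol{\Ran(K)}$, give i.\ $\iff$ ii., and the spectral theorem yields the spectral reformulations. The only difference is cosmetic: the paper reads the spectral containments directly off the operator inequalities $AG\le G^2\le BG$, equivalently $A\Proj_{\ol{\Ran(K^*)}}\le G\le B\Proj_{\ol{\Ran(K^*)}}$, and $A\Proj_{\ol{\Ran(K)}}\le F\le B\Proj_{\ol{\Ran(K)}}$, whereas your detour through $G\big|_{(\Ker K)^\perp}$ and $\sigma(K^*K)\setminus\{0\}=\sigma(KK^*)\setminus\{0\}$ additionally uses (without stating it) that an isolated point of the spectrum of a selfadjoint operator is an eigenvalue, in order to exclude $0$ from $\sigma\big(G\big|_{(\Ker K)^\perp}\big)$.
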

\begin{proof}
In order to prove the equivalence of $i.$ and $ii.$ it suffices to note that
$\langle G^2c, c\rangle_{\Hil_1} = \langle K^*KK^*K c, c\rangle_{\Hil_1} = \langle FK c, Kc\rangle_{\Hil_2}$ and that $\langle Gc, c\rangle_{\Hil_1} = \|Kc\|^2_{\Hil_2}$. The spectral conditions are then a consequence of writing $i.$ and $ii.$ as operator inequalities. Indeed condition $i.$ is equivalent to $A G \leq G^2 \leq B G$, which can be also written as $A \Proj_{\ol{\Ran(K^*)}} \leq G \leq B \Proj_{\ol{\Ran(K^*)}}$, while condition $ii.$ is equivalent to $A \Proj_{\ol{\Ran(K)}} \leq F \leq B \Proj_{\ol{\Ran(K)}}$.
\end{proof}

Since the condition for $\Psi$ to be a frame system has the form of point $ii.$ of the previous lemma with $K = \Sy_\Psi$, one immediately has the following result (that is equivalent to \cite[Lem. 5.5.4]{Christensen03}).
\begin{cor}\label{cor:principalframes}
Let $\Proj_{V_\Psi}$ denote the orthogonal projection in $\ell_2(\ind)$ onto the closed subspace $V_\Psi = \ol{\Ran(\Sy_\Psi^*)} = \Ker(\Sy_\Psi)^\bot$. Then condition (\ref{eq:frameOperatorcondition}) is equivalent to
\begin{equation}\label{eq:frameOperatorconditionEquiv}
A \Proj_{V_\Psi} \leq \Grop_\Psi \leq B \Proj_{V_\Psi} .
\end{equation}
\end{cor}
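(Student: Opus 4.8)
The plan is to apply Lemma~\ref{lem:duallemma} verbatim, with the choice $\Hil_1 = \ell_2(\ind)$, $\Hil_2 = \Hil$ and $K = \Sy_\Psi$. Since the very formulation of (\ref{eq:frameOperatorcondition}) presupposes that $\Frame_\Psi$ is a bounded operator, we are implicitly in the Bessel setting (\ref{eq:hp}); hence $\Sy_\Psi : \ell_2(\ind) \to \Hil$ is bounded, $\An_\Psi = \Sy_\Psi^*$, and the lemma is applicable. With this choice one has $G = K^*K = \An_\Psi \Sy_\Psi = \Grop_\Psi$ and $F = KK^* = \Sy_\Psi \An_\Psi = \Frame_\Psi$, so the operators $G$ and $F$ of the lemma are exactly the Gram and frame operators of $\Psi$ in the sense defined above.

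Next I would match the two subspaces appearing in the lemma against the objects in the statement. On one side, $\Sy_\Psi$ carries finite sequences onto $\vsp(\Psi)$, which is dense in $\Hil_\Psi$, while $\Ran(\Sy_\Psi) \subset \Hil_\Psi$; therefore $\ol{\Ran(K)} = \Hil_\Psi$, and condition $ii.$ of Lemma~\ref{lem:duallemma} reads precisely $A\|\varphi\|_\Hil^2 \leq \langle \Frame_\Psi\varphi, \varphi\rangle_\Hil \leq B\|\varphi\|_\Hil^2$ for all $\varphi \in \Hil_\Psi$, i.e. it is exactly (\ref{eq:frameOperatorcondition}). On the other side, $\ol{\Ran(K^*)} = \ol{\Ran(\Sy_\Psi^*)} = V_\Psi$, so the equivalent reformulation of condition $i.$ obtained inside the proof of Lemma~\ref{lem:duallemma}, namely $A\Proj_{\ol{\Ran(K^*)}} \leq G \leq B\Proj_{\ol{\Ran(K^*)}}$, is literally (\ref{eq:frameOperatorconditionEquiv}).

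Finally, since Lemma~\ref{lem:duallemma} asserts the equivalence $i. \iff ii.$, chaining these two identifications gives the desired equivalence of (\ref{eq:frameOperatorcondition}) with (\ref{eq:frameOperatorconditionEquiv}). There is essentially no obstacle here beyond this bookkeeping; the only subtlety worth recording is that the argument runs under the Bessel hypothesis, which is automatic because a frame sequence forces $\Sy_\Psi$ (equivalently $\An_\Psi$) to be bounded, so that the composed operators $\Grop_\Psi = \An_\Psi\Sy_\Psi$ and $\Frame_\Psi = \Sy_\Psi\An_\Psi$ coincide with the $G$ and $F$ of the lemma.
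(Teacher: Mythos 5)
Your proposal is correct and follows essentially the same route as the paper, which also derives the corollary by noting that condition (\ref{eq:frameOperatorcondition}) is exactly point $ii.$ of Lemma~\ref{lem:duallemma} with $K = \Sy_\Psi$, so that $G = \Grop_\Psi$, $F = \Frame_\Psi$, $\ol{\Ran(K)} = \Hil_\Psi$, $\ol{\Ran(K^*)} = V_\Psi$, and the reformulation of point $i.$ as $A\Proj_{\ol{\Ran(K^*)}} \leq G \leq B\Proj_{\ol{\Ran(K^*)}}$ gives (\ref{eq:frameOperatorconditionEquiv}). Your added remark that the Bessel/boundedness setting is implicit in the statement is a reasonable piece of bookkeeping that the paper leaves tacit.
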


This also implies that Riesz systems are frame systems. Indeed, by (\ref{eq:RieszOperatorcondition}) we have $\sigma_{\ell_2(\ind)}(\Grop_\Psi) \subset [A,B]$ and by (\ref{eq:frameOperatorconditionEquiv}) we have $\sigma_{\ell_2(\ind)}(\Grop_\Psi) \subset [A,B] \cup \{0\}$.
\begin{rem}
When $\Psi$ is a Riesz system, by the characterization (\ref{eq:RieszOperatorcondition}) the bilinear form (\ref{eq:positiveform}) is equivalent to the $\ell_2(\ind)$ scalar product, so the Hilbert space given by (\ref{eq:RKHS}) is $\Hil_{\Grop_\Psi} \approx \ell_2(\ind)$. For frames that are not Riesz sequences, the lack of linear independence leads to a nonzero kernel of the Gram operator, reflecting the well known lack of uniqueness of the expansion coefficients. However, by (\ref{eq:frameOperatorcondition}) we have that if $\Psi$ is a frame, then\footnote{Since $\Grop_\Psi$ is bounded, then its kernel is closed. In the orthogonal complement of the kernel, the bounds from above and below make the $\ell_2$ norm equivalent to the $\Grop_\Psi$ norm.} we have $\Hil_{\Grop_\Psi} \approx \ell_2(\Gamma)/\Ker(\Grop_\Psi)$.
\end{rem}

\subsection{Cyclic subspaces for unitary actions of discrete groups}
In this subsection we will show that the Gram operator and the bracket map coincide at least for a dense set of elements in $\Hil$. A remarkable consequence, given by Corollary \ref{cor:principalcharacterization}, is the possibility to recover the main characterizations of frames in principal shift-invariant spaces such as \cite[Th. 2.16]{BoorVoreRon94}, \cite[Th. 3.4]{BenedettoLi98}, \cite[Th. 5.7]{HSWW10} and \cite[Th. A]{BHP14} directly from Corollary \ref{cor:principalframes} and, for abelian groups, by making use of the $*$-isomorphism provided by Pontryagin duality.

Consider a family $\Psi$ that is an orbit $\orb$ of a single vector $\psi \in \Hil$ under a unitary representation $\Pi$ of a discrete countable group $\Gamma$, i.e. $\orb = \{\Pi(\gamma)\psi\}_{\gamma \in \Gamma}$, and denote its linearly generated space by $\psis{\psi} = \ol{\vsp\,\orb}^\Hil$.
\newpage
\noindent
The associated Gram matrix reads
$$
\Gram_\orb^{\gamma' , \gamma} = \langle \Pi(\gamma)\psi, \Pi(\gamma')\psi\rangle_\Hil = \g(\gamma^{-1}\gamma') \ , \quad \gamma,\gamma' \in \Gamma
$$
where we have introduced the notation
$$
\g(\gamma) = \langle \psi, \Pi(\gamma)\psi\rangle_\Hil .
$$
The function $\g \in \ell_\infty(\Gamma)$ is the prototype of a function of positive type (see e.g. \cite[\S 3.3]{Folland95}), and $\Gram_\orb$ is the associated positive definite kernel.

If we assume condition (\ref{eq:generalsquareintegrability}), the Gram operator $\Grop_\orb$ is then a densely defined right convolution operator on $\ell_2(\Gamma)$. Indeed, if $a = \{a(\gamma)\}_{\gamma \in \Gamma} \in \ell_0(\Gamma)$
$$
\Grop_\orb a (\gamma') = \sum_{\gamma \in \Gamma} a(\gamma) \langle \psi, \Pi(\gamma^{-1}\gamma')\psi\rangle_\Hil
= a \ast \g (\gamma') .
$$
On the basis of this simple observation, we can deduce the following proposition.
\begin{prop}\label{prop:G=B}
Let $(\Gamma,\Pi,\Hil)$ be a dual integrable triple.
\begin{itemize}
\item[i.] If $\psi$ is such that $\Grop_\orb$ is a closed and densely defined operator on $\ell_2(\ind)$, then
$$
[\psi,\psi] = \Grop_\orb .
$$
\item[ii.] If $\psi$ is such that $[\psi,\psi] \in L^2(\vn(\Gamma))$, then $\Grop_\orb$ is a closed and densely defined operator on $\ell_2(\ind)$.
\end{itemize}
\end{prop}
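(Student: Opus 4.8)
The plan is to identify the Gram operator $\Grop_\orb$ with the bracket operator $[\psi,\psi]$ by comparing their actions as right convolution operators on $\ell_2(\Gamma)$, the key point being that both are convolution by the same positive-type function $\g$. First I would establish the crucial computation of the Fourier coefficients of $[\psi,\psi]$: by the definition of the bracket map together with Property I), for every $\gamma \in \Gamma$ one has
$$
\wh{[\psi,\psi]}(\gamma) = \tau([\psi,\psi]\rr(\gamma)) = \langle \psi, \Pi(\gamma)\psi\rangle_\Hil = \g(\gamma).
$$
Thus, whenever $[\psi,\psi] \in L^1(\vn(\Gamma))$ (which is always the case in a dual integrable triple), it acts on $u \in \ell_2(\Gamma)$ by right convolution $u \mapsto u \ast \wh{[\psi,\psi]} = u \ast \g$. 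On the other hand, the observation recorded just before the statement shows that on finite sequences $a \in \ell_0(\Gamma)$ we have $\Grop_\orb a = a \ast \g$. So on the dense domain $\ell_0(\Gamma)$, the two operators agree; the remaining content of the proposition is to match their \emph{closed} extensions and to verify the hypotheses under which this makes sense.

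For part i., I would argue as follows. Assume $\Grop_\orb$ is closed and densely defined on $\ell_2(\Gamma)$. Since $\Grop_\orb$ agrees with right convolution by $\g$ on $\ell_0(\Gamma)$, and it commutes with left translations $\lr(\gamma)$ (because right convolution does), $\Grop_\orb$ is affiliated to $\vn(\Gamma) = \vnL(\Gamma)'$. Moreover $\Grop_\orb$ is selfadjoint (it is positive semidefinite, being $\An_\orb^* \An_\orb$ on the relevant domain, cf. the discussion preceding Corollary \ref{cor:closableGram}) and nonnegative. A closed, densely defined, selfadjoint operator affiliated to $\vn(\Gamma)$ whose Fourier coefficients — computed via the spectral calculus or directly as $\langle \Grop_\orb \delta_\id, \delta_\gamma\rangle = \g(\gamma)$ — coincide with those of $[\psi,\psi]$ must equal $[\psi,\psi]$, by the uniqueness of the operator affiliated to $\vn(\Gamma)$ with prescribed Fourier coefficients (the affiliated-operator analogue of the $L^1(\vn(\Gamma))$ uniqueness theorem cited in the Setting). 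Here one does \emph{not} need to know a priori that $\Grop_\orb \in L^2(\vn(\Gamma))$; closedness together with affiliation is what lets the identification go through. The equality $[\psi,\psi] = \Grop_\orb$ then includes the statement that these unbounded operators have the same domain.

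For part ii., assume $[\psi,\psi] \in L^2(\vn(\Gamma))$. Then $[\psi,\psi]$ is a closed, densely defined operator on $\ell_2(\Gamma)$ with $\ell_0(\Gamma)$ in its domain (monomials $\rr(\gamma)$ and hence their finite combinations are in the domain of any element of $L^2(\vn(\Gamma))$, as $\{\rr(\gamma)\}$ is an orthonormal basis of $L^2(\vn(\Gamma))$). On $\ell_0(\Gamma)$ it acts as convolution by $\g$, i.e. exactly as $\Grop_\orb$. So $[\psi,\psi]$ is a closed extension of the operator $a \mapsto a\ast\g$ on $\ell_0(\Gamma)$; in particular that operator is closable, and Lemma \ref{lem:generalsquareintegrability} / Corollary \ref{cor:closableGram} identify the closure with $\Grop_\orb$ once we check condition (\ref{eq:generalsquareintegrability}). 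But (\ref{eq:generalsquareintegrability}) here reads $\sum_{\gamma}|\g(\gamma^{-1}\gamma')|^2 = \|\g\|_{\ell_2(\Gamma)}^2 < \infty$, which holds precisely because $\g = \wh{[\psi,\psi]}$ is the Fourier coefficient sequence of an $L^2(\vn(\Gamma))$ element, hence in $\ell_2(\Gamma)$ by Plancherel. Thus $\Grop_\orb$ is closed and densely defined, as claimed (and, combined with part i., equals $[\psi,\psi]$).

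The main obstacle I anticipate is the careful handling of \emph{domains} of the unbounded operators: establishing that the closure of the convolution operator on $\ell_0(\Gamma)$ really coincides with $\Grop_\orb$ as defined via $\An_\orb \Sy_\orb^{**}$, and that "affiliated to $\vn(\Gamma)$ + prescribed $\ell_2$ Fourier coefficients" pins down the operator uniquely among closed densely defined operators (not merely among bounded ones). Everything else — the Fourier coefficient computation $\wh{[\psi,\psi]} = \g$, the convolution description, Plancherel for the square-integrability hypothesis — is routine given the results already recalled in the Setting section.
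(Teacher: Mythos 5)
Your part ii is essentially the paper's own argument: $[\psi,\psi]\in L^2(\vn(\Gamma))$ gives, by Plancherel and dual integrability, $\sum_{\gamma\in\Gamma}|\langle\psi,\Pi(\gamma)\psi\rangle_\Hil|^2<\infty$, i.e. condition (\ref{eq:generalsquareintegrability}), and Corollary \ref{cor:closableGram} concludes. The problem is in part i, at exactly the point you yourself flag as the anticipated obstacle. The uniqueness theorem actually available is the $L^1(\vn(\Gamma))$ one: Fourier coefficients $\tau(F\rr(\gamma))$ are only defined for $F\in L^1(\vn(\Gamma))$, and there is no stated (or easily quotable) uniqueness principle for general closed densely defined operators affiliated to $\vn(\Gamma)$ with prescribed values $\langle F\delta_\id,\delta_\gamma\rangle_{\ell_2(\Gamma)}$. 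Making such a principle rigorous would require, for instance, knowing that $\ell_0(\Gamma)$ (equivalently $\vnL(\Gamma)\delta_\id$) is a core for both closed operators, a nontrivial fact you neither prove nor cite. Note also that for a positive $F\in L^1(\vn(\Gamma))$ the vector $\delta_\id$ belongs to $\mathrm{dom}(F)$ precisely when $F\in L^2(\vn(\Gamma))$, so even your preliminary claim that $[\psi,\psi]$ ``acts by right convolution with $\g$'' on $\ell_0(\Gamma)$, hence agrees there with $\Grop_\orb$, is not justified at the level of generality in which you use it. As it stands, part i identifies two unbounded closed operators (domains included) without a valid identification tool.

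The paper closes this gap with a short argument you are missing: since $\Grop_\orb$ is a \emph{positive} right convolution operator affiliated with $\vn(\Gamma)$ and $\tau(\Grop_\orb)=\langle \An_\orb\Sy_\orb\delta_\id,\delta_\id\rangle_{\ell_2(\Gamma)}=\|\psi\|_\Hil^2<\infty$, one gets $\Grop_\orb\in L^1(\vn(\Gamma))$; then traciality gives $\tau(\Grop_\orb\rr(\gamma))=\langle\Grop_\orb\delta_\id,\delta_\gamma\rangle_{\ell_2(\Gamma)}=\langle\psi,\Pi(\gamma)\psi\rangle_\Hil=\tau([\psi,\psi]\rr(\gamma))$, and the $L^1(\vn(\Gamma))$ Uniqueness Theorem yields $[\psi,\psi]=\Grop_\orb$. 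Replacing your appeal to an ``affiliated-operator uniqueness theorem'' by this positivity-plus-finite-trace step (or, alternatively, proving the core statement you implicitly rely on) is what is needed to make part i correct.
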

\begin{proof}
To prove $i.$, let us first see that $\Grop_\orb \in L^1(\vn(\Gamma))$. As a right convolution operator, it is affiliated with $\vn(\Gamma)$. So, since it is a positive operator, it suffices to check that its trace is finite. This is true because
$$
\tau(\Grop_\orb) = \langle \An_\orb \Sy_\orb \delta_\id, \delta_\id \rangle_{\ell_2(\Gamma)} = \|\Sy_\orb \delta_\id\|^2_{\Hil} = \|\psi\|_\Hil^2 .
$$
In order to see the desired claim, by $L^1(\vn(\Gamma))$ uniqueness of Fourier coefficients (see e.g. \cite[Lem. 2.1]{BHP14}) we need only to prove that
$$
\langle \psi, \Pi(\gamma)\psi\rangle_\Hil = \tau(\Grop_\orb\rr(\gamma)) \quad \forall \, \psi \in \Hil \, , \ \forall \, \gamma \in \Gamma .
$$
Since $\Grop_\orb \delta_\id (\gamma) = \langle \psi, \Pi(\gamma)\psi\rangle_\Hil$, using the traciality of $\tau$ we have indeed
\begin{displaymath}
\tau(\Grop_\orb\rr(\gamma)) = \tau(\rr(\gamma)\Grop_\orb) = \langle \Grop_\orb \delta_\id, \delta_\gamma\rangle_{\ell_2(\Gamma)} = \langle \psi, \Pi(\gamma)\psi\rangle_\Hil .
\end{displaymath}
To prove $ii.$ observe that, by Plancherel Theorem (see e.g. \cite[Lem. 2.2]{BHP14}), $[\psi,\psi] \in L^2(\vn(\Gamma))$ implies that $\sum_{\gamma \in \Gamma}|\tau([\psi,\psi]\rr(\gamma))|^2 < \infty$. By definition of dual integrability, this is equivalent to $\sum_{\gamma \in \Gamma}|\langle \psi, \Pi(\gamma)\psi\rangle_\Hil|^2 < \infty$, which coincides with condition (\ref{eq:generalsquareintegrability}), so the conclusion follows by Corollary \ref{cor:closableGram}.
\end{proof}

\begin{cor}\label{cor:principalcharacterization}
Let $(\Gamma,\Pi,\Hil)$ be a dual integrable triple. Then $\orb$ is a frame sequence if and only if
\begin{equation}\label{eq:opframes}
A \Proj_{(\Ker[\psi,\psi])^\bot} \leq [\psi,\psi] \leq B \Proj_{(\Ker[\psi,\psi])^\bot} .
\end{equation}
It is a Riesz sequence whenever the same condition holds with $\Ker[\psi,\psi] = \{0\}$.
\end{cor}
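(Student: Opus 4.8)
The plan is to reduce both parts of the statement to the basic Hilbert-space characterizations recalled earlier (Corollary \ref{cor:principalframes} for frames and inequality (\ref{eq:RieszOperatorcondition}) for Riesz sequences), after replacing the bracket $[\psi,\psi]$ with the Gram operator $\Grop_\orb$ by means of Proposition \ref{prop:G=B} and Theorem \ref{theo:bracket=Gram}.

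First I would show that, under either of the two hypotheses, the orbit $\orb$ is a Bessel sequence and $[\psi,\psi]=\Grop_\orb$ is a bounded, positive, selfadjoint operator on $\ell_2(\Gamma)$. If $\orb$ is assumed to be a frame sequence, then it is in particular Bessel, so $\Sy_\orb$ is bounded on $\ell_2(\Gamma)$ and $\Grop_\orb=\An_\orb\Sy_\orb=\Sy_\orb^*\Sy_\orb$ is bounded, positive and selfadjoint; Proposition \ref{prop:G=B}(i) then gives $[\psi,\psi]=\Grop_\orb$. If instead the operator inequality (\ref{eq:opframes}) is assumed, its upper bound, combined with Property III) which gives $[\psi,\psi]\ge 0$, forces $0\le[\psi,\psi]\le B\,\Id$ as a form inequality on a core, so $[\psi,\psi]$ is a bounded operator; being affiliated with $\vn(\Gamma)$ as an element of $L^1(\vn(\Gamma))$, it then belongs to $\vn(\Gamma)\subset L^2(\vn(\Gamma))$ by von Neumann's double commutant theorem, whence $[\psi,\psi]=\Grop_\orb$ by Theorem \ref{theo:bracket=Gram}; and the resulting estimate $\|\Sy_\orb c\|_\Hil^2=\langle\Grop_\orb c,c\rangle_{\ell_2(\Gamma)}\le\|\Grop_\orb\|\,\|c\|^2$ for finite $c$ yields (\ref{eq:hp}), i.e. $\orb$ is Bessel.

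Next I would match the two orthogonal projections. The projection in Corollary \ref{cor:principalframes} is onto $V_\orb=\ol{\Ran(\Sy_\orb^*)}=\Ker(\Sy_\orb)^\bot$, and from $\Grop_\orb=\Sy_\orb^*\Sy_\orb\ge 0$ one gets $\Ker(\Grop_\orb)=\Ker(\Sy_\orb)$, hence $\ol{\Ran(\Grop_\orb)}=\Ker(\Grop_\orb)^\bot=\Ker(\Sy_\orb)^\bot=V_\orb$; combined with $\Grop_\orb=[\psi,\psi]$ and the support-projection identity $\Proj_{\ol{\Ran(F)}}=\Proj_{(\Ker F)^\bot}$ recalled in the Setting, this gives $\Proj_{V_\orb}=\Proj_{(\Ker[\psi,\psi])^\bot}$. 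With this identification Corollary \ref{cor:principalframes} reads: $\orb$ is a frame sequence with bounds $A\le B$ if and only if $A\,\Proj_{(\Ker[\psi,\psi])^\bot}\le[\psi,\psi]\le B\,\Proj_{(\Ker[\psi,\psi])^\bot}$, which is exactly (\ref{eq:opframes}). If moreover $\Ker[\psi,\psi]=\{0\}$, the projection equals $\Id_{\ell_2(\Gamma)}$, so (\ref{eq:opframes}) becomes $A\,\Id\le\Grop_\orb\le B\,\Id$, and by (\ref{eq:RieszOperatorcondition}) this means $\orb$ is a Riesz basis for $\psis{\psi}$, i.e. a Riesz sequence.

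The main obstacle --- in fact the only subtle point --- lies in the bootstrapping of the second step: one cannot invoke Theorem \ref{theo:bracket=Gram} until $[\psi,\psi]$ is known to belong to $L^2(\vn(\Gamma))$, and this integrability is precisely what the operator upper bound in (\ref{eq:opframes}) provides, through boundedness together with affiliation to $\vn(\Gamma)$. Dually, one must not overlook that ``frame sequence'' already entails the Bessel property that makes $\Sy_\orb$ bounded and legitimizes the use of Corollary \ref{cor:principalframes}. Once these two observations are made, the remainder is a direct transcription of the Hilbert-space facts recalled above.
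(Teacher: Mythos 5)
Your proposal is correct and follows essentially the same route as the paper's proof: identify $[\psi,\psi]$ with $\Grop_\orb$ via Proposition \ref{prop:G=B} (using boundedness of $[\psi,\psi]$, hence membership in $\vn(\Gamma)\subset L^2(\vn(\Gamma))$, in the converse direction) and then invoke Corollary \ref{cor:principalframes}, respectively (\ref{eq:RieszOperatorcondition}) for the Riesz case. The extra details you supply (matching $\Ker(\Grop_\orb)=\Ker(\Sy_\orb)$ with the support projection, and checking the Bessel property so that Corollary \ref{cor:principalframes} legitimately applies) are exactly the points the paper leaves implicit, so nothing is missing.
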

\begin{proof}
Let us first assume that $\orb$ is a frame sequence. Then $\Grop_\orb$ is bounded, and the conclusion follows as a consequence of Corollary \ref{cor:principalframes} by using point i. of Proposition \ref{prop:G=B}. Conversely, assume (\ref{eq:opframes}). Then $[\psi,\psi]$ is bounded, so in particular it belongs to $\vn(\Gamma) \subset L^2(\vn(\Gamma)$ and the conclusion is again a consequence of Corollary \ref{cor:principalframes} by using Proposition \ref{prop:G=B}.
\end{proof}

\section{Global isometries}\label{sec:isometries}

In this section we will introduce a class of isometries, that we will call \emph{Helson maps}, and provide the crucial properties that will be used in next sections to associate a modular structure to any $(\Gamma,\Pi)$-invariant space.

Before proceeding, we take a moment to make the following observation.
\begin{lem}\label{lem:generators}
Let $\Pi$ be a unitary representation of a discrete and countable group $\Gamma$ on a separable Hilbert space $\Hil$, and let $V \subset \Hil$ a closed $(\Gamma,\Pi)$-invariant subspace. Then there exists a countable family $\{\psi_i\}_{i \in \ind}$ satisfying $\psis{\psi_i} \bot \psis{\psi_j}$ for $i \neq j$ and such that $V$ decomposes into the orthogonal direct sum
\begin{equation}\label{eq:GramSchmidt}
V = \bigoplus_{i \in \ind} \psis{\psi_i} .
\end{equation}
This implies in particular that $V = \ol{\vsp\{\Pi(\gamma)\psi_i\}_{\myatop{\gamma \in \Gamma}{i \in \ind}}}^{\Hil}$, so $\{\psi_i\}_{i \in \ind}$ is a countable family of generators for $V$.
\end{lem}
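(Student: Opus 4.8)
The plan is to build $\{\psi_i\}_{i\in\ind}$ by a Gram--Schmidt-type exhaustion of $V$ by mutually orthogonal cyclic invariant subspaces, resting on two elementary facts. First, for every $\psi\in\Hil$ the subspace $\psis{\psi}=\csp\{\Pi(\gamma)\psi:\gamma\in\Gamma\}$ is closed and $(\Gamma,\Pi)$-invariant, since $\Pi(\gamma_0)\Pi(\gamma)\psi=\Pi(\gamma_0\gamma)\psi$ lies in the span of the orbit and $\Pi(\gamma_0)$ is continuous, hence maps the closure into itself. Second, if $U\subseteq V$ are closed $(\Gamma,\Pi)$-invariant subspaces, then $V\ominus U = V\cap U^\perp$ is again closed and $(\Gamma,\Pi)$-invariant: for $w\in V\ominus U$, $u\in U$, $\gamma\in\Gamma$, unitarity gives $\langle\Pi(\gamma)w,u\rangle_\Hil=\langle w,\Pi(\gamma^{-1})u\rangle_\Hil=0$ because $\Pi(\gamma^{-1})u\in U$, while $\Pi(\gamma)w\in V$ since $V$ is invariant.

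With these in hand I would run an induction that exploits the separability of $\Hil$, hence of $V$: fix a countable dense set $\{v_n\}_{n\in\N}\subset V$. If $V=\{0\}$ there is nothing to do. Otherwise set $V_0=\{0\}$, and having constructed pairwise orthogonal cyclic invariant subspaces $\psis{\psi_1},\dots,\psis{\psi_k}$ with closed invariant sum $V_k=\bigoplus_{i=1}^k\psis{\psi_i}$, stop if $V_k=V$; if not, $W_k:=V\ominus V_k$ is a nonzero closed invariant subspace, let $n_{k+1}$ be the least index with $\Proj_{W_k}v_{n_{k+1}}\neq 0$ — such an index exists, since otherwise $\{v_n\}\subset V_k$ would force $V\subseteq V_k$ — and put $\psi_{k+1}=\Proj_{W_k}v_{n_{k+1}}\in W_k$, so that $\psis{\psi_{k+1}}\subseteq W_k$ is orthogonal to $V_k$. (A Zorn's lemma argument on the poset of pairwise orthogonal families of nonzero cyclic invariant subspaces would serve equally well; separability of $\Hil$ forces any such family to be countable, as a choice of one unit vector from each member yields an orthonormal set.)

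It remains to verify that this exhausts $V$. If the process terminates we are done with a finite family. If it does not, set $V_\infty=\csp\bigcup_{i\in\N}\psis{\psi_i}=\bigoplus_{i\in\N}\psis{\psi_i}$, a closed invariant subspace of $V$, and suppose toward a contradiction that $W_\infty:=V\ominus V_\infty\neq\{0\}$. By density some $v_m$ has $\Proj_{W_\infty}v_m\neq 0$. The key observation is that a selected index is never available again: since $V=V_k\oplus W_k$ orthogonally, $v_{n_{k+1}}=\Proj_{V_k}v_{n_{k+1}}+\psi_{k+1}\in V_{k+1}$, so $\Proj_{W_{k'}}v_{n_{k+1}}=0$ for all $k'\geq k+1$; in particular the indices $n_2,n_3,\dots$ are pairwise distinct and $m$ is never selected (else $v_m\in V_\infty$). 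On the other hand $W_k\supseteq W_\infty$ for every $k$, so $\|\Proj_{W_k}v_m\|\geq\|\Proj_{W_\infty}v_m\|>0$, forcing $n_{k+1}\leq m$ at every stage — impossible, since infinitely many distinct naturals are selected. Hence $V_\infty=V$, that is $V=\bigoplus_{i\in\ind}\psis{\psi_i}$; and because $\psis{\psi_i}=\csp\{\Pi(\gamma)\psi_i:\gamma\in\Gamma\}$ this yields $V=\csp\{\Pi(\gamma)\psi_i:\gamma\in\Gamma,\ i\in\ind\}$, so $\{\psi_i\}_{i\in\ind}$ is a countable set of generators for $V$.

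The argument is essentially routine orthogonalization; the only point demanding care is the exhaustion step — showing that the inductive (or Zorn-maximal) family of mutually orthogonal cyclic invariant subspaces actually spans all of $V$, and that it is at most countable — both of which hinge on the separability of $\Hil$ together with the two invariance facts isolated at the outset.
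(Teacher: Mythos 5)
Your proof is correct, and it is in essence the same Gram--Schmidt-type argument the paper uses: peel off mutually orthogonal cyclic invariant subspaces, using that $\psis{\psi}$ is invariant and that the orthogonal complement of an invariant subspace is invariant (facts the paper leaves implicit and you rightly spell out). The one genuine difference is in the bookkeeping of the exhaustion step. The paper runs the induction over an orthonormal basis $\{e_n\}$ of $V$, setting $\psi_{n+1}=\Proj_{V_n^\bot}e_{n+1}$ and simply allowing $\psi_{n+1}=0$ when $e_{n+1}\in V_n$; since then $e_{n+1}\in V_{n+1}$ in every case, one gets $\vsp\{e_1,\dots,e_n\}\subset V_n$ for all $n$, and completeness of $\bigoplus_i\psis{\psi_i}$ in $V$ is immediate. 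You instead work with a countable dense set and a least-index selection rule that skips redundant vectors, which forces you to prove exhaustion by the index-counting contradiction involving $W_\infty$; that argument is fine, but it is exactly the work the paper's ``keep a possibly zero generator for each basis vector'' device is designed to avoid. Your Zorn's-lemma aside would also work, with countability again coming from separability as you note.
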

\begin{proof}
Such a family can be constructed as follows. Let $\{e_n\}_{n \in \N}$ be an orthonormal basis for $V$, and define $\psi_1 = e_1$. Then for all $n > 1$, if\vspace{-4pt}
$$
e_{n+1} \in V_n = \bigoplus_{k = 1}^n \psis{\psi_k}\vspace{-4pt}
$$
then set $\psi_{n+1} = 0$, otherwise define $\psi_{n+1} = \Proj_{V_n^\bot}e_{n+1}$. This proves (\ref{eq:GramSchmidt}) because $\vsp\{e_1, \dots , e_n\} \subset V_n$ for all $n$.
\end{proof}

\subsection{Weighted \texorpdfstring{$L^2(\vn(\Gamma))$}{L2} spaces}
We will often need to work with $L^2(\vn(\Gamma))$ spaces with positive weights provided by the bracket map, and with associated subspaces of $L^2(\vn(\Gamma))$ of operators composed with the support of the weight. This subsection contains all the fundamental results that we will use.

For all positive $\Omega \in L^1(\vn(\Gamma))$, the functional
\begin{equation}\label{eq:semin}
\|F\|_{2,\Omega} = \Big(\tau\big(|F^*|^2\Omega\big)\Big)^\frac12 = \|\Omega^\frac12 F\|_2 \ , \quad F \in \vn(\Gamma)
\end{equation}
defines a seminorm on $\vn(\Gamma)$, and let us call $N_\Omega$ the null space associated with (\ref{eq:semin}), that is $N_\Omega = \{F \in \vn(\Gamma) : \|F\|_{2,\Omega} = 0\}$. Let also $\mathfrak{h}$ be the space of bounded affiliated operators whose left support is contained in that of $\Omega$, i.e.
$$
\mathfrak{h} = \{H \in \vn(\Gamma) \ | \ \exists\,\, F \in \vn(\Gamma) : H = s_{\Omega} F\} .
$$
As in \cite[Lemma 3.3]{BHP14}, by defining the linear surjective map $\Sigma : \vn(\Gamma) \to \mathfrak{h}$ as
$$
\Sigma(F) = s_\Omega F
$$
and observing that $\Ker\, \Sigma = N_\Omega$, we have that $\mathfrak{h}$ can be identified with $\vn(\Gamma) / N_\Omega$.
The completion of $\mathfrak{h}$ with respect to the seminorm (\ref{eq:semin}) defines a Hilbert space that we denote
\begin{equation}\label{eq:weighted}
L^2(\vn(\Gamma),\Omega) = \ol{\mathfrak{h}}^{\|\cdot\|_{2,\Omega}} .
\end{equation}
On the other hand, if we consider the completion of $\mathfrak{h}$ with respect to the $\|\cdot\|_2$ norm we obtain a closed subspace of $L^2(\vn(\Gamma))$ as in the following lemma.
\begin{lem}
Let $\mathfrak{h}$ be defined as above, and denote with
$$
s_\Omega L^2(\vn(\Gamma)) = \{G \in L^2(\vn(\Gamma)) \ | \ \exists \, F \in L^2(\vn(\Gamma)) : G = s_\Omega F \} .
$$
Then $s_\Omega L^2(\vn(\Gamma))$ is a closed Hilbert subspace of $L^2(\vn(\Gamma))$, and
$$
s_\Omega L^2(\vn(\Gamma)) = \ol{\mathfrak{h}}^{\|\cdot\|_{2}} .
$$
\end{lem}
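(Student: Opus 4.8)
The plan is to realize $s_\Omega L^2(\vn(\Gamma))$ as the range of an orthogonal projection on the Hilbert space $L^2(\vn(\Gamma))$, and then to identify that range with the $\|\cdot\|_2$-closure of $\mathfrak{h}$ by a density argument. First I would record that left multiplication by any $a \in \vn(\Gamma)$ defines a bounded operator $L_a$ on $L^2(\vn(\Gamma))$: for a trigonometric polynomial $F$ one has $\|aF\|_2^2 = \tau(F^*a^*aF) \leq \|a\|^2\,\tau(F^*F) = \|a\|^2\|F\|_2^2$, using that $a^*a \leq \|a\|^2\Id$ and that $\tau$ is order preserving, so $L_a$ extends to a bounded operator on $L^2(\vn(\Gamma))$ with $\|L_a\| \leq \|a\|$, agreeing with $F \mapsto aF$ on the dense subspace $\vn(\Gamma) = L^\infty(\vn(\Gamma)) \subset L^2(\vn(\Gamma))$ (inclusion by finiteness of $\tau$).

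Applying this with $a = s_\Omega$, the idempotent relation $s_\Omega^2 = s_\Omega$ gives $L_{s_\Omega}^2 = L_{s_\Omega}$, and the computation $\langle s_\Omega F_1, F_2\rangle_2 = \tau(F_2^*s_\Omega F_1) = \tau((s_\Omega F_2)^* F_1) = \langle F_1, s_\Omega F_2\rangle_2$, valid since $s_\Omega^* = s_\Omega$, shows that $L_{s_\Omega}$ is selfadjoint. Hence $L_{s_\Omega}$ is an orthogonal projection on $L^2(\vn(\Gamma))$, and $s_\Omega L^2(\vn(\Gamma)) = \Ran(L_{s_\Omega})$ is a closed Hilbert subspace of $L^2(\vn(\Gamma))$. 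For the identification with $\ol{\mathfrak{h}}^{\|\cdot\|_2}$ I would argue both inclusions. If $H = s_\Omega F$ with $F \in \vn(\Gamma)$, then $F \in L^2(\vn(\Gamma))$, so $H \in s_\Omega L^2(\vn(\Gamma))$; thus $\mathfrak{h} \subset s_\Omega L^2(\vn(\Gamma))$, and since the latter is closed, $\ol{\mathfrak{h}}^{\|\cdot\|_2} \subset s_\Omega L^2(\vn(\Gamma))$. Conversely, let $G = s_\Omega F$ with $F \in L^2(\vn(\Gamma))$. By definition $L^2(\vn(\Gamma))$ is the $\|\cdot\|_2$-closure of the trigonometric polynomials, so there is a sequence $\{F_n\} \subset \vn(\Gamma)$ with $\|F_n - F\|_2 \to 0$; then $s_\Omega F_n \in \mathfrak{h}$ and $\|s_\Omega F_n - G\|_2 = \|L_{s_\Omega}(F_n - F)\|_2 \leq \|F_n - F\|_2 \to 0$, so $G \in \ol{\mathfrak{h}}^{\|\cdot\|_2}$. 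This yields the claimed equality.

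There is no serious obstacle here; the one point requiring a little care is the boundedness and selfadjointness of left multiplication by $s_\Omega$ on $L^2(\vn(\Gamma))$. This is precisely what upgrades $s_\Omega L^2(\vn(\Gamma))$ from a mere linear subspace to a closed one, and it is what justifies passing limits through $L_{s_\Omega}$ in the density step; everything else is formal.
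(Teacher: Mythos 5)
Your proof is correct and rests on the same two ingredients as the paper's: contractivity of left multiplication by $s_\Omega$ on $L^2(\vn(\Gamma))$ (which the paper obtains from H\"older's inequality and you obtain from $s_\Omega^*s_\Omega \leq \Id$ together with positivity of $\tau$), plus density of $\vn(\Gamma)$ in $L^2(\vn(\Gamma))$ for the inclusion $s_\Omega L^2(\vn(\Gamma)) \subset \ol{\mathfrak{h}}^{\|\cdot\|_2}$. The only difference is presentational: you package multiplication by $s_\Omega$ as an orthogonal projection $L_{s_\Omega}$ and read off closedness from its range, while the paper argues closedness directly from the characterization that $G \in s_\Omega L^2(\vn(\Gamma))$ exactly when $s_\Omega G = G$; note only that your step $\|s_\Omega F_n - G\|_2 = \|L_{s_\Omega}(F_n - F)\|_2$ tacitly identifies the bounded extension $L_{s_\Omega}$ with the operator product $s_\Omega F$ for a general (possibly unbounded) $F \in L^2(\vn(\Gamma))$, an identification which is itself immediate from the H\"older estimate $\|s_\Omega(F-F_n)\|_2 \leq \|F-F_n\|_2$ that the paper invokes.
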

\begin{proof}
For all $F \in L^2(\vn(\Gamma))$, $s_\Omega F \in L^2(\vn(\Gamma))$ because $s_\Omega$ belongs to $\vn(\Gamma)$.
To prove that $s_\Omega L^2(\vn(\Gamma))$ is closed, observe that $F \in s_\Omega L^2(\vn(\Gamma))$ if and only if $F \in L^2(\vn(\Gamma))$ and $s_\Omega F = F$. Let then $\{F_n\}_{n \in \N} \subset s_\Omega L^2(\vn(\Gamma))$ be a sequence converging to a given $F$ in $L^2(\vn(\Gamma))$. The claim is proved if $s_\Omega F = F$, which is true because
\begin{align*}
\|s_\Omega F - F\|_2 & \leq \inf_{n \in \N} \|s_\Omega F - F_n\|_2 + \inf_{n \in \N}\|F_n - F\|_2\\
& = \inf_{n \in \N} \|s_\Omega (F - F_n)\|_2 \leq \inf_{n \in \N} \|F - F_n\|_2  = 0
\end{align*}
where the last inequality is H\"older's inequality. By the same argument we also have that $\ol{\mathfrak{h}}^{\|\cdot\|_{2}} \subset s_\Omega L^2(\vn(\Gamma))$, so let us consider $G \in s_\Omega L^2(\vn(\Gamma))$, and let $F \in L^2(\vn(\Gamma))$ be such that $G = s_\Omega F$. If $\{F_n\}_{n \in \N}$ is a sequence in $\vn(\Gamma)$ converging to $F$ in $L^2(\vn(\Gamma))$, then $\{s_\Omega F_n\}_{n \in \N}$ is a sequence in $\mathfrak{h}$ converging to $G$ in $L^2(\vn(\Gamma))$, so that $s_\Omega L^2(\vn(\Gamma)) \subset \ol{\mathfrak{h}}^{\|\cdot\|_{2}}$.
\end{proof}
The space $s_\Omega L^2(\vn(\Gamma))$ endowed with the $\|\cdot\|_2$ is actually isometrically isomorphic to the weighted Hilbert space defined by (\ref{eq:weighted}).
\begin{lem}\label{lem:weightmap}
The map $\omega : L^2(\vn(\Gamma),\Omega) \to L^2(\vn(\Gamma))$ defined by
\begin{equation}\label{eq:mapweighttosupport}
F \mapsto \Omega^\frac12 F
\end{equation}
is a Hilbert space isomorphism onto $s_\Omega L^2(\vn(\Gamma))$.
\end{lem}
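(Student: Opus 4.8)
The plan is to check that $\omega$ is a well-defined linear isometry on the dense subspace $\mathfrak{h}\subset L^2(\vn(\Gamma),\Omega)$, extend it by continuity to all of $L^2(\vn(\Gamma),\Omega)$, and then identify its (automatically closed) range with $s_\Omega L^2(\vn(\Gamma))$. Two elementary consequences of the Borel functional calculus of the positive operator $\Omega$ will be used repeatedly: since $\Omega\in L^1(\vn(\Gamma))$ one has $\Omega^{\frac12}\in L^2(\vn(\Gamma))$, and since $t\mapsto\sqrt t$ vanishes only at $t=0$ one has $s_{\Omega^{1/2}}=s_\Omega$, hence $s_\Omega\Omega^{\frac12}=\Omega^{\frac12}=\Omega^{\frac12}s_\Omega$.

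\emph{Well-definedness, isometry, extension.} For $H=s_\Omega F\in\mathfrak{h}$ with $F\in\vn(\Gamma)$, H\"older's inequality gives $\Omega^{\frac12}H\in L^2(\vn(\Gamma))$, and $s_\Omega(\Omega^{\frac12}H)=(s_\Omega\Omega^{\frac12})H=\Omega^{\frac12}H$, so $\omega(H)\in s_\Omega L^2(\vn(\Gamma))$; moreover $\|\omega(H)\|_2=\|\Omega^{\frac12}H\|_2=\|H\|_{2,\Omega}$ straight from (\ref{eq:semin}). Since $\mathfrak{h}$ is dense in $L^2(\vn(\Gamma),\Omega)$ and $L^2(\vn(\Gamma))$ is complete, $\omega$ extends uniquely to a linear isometry on $L^2(\vn(\Gamma),\Omega)$, whose range is closed and equals $\ol{\omega(\mathfrak{h})}^{\,\|\cdot\|_2}$. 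Using $s_\Omega\Omega^{\frac12}=\Omega^{\frac12}$ again we have $\omega(\mathfrak{h})=\Omega^{\frac12}s_\Omega\vn(\Gamma)=\Omega^{\frac12}\vn(\Gamma)$, and each such element lies in the closed subspace $s_\Omega L^2(\vn(\Gamma))$ by the computation just made, so $\Ran\omega\subseteq s_\Omega L^2(\vn(\Gamma))$.

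\emph{Surjectivity} is the substantive step. By the previous lemma $s_\Omega L^2(\vn(\Gamma))=\ol{\mathfrak{h}}^{\,\|\cdot\|_2}$, and $\Ran\omega$ is closed, so it suffices to approximate every $H=s_\Omega F\in\mathfrak{h}$ in $\|\cdot\|_2$ by elements of $\Ran\omega$. Let $q_n$ be the spectral projection of $\Omega$ associated to $[1/n,n]$; then $q_n\le s_\Omega$, $q_n\nearrow s_\Omega$ strongly, and by functional calculus the operator $\Omega^{-\frac12}q_n$ (i.e.\ $f_n(\Omega)$ with $f_n(t)=t^{-1/2}\chi_{[1/n,n]}(t)$) is bounded, hence lies in $\vn(\Gamma)$, with $\Omega^{\frac12}\Omega^{-\frac12}q_n=q_n$. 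Setting $G_n=\Omega^{-\frac12}q_nF\in\vn(\Gamma)\subset\mathfrak{h}$ gives $\omega(G_n)=\Omega^{\frac12}G_n=q_nF$, and
$$
\|q_nF-H\|_2^2=\|(s_\Omega-q_n)F\|_2^2=\tau\big(F^*(s_\Omega-q_n)F\big)=\tau\big(FF^*(s_\Omega-q_n)\big),
$$
which tends to $0$ as $n\to\infty$ by normality of $\tau$, since $FF^*\in L^1(\vn(\Gamma))$ is positive and $s_\Omega-q_n$ is a projection decreasing strongly to $0$. Hence $H\in\Ran\omega$, so $\mathfrak{h}\subseteq\Ran\omega$ and therefore $s_\Omega L^2(\vn(\Gamma))=\ol{\mathfrak{h}}^{\,\|\cdot\|_2}\subseteq\Ran\omega$; combined with the reverse inclusion this yields $\Ran\omega=s_\Omega L^2(\vn(\Gamma))$, and an isometry between Hilbert spaces preserves the inner product, so $\omega$ is the asserted Hilbert space isomorphism.

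The genuinely delicate points are the manipulations of the unbounded positive operators $\Omega^{\pm 1/2}$ through the functional calculus — in particular making sure $\Omega^{-\frac12}q_n\in\vn(\Gamma)$ and $\Omega^{\frac12}\Omega^{-\frac12}q_n=q_n$ hold as honest operator identities — and the appeal to normality of the trace to get $q_nF\to s_\Omega F$ in $\|\cdot\|_2$; the rest is routine.
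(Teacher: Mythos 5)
Your proof is correct, and its substantive step (surjectivity) goes by a genuinely different route than the paper's. The isometry on $\mathfrak{h}$ and the inclusion $\Ran\,\omega\subset s_\Omega L^2(\vn(\Gamma))$ via $s_\Omega\Omega^{\frac12}=\Omega^{\frac12}$ are the same in both arguments; but for surjectivity the paper argues by duality: it takes $F_0\in s_\Omega L^2(\vn(\Gamma))$ orthogonal to $\omega\big(L^2(\vn(\Gamma),\Omega)\big)$, tests against the elements $s_\Omega\rr(\gamma)\in L^2(\vn(\Gamma),\Omega)$ to conclude $\tau(\Omega^{\frac12}F_0\rr(\gamma))=0$ for all $\gamma$, invokes the $L^1(\vn(\Gamma))$ Uniqueness Theorem to get $\Omega^{\frac12}F_0=0$, i.e. $\Ran(F_0)\subset\Ker(\Omega)$, and then uses $s_\Omega F_0=F_0$ to force $F_0=0$. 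You instead build explicit approximate preimages: with $q_n=\chi_{[1/n,n]}(\Omega)$ you have $q_nF=\omega(\Omega^{-\frac12}q_nF)$ and $q_nF\to s_\Omega F$ in $\|\cdot\|_2$ by normality of $\tau$, so $\mathfrak{h}\subset\Ran\,\omega$ and closedness of the range finishes the argument. The paper's route avoids any unbounded functional calculus beyond the square root and reuses the Fourier-analytic uniqueness theorem that runs through the whole paper; yours is more constructive, in effect exhibiting $\omega^{-1}$ on a dense set, at the price of the careful identities $\Omega^{-\frac12}q_n\in\vn(\Gamma)$ and $\Omega^{\frac12}\Omega^{-\frac12}q_n=q_n$, which you do justify. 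One small slip: the inclusion $\vn(\Gamma)\subset\mathfrak{h}$ you assert when placing $G_n=\Omega^{-\frac12}q_nF$ is false whenever $s_\Omega\neq\Id$ (e.g. $\Id\notin\mathfrak{h}$); it is harmless here because $s_\Omega\,\Omega^{-\frac12}q_n=\Omega^{-\frac12}q_n$, so $s_\Omega G_n=G_n$ and $G_n$ does lie in $\mathfrak{h}$, but the justification should be stated that way.
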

\begin{proof}
That $\omega$ is an isometry is a direct consequence of the definition of the $\|\cdot\|_{2,\Omega}$ norm (observing that since $\Omega$ and $F$ are affiliated, then $\Omega^\frac12 F$ is affiliated). Moreover, since $s_\Omega \Omega^\frac12 = \Omega^\frac12$, we have that $\omega$ maps $L^2(\vn(\Gamma),\Omega)$ to $s_\Omega L^2(\vn(\Gamma))$. To prove surjectivity, let $F_0 \in s_\Omega L^2(\vn(\Gamma)) \cap \omega\big(L^2(\vn(\Gamma),\Omega)\big)^\bot$, i.e.
$$
\langle F_0, \Omega^\frac12 F\rangle_2 = 0 \quad \forall \, F \in L^2(\vn(\Gamma),\Omega).
$$
Since $s_\Omega \rr(\gamma)$ belongs to $L^2(\vn(\Gamma),\Omega)$ for all $\gamma \in \Gamma$, this implies that
$$
\tau(\Omega^\frac12 F_0 \rr(\gamma)) = 0 \quad \forall \, \gamma \in \Gamma
$$
so by $L^1(\vn(\Gamma))$ Uniqueness Theorem $\Omega^\frac12 F_0 = 0$, i.e. $\Ran(F_0) \subset \Ker(\Omega)$.
On the other hand $s_\Omega F_0 = F_0$, which implies $\Ran(F_0) \subset \Ker(\Omega)^\bot$, hence $F_0 = 0$.
\end{proof}

When $\Pi$ is dual integrable, the bracket $[\psi,\psi]$ for nonzero $\psi \in \Hil$ provides the positive $L^1(\vn(\Gamma))$ weight that we will use. Explicitly, the induced norm is
$$
\|F\|_{2,[\psi,\psi]} = \Big(\tau(|F^*|^2[\psi,\psi])\Big)^\frac12 = \|[\psi,\psi]^\frac12 F \|_2 .
$$
The associated weighted space is needed for the following result, which was proved in \cite[Prop. 3.4]{BHP14} and lays at the basis of our subsequent constructions.
\begin{prop}\label{prop:isometry}
Let $(\Gamma,\Pi,\Hil)$ be a dual integrable triple. Then for any nonzero $\psi \in \Hil$ the map $S_\psi : \vsp\,\orb \to \vsp\{\rr(\gamma)\}_{\gamma \in \Gamma} \subset \vn(\Gamma)$ given by
\begin{equation}\label{eq:isometry}
S_\psi : \sum_{\gamma\in \Gamma} a(\gamma) \Pi(\gamma)\psi \mapsto \sum_{\gamma\in \Gamma} a(\gamma) \rr(\gamma)^*
\end{equation}
extends to a linear surjective isometry $S_\psi : \psis{\psi} \to L^2(\vn(\Gamma),[\psi,\psi])$ satisfying
\begin{equation}\label{eq:intertwining}
S_\psi[\Pi(\gamma)\varphi] = S_\psi[\varphi] \rr(\gamma)^* \, , \quad \forall \, \varphi \in \psis{\psi}.
\end{equation}
\end{prop}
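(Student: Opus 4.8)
The plan is to establish the four asserted properties in the order: well-definedness, isometry, surjectivity, and the intertwining relation (\ref{eq:intertwining}); the engine of the whole argument is a single computation on the dense subspace $\vsp\,\orb$ that settles well-definedness and the isometry simultaneously. Given a finitely supported $a \in \ell_0(\Gamma)$, set $v = \sum_\gamma a(\gamma)\Pi(\gamma)\psi \in \vsp\,\orb$ and $P = \sum_\gamma a(\gamma)\rr(\gamma)^* \in \vn(\Gamma)$; the goal is $\|P\|_{2,[\psi,\psi]} = \|v\|_\Hil$. Expanding the weighted seminorm gives $\|P\|_{2,[\psi,\psi]}^2 = \tau(P^*[\psi,\psi]P) = \sum_{\gamma,\gamma'} \ol{a(\gamma)}\,a(\gamma')\,\tau\big(\rr(\gamma)[\psi,\psi]\rr(\gamma')^*\big)$. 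Applying Property II) of the bracket twice yields $\rr(\gamma)[\psi,\psi]\rr(\gamma')^* = [\Pi(\gamma')\psi,\Pi(\gamma)\psi]$, and then the defining identity of dual integrability with $\gamma=\id$ gives $\tau([\Pi(\gamma')\psi,\Pi(\gamma)\psi]) = \langle\Pi(\gamma')\psi,\Pi(\gamma)\psi\rangle_\Hil$; summing reconstitutes $\langle v,v\rangle_\Hil$ exactly.

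With this norm identity in hand, well-definedness is immediate: if $\sum a(\gamma)\Pi(\gamma)\psi = \sum a'(\gamma)\Pi(\gamma)\psi$ in $\Hil$, then $\|\sum (a-a')(\gamma)\rr(\gamma)^*\|_{2,[\psi,\psi]} = \|\sum (a-a')(\gamma)\Pi(\gamma)\psi\|_\Hil = 0$, so the two candidate images coincide in $L^2(\vn(\Gamma),[\psi,\psi])$. Linearity on $\vsp\,\orb$ is then clear, and since $S_\psi$ is an isometry from the subspace $\vsp\,\orb$, dense in $\psis{\psi}$, into the complete space $L^2(\vn(\Gamma),[\psi,\psi])$, it extends uniquely to a linear isometry on all of $\psis{\psi}$; its range is the $\|\cdot\|_{2,[\psi,\psi]}$-closure of the set of (classes of) trigonometric polynomials.

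Surjectivity thus reduces to showing that the trigonometric polynomials are dense in $L^2(\vn(\Gamma),[\psi,\psi])$, which I would do exactly as in the proof of Lemma \ref{lem:weightmap}, by checking that the orthogonal complement of their span is trivial. Passing through the isomorphism $\omega$ of Lemma \ref{lem:weightmap} onto $s_{[\psi,\psi]}L^2(\vn(\Gamma))$, suppose $G_0 \in s_{[\psi,\psi]}L^2(\vn(\Gamma))$ satisfies $\langle G_0, [\psi,\psi]^{1/2}\rr(\gamma)^*\rangle_2 = 0$ for all $\gamma$. This inner product equals $\tau\big([\psi,\psi]^{1/2}G_0\,\rr(\gamma)\big)$, i.e.\ the $\gamma$-th Fourier coefficient of $[\psi,\psi]^{1/2}G_0$, which lies in $L^1(\vn(\Gamma))$ by H\"older; so $L^1(\vn(\Gamma))$ uniqueness forces $[\psi,\psi]^{1/2}G_0 = 0$, whence $\Ran(G_0)\subset\Ker([\psi,\psi])$, and since $s_{[\psi,\psi]}G_0 = G_0$ forces $\Ran(G_0)\subset\Ker([\psi,\psi])^\bot$, we get $G_0 = 0$.

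Finally, relation (\ref{eq:intertwining}) is checked on $\vsp\,\orb$ by reindexing: for $\varphi = \sum_\gamma a(\gamma)\Pi(\gamma)\psi$ one has $\Pi(\gamma_0)\varphi = \sum_{\gamma'} a(\gamma_0^{-1}\gamma')\Pi(\gamma')\psi$, hence $S_\psi[\Pi(\gamma_0)\varphi] = \sum_{\gamma'} a(\gamma_0^{-1}\gamma')\rr(\gamma')^* = \big(\sum_\gamma a(\gamma)\rr(\gamma)^*\big)\rr(\gamma_0)^* = S_\psi[\varphi]\rr(\gamma_0)^*$, using that $\rr$ is a homomorphism. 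This extends to $\psis{\psi}$ by continuity of all three operations involved: $\Pi(\gamma_0)$ is unitary on $\Hil$, $S_\psi$ is isometric by the above, and right multiplication by $\rr(\gamma_0)^*$ is an isometry of $L^2(\vn(\Gamma),[\psi,\psi])$ (by traciality and $\rr(\gamma_0)^*\rr(\gamma_0) = \Id$, it also descends to the quotient by $N_{[\psi,\psi]}$). I expect the only real difficulty to be conceptual rather than computational: $S_\psi$ is genuinely \emph{not} well-defined into $\vn(\Gamma)$ with any of its intrinsic norms — it is precisely the weighted seminorm $\|\cdot\|_{2,[\psi,\psi]}$ that collapses the right relations — so one must work inside $L^2(\vn(\Gamma),[\psi,\psi])$ from the outset and keep careful track of affiliation and of the quotient by $N_{[\psi,\psi]}$ at every step.
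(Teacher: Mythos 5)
Your proof is correct and follows essentially the same route as the paper: the intertwining computation on $\vsp\,\orb$ is identical to the paper's sketch, while the isometry and surjectivity parts you supply (the bracket/trace computation $\tau(P^*[\psi,\psi]P)=\|v\|_\Hil^2$ via Property II and dual integrability, and density of the classes of trigonometric polynomials in $L^2(\vn(\Gamma),[\psi,\psi])$ via the orthogonality/$L^1$-uniqueness argument of Lemma \ref{lem:weightmap}) are exactly the details the paper delegates to \cite[Prop.~3.4]{BHP14}. Your closing remark about well-definedness only holding modulo the null space $N_{[\psi,\psi]}$, i.e.\ in the quotient/completion rather than in $\vn(\Gamma)$ itself, is an accurate and worthwhile clarification of a point the statement glosses over.
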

\begin{proof}[Sketch of the proof.]
Since this proposition differs from \cite[Prop. 3.4]{BHP14} only for the presence of the right regular representation, we restrict ourselves to show (\ref{eq:intertwining}), observing that it suffices to establish it on a dense subspace.

If $\varphi = \sum_{\gamma'\in \Gamma} a(\gamma') \Pi(\gamma')\psi \in \vsp\,\orb$, then
\begin{displaymath}
S_\psi[\Pi(\gamma)\varphi] = S_\psi\Big[\sum_{\gamma'\in \Gamma} a(\gamma') \Pi(\gamma\gamma')\psi\Big]
\!= \!\sum_{\gamma'\in \Gamma} a(\gamma') \rr(\gamma\gamma')^* = \!\sum_{\gamma'\in \Gamma} a(\gamma') \rr(\gamma')^* \rr(\gamma)^*
\end{displaymath}
where the last identity is due to $\rr(\gamma\gamma')^* = \rr(\gamma'^{-1}\gamma^{-1}) = \rr(\gamma'^{-1})\rr(\gamma^{-1})$.
\end{proof}

We observe that Proposition \ref{prop:isometry}, together with Lemma \ref{lem:synthesisdomain}, show that the weighted Hilbert space $L^2(\vn(\Gamma),[\psi,\psi])$ is isometrically isomorphic to the domain $\Hil_{\Grop_\orb}$ of the synthesis operator associated to the orbit $\orb$ of the dual integrable representation $\Pi$, via the map
$$
S_\psi\Sy_\orb : \Hil_{\Grop_\orb} \to L^2(\vn(\Gamma),[\psi,\psi]) .
$$
Heuristically, one can then consider $\Hil_{\Grop_\orb}$ as the space of Fourier coefficients for $L^2(\vn(\Gamma),[\psi,\psi])$. Indeed, for all $f$ such that the right hand side of the next  expression makes sense, for example for $f \in \ell_1(\Gamma)$, one has
$$
S_\psi[\Sy_\orb f] = \sum_{\gamma \in \Gamma} f(\gamma) \rr(\gamma)^*
$$
 so that $\{f(\gamma)\}_{\gamma \in \Gamma}$ play the role of Fourier coefficients, in the sense of (\ref{eq:Fouriercoefficients}).

\

For practical purposes, we write the following corollary.

\begin{cor}\label{cor:correspondence}
Let $(\Gamma,\Pi,\Hil)$ be a dual integrable triple and let $\psi \in \Hil$. For $F \in L^2(\vn(\Gamma),[\psi,\psi])$, let us denote with
$$
\proj_F \psi = S^{-1}_\psi F.
$$
Then $\proj_F \psi \in \psis{\psi}$ and
\begin{equation}\label{eq:boundedness}
\|\proj_F \psi\|_\Hil = \|F\|_{2,[\psi,\psi]} .
\end{equation}
Moreover, if $F \in \vn(\Gamma)$, then $\proj_F$ extends to a bounded operator on $\Hil$ that reads
\begin{equation}\label{eq:correspondence}
\proj_F \psi = \sum_{\gamma \in \Gamma} \wh{F}(\gamma) \Pi(\gamma)\psi \,, \qquad \psi \in \mathcal \Hil .
\end{equation}
\end{cor}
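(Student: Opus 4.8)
The first assertion and (\ref{eq:boundedness}) will be immediate from Proposition \ref{prop:isometry}: since $S_\psi$ is a surjective isometry of $\psis{\psi}$ onto $L^2(\vn(\Gamma),[\psi,\psi])$, its inverse sends $F$ to some $\proj_F\psi\in\psis{\psi}$ with $\|\proj_F\psi\|_\Hil=\|F\|_{2,[\psi,\psi]}$. For the last part I would first record that if $F\in\vn(\Gamma)$ then $F$ defines an element of $L^2(\vn(\Gamma),[\varphi,\varphi])$ for \emph{every} $\varphi\in\Hil$, since by H\"older's inequality and property III of the bracket $\|F\|_{2,[\varphi,\varphi]}=\|[\varphi,\varphi]^{\frac12}F\|_2\le\|[\varphi,\varphi]^{\frac12}\|_2\|F\|_\infty=\|\varphi\|_\Hil\|F\|_\infty$. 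Hence $\varphi\mapsto\proj_F\varphi:=S_\varphi^{-1}F$ is defined on all of $\Hil$ with $\|\proj_F\varphi\|_\Hil\le\|F\|_\infty\|\varphi\|_\Hil$, and what remains is to show that this map is linear and is given by the series in (\ref{eq:correspondence}).

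The plan is to settle first the case of a trigonometric polynomial $G=\sum_\gamma\wh{G}(\gamma)\rr(\gamma)^*$, where (\ref{eq:isometry}) gives directly $S_\varphi^{-1}G=\sum_\gamma\wh{G}(\gamma)\Pi(\gamma)\varphi$ (a finite sum), manifestly linear in $\varphi$; then, using properties I and II of the bracket and its sesquilinearity, one computes $[\,\sum_\gamma\wh{G}(\gamma)\Pi(\gamma)\varphi,\sum_{\gamma'}\wh{G}(\gamma')\Pi(\gamma')\varphi\,]=G^*[\varphi,\varphi]G$, whence by property III and traciality $\|\proj_G\varphi\|_\Hil^2=\tau(G^*[\varphi,\varphi]G)=\tau([\varphi,\varphi]GG^*)\le\|G\|_\infty^2\tau([\varphi,\varphi])=\|G\|_\infty^2\|\varphi\|_\Hil^2$. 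So on trigonometric polynomials $G\mapsto\proj_G$ is well defined into $B(\Hil)$, is contractive for the operator norm, satisfies (\ref{eq:correspondence}), and sends $\rr(\gamma)^*$ to $\Pi(\gamma)$.

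To reach a general $F\in\vn(\Gamma)$ I would invoke the Kaplansky density theorem to pick trigonometric polynomials $F_n$ with $\|F_n\|_\infty\le\|F\|_\infty$ and $F_n\to F$ in the strong-$*$ topology. Then $\|F_n-F\|_{2,[\varphi,\varphi]}=\|(F_n-F)^*[\varphi,\varphi]^{\frac12}\|_2\to0$ for every $\varphi$, because left multiplication by $(F_n-F)^*$ is a uniformly bounded sequence tending to $0$ strongly, hence so is its image under the (normal) standard representation of $\vn(\Gamma)$ on the Hilbert space $L^2(\vn(\Gamma))$, evaluated at the fixed vector $[\varphi,\varphi]^{\frac12}$. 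Since $S_\varphi^{-1}$ is an isometry this yields $\proj_{F_n}\varphi=S_\varphi^{-1}F_n\to S_\varphi^{-1}F=\proj_F\varphi$ for all $\varphi$, so $\proj_F$ is a strong-operator limit of the $\proj_{F_n}$, hence linear and bounded with $\|\proj_F\|\le\|F\|_\infty$; and since $\wh{F_n}(\gamma)=\tau(F_n\rr(\gamma))\to\tau(F\rr(\gamma))=\wh{F}(\gamma)$, the formula (\ref{eq:correspondence}) holds for $\proj_F$ in precisely the sense in which the Fourier series $F=\sum_\gamma\wh{F}(\gamma)\rr(\gamma)^*$ converges in $\vn(\Gamma)$, i.e. as a weak-operator limit of the finite partial sums $\sum_\gamma\wh{F_n}(\gamma)\Pi(\gamma)$.

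I expect the one genuinely delicate point to be exactly this passage from trigonometric polynomials to arbitrary $F\in\vn(\Gamma)$: one cannot approximate $F$ in operator norm (the trigonometric polynomials are dense only in the reduced $C^*$-algebra $C^*_r(\Gamma)$, not in $\vn(\Gamma)$), and the Fourier partial sums of $F$ need not even be bounded in $\Hil$, so the infinite series in (\ref{eq:correspondence}) must be read — just as Fourier series of von Neumann algebra elements are — through a weak-type limit. The strong-$*$ Kaplansky approximation is what makes this rigorous while retaining the uniform bound $\|F\|_\infty$ that is needed to transfer convergence from $\vn(\Gamma)$ to $L^2(\vn(\Gamma),[\varphi,\varphi])$ and thence to $\Hil$. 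The remaining ingredients — the two isometry statements and the bracket identity $[\proj_G\varphi,\proj_G\varphi]=G^*[\varphi,\varphi]G$ — are routine.
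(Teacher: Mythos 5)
Your proposal is correct, and its core coincides with the paper's own proof: the membership $\proj_F\psi\in\psis{\psi}$ and (\ref{eq:boundedness}) are read off from Proposition \ref{prop:isometry}, and boundedness for $F\in\vn(\Gamma)$ comes from the H\"older estimate $\|F\|_{2,[\psi,\psi]}\leq\|F\|_\infty\|\psi\|_\Hil$ together with Property III --- that is literally all the paper proves here, treating linearity and the series formula (\ref{eq:correspondence}) as implicit in the definition of $S_\psi^{-1}$ and in the (weak) reading of Fourier series. Where you genuinely differ is in making that implicit part explicit: you pass from trigonometric polynomials to general $F$ by Kaplansky density, using the uniform bound $\|F_n\|_\infty\leq\|F\|_\infty$ and normality of the standard representation to get $\|F_n-F\|_{2,[\varphi,\varphi]}\to0$ for every $\varphi$. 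The paper runs the analogous approximation only later, in the proof of Proposition \ref{prop:fundamental}, and avoids Kaplansky by a more elementary device: since $[\psi,\psi]\in L^1(\vn(\Gamma))$ is defined on finite sequences, one has
$$
\|F_n-F\|_{2,[\psi,\psi]}^2 \;\leq\; \|(F_n-F)^*[\psi,\psi]\delta_\id\|_{\ell_2(\Gamma)}\,\|(F_n-F)^*\delta_\id\|_{\ell_2(\Gamma)} ,
$$
so plain strong convergence of $F_n^*$ to $F^*$, with no norm control, already suffices. Both routes are sound: yours buys the explicit operator bound $\|\proj_F\|\leq\|F\|_\infty$ and a precise sense for (\ref{eq:correspondence}) (strong-operator limit of the finite sums $\proj_{F_n}$, consistent with the paper's loose statement about convergence of Fourier series), while the paper's is shorter and needs no density theorem. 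Your auxiliary computations --- the identity $[\proj_G\varphi,\proj_G\varphi]=G^*[\varphi,\varphi]G$ for polynomials and the resulting norm estimate --- are correct.
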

\begin{proof}
We need only to prove the boundedness of $\proj_F$ when $F \in \vn(\Gamma)$. In this case $|F^*|^2$ is bounded, so by H\"older inequality one obtains
$$
\|F\|_{2,[\psi,\psi]}^2 \leq \||F^*|^2\|_\infty \|[\psi,\psi]\|_1 = \||F^*|^2\|_\infty \|\psi\|_\Hil^2
$$
where $\||F^*|^2\|_\infty$ stands for the usual operator norm, and the last identity is due to Property III of the bracket map. Hence (\ref{eq:boundedness}) implies that $\qroj_F$ is bounded.
\end{proof}

\subsection{Helson maps}

These results on weighted spaces allow us to exploit the notion of Helson map and to prove, in Theorem \ref{theo:DualHelsonEquivalence}, that its existence is equivalent to the dual integrability of the triple $(\Gamma,\Pi,\Hil)$. As we will see in the next section, Helson maps generalizes the fiberization mapping (\ref{eq:Fourierperiodization}) as well as the Zak transform, and this notion will be used to characterize the modular structures associated with $(\Gamma,\Pi)$-invariant subspaces of $\Hil$. 

When the triple $(\Gamma,\Pi,\Hil)$ admits a Helson map $\iso$, see Definition \ref{def:Helsonmap}, we denote with $\Kil = \iso(\Hil)$ its range, that is a Hilbert space endowed with the norm of $L^2((\M,\nu),L^2(\vn(\Gamma)))$ and that we denote for short with
$$
\norm{\varPhi} = \left(\int_\M \|\varPhi(x)\|_2^2 d\nu(x)\right)^\frac12 \quad \forall \, \varPhi \in L^2((\M,\nu),L^2(\vn(\Gamma))).
$$

The first step for the proof of Theorem \ref{theo:DualHelsonEquivalence} is showing that the existence of a Helson map implies dual integrability.
\begin{prop}\label{prop:propertyii}
Let $(\Gamma,\Pi,\Hil)$ admit a Helson map $\iso$. Then it is a dual integrable triple, and the bracket map can be expressed as
\begin{equation}\label{eq:Helsonbracket}
\displaystyle[\varphi,\psi] = \int_\M \isom{\psi}^* \isom{\varphi} d\nu(x) .
\end{equation}
\end{prop}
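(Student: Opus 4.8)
The plan is to use the right-hand side of \eqref{eq:Helsonbracket} as the \emph{definition} of a candidate bracket, verify that it takes values in $L^1(\vn(\Gamma))$ and is sesquilinear, and then check the defining identity of dual integrability; since a bracket map is uniquely pinned down by that identity via $L^1(\vn(\Gamma))$ uniqueness of Fourier coefficients (e.g. \cite[Lem. 2.1]{BHP14}), this simultaneously proves that $(\Gamma,\Pi,\Hil)$ is dual integrable and that its bracket map is given by \eqref{eq:Helsonbracket}. So set, for $\varphi,\psi\in\Hil$,
$$
B(\varphi,\psi) := \int_\M \isom{\psi}^* \isom{\varphi}\, d\nu(x).
$$

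\textbf{Well-definedness.} For a.e.\ $x\in\M$ both $\isom{\varphi}$ and $\isom{\psi}$ lie in $L^2(\vn(\Gamma))$, so by H\"older's inequality $\isom{\psi}^*\isom{\varphi}\in L^1(\vn(\Gamma))$ with $\|\isom{\psi}^*\isom{\varphi}\|_1 \le \|\isom{\psi}^*\|_2\|\isom{\varphi}\|_2 = \|\isom{\psi}\|_2\|\isom{\varphi}\|_2$ (using $\|F^*\|_2=\|F\|_2$, which follows from traciality). Since the bilinear multiplication $L^2(\vn(\Gamma))\times L^2(\vn(\Gamma))\to L^1(\vn(\Gamma))$, $(F,G)\mapsto G^*F$, is continuous by the same estimate, and $x\mapsto\isom{\varphi}$, $x\mapsto\isom{\psi}$ are strongly measurable into $L^2(\vn(\Gamma))$ by definition of $L^2((\M,\nu),L^2(\vn(\Gamma)))$, the integrand $x\mapsto\isom{\psi}^*\isom{\varphi}$ is strongly measurable into $L^1(\vn(\Gamma))$. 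Moreover, by Cauchy--Schwarz and the isometry property of $\iso$,
$$
\int_\M \big\|\isom{\psi}^*\isom{\varphi}\big\|_1\, d\nu(x) \le \norm{\iso[\psi]}\,\norm{\iso[\varphi]} = \|\psi\|_\Hil\,\|\varphi\|_\Hil < \infty,
$$
so the Bochner integral $B(\varphi,\psi)$ exists in $L^1(\vn(\Gamma))$. Linearity in $\varphi$ and conjugate-linearity in $\psi$ are immediate from linearity of $\iso$ and the placement of the adjoint.

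\textbf{The bracket identity.} Since $F\mapsto\tau(F\rr(\gamma))$ is $\|\cdot\|_1$-bounded on $L^1(\vn(\Gamma))$ (because $\rr(\gamma)$ is unitary, so $|\tau(F\rr(\gamma))|\le\|F\rr(\gamma)\|_1=\|F\|_1$), it commutes with the Bochner integral, whence
$$
\tau\big(B(\varphi,\psi)\rr(\gamma)\big) = \int_\M \tau\big(\isom{\psi}^*\isom{\varphi}\rr(\gamma)\big)\, d\nu(x) = \int_\M \big\langle \isom{\varphi}\rr(\gamma),\, \isom{\psi}\big\rangle_2\, d\nu(x),
$$
the last equality being just $\langle F_1,F_2\rangle_2=\tau(F_2^*F_1)$. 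Now \eqref{eq:Helsonproperty} applied with $\gamma^{-1}$ gives $\iso[\Pi(\gamma^{-1})\varphi] = \iso[\varphi]\rr(\gamma^{-1})^* = \iso[\varphi]\rr(\gamma)$, so the integrand equals $\langle \isom{\Pi(\gamma^{-1})\varphi}, \isom{\psi}\rangle_2$ and the integral equals $\langle \iso[\Pi(\gamma^{-1})\varphi],\iso[\psi]\rangle_\oplus$. Since a linear isometry between Hilbert spaces preserves inner products (polarization), this is $\langle\Pi(\gamma^{-1})\varphi,\psi\rangle_\Hil = \langle\varphi,\Pi(\gamma)\psi\rangle_\Hil$ by unitarity of $\Pi$. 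Thus $\tau(B(\varphi,\psi)\rr(\gamma)) = \langle\varphi,\Pi(\gamma)\psi\rangle_\Hil$ for all $\varphi,\psi\in\Hil$, $\gamma\in\Gamma$, so $B$ is a bracket map; by uniqueness of the bracket, $[\cdot,\cdot]=B$, which is \eqref{eq:Helsonbracket}.

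The computation above is essentially routine; the points requiring care are (a) the well-definedness of the vector-valued integral, i.e.\ strong measurability plus the absolute-integrability bound, and (b) the bookkeeping of adjoints and the inversion $\gamma\mapsto\gamma^{-1}$ when invoking \eqref{eq:Helsonproperty}, together with the justification (boundedness on $L^1(\vn(\Gamma))$) for interchanging $\tau(\cdot\,\rr(\gamma))$ with the Bochner integral. I do not anticipate any genuine obstacle beyond these verifications.
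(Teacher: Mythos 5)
Your proof is correct and follows essentially the same route as the paper: you take the integral $\int_\M \iso[\psi](x)^*\iso[\varphi](x)\,d\nu(x)$ as the candidate bracket, establish its $L^1(\vn(\Gamma))$ membership via H\"older on $L^2(\vn(\Gamma))$ and Cauchy--Schwarz together with the isometry of $\iso$, verify the dual integrability identity using the intertwining property (\ref{eq:Helsonproperty}) and the interchange of $\tau(\cdot\,\rr(\gamma))$ with the integral, and conclude by $L^1(\vn(\Gamma))$ uniqueness of Fourier coefficients. The only cosmetic difference is that you apply (\ref{eq:Helsonproperty}) to the first argument with $\gamma^{-1}$ whereas the paper applies it to the second argument with $\gamma$, and your justification of the interchange (boundedness of $F\mapsto\tau(F\rr(\gamma))$ on $L^1$ against a Bochner integral) is a clean substitute for the paper's appeal to Fubini via normality of $\tau$.
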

\begin{proof}
Since $\iso$ is an isometry satisfying (\ref{eq:Helsonproperty}), we have
\begin{align*}
\langle & \varphi, \Pi(\gamma)\phi \rangle_\Hil = \langle \iso[\varphi], \iso[\Pi(\gamma)\phi] \rangle_\oplus = \int_\M \langle \isom{\varphi}, \isom{\Pi(\gamma)\phi}\rangle_{2} d\nu(x)  \\
& = \int_\M \langle \isom{\varphi}, \isom{\phi}\rr(\gamma)^*\rangle_{2} d\nu(x)  = \tau\bigg(\rr(\gamma) \int_\M \isom{\phi}^* \isom{\varphi} d\nu(x)\bigg)
\end{align*}
where the last identity is due to Fubini's Theorem, which holds by the normality of $\tau$. Let us  prove that the right hand side of  (\ref{eq:Helsonbracket}) is in $L^1(\vn(\Gamma))$. For this, we only need to see that its norm is finite, which is true because
\begin{align*}
\Big\| & \int_\M \isom{\psi}^* \isom{\varphi} d\nu(x) \Big\|_{1} \leq \int_\M \|\isom{\psi}^* \isom{\varphi}\|_1 d\nu(x)\\
& \leq \int_\M \|\isom{\psi}\|_2 \|\isom{\varphi}\|_2 d\nu(x) \leq \norm{\iso[\psi]} \norm{\iso[\varphi]} = \|\psi\|_\Hil\|\varphi\|_\Hil \end{align*}
where we have used H\"older's inequality on $L^2(\vn(\Gamma))$ and on $L^2(\M,d\nu)$.
Now, since we have that the Fourier coefficients of $[\varphi,\psi]$ and $\int_\M \isom{\psi}^* \isom{\varphi} d\nu(x)$ coincide, (\ref{eq:Helsonbracket}) holds by $L^1(\vn(\Gamma))$ Uniqueness Theorem.
\end{proof}
We remark that, by a similar argument to the one used in the proof of \cite[Th. 4.1]{BHP14}, the map
$$
\begin{array}{rcl}
\Gamma \times \Kil & \to & \Kil\\
(\gamma, \varPhi) & \mapsto & \varPhi \rr(\gamma)^*
\end{array}
$$
defines a unitary representation of $\Gamma$ on $\Kil$ that is unitarily equivalent to a summand of a direct integral decomposition of the left regular representation. Indeed, a Helson map is actually the intertwining operator between $\Pi$ and such a representation.

We are now ready to provide a result associated with this kind of isometry, which will be crucial in the next sections for the characterization of Riesz and frame sequences, as well as for the module structure and the equivalence between shift-invariance and noncommutative multiplicative invariance.

\begin{prop}\label{prop:fundamental}
Let $(\Gamma,\Pi,\Hil)$ be a dual integrable triple admitting a Helson map $\iso$. Let $F \in \vn(\Gamma)$ and let $\qroj_F$ be the associated bounded operator on $\Hil$ given by (\ref{eq:correspondence}). Then
\begin{itemize}
\item[i.] $\iso[\qroj_F\varphi] = \iso[\varphi] F$ \quad for all $\varphi \in \Hil$
\item[ii.] $[\qroj_F \varphi, \psi] = [\varphi, \psi]F$ , \, $[\varphi, \qroj_F \psi] = F^*[\varphi, \psi]$ \quad for all $\varphi, \psi \in \Hil$
\item[iii.] if $V \subset \Hil$ is a $(\Gamma,\Pi)$-invariant subspace, then $\qroj_F V \subset V$.
\end{itemize}
\end{prop}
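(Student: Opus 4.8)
The plan is to prove the three statements in the order given, using parts i. and ii. as successive reductions. For part i., I would first verify the identity on a dense subspace and then extend by continuity. The natural dense subspace to test on is $\vsp\,\orb$ for various generators, but since $\iso$ is globally defined it is cleaner to argue directly: fix $\varphi \in \Hil$ and consider $\iso[\qroj_F\varphi]$. By the expansion (\ref{eq:correspondence}), $\qroj_F\varphi = \sum_{\gamma\in\Gamma}\wh F(\gamma)\Pi(\gamma)\varphi$. The subtlety is that this series converges in $\Hil$ and $\iso$ is continuous, so we may try to pass $\iso$ inside the sum to obtain $\sum_\gamma \wh F(\gamma)\iso[\Pi(\gamma)\varphi] = \sum_\gamma \wh F(\gamma)\iso[\varphi]\rr(\gamma)^*$ using the Helson property (\ref{eq:Helsonproperty}). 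The remaining point is to identify this last series, computed in the Hilbert space $\Kil \subset L^2((\M,\nu),L^2(\vn(\Gamma)))$, with $\iso[\varphi]F$. For this I would work fiberwise: for a.e. $x$, $(\iso[\varphi]F)(x) = \iso[\varphi](x)\,F$ by (\ref{eq:rightaction}), and the claim reduces to the statement that $\sum_\gamma \wh F(\gamma)\,G\,\rr(\gamma)^* = G F$ in $L^2(\vn(\Gamma))$ for any fixed $G \in L^2(\vn(\Gamma))$, which is just the (right) convergence of the Fourier series $F = \sum_\gamma \wh F(\gamma)\rr(\gamma)^*$ in the weak operator topology together with the continuity of right multiplication by the bounded operator $G$ on the appropriate topology. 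The main obstacle here is making rigorous the interchange of $\iso$ (or of the $\Kil$-valued sum) with the operator-valued series; I expect this to be handled by using that the partial sums of $\wh F$ give trigonometric polynomials $F_n \to F$ in a suitable sense, that $\qroj_{F_n}\varphi \to \qroj_F\varphi$ in $\Hil$ by Corollary \ref{cor:correspondence} (boundedness uniformly controlled via $\||F_n^*|^2\|_\infty$, which may require care — one may instead use that $\qroj$ depends continuously on $F$ in the strong operator topology), and then passing to the limit on both sides.

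For part ii., I would simply combine part i. with Proposition \ref{prop:propertyii}. Using (\ref{eq:Helsonbracket}),
\begin{align*}
[\qroj_F\varphi,\psi] &= \int_\M \isom{\psi}^*\,\iso[\qroj_F\varphi](x)\,d\nu(x) = \int_\M \isom{\psi}^*\,(\isom{\varphi}\,F)\,d\nu(x)\\
&= \Big(\int_\M \isom{\psi}^*\,\isom{\varphi}\,d\nu(x)\Big) F = [\varphi,\psi]F,
\end{align*}
where pulling $F$ out of the integral uses the normality of $\tau$ exactly as in the Fubini step of Proposition \ref{prop:propertyii}, and one must check the integrand stays in $L^1(\vn(\Gamma))$, which follows from the H\"older estimate there together with $\||F^*|^2\|_\infty < \infty$. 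The second identity $[\varphi,\qroj_F\psi] = F^*[\varphi,\psi]$ then follows by applying the first to the pair $(\psi,\varphi)$ and using Property I) of the bracket map: $[\varphi,\qroj_F\psi] = [\qroj_F\psi,\varphi]^* = ([\psi,\varphi]F)^* = F^*[\psi,\varphi]^* = F^*[\varphi,\psi]$.

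For part iii., let $V$ be $(\Gamma,\Pi)$-invariant and let $\varphi \in V$. Since $\wh F \in \ell_2(\Gamma)$ and $V$ is closed, the series $\qroj_F\varphi = \sum_{\gamma\in\Gamma}\wh F(\gamma)\Pi(\gamma)\varphi$ has all partial sums in $V$ (each $\Pi(\gamma)\varphi \in V$ by invariance), hence its limit lies in $V$. The only thing to confirm is that the series indeed converges in $\Hil$, which is part of the content of Corollary \ref{cor:correspondence} (it converges because $\qroj_F$ is the bounded operator of (\ref{eq:correspondence})). Alternatively one can use part i.: $\iso(\qroj_F\varphi) = \iso[\varphi]F$, and if one already knows (from Theorem \ref{theo:multiplicatively}, if available, or directly) that $\iso(V)$ is invariant under right composition with $\vn(\Gamma)$, then $\iso[\varphi]F \in \iso(V)$, so $\qroj_F\varphi \in V$ since $\iso$ is injective; but the direct argument via the series is self-contained and preferable here. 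I expect part iii. to be essentially immediate once i. is in hand, and the whole weight of the proposition to rest on the convergence/interchange issue in part i.
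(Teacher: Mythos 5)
Your parts ii.\ and iii.\ are fine in outline (ii.\ via pulling the bounded $F$ out of the integral in (\ref{eq:Helsonbracket}) is a legitimate alternative to the paper's argument, which instead compares Fourier coefficients and invokes $L^1(\vn(\Gamma))$ uniqueness, and does not need part i.\ at all), but part i.\ has a genuine gap, and since both your ii.\ and your iii.\ lean on i.\ (or on the same series manipulation), the gap propagates. The problem is your choice of approximating sequence: you work with the partial sums $F_n$ of the Fourier series of $F$. The reduction you state --- that $\sum_\gamma \wh F(\gamma)\,G\,\rr(\gamma)^* = GF$ in $L^2(\vn(\Gamma))$ for a fixed $G\in L^2(\vn(\Gamma))$ follows from WOT convergence of the Fourier series ``together with continuity of right multiplication by the bounded operator $G$'' --- does not work: $G\in L^2(\vn(\Gamma))$ is in general an unbounded operator, and WOT convergence of $F_n\to F$ gives no control on $\|G(F_n-F)\|_2$, i.e.\ on $\|F_n-F\|_{2,[\varphi,\varphi]}$ with the $L^1$-weight $[\varphi,\varphi]$. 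Your fallback (control via $\||F_n^*|^2\|_\infty$, or ``continuity of $\qroj$ in the strong operator topology'') runs into the same wall: Fourier partial sums of an element of $\vn(\Gamma)$ need not be uniformly bounded in operator norm nor converge strongly (already for $\Gamma=\Z$ the $L^\infty$ norms of Fourier partial sums of an $L^\infty$ function can blow up), so neither SOT convergence nor $\|F_n-F\|_{2,[\varphi,\varphi]}\to 0$ is available for this particular sequence, and likewise the $\Hil$-convergence of $\sum_\gamma\wh F(\gamma)\Pi(\gamma)\varphi$ along partial sums, which your part iii.\ takes for granted, is not justified.

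The missing idea is to abandon the Fourier partial sums and use instead an arbitrary sequence of trigonometric polynomials $F_n$ with $F_n^*\to F^*$ strongly on $\ell_2(\Gamma)$, which exists because $\vn(\Gamma)$ is the SOT closure of the trigonometric polynomials (double commutant theorem). For such a sequence one gets, by Cauchy--Schwarz, $\|F_n-F\|_{2,[\varphi,\varphi]}^2 \leq \|(F_n-F)^*[\varphi,\varphi]\delta_\id\|_{\ell_2(\Gamma)}\,\|(F_n-F)^*\delta_\id\|_{\ell_2(\Gamma)} \to 0$, hence $\qroj_{F_n}\varphi\to\qroj_F\varphi$ in $\Hil$ by (\ref{eq:boundedness}), while $\norm{\iso[\varphi]F_n-\iso[\varphi]F} = \|F_n-F\|_{2,[\varphi,\varphi]}\to 0$ by Proposition \ref{prop:propertyii}; since the identity holds for trigonometric polynomials by (\ref{eq:Helsonproperty}), passing to the limit gives i.\ (and shows $\iso[\varphi]F\in\Kil$). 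This is exactly the paper's proof. With this approximation, your iii.\ also goes through in the paper's form: $\qroj_{F_n}\psi\in\vsp\,\orb$, so $\qroj_F\psi\in\psis{\psi}\subset V$; note that your alternative route to iii.\ through Theorem \ref{theo:multiplicatively} would be circular, since that theorem is proved from this proposition.
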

\begin{proof}

To prove $i.$ observe first that the identity holds for trigonometric polynomials as a consequence of (\ref{eq:Helsonproperty}). Let then $\{F_n\}_{n \in \N}$ be a sequence of trigonometric polynomials such that $\{F_n^*\}_{n \in \N}$ converges strongly to $F^*$, i.e.
$$
\|F_n^* u - F^* u\|_{\ell_2(\Gamma)} \to 0, \quad \forall \, u \in \ell_2(\Gamma).
$$
Observe that such a sequence always exists because $\vn(\Gamma)$ coincides with the SOT-closure of trigonometric polynomials by von Neumann's Double Commutant Theorem (see e.g. \cite{Conway00}). This implies that for all $\psi \in \Hil$
\begin{equation}\label{eq:convergence}
\|F_n - F\|_{2,[\psi,\psi]} \to 0 .
\end{equation}
Indeed, by definition of the weighted norm we have
\begin{align*}
\|F_n - F\|_{2,[\psi,\psi]}^2 & = \|[\psi,\psi]^\frac12 (F_n - F)\|_2^2  = \tau((F_n - F) (F_n - F)^* [\psi,\psi])\\
& = \langle (F_n - F)^* [\psi,\psi] \delta_\id, (F_n - F)^* \delta_\id\rangle_{\ell_2(\Gamma)}\\
& \leq \|(F_n - F)^* [\psi,\psi] \delta_\id\|_{\ell_2(\Gamma)} \|(F_n - F)^* \delta_\id\|_{\ell_2(\Gamma)}
\end{align*}
where $[\psi,\psi] \delta_\id \in \ell_2(\Gamma)$ because the domain of $[\psi,\psi] \in L^1(\vn(\Gamma)$ contains finite sequences. Then (\ref{eq:convergence}) follows because $\{F_n^*\}_{n \in \N}$ converges strongly to $F^*$.
Now, by (\ref{eq:boundedness}),  we have
$$
\|\qroj_{F} \varphi - \qroj_{F_n} \varphi \|_\Hil = \|\qroj_{F - F_n} \varphi\|_\Hil = \| F - F_n\|_{2,[\varphi,\varphi]}
$$
for all $\varphi\in\Hil$ and thus (\ref{eq:convergence}) implies that $\qroj_{F_n}$ converges strongly to $\qroj_{F}$. As a consequence, since $\iso$ is continuous, we obtain 
\begin{equation}\label{eq:intermezzo}
\norm{\iso[\qroj_{F}\varphi] - \iso[\qroj_{F_n}\varphi]} \to 0 \quad \forall \, \varphi \in \Hil.
\end{equation}
On the other hand, observe that $\iso[\varphi]F$ belongs to $L^2\big((\M,\nu),L^2(\vn(\Gamma))\big)$ while $\iso[\varphi] F_n \in \Kil$ for all $n$. Now we have
\begin{align*}
\norm{\iso[\varphi]F - \iso[\varphi]F_n}^2 & = \tau \left(\int_\M |\isom{\varphi}(F - F_n)|^2 d\nu(x)\right)\\
& = \tau \left(|(F - F_n)^*|^2\int_\M |\isom{\varphi}|^2 d\nu(x)\right) = \|F - F_n\|^2_{2,[\varphi,\varphi]} ,
\end{align*}
where the last identity is due to Proposition \ref{prop:propertyii}. This implies that $\{\iso[\varphi]F_n\}_{n \in \N}$ converges to $\iso[\varphi] F$ in  $L^2\big((\M,\nu),L^2(\vn(\Gamma))\big)$, so in particular $\iso[\varphi] F \in \Kil$. Therefore, since $\iso[\varphi]F_n=\iso[\qroj_{F_n}\varphi]$ for all $n$, by (\ref{eq:intermezzo}) we obtain $\iso[\varphi]F=\iso[\qroj_{F}\varphi]$ which gives $i.$ 
\newpage
To prove the first identity in $ii.$ observe that since both $[\qroj_F \varphi, \psi]$ and $[\varphi, \psi]F$ belong to $L^1(\vn(\Gamma))$, by $L^1(\vn(\Gamma))$ Uniqueness Theorem it suffices to show that their Fourier coefficients coincide. This is true by definition of dual integrability, because
\begin{align*}
\tau([\qroj_F \varphi, \psi]\rr(\gamma)) & = \langle \qroj_F \varphi, \Pi(\gamma)\psi\rangle_\Hil = \sum_{\gamma' \in \Gamma} \wh{F}(\gamma') \langle \Pi(\gamma') \varphi, \Pi(\gamma)\psi\rangle_\Hil\\
& = \sum_{\gamma' \in \Gamma} \wh{F}(\gamma') \langle \varphi, \Pi(\gamma'^{-1}\gamma)\psi\rangle_\Hil = \sum_{\gamma' \in \Gamma} \wh{F}(\gamma') \tau([\varphi, \psi]\rr(\gamma'^{-1}\gamma))\\
& = \sum_{\gamma' \in \Gamma} \wh{F}(\gamma') \tau([\varphi, \psi]\rr(\gamma')^*\rr(\gamma)) = \tau([\varphi,\psi]F\rr(\gamma))
\end{align*}
where the last identity is due to the normality of the trace. The second identity in $ii.$ is a consequence of this and of Property I) of the bracket map.

To prove $iii.$ we first observe that, by the same argument used to prove $i.$, for all $\psi \in \Hil$ we have that $\qroj_F\psi \in \psis{\psi}$. Indeed by choosing the same sequence $\{F_n\}_{n \in \N}$ of trigonometric polynomials we have that $\qroj_{F_n}\psi \in \vsp\,\orb$ so that $\qroj_F\psi$ belongs to its closure by $i.$. The conclusion then follows because if $\psi \in V$ then the whole $\psis{\psi}$ is a closed subspace of $V$.
\end{proof}

\subsection{Fiberization mapping}

We now show that every dual integrable triple possesses a Helson map, by providing a concrete construction based on Fourier analysis. We also show that Helson maps extend respectively the fiberization mapping \cite{Helson64, BoorVoreRon93, BoorVoreRon94, Bownik00, CabrelliPaternostro10, CabrelliPaternostro11} and the Zak transform \cite{Weil64, Zak67, Janssen82, HSWW10, Zak-SIS} to the present noncommutative setting, modifying and slightly extending the constructions made in \cite{BHP14}.

Let $(\Gamma,\Pi,\Hil)$ be a dual integrable triple and, for a given $0 \neq \psi \in \Hil$, let
$$
U_\psi : \psis{\psi} \to s_{[\psi,\psi]} L^2(\vn(\Gamma))
$$ be the surjective isometry obtained from (\ref{eq:isometry}) by means of the map (\ref{eq:mapweighttosupport}) as
$$
U_\psi = [\psi,\psi]^\frac12 S_\psi .
$$
A slight modification of \cite[Th. 4.1]{BHP14} provides then the following.

\begin{prop}\label{prop:periodization}
Let $(\Gamma,\Pi,\Hil)$ be a dual integrable triple and let $\Psi=\{\psi_i\}_{i\in\ind}$ be a family as in Lemma \ref{lem:generators} for $\Hil$. For any $\varphi \in \Hil$ let $\varphi^{(i)}$ be its orthogonal projection onto $\psis{\psi_i}$, i.e. $\varphi^{(i)} = \Proj_{\psis{\psi_i}} \varphi$.
Then the map $U_{\Psi} : \varphi \mapsto \displaystyle\big\{U_{\psi_i}[\varphi^{(i)}]\big\}_{i \in \ind}$ defines a linear surjective isometry
$$
U_{\Psi} : \Hil \to \mathcal{K}_\Pi^\Psi = \displaystyle\bigoplus_{i \in \ind} s_{[\psi_i,\psi_i]}L^2(\vn(\Gamma)) \ \subset \ \ell_2(\ind,L^2(\vn(\Gamma)))
$$
that satisfies
\begin{equation}\label{eq:intertwining2}
U_\Psi[\Pi(\gamma)\varphi] =  U_\Psi[\varphi] \rr(\gamma)^*\, , \quad \forall \, \gamma \in \Gamma , \ \forall \, \varphi \in \Hil .
\end{equation}
\end{prop}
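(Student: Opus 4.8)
The plan is to build $U_\Psi$ fiber by fiber out of the individual maps $U_{\psi_i} = [\psi_i,\psi_i]^{1/2}S_{\psi_i}$. By Proposition \ref{prop:isometry} and Lemma \ref{lem:weightmap} these are already surjective isometries $U_{\psi_i}: \psis{\psi_i} \to s_{[\psi_i,\psi_i]}L^2(\vn(\Gamma))$; after discarding the indices with $\psi_i = 0$ (whose fibers are trivial and contribute nothing) I may assume $\psi_i \neq 0$ throughout. The first step would be to record that each $U_{\psi_i}$ inherits the intertwining relation $U_{\psi_i}[\Pi(\gamma)\varphi] = U_{\psi_i}[\varphi]\rr(\gamma)^*$ for $\varphi \in \psis{\psi_i}$: this follows at once from (\ref{eq:intertwining}) together with the associativity of operator composition, since $[\psi_i,\psi_i]^{1/2}\big(S_{\psi_i}[\varphi]\rr(\gamma)^*\big) = \big([\psi_i,\psi_i]^{1/2}S_{\psi_i}[\varphi]\big)\rr(\gamma)^*$.

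Next I would check that $U_\Psi$ is a well-defined linear isometry. Linearity is immediate from the linearity of the fiber projections $\varphi \mapsto \varphi^{(i)} = \Proj_{\psis{\psi_i}}\varphi$ and of the $U_{\psi_i}$. Since $\Psi$ is a family as in Lemma \ref{lem:generators}, the decomposition $\Hil = \bigoplus_{i\in\ind}\psis{\psi_i}$ is orthogonal, so $\|\varphi\|_\Hil^2 = \sum_{i\in\ind}\|\varphi^{(i)}\|_\Hil^2$, and as each $U_{\psi_i}$ is an isometry this gives
$$
\sum_{i\in\ind}\|U_{\psi_i}[\varphi^{(i)}]\|_2^2 = \sum_{i\in\ind}\|\varphi^{(i)}\|_\Hil^2 = \|\varphi\|_\Hil^2 ;
$$
in particular each component $U_{\psi_i}[\varphi^{(i)}]$ lies in $s_{[\psi_i,\psi_i]}L^2(\vn(\Gamma))$, so $U_\Psi[\varphi] \in \mathcal{K}_\Pi^\Psi \subset \ell_2(\ind, L^2(\vn(\Gamma)))$ and $U_\Psi$ preserves norms.

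For surjectivity, given $\{G_i\}_{i\in\ind} \in \mathcal{K}_\Pi^\Psi$ I would use that each $U_{\psi_i}$ is onto $s_{[\psi_i,\psi_i]}L^2(\vn(\Gamma))$ to choose $\varphi_i \in \psis{\psi_i}$ with $U_{\psi_i}[\varphi_i] = G_i$ and $\|\varphi_i\|_\Hil = \|G_i\|_2$; then $\sum_{i\in\ind}\|\varphi_i\|_\Hil^2 = \sum_{i\in\ind}\|G_i\|_2^2 < \infty$, so mutual orthogonality of the $\psis{\psi_i}$ makes $\varphi = \sum_{i\in\ind}\varphi_i$ convergent in $\Hil$, and the same orthogonality forces $\varphi^{(i)} = \varphi_i$, whence $U_\Psi[\varphi] = \{G_i\}_{i\in\ind}$.

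The last point, the intertwining identity (\ref{eq:intertwining2}), is where the only real care is needed: under the componentwise right action of $\rr(\gamma)^*$ on $\ell_2(\ind, L^2(\vn(\Gamma)))$ it reduces to the per-fiber identity, once one knows $(\Pi(\gamma)\varphi)^{(i)} = \Pi(\gamma)\varphi^{(i)}$, i.e. that $\Pi(\gamma)$ commutes with $\Proj_{\psis{\psi_i}}$. This holds because $\psis{\psi_i}$ is $(\Gamma,\Pi)$-invariant and $\Pi(\gamma)$ is unitary: from $\Pi(\gamma)\psis{\psi_i} \subset \psis{\psi_i}$ and $\Pi(\gamma^{-1})\psis{\psi_i} \subset \psis{\psi_i}$ one gets $\Pi(\gamma)\psis{\psi_i} = \psis{\psi_i}$, so $\Pi(\gamma)$ maps $\psis{\psi_i}$ and its orthogonal complement onto themselves and hence commutes with the projection. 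Combining this with the per-fiber intertwining relation from the first step yields
$$
U_\Psi[\Pi(\gamma)\varphi] = \big\{U_{\psi_i}[\Pi(\gamma)\varphi^{(i)}]\big\}_{i\in\ind} = \big\{U_{\psi_i}[\varphi^{(i)}]\,\rr(\gamma)^*\big\}_{i\in\ind} = U_\Psi[\varphi]\,\rr(\gamma)^* .
$$
Apart from the convergence argument in the surjectivity step and this commutation point, the proof is essentially bookkeeping assembled from Proposition \ref{prop:isometry} and Lemma \ref{lem:weightmap}, so I do not anticipate a genuine obstacle.
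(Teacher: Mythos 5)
Your proof is correct and follows essentially the same route as the paper, which assembles $U_\Psi$ fiberwise from the surjective isometries $U_{\psi_i}$, uses the orthogonal decomposition of Lemma \ref{lem:generators} for the norm identity, and deduces (\ref{eq:intertwining2}) from (\ref{eq:intertwining}). The only difference is that you spell out details the paper leaves implicit (the fiberwise surjectivity argument and the fact that $\Pi(\gamma)$ commutes with $\Proj_{\psis{\psi_i}}$ because each $\psis{\psi_i}$ is $(\Gamma,\Pi)$-invariant and $\Pi(\gamma)$ is unitary), and these are handled correctly.
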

\begin{proof}
Since each map $U_{\psi_i}$ is an isometric isomorphism from $\psis{\psi_i}$ onto the space $s_{[\psi_i,\psi_i]}L^2(\vn(\Gamma))$, we have
\begin{displaymath}
\|U_{\Psi}[\varphi]\|_{\mathcal{K}_\Pi^\Psi}^2 = \sum_{i \in \ind} \|U_{\psi_i}[\varphi^{(i)}]\|_2^2 = \sum_{i \in \ind} \|\varphi^{(i)}\|_{\Hil}^2 = \|\varphi\|_\Hil^2
\end{displaymath}
while identity (\ref{eq:intertwining2}) is a consequence of (\ref{eq:intertwining}).
\end{proof}

This abstract isometry was introduced in the abelian setting in \cite{HSWW10}, and it has been recently used for the study of spaces invariant under the action of an LCA group in \cite{Iverson14}. In the present setting, it provides the conclusion of the proof that having a Helson map is equivalent to dual integrability.

\begin{proof}[Proof of Theorem \ref{theo:DualHelsonEquivalence}]
If $(\Gamma,\Pi,\Hil)$ is a dual integrable triple, then a Helson map is provided by Proposition \ref{prop:periodization}, while the converse is due to Proposition \ref{prop:propertyii}.
\end{proof}

\paragraph{Integer translations on $L^2(\R)$.}
The map $U_\Psi$ extends to general discrete groups and unitary representations the fiberization mapping (\ref{eq:Fourierperiodization}) (see also \cite[Prop. 3.3]{CabrelliPaternostro10} for general LCA groups). Indeed, when $\Gamma$ is abelian, by the isomorphism between $\vn(\Gamma)$ and $\wh{\Gamma}$ provided by Pontryagin duality we have
$$
\ell_2(\ind,L^2(\vn(\Gamma))) \approx \ell_2(\ind,L^2(\wh{\Gamma})) \approx L^2(\wh{\Gamma},\ell_2(\ind)) .
$$
If $\Gamma = \Z$ and $\Pi$ are integer translations on $L^2(\R)$, i.e. $\Pi(k)\varphi(x) = \varphi(x - k)$, let $\ind = \Gamma^*$, the annihilator of the discrete group $\Gamma$, which coincides in this case with $\Z$, and let $\Psi \subset L^2(\R)$ be the Shannon system
$$
\wh{\psi}_j = \chi_{[j,j+1]}, \,\, j\in\Z.
$$
It is easy to see that this system satisfies (\ref{eq:GramSchmidt}) for $V = L^2(\R)$, and that integer translates of each $\psi_j$ generates an orthonormal system, so that $[\psi_j,\psi_j] = \Id_{\ell_2(\Z)}$. In this case $U_{\psi_j} = S_{\psi_j}$ for all $j\in\Z$, because $s_{[\psi_j,\psi_j]} = \Id_{\ell_2(\Z)}$. Now let $\varphi \in L^2(\R)$, with components $\varphi^{(j)} \in \csp\{\psi_j(\cdot - k)\}_{k \in \Z}$ given by
$$
\varphi^{(j)}(x) = \sum_{k \in \Z} a^j_k \psi_j(x - k) = \sum_{k \in \Z} a^j_k \Pi(k)\psi_j(x).
$$
Then, the map $U_{\Psi}$ reads 
\begin{equation}\label{eq:integerisometry}
U_\Psi[\varphi] = \Big\{U_{\psi_l}[\varphi^{(l)}]\Big\}_{l \in \Z} = \Big\{S_{\psi_l}[\varphi^{(l)}]\Big\}_{l \in \Z} = \Big\{\sum_{k \in \Z} a_k^l \rr(k)^*\Big\}_{l \in \Z},
\end{equation}
where $\{\rr(k)\}_{k \in \Z}$ is the sequence of translation operators in $\ell^2(\Z)$.

On the other hand, by definition, $\wh{\psi}_j (\alpha + l) = \delta_{j,l}$ for all $j, l \in \Z$ and a.e. $\alpha \in \wh{\Gamma} = [0,1)$. Thus, $\wh{\varphi^{(j)}}(\alpha + l) = \sum_{k \in \Z} a^j_k e^{-2\pi i k \alpha}\delta_{j,l}$ for a.e. $\alpha \in [0,1)$, and
$$
\wh{\varphi}(\alpha + l) = \sum_{j\in\Z} \wh{\varphi^{(j)}}(\alpha + l) = \sum_{k \in \Z} a^l_k e^{-2\pi i k \alpha} \quad \textrm{for a.e.} \ \alpha \in [0,1) \ \textnormal{and all} \ l \in \Z.
$$
Therefore, the fiberization mapping (\ref{eq:Fourierperiodization}) reads  $\iso \varphi (\cdot)= \{ \sum_{k \in \Z} a^l_k e^{-2\pi i k \cdot}\}_{l\in\Z}$. 

As a conclusion, we see that the mappings $\iso$ and $U_{\Psi}$ can be obtained one from the another one by applying Pontryagin duality, i.e. by turning the characters $\{e^{2\pi i k\cdot}\}_{k \in \Z}$ of $\Gamma=\Z$ into the integer translations $\{\rr(k)\}_{k \in \Z}$ over $\ell_2(\Z)$.

\paragraph{Measurable group actions on $L^2(\meas,\mu)$ and Zak transform.}
A particular construction of a Helson map, which differs from the fiberization mapping, can be given in terms of the Zak transform whenever the representation $\Pi$ arises from a measurable action of a discrete group on a measure space. This was first considered in \cite{HSWW10} and then in \cite{Zak-SIS} in the abelian setting. For the nonconmmutative case, the Zak transform was taken into consideration  in \cite{BHP14}. For the sake of completeness we include its construction here.
\newpage
Consider a $\sigma$-finite measure space $(\meas,\mu)$, $\Gamma$ a countable discrete group  and let $\sigma: \Gamma  \times \meas \to \meas$ be a quasi $\Gamma$-invariant measurable action of $\Gamma$ on $\meas$. This means that for each $\gamma \in \Gamma$ the map $x \mapsto \sigma_\gamma (x) = \sigma(\gamma,x)$ is $\mu$-measurable, that for all $\gamma, \gamma' \in \Gamma$ and almost all $x \in \meas$ it holds $\sigma_\gamma (\sigma_{\gamma'} (x)) = \sigma_{\gamma \gamma'}(x)$ and $\sigma_\id (x) = x$, and that for each $\gamma \in \Gamma$  the measure $\mu_\gamma$ defined by $\mu_\gamma(E) = \mu(\sigma_\gamma (E))$ is absolutely continuous with respect to $\mu$ with positive Radon-Nikodym derivative. Let us indicate the family of associated Jacobian densities with the measurable function $J_\sigma: \Gamma \times \meas \to \R^+$ given by
$$
d\mu(\sigma_\gamma (x)) = J_\sigma (\gamma, x)\, d\mu (x) .
$$
We can then define a unitary representation $\Pi_\sigma$ of $\Gamma$ on $L^2(\meas,\mu)$ as
\begin{equation}\label{eq:measurablerep}
\Pi_\sigma(\gamma)\varphi(x) = J_\sigma (\gamma^{-1}, x)^\frac12 \varphi(\sigma_{\gamma^{-1}}(x)) . 
\end{equation}
We say that the action $\sigma$ has the \emph{tiling property} if there exists a $\mu$-measurable subset $C \subset \meas$ such that the family
$\{\sigma_\gamma (C)\}_{\gamma \in \Gamma}$ is a disjoint covering of $\mu$-almost all $\meas$, i.e.
$\mu\big(\sigma_{\gamma_1}(C) \cap \sigma_{\gamma_2} (C)\big) = 0$ for $\gamma_1 \neq \gamma_2$ and
$$
\mu\bigg(\meas \setminus \bigcup_{\gamma \in \Gamma}\,\sigma_\gamma (C)\bigg) = 0 .
$$

Following \cite{BHP14}, we call \emph{noncommutative Zak transform} of $\varphi \in L^2(\meas,\mu)$ associated to the action $\sigma$ the measurable field of operators over $\meas$ given by
$$
Z_\sigma[\varphi](x) = \sum_{\gamma \in \Gamma} \Big(\big(\Pi_\sigma(\gamma)\varphi\big)(x)\Big) \rr(\gamma), \quad x \in \meas .
$$

The following result is a slight improvement of \cite[Th. B, i)]{BHP14}, showing that $Z_\sigma$ defines an isometry that is surjective on the whole $L^2((C,\mu),L^2(\vn(\Gamma)))$.
\begin{prop}\label{prop:Zak}
Let $\sigma$ be a quasi-$\Gamma$-invariant action of the countable discrete group $\Gamma$ on the measure space $(\meas,\mu)$, and let $\Pi_\sigma$ be the unitary representation given by  (\ref{eq:measurablerep}) on $L^2(\meas,\mu)$. If $\sigma$ has the tiling property with tiling set $C$, then the Zak transform $Z_\sigma$ defines an isometric isomorphism
$$
Z_\sigma : L^2(\meas,\mu) \to L^2((C,\mu),L^2(\vn(\Gamma)))
$$
satisfying the quasi-periodicity condition
$$
Z_\sigma[\Pi_\sigma(\gamma)\varphi] = Z_\sigma[\varphi] \rr(\gamma)^* \, , \quad  \forall \, \gamma \in \Gamma , \ \forall \, \varphi \in L^2(\meas,\mu) .
$$
\end{prop}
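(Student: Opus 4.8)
The plan is to establish the three assertions of the proposition --- that $Z_\sigma$ is an isometry, that it is onto $L^2((C,\mu),L^2(\vn(\Gamma)))$, and that it satisfies the quasi-periodicity relation --- essentially by hand, the main tools being Plancherel between $L^2(\vn(\Gamma))$ and $\ell_2(\Gamma)$, Tonelli's theorem, the cocycle identity $J_\sigma(\gamma\gamma',x) = J_\sigma(\gamma,\sigma_{\gamma'}(x))\,J_\sigma(\gamma',x)$ for the Jacobian densities (which in particular gives $J_\sigma(\id,x)=1$ and $J_\sigma(\gamma^{-1},\sigma_\gamma(y))\,J_\sigma(\gamma,y) = 1$), and the tiling property.

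\emph{Isometry.} For $\varphi \in L^2(\meas,\mu)$, each finite partial sum $\sum_{\gamma} (\Pi_\sigma(\gamma)\varphi)(x)\,\rr(\gamma)$ lies in $L^2(\vn(\Gamma))$, and Plancherel shows that whenever $\{(\Pi_\sigma(\gamma)\varphi)(x)\}_{\gamma\in\Gamma}\in\ell_2(\Gamma)$ the series defining $Z_\sigma[\varphi](x)$ converges in $L^2(\vn(\Gamma))$ with $\|Z_\sigma[\varphi](x)\|_2^2 = \sum_{\gamma\in\Gamma}|(\Pi_\sigma(\gamma)\varphi)(x)|^2$. The first step is then to compute $\int_C \sum_\gamma |(\Pi_\sigma(\gamma)\varphi)(x)|^2\,d\mu(x)$: by Tonelli one exchanges sum and integral, substitutes $|(\Pi_\sigma(\gamma)\varphi)(x)|^2 = J_\sigma(\gamma^{-1},x)\,|\varphi(\sigma_{\gamma^{-1}}(x))|^2$, and applies the change of variables $x = \sigma_\gamma(y)$ (so that $x$ ranges over $C$ exactly when $y$ ranges over $\sigma_{\gamma^{-1}}(C)$) using $d\mu(\sigma_\gamma(y)) = J_\sigma(\gamma,y)\,d\mu(y)$ and $J_\sigma(\gamma^{-1},\sigma_\gamma(y))\,J_\sigma(\gamma,y) = 1$. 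The $\gamma$-th term collapses to $\int_{\sigma_{\gamma^{-1}}(C)}|\varphi(y)|^2\,d\mu(y)$; summing over $\gamma$ and invoking the tiling property (which makes $\{\sigma_{\gamma^{-1}}(C)\}_{\gamma\in\Gamma}$ a $\mu$-a.e.\ disjoint cover of $\meas$) gives $\|\varphi\|^2_{L^2(\meas,\mu)}$. In particular this sum is finite, so for a.e.\ $x$ the series converges, $x\mapsto Z_\sigma[\varphi](x)$ is measurable (being the a.e.\ pointwise limit of measurable finite partial sums), $Z_\sigma[\varphi]\in L^2((C,\mu),L^2(\vn(\Gamma)))$, and $\norm{Z_\sigma[\varphi]} = \|\varphi\|_{L^2(\meas,\mu)}$.

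\emph{Surjectivity.} Given $\varPhi \in L^2((C,\mu),L^2(\vn(\Gamma)))$, expand $\varPhi(x) = \sum_{\gamma}a(x,\gamma)\,\rr(\gamma)$ for a.e.\ $x\in C$, where $\{a(x,\gamma)\}_\gamma\in\ell_2(\Gamma)$ with $\sum_\gamma|a(x,\gamma)|^2 = \|\varPhi(x)\|_2^2$; since the coefficient functionals are continuous on $L^2(\vn(\Gamma))$, each $x\mapsto a(x,\gamma)$ is measurable. Using the tiling property, for $\mu$-a.e.\ $y\in\meas$ there is a unique $\gamma=\gamma(y)\in\Gamma$ with $\sigma_\gamma(y)\in C$; define $\varphi(y) = J_\sigma(\gamma^{-1},\sigma_\gamma(y))^{-1/2}\,a(\sigma_\gamma(y),\gamma)$, a measurable function as it is defined piecewise over the measurable tiles $\{\sigma_{\gamma^{-1}}(C)\}_\gamma$ by measurable formulas. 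A direct check from the definition of $\Pi_\sigma$ and the cocycle identity shows $(\Pi_\sigma(\gamma)\varphi)(x) = a(x,\gamma)$ for every $\gamma$ and a.e.\ $x\in C$ (the point being that $\sigma_\gamma(\sigma_{\gamma^{-1}}(x)) = x\in C$, so $\gamma(\sigma_{\gamma^{-1}}(x))=\gamma$), whence $Z_\sigma[\varphi] = \varPhi$; and the change-of-variables computation above, read in reverse, gives $\|\varphi\|_{L^2(\meas,\mu)}^2 = \norm{\varPhi}^2 < \infty$, so $\varphi\in L^2(\meas,\mu)$. Thus $Z_\sigma$ is onto.

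\emph{Quasi-periodicity and the main obstacle.} For $\gamma_0\in\Gamma$, since $\Pi_\sigma$ is a representation, $Z_\sigma[\Pi_\sigma(\gamma_0)\varphi](x) = \sum_\gamma (\Pi_\sigma(\gamma\gamma_0)\varphi)(x)\,\rr(\gamma)$; reindexing $\gamma\mapsto\gamma\gamma_0^{-1}$ and using $\rr(\gamma\gamma_0^{-1}) = \rr(\gamma)\rr(\gamma_0)^*$ together with the continuity of right multiplication by a bounded operator on $L^2(\vn(\Gamma))$, this equals $\big(\sum_\gamma (\Pi_\sigma(\gamma)\varphi)(x)\,\rr(\gamma)\big)\rr(\gamma_0)^* = Z_\sigma[\varphi](x)\,\rr(\gamma_0)^*$, which is the claimed identity. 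I expect the only genuinely delicate point to be the surjectivity step: one must produce a measurable choice of the Fourier coefficients $a(\cdot,\gamma)$ and verify that the piecewise-defined $\varphi$ is measurable, and one must keep careful track of the Jacobian cocycle throughout both change-of-variables computations. Everything else reduces to routine applications of Plancherel, Tonelli, and the tiling hypothesis.
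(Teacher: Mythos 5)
Your proof is correct and follows essentially the same route as the paper: your coefficients $a(x,\gamma)$ are exactly the Fourier coefficients $\tau\big(\varPhi(x)\rr(\gamma)^*\big)$ appearing in the paper's explicit inversion formula (\ref{eq:Zakinversion}), and your quasi-periodicity and norm computations are the same reindexing and change-of-variables arguments. The only difference is that you write out the isometry computation in full, whereas the paper delegates it to \cite[Th. B]{BHP14}.
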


\begin{proof}
The isometry can be proved as in \cite[Th. B]{BHP14}, while quasi-periodicity can be obtained explicitly by
$$
Z_\sigma[\Pi_\sigma(\gamma)\varphi] = \sum_{\gamma'} \Big(\big(\Pi_\sigma(\gamma'\gamma)\varphi\big)(x)\Big) \rr(\gamma') = \sum_{\gamma''} \Big(\big(\Pi_\sigma(\gamma'')\varphi\big)(x)\Big) \rr(\gamma''\gamma^{-1}) .
$$
To prove surjectivity, take $\varPsi \in L^2((C,\mu),L^2(\vn(\Gamma)))$ and for each $\gamma \in \Gamma$ define
\begin{equation}\label{eq:Zakinversion}
\psi(x) = J_\sigma (\gamma^{-1}, \sigma_{\gamma}(x))^{-\frac12} \tau \big( \varPsi(\sigma_{\gamma}(x)) \rr(\gamma)^* \big) \quad \textnormal{a.e.} \ x \in \sigma_{\gamma^{-1}}(C) .
\end{equation}
Such a $\psi$ belongs to $L^2(\meas,\mu)$, since by the tiling property it is measurable and its norm reads
\begin{align*}
\|\psi\|_{L^2(\meas,\mu)}^2 &\! = \!\int_\meas \!\! |\psi(x)|^2 d\mu(x) = \sum_{\gamma \in \Gamma} \int_{\sigma_{\gamma^{-1}}(C)} \!\!\!\!\!\!\!\!\!\!\!\!\!\!\!\!J_\sigma (\gamma^{-1}, \sigma_{\gamma}(x))^{-1} \Big|\tau \big( \varPsi(\sigma_{\gamma}(x)) \rr(\gamma)^* \big)\Big|^2 d\mu(x)\\
&\! = \!\sum_{\gamma \in \Gamma} \int_{C} J_\sigma (\gamma^{-1}, y)^{-1} |\tau (\varPsi(y) \rr(\gamma)^* )|^2 J_\sigma (\gamma^{-1}, y)d\mu(y)
\end{align*}
where the last identity is due to the definition of the Jacobian density, because 
$d\mu(x)=d\mu(\sigma_{\gamma^{-1}}(y))=J_\sigma (\gamma^{-1}, y)d\mu(y)$.
Then, by Plancherel Theorem
$$
\|\psi\|_{L^2(\meas,\mu)}^2 = \int_C \sum_{\gamma \in \Gamma} |\tau (\varPsi(y) \rr(\gamma)^* )|^2  d\mu(y) = \int_C \|\varPsi(y)\|_2^2  d\mu(y)  
$$
so that $\|\psi\|_{L^2(\meas,\mu)}^2 = \|\varPsi\|^2_{L^2((C,\mu),L^2(\vn(\Gamma)))}<+\infty$. By applying the Zak transform to $\psi$ we then have that, for a.e. $x \in C$,
\begin{align*}
Z_\sigma[\psi](x) = \sum_{\gamma \in \Gamma} J_\sigma (\gamma^{-1}, x)^\frac12 \psi(\sigma_{\gamma^{-1}}(x)) \rr(\gamma) = \sum_{\gamma \in \Gamma} \tau ( \varPsi(x) \rr(\gamma)^* \big) \rr(\gamma) = \varPsi(x)
\end{align*}
again by Plancherel Theorem. This proves surjectivity and in particular shows that (\ref{eq:Zakinversion}) provides an explicit inversion formula for $Z_\sigma$.
\end{proof}

\begin{rem}
The Zak transform is actually directly related to the isometry $S_\psi$ introduced in (\ref{eq:isometry}), since for all $F \in \vn(\Gamma)$ it holds
$$
S^{-1}_\psi F (x) = \tau \big( Z_\sigma[\psi](x) F \big) \quad \textnormal{a.e.} \ x \in \meas .
$$
Indeed, denoting with $\{\wh{F}(\gamma)\}_{\gamma \in \Gamma}$ the Fourier coefficients of $F$, by the orthonormality of $\{\rr(\gamma)\}_{\gamma \in \Gamma}$ in $L^2(\vn(\Gamma))$ it holds
$$
\tau \big( Z_\sigma[\psi](x) F \big) = \sum_{\gamma, \gamma' \in \Gamma} \big(\Pi_\sigma(\gamma) \psi\big) (x) \wh{F}(\gamma') \, \tau(\rr(\gamma)\rr(\gamma')^*) = \sum_{\gamma \in \Gamma} \wh{F}(\gamma) \Pi_\sigma(\gamma)\psi(x).
$$
For a relationship between the Zak transform and the global isometry $U_\Psi$ in LCA groups, see \cite[Prop. 6.11]{Zak-SIS}.
\end{rem}

\section{Shift invariance and Hilbert modules}\label{sec:Hilbertmodules}

In this section we will show how the invariance under the action of a dual integrable representation is associated to the notion of Hilbert $L^2(\vn(\Gamma))$-modules.

Throughout this section we will assume that $(\Gamma,\Pi,\Hil)$ is  a dual integrable triple and denote with $\iso : \Hil \to \Kil$ an associated Helson map 

The first fundamental result is the following.

\begin{theo}\label{theo:multiplicatively}
Let $V$ be a closed subspace of $\Hil$, and let $M = \iso(V)$ be the corresponding closed subspace of $\Kil$. Then $V$ is $(\Gamma,\Pi)$-invariant if and only if
\begin{equation}\label{eq:multiplicatively}
M A \subset M \quad \forall \, A \in \vn(\Gamma) .
\end{equation}
\end{theo}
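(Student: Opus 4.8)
The plan is to prove both implications using the intertwining property \eqref{eq:Helsonproperty} of the Helson map $\iso$ together with the density of trigonometric polynomials in $\vn(\Gamma)$ in the strong operator topology. First suppose $V$ is $(\Gamma,\Pi)$-invariant. Pick any $\varPhi \in M$, so $\varPhi = \iso[\varphi]$ for some $\varphi \in V$, and pick any $A \in \vn(\Gamma)$. We want to show $\varPhi A \in M$. For a trigonometric polynomial $A = \sum_{\gamma} c_\gamma \rr(\gamma)^*$ this is immediate from \eqref{eq:Helsonproperty}: indeed $\varPhi A = \sum_\gamma c_\gamma \iso[\varphi]\rr(\gamma)^* = \sum_\gamma c_\gamma \iso[\Pi(\gamma)\varphi] = \iso\big[\sum_\gamma c_\gamma \Pi(\gamma)\varphi\big]$, and the element in brackets lies in $V$ by $(\Gamma,\Pi)$-invariance, so $\varPhi A \in \iso(V) = M$. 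For general $A \in \vn(\Gamma)$, I would invoke Proposition \ref{prop:fundamental}: writing $A$ in place of $F$ there, part i. gives $\iso[\qroj_A\varphi] = \iso[\varphi]A = \varPhi A$, and part iii. gives $\qroj_A\varphi \in V$ (since $V$ is $(\Gamma,\Pi)$-invariant), hence $\varPhi A = \iso[\qroj_A\varphi] \in M$. This already establishes \eqref{eq:multiplicatively}.

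For the converse, assume $MA \subset M$ for all $A \in \vn(\Gamma)$; I must show $\Pi(\gamma)V \subset V$ for every $\gamma \in \Gamma$. Fix $\gamma \in \Gamma$ and $\varphi \in V$. Since $\rr(\gamma)^* \in \vn(\Gamma)$, the hypothesis gives $\iso[\varphi]\rr(\gamma)^* \in M$, i.e.\ there exists $\eta \in V$ with $\iso[\eta] = \iso[\varphi]\rr(\gamma)^*$. But by the Helson property \eqref{eq:Helsonproperty}, $\iso[\Pi(\gamma)\varphi] = \iso[\varphi]\rr(\gamma)^* = \iso[\eta]$, and since $\iso$ is an isometry (in particular injective), $\Pi(\gamma)\varphi = \eta \in V$. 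As $\gamma$ and $\varphi$ were arbitrary, $V$ is $(\Gamma,\Pi)$-invariant.

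Both directions are short once the machinery of Section \ref{sec:isometries} is in place; the only subtlety worth spelling out is that the forward direction for a general (possibly unbounded-looking, but here bounded) element $A \in \vn(\Gamma)$ genuinely requires the approximation argument packaged in Proposition \ref{prop:fundamental}, rather than a naive termwise manipulation of the Fourier series of $A$, because $\iso[\varphi]A$ is a priori only an element of $L^2((\M,\nu),L^2(\vn(\Gamma)))$ and one needs to know it actually lands back in $\Kil = \iso(\Hil)$ and is the image of a vector of $V$. That membership and the identification with $\iso[\qroj_A\varphi]$ is exactly what Proposition \ref{prop:fundamental} i.--iii. supplies, so there is no real obstacle; the proof is essentially a direct application of those facts plus injectivity of $\iso$. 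One should also note in passing that $M = \iso(V)$ is closed because $\iso$ is an isometric isomorphism onto $\Kil$ and $V$ is closed, so no completeness issue arises in stating \eqref{eq:multiplicatively}.
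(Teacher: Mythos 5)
Your proof is correct and follows essentially the same route as the paper: the forward direction via Proposition \ref{prop:fundamental} (parts i.\ and iii.) applied to $\qroj_A\varphi$, and the converse via the Helson property \eqref{eq:Helsonproperty} together with injectivity of the isometry $\iso$. The extra remarks about trigonometric polynomials and closedness of $M$ are harmless elaborations of what the paper leaves implicit.
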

\begin{proof}
Assume that $V$ is $(\Gamma,\Pi)$-invariant, and take $\varPhi \in M$. Then, by Proposition \ref{prop:fundamental}, we have $\varPhi A \in M$ for all $A \in \vn(\Gamma)$ because, calling $\varphi = \iso^{-1}\varPhi \in V$
$$
\varPhi A = \iso[\varphi] A = \iso [\qroj_A \varphi]
$$
where $\qroj_A \varphi \in V$. Conversely, if we take $\varphi \in V$ then for all $\gamma \in \Gamma$ the relations (\ref{eq:Helsonproperty}) and (\ref{eq:multiplicatively}) imply that $\iso(\Pi(\gamma)\varphi) \in M$ and hence $\Pi(\gamma)\varphi \in V$.
\end{proof}

In the abelian setting, a subspace $M$ of $\Kil$ satisfying (\ref{eq:multiplicatively}) is called \emph{multiplicatively invariant} \cite{BownikRoss13}. In this noncommutative setting, however, the invariance is at the level of composition with operators in the von Neumann algebra, hence giving rise to a module structure.

\subsection{\texorpdfstring{$L^2(\vn(\Gamma))$}{L2}-Hilbert modules with outer coefficients}

Given a von Neumann algebra $\ALG$, an $L^p(\ALG)$-Hilbert module is a module over $\ALG$ endowed with an $L^\frac{p}{2}(\ALG)$ valued inner product. Standard references for modules can be \cite{Keating98, KostrikinShafarevich91}, or \cite{BlecherLeMerdy04} for operator modules. The main reference for $L^p$ Hilbert modules over von Neumann algebras is \cite{JungeSherman05}, where they were introduced. Standard reference for Hilbert modules over $C^*$-algebras, which include the case $p=\infty$, can be \cite[Chapt. 15]{Wegge93} or \cite{Lance95, ManuilovTroitsky05}, while a comprehensive bibliography can be found in \cite{FrankBiblio}.

For the sake of completeness, we recall in detail the definition of module, and the definition of inner product introduced in \cite{JungeSherman05}.

\begin{defi}\label{def:modules}
Let $\ALG$ be a complex associative algebra with unity $\Id$. A complex vector space $\E$ is called an \emph{$\ALG$-module} if there exists a map
$$
\vartheta : \E \times \ALG \to \E
$$
satisfying the following relations for all $x, y \in \E$, all $F, G \in \ALG$ and all $\alpha \in \C$:
\begin{itemize}
\item[M1.] $\vartheta(x + y, F) = \vartheta(x,F) + \vartheta(y,F)$\vspace{-4pt}
\item[M2.] $\vartheta(x, F + G) = \vartheta(x,F) + \vartheta(x,G)$\vspace{-4pt}
\item[M3.] $\vartheta(x, FG) = \vartheta(\vartheta(x,F),G)$\vspace{-4pt}
\item[M4.] $\vartheta(x,\Id) = x$\vspace{-4pt}
\item[M5.] $\vartheta(\alpha x, F) = \vartheta(x, \alpha F) = \alpha \vartheta(x,F)$.
\end{itemize}
\end{defi}
\begin{defi}\label{def:Hmodules}
Let $\ALG$ be a von Neumann algebra, let $\E$ be an \emph{$\ALG$-module}, and let $0 < p \leq \infty$.
An $L^\frac{p}{2}(\ALG)$-valued \emph{inner product} over $\E$ is a map $$\ipr{\cdot}{\cdot} : \E \times \E \to L^\frac{p}{2}(\ALG)$$ satisfying the following relations for all $x, y, z \in \E$, all $F \in \ALG$ and all $\alpha, \beta \in \C$:
\begin{itemize}
\item[i.] $\ipr{x}{\alpha y + \beta z} = \ol{\alpha} \ipr{x}{y} + \ol{\beta} \ipr{x}{z}$
\item[ii.] $\ipr{x}{y} = \ipr{y}{x}^*$
\item[iii.] $\ipr{x}{x} \geq 0 \quad \textnormal{and} \quad \ipr{x}{x} = 0 \ \Rightarrow \ x = 0$
\item[iv.] $\ipr{x}{\vartheta(y,F)} = F^*\ipr{x}{y}$ .
\end{itemize}
For $p \geq 2$, let $\|x\|_\E = \|\ipr{x}{x}^\frac12\|_{p}$ be the norm induced by the inner product.
A pair $(\E,\{\cdot,\cdot\})$, where $\E$ is an $\ALG$-module, $\{\cdot,\cdot\}$ is an $L^\frac{p}{2}(\ALG)$-valued inner product over $\E$, and $(\E,\|\cdot\|_\E)$ is a Banach space, is called an $L^p(\ALG)$-Hilbert module. When $p = 2$ it is a Hilbert space with scalar product
\begin{equation}\label{eq:Xscalarprod}
\langle x, y\rangle_\E = \tau(\ipr{x}{y}).
\end{equation}
\end{defi}
We can now prove that such structures are the ones that arise naturally as images under Helson maps of $(\Gamma,\Pi)$-invariant spaces.
\begin{proof}[Proof of Theorem \ref{theo:Hilmodshiftinv}]
By Theorem \ref{theo:multiplicatively}, the action $\vartheta$ of $\vn(\Gamma)$ on $(M,\ipr{\cdot}{\cdot}_\Kil)$ given by operator composition from the right maps $M$ into itself. Moreover, it satisfies axioms $M1$ to $M5$. Then $\Kil$ is an $\vn(\Gamma)$-module, and the images of $(\Gamma,\Pi)$-invariant subspaces of $\Hil$ under a Helson map $\iso$ are submodules.

Points $i.$ to $iii.$ of Definition \ref{def:Hmodules} are properties of the bracket map. The main point to prove is then Property $iv.$ of the inner product, i.e. that for all $F \in \vn(\Gamma)$ and all $\varPhi, \varPsi \in M$ it holds
$$
\ipr{\varPhi}{\varPsi F}_\Kil = F^* \ipr{\varPhi}{\varPsi}_\Kil .
$$
This is a consequence of Proposition \ref{prop:fundamental}. Indeed, for $\varphi = \iso^{-1}[\varPhi]$, $\psi = \iso^{-1}[\varPsi]$
\begin{displaymath}
\ipr{\varPhi}{\varPsi F}_\Kil = [\varphi, \iso^{-1}[\varPsi F]] = [\varphi, \qroj_F\psi] = F^* [\varphi, \psi] = F^*\ipr{\varPhi}{\varPsi}.
\end{displaymath}
Finally by (\ref{eq:Kilinnerproduct}) and Property III of the bracket map we have that
\begin{equation}\label{eq:Kilnormidentity}
\tau(\ipr{\varPsi}{\varPsi}_\Kil) = \tau([\iso^{-1}[\varPsi],\iso^{-1}[\varPsi]) = \|\iso^{-1}[\varPsi]\|_\Hil^2 = \norm{\varPsi}^2 .
\end{equation}
Thus, the norm induced by $\ipr{\cdot}{\cdot}_\Kil$ coincides with the natural one induced by $L^2(\M,L^2(\vn(\Gamma)))$, so $\Kil$ is a Hilbert space with the scalar product (\ref{eq:Xscalarprod}).
\end{proof}
An analogous consequence of Proposition \ref{prop:fundamental} is the following.
\begin{cor}
Let $(\Gamma,\Pi,\Hil)$ be a dual integrable triple, and let $V$ be a closed subspace of $\Hil$. Then $V$ is $(\Gamma,\Pi)$-invariant if and only if $(V,[\cdot,\cdot])$ is a $L^2(\vn(\Gamma))$-Hilbert module with map $\vartheta$ given by
$$
\vartheta(\psi,F) = \qroj_F \psi .
$$
\end{cor}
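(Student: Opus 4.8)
The plan is to reduce both implications to Theorem~\ref{theo:Hilmodshiftinv}, combined with Proposition~\ref{prop:fundamental} and the equivalence of Theorem~\ref{theo:DualHelsonEquivalence}. Since $(\Gamma,\Pi,\Hil)$ is dual integrable, Theorem~\ref{theo:DualHelsonEquivalence} furnishes a Helson map $\iso : \Hil \to \Kil$, and for a closed subspace $V \subset \Hil$ the restriction $\iso|_V : V \to M := \iso(V)$ is a surjective isometry onto a closed subspace of $\Kil$.

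For the forward implication, suppose $V$ is $(\Gamma,\Pi)$-invariant. First I would observe that $\vartheta(\psi,F) = \qroj_F\psi$ does land in $V$ for all $\psi \in V$, $F \in \vn(\Gamma)$, which is exactly Proposition~\ref{prop:fundamental}~$iii.$ Next, I would transport the modular structure of $M$ back to $V$ through $\iso|_V$: by Proposition~\ref{prop:fundamental}~$i.$ one has $\iso[\qroj_F\psi] = \iso[\psi]\,F$, so $\iso|_V$ intertwines $\vartheta$ with the right composition (\ref{eq:rightaction}); and by the definition (\ref{eq:Kilinnerproduct}) of $\ipr{\cdot}{\cdot}_\Kil$ one has $\ipr{\iso[\varphi]}{\iso[\psi]}_\Kil = [\varphi,\psi]$. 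Since Theorem~\ref{theo:Hilmodshiftinv} asserts that $(M,\ipr{\cdot}{\cdot}_\Kil)$ is an $L^2(\vn(\Gamma))$-Hilbert module under right composition, pulling all the data back through the isometry $(\iso|_V)^{-1}$ shows that $(V,[\cdot,\cdot])$ is an $L^2(\vn(\Gamma))$-Hilbert module with the map $\vartheta$. One may instead verify the axioms of Definitions~\ref{def:modules} and~\ref{def:Hmodules} directly: M1, M2, M4, M5 are immediate from (\ref{eq:correspondence}) and the linearity of Fourier coefficients, M3 amounts to $\qroj_{FG} = \qroj_G\qroj_F$ (which follows from Proposition~\ref{prop:fundamental}~$i.$ and the injectivity of $\iso$, the order reversal reflecting the right action of $\vn(\Gamma)$); properties $i.$--$iii.$ of the inner product are Properties~I) and~III) of the bracket map, property $iv.$ is the second identity in Proposition~\ref{prop:fundamental}~$ii.$, and the Banach space condition holds because $\|\psi\|_V = \|[\psi,\psi]^{\frac12}\|_2 = \tau([\psi,\psi])^{\frac12} = \|\psi\|_\Hil$ by Property~III), so $(V,\|\cdot\|_V)$ is simply $V$ with the ambient Hilbert norm, which is complete as $V$ is closed in $\Hil$.

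For the converse, suppose $(V,[\cdot,\cdot])$ is an $L^2(\vn(\Gamma))$-Hilbert module with the stated $\vartheta$. Being a module, $\vartheta(\psi,F) = \qroj_F\psi$ lies in $V$ for every $\psi \in V$ and every $F \in \vn(\Gamma)$. Choosing $F = \rr(\gamma)^*$ and computing its Fourier coefficients from (\ref{eq:Fouriercoefficients}), namely $\wh{\rr(\gamma)^*}(\gamma') = \tau(\rr(\gamma)^*\rr(\gamma')) = \tau(\rr(\gamma^{-1}\gamma')) = \delta_{\gamma,\gamma'}$, the expansion (\ref{eq:correspondence}) collapses to $\qroj_{\rr(\gamma)^*}\psi = \Pi(\gamma)\psi$. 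Hence $\Pi(\gamma)\psi \in V$ for all $\psi \in V$ and all $\gamma \in \Gamma$, i.e.\ $V$ is $(\Gamma,\Pi)$-invariant.

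Since all the ingredients are already in place, there is no substantial obstacle; the only points that require a moment of care are the order reversal in axiom M3 (forced by using the right, rather than left, regular representation) and the remark that the modular norm on $V$ coincides with the Hilbert norm, which is what secures the completeness needed in Definition~\ref{def:Hmodules}.
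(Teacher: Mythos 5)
Your proof is correct and follows essentially the route the paper intends: the corollary is stated there as an ``analogous consequence of Proposition~\ref{prop:fundamental}'', i.e.\ one verifies the module axioms and Property~$iv.$ of the inner product via Proposition~\ref{prop:fundamental} (and the bracket properties I)--III) for $i.$--$iii.$ and completeness), with the converse reduced to $\qroj_{\rr(\gamma)^*}\psi = \Pi(\gamma)\psi$, exactly as you do. Your observations on the order reversal in M3 and on the coincidence of the modular norm with the Hilbert norm are the right points of care, and both are handled correctly.
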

Thus $\Hil$ together with the bracket map is an $L^2(\vn(\Gamma))$-Hilbert module, that is unitarily isomorphic to $(\Kil, \ipr{\cdot}{\cdot})$ by means of the Helson map $\iso$.

Since the next treatment will deal either with general $L^2(\vn(\Gamma))$-Hilbert modules or with the modular structure of $\Kil$, for simplicity we will abandon the notation $\vartheta$ and simply indicate the action of $\vn(\Gamma)$ as a right action, i.e. with
$$
\vartheta(\varPsi,F) = \varPsi F.
$$

Modules are linear structures that allow much of the constructions associated to vector spaces, by taking linear combinations with coefficients in the associated algebras. In the present situation, the module structure resulting from Theorem \ref{theo:multiplicatively} would suggest to consider general linear spaces (submodules) generated by elements of $\Kil$ with coefficients in $\vn(\Gamma)$. However, we will see that it is particularly useful to consider coefficients belonging to a larger set of possibly unbounded operators. We will call these coefficients \emph{outer coefficients}, in order to distinguish them from the ordinary ones belonging to $\vn(\Gamma)$. We first define a composition $\varPhi F$ of an element $\varPhi \in \E$ with an element $F \in L^2(\vn(\Gamma),\ipr{\varPhi}{\varPhi})$, which can be done by the density of $\vn(\Gamma)$ in $L^2(\vn(\Gamma),\ipr{\varPhi}{\varPhi})$.

\begin{prop}\label{prop:coefficients}
Let $\E$ be an $L^2(\vn(\Gamma))$-Hilbert module, let $\varPhi \in \E$, and let $F \in L^2(\vn(\Gamma),\ipr{\varPhi}{\varPhi})$. Then for all sequences $\{F_n\}_{n\in\N} \subset \vn(\Gamma)$ converging to $F$ in $L^2(\vn(\Gamma),\ipr{\varPhi}{\varPhi})$ there is a unique element $\varPsi_{\varPhi,F} \in \E$ such that $\{\varPhi F_n\}_{n\in\N} \subset \E$ converges to $\varPsi_{\varPhi,F}$ in $\E$, and satisfies
$$
\|\varPsi_{\varPhi,F}\|_\E = \|F\|_{2,\ipr{\varPhi}{\varPhi}} .
$$
We will denote such element with $\varPsi_{\varPhi,F} = \varPhi F$.
\end{prop}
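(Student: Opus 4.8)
The plan is to obtain $\varPsi_{\varPhi,F}$ as the image of $F$ under the norm-preserving extension, to all of $L^2(\vn(\Gamma),\ipr{\varPhi}{\varPhi})$, of the module action $F\mapsto\varPhi F$ already defined on $\vn(\Gamma)$. The entire content of the statement is concentrated in the single identity
$$
\|\varPhi F\|_\E=\|F\|_{2,\ipr{\varPhi}{\varPhi}}\qquad\text{for all }F\in\vn(\Gamma);
$$
once this is available, the existence, uniqueness and independence of the approximating sequence are a routine density-and-completeness argument.

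To prove the identity I would proceed as follows. For $F\in\vn(\Gamma)$ the element $\varPhi F\in\E$ is given by the module structure, and by (\ref{eq:Xscalarprod}) one has $\|\varPhi F\|_\E^2=\tau(\ipr{\varPhi F}{\varPhi F})$. Using Property $iv.$ of Definition \ref{def:Hmodules} with $\varPhi F=\vartheta(\varPhi,F)$ in the second slot, then Property $ii.$ together with the selfadjointness of $\ipr{\varPhi}{\varPhi}$ (which is nonnegative by Property $iii.$), one computes
$$
\ipr{\varPhi F}{\varPhi F}=F^*\ipr{\varPhi F}{\varPhi}=F^*\,\ipr{\varPhi}{\varPhi F}^*=F^*\big(F^*\ipr{\varPhi}{\varPhi}\big)^*=F^*\,\ipr{\varPhi}{\varPhi}\,F,
$$
which lies in $L^1(\vn(\Gamma))$ by H\"older's inequality, since $\ipr{\varPhi}{\varPhi}\in L^1(\vn(\Gamma))$ and $F\in\vn(\Gamma)$. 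Writing $\Omega=\ipr{\varPhi}{\varPhi}$ one then has $\tau(F^*\Omega F)=\|\Omega^{\frac12}F\|_2^2=\|F\|_{2,\Omega}^2$ straight from (\ref{eq:semin}) (equivalently, $\tau(F^*\Omega F)=\tau(FF^*\Omega)=\tau(|F^*|^2\Omega)$ by traciality of $\tau$), which is the claimed isometric identity.

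Granting this, I would finish the proof thus. The identity forces $\varPhi F=0$ whenever $\|F\|_{2,\ipr{\varPhi}{\varPhi}}=0$, so $F\mapsto\varPhi F$ descends to a norm-preserving map on $\mathfrak{h}=\vn(\Gamma)/N_{\ipr{\varPhi}{\varPhi}}$, which is dense in $L^2(\vn(\Gamma),\ipr{\varPhi}{\varPhi})$ by (\ref{eq:weighted}). For $F\in L^2(\vn(\Gamma),\ipr{\varPhi}{\varPhi})$ and any sequence $\{F_n\}\subset\vn(\Gamma)$ with $F_n\to F$, the sequence $\{\varPhi F_n\}$ is Cauchy in $\E$ because $\|\varPhi F_n-\varPhi F_m\|_\E=\|F_n-F_m\|_{2,\ipr{\varPhi}{\varPhi}}$, hence converges in the Banach space $\E$ to an element $\varPsi_{\varPhi,F}$, whose uniqueness is automatic. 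If $\{F_n'\}$ is another approximating sequence, then $\|\varPhi F_n-\varPhi F_n'\|_\E=\|F_n-F_n'\|_{2,\ipr{\varPhi}{\varPhi}}\to0$, so the limit does not depend on the choice; and continuity of $\|\cdot\|_\E$ gives $\|\varPsi_{\varPhi,F}\|_\E=\lim_n\|\varPhi F_n\|_\E=\lim_n\|F_n\|_{2,\ipr{\varPhi}{\varPhi}}=\|F\|_{2,\ipr{\varPhi}{\varPhi}}$.

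The step that I expect to need the most care is precisely the norm identity, and within it the bookkeeping with $\ipr{\varPhi}{\varPhi}$, which is in general an unbounded operator only affiliated to $\vn(\Gamma)$: one has to make sure that $F^*\ipr{\varPhi}{\varPhi}F$ genuinely belongs to $L^1(\vn(\Gamma))$ and that the adjoint and cyclicity manipulations under $\tau$ are legitimate, which is exactly where the noncommutative H\"older inequality and the traciality of $\tau$ on $L^1(\vn(\Gamma))$ are used. Everything downstream of this identity is standard.
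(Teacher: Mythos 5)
Your proposal is correct and follows essentially the same route as the paper: the key identity $\{\varPhi G,\varPhi G\}=G^*\{\varPhi,\varPhi\}G$ (hence $\|\varPhi G\|_\E=\|G\|_{2,\{\varPhi,\varPhi\}}$ for $G\in\vn(\Gamma)$), applied to differences $F_n-F_m$ to get a Cauchy sequence, followed by completeness, independence of the approximating sequence, and continuity of the norm. The paper's proof is simply a compressed version of the same argument, invoking traciality to write $\tau(\{\varPhi G,\varPhi G\})=\tau(|G^*|^2\{\varPhi,\varPhi\})$ and remarking that the same computation gives the norm identity and uniqueness.
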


\begin{proof}
Let $\{F_n\}_{n\in\N} \subset \vn(\Gamma)$ be a sequence converging to $F$ in $L^2(\vn(\Gamma),\ipr{\varPhi}{\varPhi})$. Then $\{\varPhi F_n\}_{n\in\N} \subset \E$ is a Cauchy sequence, because
\begin{align*}
\|\varPhi F_n - \varPhi F_m\|^2_\E & = \tau\Big(\ipr{\varPhi (F_n - F_m)}{\varPhi (F_n - F_m)}\Big)\\
& = \tau \left( |(F_n - F_m)^*|^2 \ipr{\varPhi}{\varPhi}\right) = \|F_n - F_m\|_{2,\ipr{\varPhi}{\varPhi}}^2 .
\end{align*}
Therefore, $\{\varPhi F_n\}_{n\in\N}$ has a limit in $\E$.
The same argument proves the identity of the norms and the uniqueness of the element $\varPhi F$.
\end{proof}

The relevance of considering coefficients in this larger class of unbounded operators can be understood with the following argument. For a given single element $\varPhi \in \E$ consider the set
\begin{equation}\label{eq:single}
M_\varPhi = \{\varPsi \in \E \, | \, \exists\,\, F \in \vn(\Gamma) \, : \, \varPsi = \varPhi F\}.
\end{equation}
This set is not closed in the (Banach) topology of $\E$, because one could take a sequence $\{F_n\}_{n\in\N} \subset \vn(\Gamma)$ converging to some $F \in L^2(\vn(\Gamma),\ipr{\varPhi}{\varPhi})$, so that the sequence $\{\varPsi_n\}_{n\in\N} = \{\varPhi F_n\}_{n\in\N}$ converges to $\varPhi F$ in $\E$, as in Proposition \ref{prop:coefficients}. As a consequence, the smallest closed submodule of $\E$ containing $M_\varPhi$ contains also the element $\varPhi F$, which does not belong to $M_\varPhi$. In particular this implies that, in order to generate a closed submodule from a finite family, one needs either to take the closure of the $\vsp_{\vn(\Gamma)}$ or to consider a span associated to coefficients that do not belong to $\vn(\Gamma)$.
This motivates the following definition.
\begin{defi}\label{def:outerspan}
Let $\E$ be an $L^2(\vn(\Gamma))$-Hilbert module and let $\Phi = \{\varPhi_j\}_{j \in \ind} \subset \E$ be a countable sequence. We will call \emph{outer span} of $\Phi$ the space of their finite linear combinations with coefficients in the associated weighted $L^2$ spaces:
\begin{align*}
\osp\,\Phi = \bigg\{ & \varPsi \in \E \, | \, \exists \ \textnormal{finite sequence} \ \{C_j\}_{j \in \ind} \ \textnormal{such that}\\
& C_j \in L^2(\vn(\Gamma),\ipr{\varPhi_j}{\varPhi_j}) \ \textnormal{and} \ \varPsi = \sum_{j \in \ind} \varPhi_jC_j\bigg\}
\end{align*}
where the compositions $\varPhi_jC_j$ are defined in the sense of Proposition \ref{prop:coefficients}.
\end{defi}
Observe that, for any finite sequence $\Phi = \{\varPhi_j\}_{j \in \ind} \subset \E$, one has that
$$
\osp\,\Phi = \sum_{j \in \ind} \ol{M_{\varPhi_j}}^\E = \ol{\sum_{j \in \ind} M_{\varPhi_j}}^\E \ ,
$$
where we have used the notation (\ref{eq:single}), and the sum of the $M_{\varPhi_j}$ is the usual modular span with coefficients in $\vn(\Gamma)$ of the finite sequence $\Phi$, i.e.
$$
\sum_{j \in \ind} M_{\varPhi_j} = \{\varPsi \in \E \, | \, \exists \ \textnormal{finite} \ \{F_j\}_{j \in \ind} \subset \vn(\Gamma) \, : \, \varPsi = \sum_{j \in \ind} \varPhi_j F_j\} = \vsp_{\vn(\Gamma)}\, \Phi .
$$
For general, possibly infinite, countable families, it can be seen that the closure of the outer span coincides with the closure of the usual modular span, so that
\begin{equation}\label{eq:closedmodularspan}
\ol{\osp\ \Phi}^\E = \ol{\vsp_{\vn(\Gamma)}\, \Phi}^\E
\end{equation}
defines a closed submodule of $\E$, that we will denote with $\E_\Phi$.

Not only the modular structure of $\E$ can be extended to outer coefficients, as in Proposition \ref{prop:coefficients}. Also the Hilbert structure, in the sense that Property $iv.$ of Definition \ref{def:Hmodules} can be obtained as in the following proposition.
\begin{prop}\label{prop:outerinner}
Let $\E$ be an $L^2(\vn(\Gamma))$-Hilbert module, let $\varPhi,\varPsi \in \E$ and let $F \in L^2(\vn(\Gamma),\ipr{\varPhi}{\varPhi})$. Then $\{\varPhi, \varPsi\}F \in L^1(\vn(\Gamma))$ and
\begin{equation}\label{eq:outerinner}
\{\varPhi F, \varPsi\} = \{\varPhi, \varPsi\}F
\end{equation}
where the composition $\varPhi F$ is intended in the sense of Proposition \ref{prop:coefficients}.
\end{prop}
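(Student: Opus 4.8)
The plan is to reduce (\ref{eq:outerinner}) to the case of coefficients in $\vn(\Gamma)$, where it is exactly Property $iv.$ of Definition \ref{def:Hmodules}, and then pass to the limit exactly as in Proposition \ref{prop:coefficients}. The one ingredient that is not immediate is a Cauchy--Schwarz-type bound for the $L^1(\vn(\Gamma))$-valued inner product, which I would establish first:
\begin{equation*}
\|\{x,y\}\|_1 \leq \|x\|_\E \, \|y\|_\E \qquad \textnormal{for all } x,y \in \E .
\end{equation*}
To prove this, observe that $\{x,y\} \in L^1(\vn(\Gamma))$ is a closed densely defined operator affiliated to $\vn(\Gamma)$, so its polar decomposition $\{x,y\} = u\,|\{x,y\}|$ has $u$ a partial isometry in $\vn(\Gamma)$ whose initial projection $u^*u$ is the support of $|\{x,y\}|$, whence $u^*\{x,y\} = |\{x,y\}|$. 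Therefore, using $\|\cdot\|_1 = \tau(|\cdot|)$, the traciality of $\tau$, the identity $\{xu^*,y\} = \{x,y\}u^*$ (which follows from Properties $ii.$ and $iv.$), and (\ref{eq:Xscalarprod}),
\begin{equation*}
\|\{x,y\}\|_1 = \tau\big(u^*\{x,y\}\big) = \tau\big(\{x,y\}u^*\big) = \tau\big(\{xu^*,y\}\big) = \langle xu^*, y\rangle_\E \leq \|xu^*\|_\E \, \|y\|_\E .
\end{equation*}
Finally, $\{xu^*,xu^*\} = u\{x,x\}u^*$, so $\|xu^*\|_\E^2 = \tau(u\{x,x\}u^*) = \tau(u^*u\,\{x,x\}) \leq \|u^*u\|_\infty\|\{x,x\}\|_1 \leq \|x\|_\E^2$, which gives the claimed inequality. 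In particular $\{\,\cdot\,,\varPsi\} : \E \to L^1(\vn(\Gamma))$ is continuous for each fixed $\varPsi$.

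With this in hand the proposition follows the pattern of Proposition \ref{prop:coefficients}. Pick $\{F_n\}_{n\in\N} \subset \vn(\Gamma)$ with $F_n \to F$ in $L^2(\vn(\Gamma),\ipr{\varPhi}{\varPhi})$, so that $\varPhi F_n \to \varPhi F$ in $\E$. For each $n$, Properties $ii.$--$iv.$ give $\{\varPhi F_n,\varPsi\} = \{\varPhi,\varPsi\}F_n$, which lies in $L^1(\vn(\Gamma))$ by H\"older's inequality. Using the Cauchy--Schwarz bound above, the additivity of the module action and of the inner product, and the norm identity of Proposition \ref{prop:coefficients},
$$
\|\{\varPhi,\varPsi\}F_n - \{\varPhi,\varPsi\}F_m\|_1 = \|\{\varPhi(F_n - F_m),\varPsi\}\|_1 \leq \|\varPhi(F_n - F_m)\|_\E \|\varPsi\|_\E = \|F_n - F_m\|_{2,\ipr{\varPhi}{\varPhi}} \|\varPsi\|_\E ,
$$
so $\{\varPhi,\varPsi\}F_n$ is Cauchy in $L^1(\vn(\Gamma))$; its limit, which we take as the definition of $\{\varPhi,\varPsi\}F$, therefore belongs to $L^1(\vn(\Gamma))$. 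On the other hand, by continuity of $\{\,\cdot\,,\varPsi\}$ and $\varPhi F_n \to \varPhi F$, the same sequence $\{\varPhi F_n,\varPsi\}$ converges in $L^1$ to $\{\varPhi F,\varPsi\}$. Since the two sequences coincide termwise we get $\{\varPhi,\varPsi\}F = \{\varPhi F,\varPsi\}$, i.e.\ (\ref{eq:outerinner}); this also shows the limit is independent of the approximating sequence, since $\varPhi F$ is.

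The only genuine obstacle is the Cauchy--Schwarz inequality: in the bounded ($C^*$-module) setting it is classical, but here $\{x,x\}$ may be unbounded, so one cannot use $\{x,y\}\{y,x\} \leq \|\{y,y\}\|\,\{x,x\}$; the polar-decomposition argument circumvents this by working directly under the trace. The remaining steps are a routine density-and-completeness argument mirroring Proposition \ref{prop:coefficients}.
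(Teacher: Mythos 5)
Your proof is correct, and it differs from the paper's in one genuine respect: where you prove the Cauchy--Schwarz bound $\|\ipr{x}{y}\|_1 \leq \|x\|_\E\,\|y\|_\E$ from scratch (polar decomposition of the affiliated operator $\ipr{x}{y}$, traciality, and the module identity $\ipr{xu^*}{y}=\ipr{x}{y}u^*$), the paper instead cites \cite[Prop. 3.2]{JungeSherman05}, which supplies both the factorization $\ipr{\varPhi}{\varPsi} = \ipr{\varPsi}{\varPsi}^{\frac12} B \ipr{\varPhi}{\varPhi}^{\frac12}$ with $\|B\|_\infty \leq 1$ and the boundedness of $\varPhi \mapsto \ipr{\varPhi}{\varPsi}$. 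The factorization buys the paper something slightly stronger at the first step: H\"older applied to $\ipr{\varPsi}{\varPsi}^{\frac12}B\,\ipr{\varPhi}{\varPhi}^{\frac12}F$ gives a direct meaning to the product $\ipr{\varPhi}{\varPsi}F$ for every $F$ in the weighted space, together with the single estimate $\|\ipr{\varPhi}{\varPsi}F\|_1 \leq \|\varPsi\|_\E\,\|F\|_{2,\ipr{\varPhi}{\varPhi}}$, which is then applied with $F$ replaced by $F_n - F$; you instead can only invoke the module identity for bounded coefficients, so you define $\ipr{\varPhi}{\varPsi}F$ as the $L^1$-limit of $\ipr{\varPhi}{\varPsi}F_n$ via a Cauchy-sequence argument. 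The two definitions agree (both reduce to the identification $F \mapsto \ipr{\varPhi}{\varPhi}^{\frac12}F$ of Lemma \ref{lem:weightmap} plus continuity), and the concluding limit-uniqueness step is identical in both proofs. What your route buys is self-containedness — no appeal to the Junge--Sherman inequality, only polar decomposition, traciality and H\"older, which fits the paper's stated aim of detailed proofs; what it costs is that the membership $\ipr{\varPhi}{\varPsi}F \in L^1(\vn(\Gamma))$ is obtained by completeness rather than exhibited as a concrete product of two $L^2$ factors.
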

\begin{proof}
By \cite[Prop. 3.2]{JungeSherman05}, for $\varPhi, \varPsi \in \E$ there is a $B \in \vn(\Gamma)$ with $\|B\|_\infty \leq 1$ such that\footnote{Observe that the relationship between the inner product $\langle \cdot, \cdot \rangle$ of \cite{JungeSherman05} and the one of Definition \ref{def:Hmodules} is $\langle x, y\rangle = \ipr{y}{x}$.} $\ipr{\varPhi}{\varPsi} = \ipr{\varPsi}{\varPsi}^\frac12 B \ipr{\varPhi}{\varPhi}^\frac12$. Thus, by H\"older's inequality
\begin{align*}
\|\ipr{\varPhi}{\varPsi} F\|_1 & = \|\ipr{\varPsi}{\varPsi}^\frac12 B \ipr{\varPhi}{\varPhi}^\frac12 F\|_1 \leq \|\ipr{\varPsi}{\varPsi}^\frac12 B\|_2 \ \|\ipr{\varPhi}{\varPhi}^\frac12 F\|_2\\
& \leq \|\varPsi\|_\E \ \|F\|_{2,\ipr{\varPhi}{\varPhi}} .
\end{align*}
Hence $\ipr{\varPhi}{\varPsi}F \in L^1(\vn(\Gamma))$. To prove (\ref{eq:outerinner}), let $\{F_n\}_{n\in\N} \subset \vn(\Gamma)$ be a sequence converging to $F$ in $L^2(\vn(\Gamma),\ipr{\varPhi}{\varPhi})$. By the same argument as before and the definition of $L^2(\vn(\Gamma))$-Hilbert module we have that for all $n \in \N$
$$
\|\ipr{\varPhi F_n}{\varPsi} - \ipr{\varPhi}{\varPsi} F\|_1 = \|\ipr{\varPhi}{\varPsi}F_n - \ipr{\varPhi}{\varPsi} F\|_1 \leq \|\varPsi\|_\E \ \|F_n - F\|_{2,\ipr{\varPhi}{\varPhi}}
$$
so that $\{\ipr{\varPhi F_n}{\varPsi}\}_{n \in \N}$ converges to $\{\varPhi, \varPsi\}F$ in $L^1(\vn(\Gamma))$. On the other hand, by Proposition \ref{prop:coefficients} we have that $\{\varPhi F_n\}_{n \in \N}$ converges to $\varPhi F$ in $\E$, while \cite[Prop. 3.2]{JungeSherman05}
implies that for all $\varPsi \in \E$ the map
$$
\begin{array}{rccl}
b_\varPsi : & \E & \to & L^1(\vn(\Gamma))\\
& \varPhi & \mapsto & \ipr{\varPhi}{\varPsi}
\end{array}
$$
is bounded. Thus $\{\ipr{\varPhi F_n}{\varPsi}\}_{n \in \N}$ converges to $\{\varPhi F, \varPsi\}$ in $L^1(\vn(\Gamma))$ and the conclusion follows by uniqueness of the limit.
\end{proof}

The presence of outer coefficients is not exclusive of this setting. Indeed, they appeared in disguise already in \cite[Th. 2.14]{BoorVoreRon93}, that is one of the fundamental results in the theory of shift-invariant spaces. We can extend it to the present setting, and characterize the multiplier as an outer coefficient, as follows.
\begin{prop}\label{prop:BDR}
Let $(\Gamma,\Pi,\Hil)$ be a dual integrable triple with  associated Helson map $\iso$ and let $\varphi, \psi \in \Hil$. Then $\varphi \in \psis{\psi}$ if and only if there exists $F \in L^2(\vn(\Gamma),[\psi,\psi])$ satisfying
$$
\iso[\varphi] = \iso[\psi]F
$$
and in this case one has $[\varphi,\psi] = [\psi,\psi] F$.
\end{prop}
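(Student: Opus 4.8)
The plan is to derive everything from the correspondence $F\mapsto \proj_F\psi = S_\psi^{-1}F$ of Corollary~\ref{cor:correspondence} together with Proposition~\ref{prop:fundamental}, which already pins down $\iso[\proj_F\psi]$ and $[\proj_F\psi,\psi]$ when $F\in\vn(\Gamma)$. Recall that $\Kil=\iso(\Hil)$ is an $L^2(\vn(\Gamma))$-Hilbert module by Theorem~\ref{theo:Hilmodshiftinv} with $\ipr{\iso[\psi]}{\iso[\psi]}_\Kil=[\psi,\psi]$, so that for $F\in L^2(\vn(\Gamma),[\psi,\psi])$ the composition $\iso[\psi]F$ is a well-defined element of $\Kil$ in the sense of Proposition~\ref{prop:coefficients}. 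The one substantive step is then the following claim: for every $F\in L^2(\vn(\Gamma),[\psi,\psi])$,
$$
\iso[\proj_F\psi] = \iso[\psi]F .
$$

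To prove the claim I would approximate. Choose $\{F_n\}_{n\in\N}\subset\vn(\Gamma)$ with $\|F_n-F\|_{2,[\psi,\psi]}\to 0$, which is possible since $L^2(\vn(\Gamma),[\psi,\psi])$ is, by~(\ref{eq:weighted}), a completion of $\vn(\Gamma)$. For each $n$, Corollary~\ref{cor:correspondence} gives $\proj_{F_n}\psi=\qroj_{F_n}\psi$, and point~i.\ of Proposition~\ref{prop:fundamental} gives $\iso[\qroj_{F_n}\psi]=\iso[\psi]F_n$. On the left, the isometry property~(\ref{eq:boundedness}) gives $\|\proj_{F_n}\psi-\proj_F\psi\|_\Hil=\|F_n-F\|_{2,[\psi,\psi]}\to 0$, so $\iso[\proj_{F_n}\psi]\to\iso[\proj_F\psi]$ in $\Kil$ by continuity of $\iso$. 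On the right, Proposition~\ref{prop:coefficients} applied with $\varPhi=\iso[\psi]$ gives $\iso[\psi]F_n\to\iso[\psi]F$ in $\Kil$. Passing to the limit in $\iso[\qroj_{F_n}\psi]=\iso[\psi]F_n$ proves the claim.

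The equivalence follows immediately. If $\varphi\in\psis{\psi}$, Proposition~\ref{prop:isometry} furnishes $F:=S_\psi\varphi\in L^2(\vn(\Gamma),[\psi,\psi])$ with $\varphi=\proj_F\psi$, hence $\iso[\varphi]=\iso[\psi]F$ by the claim. Conversely, if $\iso[\varphi]=\iso[\psi]F$ for some $F\in L^2(\vn(\Gamma),[\psi,\psi])$, the claim yields $\iso[\varphi]=\iso[\proj_F\psi]$, and injectivity of the isometry $\iso$ gives $\varphi=\proj_F\psi\in\psis{\psi}$. For the remaining identity, I would apply Proposition~\ref{prop:outerinner} in $\E=\Kil$ with $\varPhi=\varPsi=\iso[\psi]$: it gives $\ipr{\iso[\psi]F}{\iso[\psi]}_\Kil=\ipr{\iso[\psi]}{\iso[\psi]}_\Kil F=[\psi,\psi]F\in L^1(\vn(\Gamma))$, while by the definition~(\ref{eq:Kilinnerproduct}) of $\ipr{\cdot}{\cdot}_\Kil$ and the relation $\iso[\psi]F=\iso[\varphi]$ the left-hand side equals $[\iso^{-1}(\iso[\psi]F),\psi]=[\varphi,\psi]$. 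Hence $[\varphi,\psi]=[\psi,\psi]F$.

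The main obstacle is foundational and notational rather than technical: one must consistently read $\iso[\psi]F$ for unbounded $F$ as the module composition of Proposition~\ref{prop:coefficients} (the naive pointwise product $(\iso[\psi])(x)F$ need not be meaningful), and one must ensure that both limits $\proj_{F_n}\psi\to\proj_F\psi$ in $\Hil$ and $\iso[\psi]F_n\to\iso[\psi]F$ in $\Kil$ are legitimate — which is exactly what Corollary~\ref{cor:correspondence} and Proposition~\ref{prop:coefficients} provide. Once these are in place the proof is a direct assembly of Propositions~\ref{prop:isometry}, \ref{prop:fundamental}, \ref{prop:coefficients} and~\ref{prop:outerinner}.
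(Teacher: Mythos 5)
Your proposal is correct and follows essentially the same route as the paper: the paper isolates your key claim $\iso[\proj_F\psi]=\iso[\psi]F$ as a separate lemma (Lemma \ref{lem:coefficients}), proved by exactly your approximation argument via Proposition \ref{prop:coefficients}, Corollary \ref{cor:correspondence} and Proposition \ref{prop:fundamental}, and then deduces both the equivalence and the identity $[\varphi,\psi]=[\psi,\psi]F$ from Proposition \ref{prop:outerinner} and (\ref{eq:Kilinnerproduct}) just as you do.
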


The proof of this proposition is based on an intermediate result that extends Proposition \ref{prop:fundamental} by making use of Proposition \ref{prop:coefficients} and Corollary \ref{cor:correspondence}.
\begin{lem}\label{lem:coefficients}
Let $\psi \in \Hil$ and let $F \in L^2(\vn(\Gamma),[\psi,\psi])$. Then
\begin{equation}\label{eq:outerHelson}
\iso[\psi] F = \iso[\qroj_F \psi]
\end{equation}
where $\qroj_F \psi$ is given by (\ref{eq:correspondence}), so in particular $\iso[\psi]F \in \iso(\psis{\psi})$.
\end{lem}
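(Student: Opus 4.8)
The plan is to prove (\ref{eq:outerHelson}) by approximating the possibly unbounded coefficient $F$ by a sequence in $\vn(\Gamma)$, passing to the limit on both the $\Hil$-side and the $\Kil$-side, and observing that both limiting processes are controlled by the single norm $\|\cdot\|_{2,[\psi,\psi]}$. We may assume $\psi \neq 0$, since otherwise $[\psi,\psi] = 0$, $F = 0$, and both sides of (\ref{eq:outerHelson}) vanish. Recall that here $\qroj_F\psi$ is to be read as $S_\psi^{-1}F$ in the sense of Corollary \ref{cor:correspondence}, which extends (\ref{eq:correspondence}) to $F \in L^2(\vn(\Gamma),[\psi,\psi])$. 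By density of $\vn(\Gamma)$ in $L^2(\vn(\Gamma),[\psi,\psi])$, fix a sequence $\{F_n\}_{n\in\N} \subset \vn(\Gamma)$ with $\|F_n - F\|_{2,[\psi,\psi]} \to 0$.

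First I would pass to the limit in $\Hil$. Since $S_\psi^{-1}$ is linear and $\|\qroj_G\psi\|_\Hil = \|G\|_{2,[\psi,\psi]}$ for every $G \in L^2(\vn(\Gamma),[\psi,\psi])$ by (\ref{eq:boundedness}), we get
$$
\|\qroj_{F_n}\psi - \qroj_F\psi\|_\Hil = \|\qroj_{F_n - F}\psi\|_\Hil = \|F_n - F\|_{2,[\psi,\psi]} \to 0 ,
$$
so $\qroj_{F_n}\psi \to \qroj_F\psi$ in $\Hil$, and in particular $\qroj_F\psi \in \psis{\psi}$. Applying the Helson map $\iso$, which is continuous, yields $\iso[\qroj_{F_n}\psi] \to \iso[\qroj_F\psi]$ in $\Kil$. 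For each $n$ the coefficient $F_n$ lies in $\vn(\Gamma)$, so point i.\ of Proposition \ref{prop:fundamental} gives $\iso[\qroj_{F_n}\psi] = \iso[\psi]F_n$; hence $\iso[\psi]F_n \to \iso[\qroj_F\psi]$ in $\Kil$.

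It remains to identify this limit with $\iso[\psi]F$. By Theorem \ref{theo:Hilmodshiftinv} applied with $V = \Hil$, the pair $(\Kil,\ipr{\cdot}{\cdot}_\Kil)$ is an $L^2(\vn(\Gamma))$-Hilbert module, and by the definition (\ref{eq:Kilinnerproduct}) one has $\ipr{\iso[\psi]}{\iso[\psi]}_\Kil = [\psi,\psi]$, so $L^2(\vn(\Gamma),\ipr{\iso[\psi]}{\iso[\psi]}_\Kil) = L^2(\vn(\Gamma),[\psi,\psi])$. Therefore Proposition \ref{prop:coefficients}, applied in $\Kil$ with $\varPhi = \iso[\psi]$ and the same approximating sequence $\{F_n\}$, asserts that $\iso[\psi]F_n$ converges in $\Kil$ exactly to the element denoted $\iso[\psi]F$. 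By uniqueness of limits in $\Kil$ we conclude $\iso[\psi]F = \iso[\qroj_F\psi]$, which is (\ref{eq:outerHelson}); and since $\qroj_F\psi \in \psis{\psi}$, the element $\iso[\psi]F$ belongs to $\iso(\psis{\psi})$. The argument is a short chaining of earlier results, so no genuine obstacle is expected; the only point needing care is that the \emph{same} sequence $\{F_n\}$ must serve on both sides, which is legitimate precisely because the convergence on the $\Hil$-side (through Corollary \ref{cor:correspondence}) and on the $\Kil$-side (through Proposition \ref{prop:coefficients}) are both governed by $\|\cdot\|_{2,[\psi,\psi]}$, via the identity $\ipr{\iso[\psi]}{\iso[\psi]}_\Kil = [\psi,\psi]$.
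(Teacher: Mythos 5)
Your proof is correct and follows essentially the same route as the paper's: approximate $F$ by $F_n\in\vn(\Gamma)$ in $\|\cdot\|_{2,[\psi,\psi]}$, use Proposition \ref{prop:fundamental} for the bounded coefficients, Corollary \ref{cor:correspondence} together with the isometry of $\iso$ on the $\Hil$-side, and Proposition \ref{prop:coefficients} on the $\Kil$-side, then conclude by uniqueness of limits. Your explicit remark that $\ipr{\iso[\psi]}{\iso[\psi]}_\Kil=[\psi,\psi]$, which legitimizes using the same approximating sequence in both limits, is a detail the paper leaves implicit but is exactly the intended justification.
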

\begin{proof}
Let $\{F_n\}_{n\in\N} \subset \vn(\Gamma)$ be a sequence converging to $F$ in the $\|\cdot\|_{2,[\psi,\psi]}$ norm. By Proposition \ref{prop:coefficients} we have that $\iso[\psi]F_n$ converges to $\iso[\psi]F$ in $\Kil$.
On the other hand, Proposition \ref{prop:fundamental} ensures that
$$
\iso[\psi] F_n = \iso[\qroj_{F_n} \psi] \quad \forall \, n
$$
while, by Corollary \ref{cor:correspondence} and the fact that $\iso$ is an isometry, we have that the sequence $\{\iso[\qroj_{F_n} \psi] \}_{n\in\N}$ converges to $\iso[\qroj_{F} \psi] F$ in $\Kil$. This shows (\ref{eq:outerHelson}), because
$$
\norm{\iso[\qroj_{F} \psi] - \iso[\qroj_{F_n} \psi]} = \|\qroj_{F} \psi - \qroj_{F_n} \psi\|_\Hil = \|F - F_n\|_{2,[\psi,\psi]} .
$$
Since, by Corollary \ref{cor:correspondence}, $\qroj_F \psi \in \psis{\psi}$, then $\iso[\psi]F$ belongs to $\iso(\psis{\psi})$.
\end{proof}

\begin{proof}[Proof of Proposition \ref{prop:BDR}]
By Corollary \ref{cor:correspondence} we have that $\varphi \in \psis{\psi}$ if and only if there exists $F \in L^2(\vn(\Gamma),[\psi,\psi])$ such that $\varphi = \proj_F \psi$. The main statement is then a consequence of Lemma \ref{lem:coefficients}. This implies in particular that
$$
\ipr{\iso[\varphi]}{\iso[\psi]}_\Kil = \ipr{\iso[\psi]F}{\iso[\psi]}_\Kil
$$
so that the identity $[\varphi,\psi] = [\psi,\psi] F$ can be deduced by Proposition \ref{prop:outerinner} and the definition of $\Kil$ inner product given in (\ref{eq:Kilinnerproduct}).
\end{proof}

Next theorem extends Proposition \ref{prop:BDR} to countable families of generators. In particular, when the family is finite it shows the role of outer coefficients in order to represent $(\Gamma,\Pi)$-invariant subspaces of the original Hilbert space $\Hil$ as submodules of $\Kil$. As such, it is a direct extension of \cite[Th. 1.7]{BoorVoreRon94}.

\begin{theo}\label{theo:coefficients}
Let $(\Gamma,\Pi,\Hil)$ be a dual integrable triple with Helson map $\iso$. Let $\{\varphi_j\}_{j\in \ind}\subset\Hil$ be a countable family, denote with $\Phi = \{\iso[\varphi_j]\}_{j \in \ind}$ and with $E=\{\Pi(\gamma)\varphi_j:\, \gamma\in\Gamma, j\in\ind\}$, and let $V = \ol{\vsp\,E}^\Hil$. Then
$$
\iso[V] = \E_\Phi
$$
and in particular, when the family $\{\varphi_j\}_{j\in \ind}$ is finite, we have
$$
\iso[V] = \osp\big\{\iso[\varphi_j]:\, j\in \ind\big \}.
$$
\end{theo}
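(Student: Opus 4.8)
The strategy is to compute $\iso[V]$ explicitly from the intertwining relation (\ref{eq:Helsonproperty}) and then identify it with $\E_\Phi=\ol{\osp\,\Phi}^\Kil=\ol{\vsp_{\vn(\Gamma)}\,\Phi}^\Kil$ by two inclusions, the substantive one being supplied by Lemma \ref{lem:coefficients}. Since $\iso$ is a surjective isometry, it is a homeomorphism of $\Hil$ onto $\Kil$ and therefore commutes with the operation of taking closures; combined with linearity and (\ref{eq:Helsonproperty}) this gives
$$
\iso[V]=\ol{\vsp\,\iso[E]}^\Kil=\ol{\vsp\big\{\iso[\varphi_j]\rr(\gamma)^*:\gamma\in\Gamma,\ j\in\ind\big\}}^\Kil .
$$
For the inclusion $\iso[V]\subseteq\E_\Phi$ I would observe that $\rr(\gamma)^*\in\vn(\Gamma)$, so each generator $\iso[\varphi_j]\rr(\gamma)^*$ already lies in $\vsp_{\vn(\Gamma)}\,\Phi$; grouping any finite linear combination of these generators by the index $j$ shows $\vsp\,\iso[E]\subseteq\vsp_{\vn(\Gamma)}\,\Phi\subseteq\E_\Phi$, and since $\E_\Phi$ is closed, passing to closures gives the inclusion.

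For the reverse inclusion $\E_\Phi\subseteq\iso[V]$ I would work directly with outer coefficients. By (\ref{eq:Kilinnerproduct}) one has $\ipr{\iso[\varphi_j]}{\iso[\varphi_j]}_\Kil=[\varphi_j,\varphi_j]$, so a typical element of $\osp\,\Phi$ is a finite sum $\sum_j\iso[\varphi_j]C_j$ with $C_j\in L^2(\vn(\Gamma),[\varphi_j,\varphi_j])$. Lemma \ref{lem:coefficients} then says $\iso[\varphi_j]C_j=\iso[\qroj_{C_j}\varphi_j]$ with $\qroj_{C_j}\varphi_j\in\psis{\varphi_j}$; and since $\vsp\{\Pi(\gamma)\varphi_j\}_{\gamma\in\Gamma}\subseteq\vsp\,E$ we have $\psis{\varphi_j}\subseteq V$, so each summand lies in $\iso[V]$ and hence $\osp\,\Phi\subseteq\iso[V]$. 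As $\iso[V]$ is closed this yields $\E_\Phi=\ol{\osp\,\Phi}^\Kil\subseteq\iso[V]$, and together with the first inclusion we conclude $\iso[V]=\E_\Phi$. For the finite case, the remark following Definition \ref{def:outerspan} gives $\osp\,\Phi=\ol{\vsp_{\vn(\Gamma)}\,\Phi}^\Kil=\E_\Phi$, whence $\iso[V]=\osp\,\Phi$.

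The main point of difficulty has in fact already been dealt with inside Lemma \ref{lem:coefficients} (and, underneath it, Proposition \ref{prop:fundamental}): the delicate statement is that for an \emph{outer} coefficient $C\in L^2(\vn(\Gamma),[\varphi_j,\varphi_j])$ the composition $\iso[\varphi_j]C$ still lands in $\iso(\psis{\varphi_j})$, which is obtained by approximating $C$ by trigonometric polynomials whose adjoints converge in the strong operator topology and passing to the limit — and this is exactly the reason why the outer span, rather than only the $\vn(\Gamma)$-span, is the natural object to compare with $\iso[V]$. What is left for this proof is then only bookkeeping: verifying that $\iso$ commutes with closures, keeping track that the relevant weight in the outer span is precisely $[\varphi_j,\varphi_j]$, and invoking the finite-family identity $\osp\,\Phi=\E_\Phi$ established after Definition \ref{def:outerspan}.
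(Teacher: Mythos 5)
Your argument is correct and follows essentially the same route as the paper's proof: the inclusion $\E_\Phi\subseteq\iso[V]$ via Lemma \ref{lem:coefficients} (outer coefficients give $\iso[\varphi_j]C_j=\iso[\qroj_{C_j}\varphi_j]\in\iso[\psis{\varphi_j}]$), and the reverse inclusion by grouping finite combinations of $\iso[\varphi_j]\rr(\gamma)^*$ into trigonometric-polynomial coefficients, followed by closure arguments and the finite-family identity $\osp\,\Phi=\E_\Phi$. The only cosmetic difference is that you use that $\iso$ is a homeomorphism onto $\Kil$ to commute with closures, while the paper only invokes continuity via $\iso[\ol{\vsp\,E}]\subset\ol{\iso[\vsp\,E]}$.
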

\begin{proof}
By Lemma \ref{lem:coefficients} we have that for each $j\in\ind$
$$
\osp\big\{\iso[\varphi_j]\big\} \subset \iso[\psis{\varphi_j}],
$$
which in particular implies that $\E_\Phi = \ol{\osp\big\{\iso[\varphi_j]:\, j\in \ind\big \}}^{\norm{\cdot} }\subset \iso[V]$.

To show the opposite inclusion, let first take $\varPsi \in \iso[\vsp\,E]$.
Then $\varPsi=\iso[\psi]$ with 
$\psi = \sum_{ j\in \ind} \sum_{\gamma \in \Gamma} c_{j,\gamma} \Pi(\gamma)\varphi_j$ and 
$c = \{c_{j,\gamma}\}_{ j\in \ind, \gamma\in\Gamma}$ being a finite sequence. 
For each $j$ the operator
$C_j = \sum_{\gamma \in \Gamma} c_{j,\gamma} \rr(\gamma)^*$ is a trigonometric polynomial, so that it satisfies
$$
\iso[\varphi_j] C_j = \iso[\proj_{C_j} \varphi_j]
$$
by definition of Helson map. Thus,
\begin{displaymath}
\psi = \sum_{ j\in \ind} \proj_{C_j} \varphi_j = \iso^{-1}\sum_{ j\in \ind} \iso[\varphi_j] C_j
\end{displaymath}
hence proving that $\varPsi \in \vsp_{\vn(\Gamma)}\{\iso[\varphi_j]:\, j\in \ind\}$. Finally we have
\begin{displaymath}
\iso[V]=\iso[\ol{\textnormal{span}\,E}^\Hil] \subset \ol{\iso[\textnormal{span}\,E]}^{\norm{\cdot}} \subset \ol{\vsp_{\vn(\Gamma)}\big\{\iso[\varphi_j]:\, j\in \ind\big \}}^{\norm{\cdot} } = \E_\Phi. \qedhere
\end{displaymath}
\end{proof}

\begin{cor}
Let $(\Gamma,\Pi,\Hil)$ be a dual integrable triple with Helson map $\iso$, let $\Kil = \iso[\Hil]$ and let $M$ be a closed submodule of $\Kil$. Then $M$ admits a countable family $\{\varPhi_j\}_{j \in \ind} \subset M$ of generators.
\end{cor}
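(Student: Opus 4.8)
The idea is to transport the statement back to $\Hil$ via the Helson map and then apply the results already established for $(\Gamma,\Pi)$-invariant subspaces. Since $\iso : \Hil \to \Kil$ is by definition a surjective isometry, it is a Hilbert space isomorphism with bounded inverse, so $V := \iso^{-1}(M)$ is a closed subspace of $\Hil$. The hypothesis that $M$ is a closed submodule of $\Kil$ means precisely that $MA \subset M$ for all $A \in \vn(\Gamma)$, hence Theorem \ref{theo:multiplicatively} applies and tells us that $V$ is $(\Gamma,\Pi)$-invariant.

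Next I would invoke Lemma \ref{lem:generators}: since $V$ is a closed $(\Gamma,\Pi)$-invariant subspace of $\Hil$, there exists a countable family $\{\psi_j\}_{j \in \ind} \subset V$ such that
$$
V = \ol{\vsp\{\Pi(\gamma)\psi_j : \gamma \in \Gamma, j \in \ind\}}^\Hil .
$$
Set $\varPhi_j = \iso[\psi_j]$; since $\psi_j \in V = \iso^{-1}(M)$, each $\varPhi_j$ lies in $M$, so $\Phi := \{\varPhi_j\}_{j \in \ind}$ is a countable family contained in $M$.

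Finally I would apply Theorem \ref{theo:coefficients} with the choice $\varphi_j = \psi_j$: that theorem gives $\iso[V] = \E_\Phi$, where $\E_\Phi = \ol{\osp\,\Phi}^\Kil = \ol{\vsp_{\vn(\Gamma)}\,\Phi}^\Kil$ by (\ref{eq:closedmodularspan}). Since $\iso[V] = M$ by construction, we conclude $M = \E_\Phi$, i.e. $M$ is the closed submodule generated by the countable family $\{\varPhi_j\}_{j \in \ind}$, which is exactly the assertion.

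\textbf{Main obstacle.} There is no serious obstacle here: the corollary is essentially a packaging of Theorems \ref{theo:multiplicatively} and \ref{theo:coefficients} together with Lemma \ref{lem:generators}. The only point requiring a moment of care is the bookkeeping that the notion of ``closed submodule of $\Kil$'' in the statement coincides with that of a closed subspace invariant under the right composition (\ref{eq:rightaction}) used in Theorem \ref{theo:multiplicatively}, and that ``generators'' is understood in the sense of the closed modular span $\E_\Phi$ defined in (\ref{eq:closedmodularspan}); both identifications are immediate from the development of Section \ref{sec:Hilbertmodules}.
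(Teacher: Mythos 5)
Your proposal is correct and follows essentially the same route as the paper: identify $V=\iso^{-1}(M)$ as a closed $(\Gamma,\Pi)$-invariant subspace via Theorem \ref{theo:multiplicatively}, produce countable generators with Lemma \ref{lem:generators}, and transfer them to $M$ through Theorem \ref{theo:coefficients}. Your version merely spells out the bookkeeping that the paper's one-line proof leaves implicit.
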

\begin{proof}
By Theorem \ref{theo:multiplicatively}, all submodules of $\Kil$ that are closed with respect to the topology of $\Kil$ are the image under $\iso$ of a closed $(\Gamma,\Pi)$-invariant subspace of $\Hil$, so this corollary is a consequence of Lemma \ref{lem:generators} and Theorem \ref{theo:coefficients}.
\end{proof}

\subsection{Fundamental modular operators}

In this section we will introduce the modular analogous of the synthesis, analysis and Gram operators associated to a countable family $\Phi = \{\varPhi_j\}_{j \in \ind}$ in an $L^2(\vn(\Gamma))$-Hilbert module $\E$. We will do this with respect to two different classes of spaces of coefficients. At first we will consider the case of outer coefficients, which does not require any assumption on $\Phi$ and allows to characterize the domains and ranges of the operators in a general way that is analogous to that of Section \ref{sec:boundedness}. Then, we will consider an assumption on the family $\Phi$ that is the weakest possible assumption allowing to obtain a modular Gram operator that is densely defined on a space of unweighted coefficients. This approach will follow that of Section \ref{sec:squareintegrability}, and will provide the basis for the characterizations of the modular analogous of Riesz and frame systems that will be developed in the next section.

\subsubsection{Basic definitions on spaces of outer coefficients}

We introduce now the fundamental operators in terms of the natural spaces of outer coefficients studied in the previous section. In order to do so, we first define some classes of spaces of sequences associated to countable families in $\E$.

\begin{defi}
Let $\E$ be an $L^2(\vn(\Gamma))$-Hilbert module and let $\Phi = \{\varPhi_j\}_{j \in \ind} \subset \E$ be a countable family. For $0 \leq p \leq \infty$ we define the weighted summability spaces
\begin{align*}
\ell_p^{2,\Phi}(\ind,\vn(\Gamma)) = \Big\{ & C = \{C_j\}_{j \in \ind} \, \big| \, C_j \in L^2(\vn(\Gamma),\ipr{\varPhi_j}{\varPhi_j}) \ \forall \, j \in \ind \ \ \textnormal{and}\\
& \big\{\|C_j\|_{2,\ipr{\varPhi_j}{\varPhi_j}}\big\}_{j \in \ind} \in \ell_p(\ind)\Big\} .
\end{align*}
For $p \geq 1$ they are Banach spaces with norm
$$
\|C\|_{\ell_p^{2,\Phi}} =
\left\{
\begin{array}{cc}
\displaystyle\Big(\sum_{j \in \ind}\|C_j\|^p_{2,\ipr{\varPhi_j}{\varPhi_j}}\Big)^{\frac{1}{p}} & 1 \leq p < \infty \vspace{4pt}\\
\displaystyle \sup_{j \in \ind}\Big\{\|C_j\|_{2,\ipr{\varPhi_j}{\varPhi_j}}\Big\} & p = \infty
\end{array}
\right.
$$
satisfying the usual inclusions
$$
\ell_p^{2,\Phi}(\ind,\vn(\Gamma)) \subset \ell_{q}^{2,\Phi}(\ind,\vn(\Gamma)) \, , \quad p \leq q .
$$
With $p = 0$ we mean that the sequence is finite, while $p = 2$ corresponds to a direct sum of Hilbert spaces
$$
\ell_2^{2,\Phi}(\ind,\vn(\Gamma)) = \bigoplus_{j \in \ind} L^2(\vn(\Gamma),\ipr{\varPhi_j}{\varPhi_j}) .
$$
\end{defi}

These spaces provide a natural environment to introduce a synthesis operator as an operator turning coefficients into linear combinations.
\begin{defi}
The \emph{modular synthesis operator} associated to a countable family $\Phi = \{\varPhi_j\}_{j \in \ind} \subset \E$ is the operator
$$
\begin{array}{rccl}
\mSy_\Phi : & \ell_0^{2,\Phi}(\ind,\vn(\Gamma)) & \to &  \osp \,\Phi\\
& C & \mapsto & \displaystyle\sum_{j \in \ind} \varPhi_j C_j .
\end{array}
$$
\end{defi}

The modular synthesis operator can be extended to infinite sequences belonging to $\ell_1^{2,\Phi}(\ind,\vn(\Gamma))$, as in the setting of Section \ref{sec:boundedness}. However, since we are using weighted spaces, no further assumption is needed here.
\begin{lem}
The modular synthesis operator $\mSy_\Phi$ associated to a countable family $\Phi = \{\varPhi_j\}_{j \in \ind} \subset \E$ extends to a bounded operator from $\ell_1^{2,\Phi}(\ind,\vn(\Gamma))$ to $\E$, satisfying
$$
\|\mSy_\Phi C\|_\E \leq \|C\|_{\ell_1^{2,\Phi}} .
$$
\end{lem}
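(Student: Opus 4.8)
The plan is to prove the estimate first on finite sequences, where it follows at once from the triangle inequality in $\E$ and the isometry identity of Proposition~\ref{prop:coefficients}, and then to extend to $\ell_1^{2,\Phi}(\ind,\vn(\Gamma))$ by a density/completeness argument. Concretely, for a finite sequence $C=\{C_j\}_{j\in\ind}\in\ell_0^{2,\Phi}(\ind,\vn(\Gamma))$ the sum defining $\mSy_\Phi C=\sum_{j\in\ind}\varPhi_j C_j$ has only finitely many nonzero terms, and each of them satisfies $\|\varPhi_j C_j\|_\E=\|C_j\|_{2,\ipr{\varPhi_j}{\varPhi_j}}$ by Proposition~\ref{prop:coefficients}; hence
$$
\|\mSy_\Phi C\|_\E\;\le\;\sum_{j\in\ind}\|\varPhi_j C_j\|_\E\;=\;\sum_{j\in\ind}\|C_j\|_{2,\ipr{\varPhi_j}{\varPhi_j}}\;=\;\|C\|_{\ell_1^{2,\Phi}} .
$$

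Next I would extend $\mSy_\Phi$ to all of $\ell_1^{2,\Phi}(\ind,\vn(\Gamma))$. Fixing any enumeration of $\ind$, the same computation applied to finite tails shows that the partial sums $S_N=\sum_{j\le N}\varPhi_j C_j$ form a Cauchy sequence in $\E$: indeed $\|S_N-S_M\|_\E\le\sum_{M<j\le N}\|C_j\|_{2,\ipr{\varPhi_j}{\varPhi_j}}\to 0$ as $M,N\to\infty$, since $\{\|C_j\|_{2,\ipr{\varPhi_j}{\varPhi_j}}\}_{j\in\ind}\in\ell_1(\ind)$ by the very definition of $\ell_1^{2,\Phi}(\ind,\vn(\Gamma))$. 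Because $(\E,\|\cdot\|_\E)$ is a Banach space (being an $L^2(\vn(\Gamma))$-Hilbert module, cf.\ Definition~\ref{def:Hmodules}), the limit exists, and I would define $\mSy_\Phi C$ to be this limit. Linearity is inherited from the finite case, and letting $N\to\infty$ in the bound above, using continuity of $\|\cdot\|_\E$, gives $\|\mSy_\Phi C\|_\E\le\|C\|_{\ell_1^{2,\Phi}}$ for every $C\in\ell_1^{2,\Phi}(\ind,\vn(\Gamma))$.

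The only point requiring a little care — and the only place where more than the triangle inequality enters — is the well-posedness of the infinite sum $\sum_{j\in\ind}\varPhi_j C_j$, i.e.\ that the limit of the partial sums does not depend on the chosen enumeration of $\ind$. This is the standard fact that an absolutely summable family in a Banach space is unconditionally summable, and the tail estimate above supplies it directly: for any two enumerations the symmetric difference of the first $N$ index sets is eventually contained in a set on which the $\ell_1(\ind)$-tail of $\{\|C_j\|_{2,\ipr{\varPhi_j}{\varPhi_j}}\}_{j\in\ind}$ is arbitrarily small. With this in hand the statement follows.
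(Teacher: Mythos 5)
Your proposal is correct and follows essentially the same route as the paper: the key estimate is the triangle inequality in $\E$ combined with the identity $\|\varPhi_j C_j\|_\E=\|C_j\|_{2,\ipr{\varPhi_j}{\varPhi_j}}$ from Proposition \ref{prop:coefficients}. The paper's proof is just this one-line computation, leaving the extension by absolute summability and completeness of $\E$ implicit, which you have simply spelled out.
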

\begin{proof}
This is a direct consequence of Proposition \ref{prop:coefficients}. Indeed,
\begin{displaymath}
\|\mSy_\Phi C\|_\E = \|\sum_{j \in \ind} \varPhi_j C_j\|_\E \leq \sum_{j \in \ind} \|\varPhi_j C_j\|_\E = \sum_{j \in \ind} \|C_j\|_{2,\ipr{\varPhi_j}{\varPhi_j}} . \qedhere
\end{displaymath}
\end{proof}

In analogy with Section \ref{sec:boundedness}, one can introduce the dual operator of $\mSy_\Phi$ as the operator $\mAn_\Phi : \E^* \to (\ell_1^{2,\Phi}(\ind,\vn(\Gamma)))^*$ which, when applied to $\varPsi \in \E \approx \E^*$, acts linearly on $C \in \ell_1^{2,\Phi}(\ind,\vn(\Gamma))$ as
\begin{equation}\label{eq:properlydefmodularanalysis}
(\mAn_\Phi \varPsi) C = \langle \mSy_\Phi C, \varPsi\rangle_\E = \tau\Big(\ipr{\sum_{j \in \ind} \varPhi_j C_j}{\varPsi}\Big) = \tau\Big(\sum_{j \in \ind} \ipr{\varPhi_j}{\varPsi}C_j\Big),
\end{equation}
where the last identity is due to Proposition \ref{prop:outerinner}. This leads to the following definitions of modular analysis and modular Gram operators.
\begin{defi}\label{def:modularAG}
We will call \emph{modular analysis operator} $\mAn_\Phi$ associated to a countable family $\Phi = \{\varPhi_j\}_{j \in \ind} \subset \E$ the bounded map
$$
\begin{array}{rccl}
\mAn_\Phi : & \E & \to & (\ell_1^{2,\Phi}(\ind,\vn(\Gamma)))^*\vspace{4pt}\\
& \varPsi & \mapsto & \Big\{\ipr{\varPsi}{\varPhi_j}\Big\}_{j \in \ind}
\end{array}
$$
where we have identified the sequence $\Big\{\ipr{\varPsi}{\varPhi_j}\Big\}_{j \in \ind}$ with the linear functional (\ref{eq:properlydefmodularanalysis}). We will call \emph{modular Gram operator} the composition of modular synthesis and analysis operators
$$
\begin{array}{rccl}
\mGrop_\Phi = \mAn_\Phi \mSy_\Phi : \!\!\!& \ell_1^{2,\Phi}(\ind,\vn(\Gamma)) & \!\!\!\!\to \!\!\!\!& (\ell_1^{2,\Phi}(\ind,\vn(\Gamma)))^*\vspace{4pt}\\
& C & \!\!\!\!\mapsto\!\!\!\! & \Big\{\ipr{\sum_k \varPhi_k C_k}{\varPhi_j}\Big\}_{j \in \ind} \!\!= \Big\{\sum_k \ipr{\varPhi_k}{\varPhi_j}C_k\Big\}_{j \in \ind} .
\end{array}
$$
\end{defi}

As for the usual Hilbert space setting, the modular Gram operator allows to define a quadratic form which characterizes the domain of the modular synthesis operator. The resulting structure in this setting is not only a Hilbert space but an $L^2(\vn(\Gamma))$-Hilbert module.
\begin{lem}\label{lem:ncquadratic}
Let $\E$ be an $L^2(\vn(\Gamma))$-Hilbert module, and let $\Phi = \{\varPhi_j\}_{j \in \ind} \subset \E$ be a countable family. The map
$$
\begin{array}{rccl}
\ipr{\cdot}{\cdot}_{\mGrop_\Phi} : & \ell_1^{2,\Phi}(\ind,\vn(\Gamma)) \times \ell_1^{2,\Phi}(\ind,\vn(\Gamma)) & \to & L^1(\vn(\Gamma))\vspace{4pt}\\
& (C,D) & \mapsto & \displaystyle\sum_{j \in \ind} D_j^* (\mGrop_\Phi C)_j
\end{array}
$$
is an $L^1(\vn(\Gamma))$-valued inner product in the sense of Definition \ref{def:Hmodules}, satisfying
\begin{equation}\label{eq:modularisometry}
\tau(\ipr{C}{C}_{\mGrop_\Phi}) = \|\mSy_\Phi C\|^2_\E .
\end{equation}
\end{lem}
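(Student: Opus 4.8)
The plan is to reduce the whole statement to the single identity
\[
\ipr{C}{D}_{\mGrop_\Phi} = \ipr{\mSy_\Phi C}{\mSy_\Phi D}_\E \, , \qquad C, D \in \ell_1^{2,\Phi}(\ind,\vn(\Gamma)) ,
\]
after which every requirement of Definition \ref{def:Hmodules} for $\ipr{\cdot}{\cdot}_{\mGrop_\Phi}$, together with the isometry (\ref{eq:modularisometry}), is inherited from the corresponding property of the $L^1(\vn(\Gamma))$-valued inner product $\ipr{\cdot}{\cdot}_\E$ of $\E$ and from the $\vn(\Gamma)$-linearity of $\mSy_\Phi$. By the previous lemma, $\mSy_\Phi$ extends to a bounded map $\ell_1^{2,\Phi}(\ind,\vn(\Gamma)) \to \E$, so the right-hand side above is a well-defined element of $L^1(\vn(\Gamma))$.

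To establish the identity I would unwind the definitions. By Definition \ref{def:modularAG} one has $(\mGrop_\Phi C)_j = \ipr{\mSy_\Phi C}{\varPhi_j}$, so the $j$-th term of the defining series reads $D_j^*(\mGrop_\Phi C)_j = D_j^*\ipr{\mSy_\Phi C}{\varPhi_j}$. Since $D_j \in L^2(\vn(\Gamma),\ipr{\varPhi_j}{\varPhi_j})$, Proposition \ref{prop:outerinner} --- used after swapping the two entries by Property $ii.$ of Definition \ref{def:Hmodules} and taking adjoints --- gives $D_j^*\ipr{\mSy_\Phi C}{\varPhi_j} = \ipr{\mSy_\Phi C}{\varPhi_j D_j}$. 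It then remains to pass the (possibly infinite) sum over $j$ inside the inner product: the partial sums of $\sum_j \varPhi_j D_j$ converge to $\mSy_\Phi D$ in $\E$, while $\varPsi \mapsto \ipr{\mSy_\Phi C}{\varPsi}$ is a bounded conjugate-linear map from $\E$ to $L^1(\vn(\Gamma))$, being the composition of the bounded functional $b_{\mSy_\Phi C}$ of \cite[Prop. 3.2]{JungeSherman05} with the isometric involution $F \mapsto F^*$ of $L^1(\vn(\Gamma))$. Hence $\sum_j D_j^*(\mGrop_\Phi C)_j$ converges in $L^1(\vn(\Gamma))$ to $\ipr{\mSy_\Phi C}{\mSy_\Phi D}_\E$; in particular $\ipr{\cdot}{\cdot}_{\mGrop_\Phi}$ is well defined and the displayed identity holds.

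With the identity in hand the axioms are formal. Conjugate-bilinearity (Property $i.$) and $\ipr{C}{D}_{\mGrop_\Phi} = \ipr{D}{C}_{\mGrop_\Phi}^*$ (Property $ii.$) follow from the linearity of $\mSy_\Phi$ and from Properties $i.$, $ii.$ of $\ipr{\cdot}{\cdot}_\E$. Positivity, $\ipr{C}{C}_{\mGrop_\Phi} = \ipr{\mSy_\Phi C}{\mSy_\Phi C}_\E \geq 0$, is clear, and $\ipr{C}{C}_{\mGrop_\Phi} = 0$ forces $\mSy_\Phi C = 0$ by Property $iii.$ for $\E$, so the form is strictly positive exactly off $\Ker \mSy_\Phi$, in analogy with the construction (\ref{eq:RKHS}). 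For Property $iv.$, the $\vn(\Gamma)$-action on $\ell_1^{2,\Phi}(\ind,\vn(\Gamma))$ is the componentwise right composition $(DF)_j = D_j F$ (which maps $\ell_1^{2,\Phi}$ into itself by H\"older's inequality), and $\mSy_\Phi(DF) = (\mSy_\Phi D)F$ by module axiom $M3$ and Proposition \ref{prop:coefficients} after passing to trigonometric truncations of the $D_j$; hence $\ipr{C}{DF}_{\mGrop_\Phi} = \ipr{\mSy_\Phi C}{(\mSy_\Phi D)F}_\E = F^*\ipr{\mSy_\Phi C}{\mSy_\Phi D}_\E = F^*\ipr{C}{D}_{\mGrop_\Phi}$. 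Finally (\ref{eq:modularisometry}) is the case $C = D$ of the identity after applying $\tau$, since $\tau(\ipr{x}{x}_\E) = \|x\|_\E^2$. The only genuine obstacle is the interchange of infinite modular sums with the $L^1(\vn(\Gamma))$-valued inner product, which is handled by combining the boundedness of $\mSy_\Phi$ on $\ell_1^{2,\Phi}$, the continuity of the maps $b_\varPsi$, and the extension of Property $iv.$ to outer coefficients furnished by Proposition \ref{prop:outerinner}.
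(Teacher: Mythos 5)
Your proposal is correct and follows essentially the same route as the paper: the key step in both is the identity $\ipr{C}{D}_{\mGrop_\Phi} = \ipr{\mSy_\Phi C}{\mSy_\Phi D}$, obtained from Proposition \ref{prop:outerinner}, after which all axioms and (\ref{eq:modularisometry}) are inherited from the inner product of $\E$. The extra care you take with the interchange of the infinite sum and the inner product (via the boundedness of $b_\varPsi$) and with Property $iv.$ is exactly what the paper's proof leaves implicit, and your remark that strict positivity holds only modulo $\Ker\,\mSy_\Phi$ matches the paper's subsequent quotient construction.
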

\begin{proof}
The argument is based on Proposition \ref{prop:outerinner}, which implies
\begin{align*}
\ipr{C}{D}_{\mGrop_\Phi} & = \sum_{j \in \ind} D_j^* (\mGrop_\Phi C)_j = \sum_{j,k \in \ind}D_j^* \ipr{\varPhi_k}{\varPhi_j}C_k = \big\{\sum_{k \in \ind} \varPhi_k C_k , \sum_{j \in \ind} \varPhi_j D_j\big\}\\
& = \ipr{\mSy_\Phi C}{\mSy_\Phi D}
\end{align*}
so that all the properties of an $L^1(\vn(\Gamma))$-valued inner product are inherited from those of $\ipr{\cdot}{\cdot}$. The identity (\ref{eq:modularisometry}) is the definition of the $\E$ norm.
\end{proof}

If we denote with $\|\cdot\|_{\mGrop_\Phi}$ the seminorm on $\ell_1^{2,\Phi}(\ind,\vn(\Gamma))$ given by (\ref{eq:modularisometry}), i.e.
\begin{equation}\label{eq:mGropNorm}
\|C\|_{\mGrop_\Phi} = \|\mSy_\Phi C\|_\E
\end{equation}
and we denote with $\mathcal{N}$ its null space 
$$
\mathcal{N} = \{C \in \ell_1^{2,\Phi}(\ind,\vn(\Gamma)) \, | \, \|C\|_{\mGrop_\Phi} = 0\}
$$
we can obtain the modular analogous of Lemma \ref{lem:synthesisdomain}, in the following Theorem.

\begin{theo}
Let $\Phi = \{\varPhi_j\}_{j \in \ind} \subset \E$ be a countable family, and let $\Hil_{\mGrop_\Phi}$ be the Hilbert space
$$
\Hil_{\mGrop_\Phi} = \ol{\ell_1^{2,\Phi}(\ind,\vn(\Gamma))/\mathcal{N}}^{\|\cdot\|_{\mGrop_\Phi}} .
$$
Then $(\Hil_{\mGrop_\Phi},\ipr{\cdot}{\cdot}_{\mGrop_\Phi})$ is an $L^2(\vn(\Gamma))$-Hilbert module, and the modular synthesis operator defines an isomorphism of $L^2(\vn(\Gamma))$-Hilbert modules
\begin{equation}\label{eq:modularsynthesisdomain}
\mSy_\Phi : \Hil_{\mGrop_\Phi} \to \E_\Phi
\end{equation}
where $\E_\Phi$ is defined as in (\ref{eq:closedmodularspan}).
\end{theo}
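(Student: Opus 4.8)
The plan is to realise, via the modular synthesis operator, the normed module $\ell_1^{2,\Phi}(\ind,\vn(\Gamma))/\mathcal{N}$ as a dense submodule of the closed submodule $\E_\Phi$ of $\E$, so that its completion $\Hil_{\mGrop_\Phi}$ is exactly $\E_\Phi$ under this identification, and then to transport the $L^2(\vn(\Gamma))$-Hilbert module structure back along $\mSy_\Phi$.

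First I would put a right $\vn(\Gamma)$-action on $\ell_1^{2,\Phi}(\ind,\vn(\Gamma))$ by $CF = \{C_j F\}_{j\in\ind}$, where the compositions $C_j F \in L^2(\vn(\Gamma),\ipr{\varPhi_j}{\varPhi_j})$ are those of Proposition \ref{prop:coefficients}; axioms M1--M5 of Definition \ref{def:modules} hold because they hold on $\vn(\Gamma)$-valued sequences and extend to the weighted $L^2$ spaces by continuity, using $\|GF\|_{2,\Omega}\le\|G\|_{2,\Omega}\|F\|_\infty$. Then I would observe that $\mSy_\Phi(CF)=(\mSy_\Phi C)F$ (by M3 on finite sequences, and by density in general) and that $\|xF\|_\E^2 = \tau(F^*\ipr{x}{x}F) = \tau(\ipr{x}{x}FF^*) \le \|F\|_\infty^2\|x\|_\E^2$ in any $L^2(\vn(\Gamma))$-Hilbert module by traciality of $\tau$; together these give $\|CF\|_{\mGrop_\Phi}\le\|F\|_\infty\|C\|_{\mGrop_\Phi}$, so that $\mathcal{N}$ is a submodule and $\ell_1^{2,\Phi}(\ind,\vn(\Gamma))/\mathcal{N}$ is an $\vn(\Gamma)$-module normed by $\|\cdot\|_{\mGrop_\Phi}$, on which, by its very definition, $\mSy_\Phi$ is an isometric module morphism into $\E$.

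Next I would identify the completion. Since $\mSy_\Phi$ is an isometry from $\ell_1^{2,\Phi}(\ind,\vn(\Gamma))/\mathcal{N}$ into the complete space $\E$, it extends to an isometry of $\Hil_{\mGrop_\Phi}$ into $\E$ whose range is closed. A truncation argument shows $\ell_0^{2,\Phi}(\ind,\vn(\Gamma))$ is $\|\cdot\|_{\ell_1^{2,\Phi}}$-dense, hence $\|\cdot\|_{\mGrop_\Phi}$-dense, in $\ell_1^{2,\Phi}(\ind,\vn(\Gamma))$, so $\ell_0^{2,\Phi}(\ind,\vn(\Gamma))/\mathcal{N}$ is dense in $\Hil_{\mGrop_\Phi}$; since $\mSy_\Phi$ maps $\ell_0^{2,\Phi}(\ind,\vn(\Gamma))$ onto $\osp\,\Phi$, the closed range of the extended $\mSy_\Phi$ equals $\ol{\osp\,\Phi}^\E$, which by (\ref{eq:closedmodularspan}) is $\E_\Phi$. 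Hence $\mSy_\Phi:\Hil_{\mGrop_\Phi}\to\E_\Phi$ is a surjective isometry, and it intertwines the module actions because $\mSy_\Phi(CF)=(\mSy_\Phi C)F$ on the dense subspace while both sides are continuous in $C$.

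It then remains to put the Hilbert module structure on $\Hil_{\mGrop_\Phi}$. The right $\vn(\Gamma)$-action and the $L^1(\vn(\Gamma))$-valued form $\ipr{\cdot}{\cdot}_{\mGrop_\Phi}$ on $\ell_1^{2,\Phi}(\ind,\vn(\Gamma))/\mathcal{N}$ are, through $\mSy_\Phi$, the pullbacks of the module action on $\E_\Phi$ and of $\ipr{\cdot}{\cdot}_\E$ respectively — the latter being precisely Lemma \ref{lem:ncquadratic} — so they extend continuously to $\Hil_{\mGrop_\Phi}$, the continuity of the form in $\|\cdot\|_{\mGrop_\Phi}$ being the Cauchy--Schwarz inequality $\|\ipr{C}{D}_{\mGrop_\Phi}\|_1 \le \|C\|_{\mGrop_\Phi}\|D\|_{\mGrop_\Phi}$ in $\E$ (as in the proof of Proposition \ref{prop:outerinner}, from \cite[Prop. 3.2]{JungeSherman05}). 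The extensions satisfy M1--M5 and properties i.--iv. of Definition \ref{def:Hmodules}, these being closed conditions; property iii. in particular follows from the continuous extension of (\ref{eq:modularisometry}), $\tau(\ipr{x}{x}_{\mGrop_\Phi})=\|x\|_{\mGrop_\Phi}^2$, which also shows the inner-product norm coincides with the complete norm $\|\cdot\|_{\mGrop_\Phi}$. Thus $(\Hil_{\mGrop_\Phi},\ipr{\cdot}{\cdot}_{\mGrop_\Phi})$ is an $L^2(\vn(\Gamma))$-Hilbert module and $\mSy_\Phi$ is, in addition, inner-product preserving, so (\ref{eq:modularsynthesisdomain}) is an isomorphism of $L^2(\vn(\Gamma))$-Hilbert modules. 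I expect the main difficulty to be purely the bookkeeping of these three interlocking extensions (action, form, synthesis map) to the completion, together with the careful identification of the extended range with $\E_\Phi$; the only genuinely non-formal ingredient, the module Cauchy--Schwarz/triangle estimate, is already available.
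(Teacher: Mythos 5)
Your proposal is correct and follows essentially the paper's own route: both arguments rest on Lemma \ref{lem:ncquadratic} and the identity (\ref{eq:modularisometry}), which make $\mSy_\Phi$ an isometry on the quotient $\ell_1^{2,\Phi}(\ind,\vn(\Gamma))/\mathcal{N}$ and let the completion carry the modular inner product. The only differences are cosmetic: you transport the $L^2(\vn(\Gamma))$-Hilbert module structure back from $\E_\Phi$ along $\mSy_\Phi$ and obtain surjectivity from the density of $\osp\,\Phi$ (the image of $\ell_0^{2,\Phi}(\ind,\vn(\Gamma))$) together with closedness of an isometric range, whereas the paper endows $\Hil_{\mGrop_\Phi}$ with the structure intrinsically and reuses the orthogonality argument of Lemma \ref{lem:synthesisdomain}; these are equivalent, and your version merely spells out bookkeeping (the module action on $\ell_1^{2,\Phi}$, the intertwining $\mSy_\Phi(CF)=(\mSy_\Phi C)F$, continuity of the form) that the paper leaves implicit.
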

\begin{proof}
Since $\ell_1^{2,\Phi}(\ind,\vn(\Gamma))$ is a module, also $\Hil_{\mGrop_\Phi}$ is a module. Moreover, by Lemma \ref{lem:ncquadratic} it is endowed with the $L^1(\vn(\Gamma))$-valued inner product $\ipr{\cdot}{\cdot}_{\Grop_\Phi}$, and this induces a scalar product that coincides with the polarization of the norm (\ref{eq:mGropNorm}), which makes it a Hilbert space. So it is an $L^2(\vn(\Gamma))$-Hilbert module.

In order to see that $\mSy_\Phi$ defines a surjective isometry as in (\ref{eq:modularsynthesisdomain}), observe first that, from the identity (\ref{eq:modularisometry}) we obtain that $\mathcal{N} = \Ker(\mSy_\Phi)$, so that (\ref{eq:modularsynthesisdomain}) holds, and we also obtain that $\mSy_\Phi$ is an isometry. The argument to prove surjectivity is the same used in Lemma \ref{lem:synthesisdomain}.
\end{proof}

\subsubsection{Densely defined operators \texorpdfstring{in $\ell_2(\ind,L^2(\vn(\Gamma)))$}{}}

In this and the following sections, for a given Banach space $V$ we will denote with $\ell_p(\ind,V)$ the associated vector-valued $\ell_p$ spaces. A particularly relevant space is $\ell_2(\ind,L^2(\vn(\Gamma)))$, which is actually endowed with a natural $L^2(\vn(\Gamma))$-Hilbert module structure that generalizes the one of standard countably generated Hilbert $C^*$-modules. It is called \emph{principal} in \cite[\S 3]{JungeSherman05}. In this section we will provide the minimal assumption in order for the modular Gram operator to be densely defined in $\ell_2(\ind,L^2(\vn(\Gamma)))$. We resume the basic properties that we will need in the following proposition, referring to \cite{JungeSherman05} for proof and details.
\begin{prop}
The separable Hilbert space
$
\ell_2(\ind,L^2(\vn(\Gamma))) = \displaystyle\bigoplus_{j \in \ind} L^2(\vn(\Gamma))
$
is an $L^2(\vn(\Gamma))$-Hilbert module with inner product
$$
\ipr{C}{D}_2 = \sum_{j \in \ind} D_j^* C_j \ , \quad C, D \in \ell_2(\ind,L^2(\vn(\Gamma))) .
$$
\end{prop}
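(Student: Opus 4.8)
The assertion is that $\ell_2(\ind,L^2(\vn(\Gamma)))$, endowed with the componentwise right composition $CF=\{C_jF\}_{j\in\ind}$ by elements $F\in\vn(\Gamma)$ and with the pairing $\ipr{C}{D}_2=\sum_{j\in\ind}D_j^*C_j$, meets all the requirements of Definitions \ref{def:modules} and \ref{def:Hmodules} for $p=2$. The plan is to check each of these requirements directly; the only point that is not a mere termwise algebraic identity is the convergence of the series involved, which I would handle with the noncommutative H\"older inequality together with the Cauchy--Schwarz inequality in $\ell_2(\ind)$.

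First I would observe that the componentwise composition is well defined: since each $F\in\vn(\Gamma)$ acts on $L^2(\vn(\Gamma))$ as a bounded operator, $\|C_jF\|_2\le\|C_j\|_2\|F\|_\infty$ by H\"older's inequality, so $\sum_{j\in\ind}\|C_jF\|_2^2\le\|F\|_\infty^2\sum_{j\in\ind}\|C_j\|_2^2<\infty$ and $CF$ stays in $\ell_2(\ind,L^2(\vn(\Gamma)))$. Axioms M1--M5 of Definition \ref{def:modules} are then immediate from bilinearity of operator composition, its associativity, and the fact that the unit $\rr(\id)$ of $\vn(\Gamma)$ acts as the identity. Next, for the pairing: by the noncommutative H\"older inequality and the traciality of $\tau$ one has $\|D_j^*C_j\|_1\le\|D_j^*\|_2\|C_j\|_2=\|D_j\|_2\|C_j\|_2$, and summing over $j$ and applying Cauchy--Schwarz in $\ell_2(\ind)$ gives $\sum_{j\in\ind}\|D_j^*C_j\|_1\le\big(\sum_{j\in\ind}\|C_j\|_2^2\big)^{\frac12}\big(\sum_{j\in\ind}\|D_j\|_2^2\big)^{\frac12}<\infty$. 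Hence the series defining $\ipr{C}{D}_2$ converges absolutely in $L^1(\vn(\Gamma))$, so the pairing indeed takes values in $L^1(\vn(\Gamma))$.

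It then remains to verify properties i.--iv. of Definition \ref{def:Hmodules}. Conjugate-linearity in the second argument (property i.) is termwise since $(\alpha D_j)^*=\ol\alpha D_j^*$; the adjoint is an isometric involution of $L^1(\vn(\Gamma))$, hence continuous and commuting with the convergent series, so $\ipr{C}{D}_2^*=\sum_{j\in\ind}C_j^*D_j=\ipr{D}{C}_2$, which is property ii.; and left multiplication by the bounded operator $F^*$ is continuous on $L^1(\vn(\Gamma))$, so $\ipr{C}{DF}_2=\sum_{j\in\ind}F^*D_j^*C_j=F^*\ipr{C}{D}_2$, which is property iv. For property iii., the partial sums $\sum_{j\le n}C_j^*C_j$ are nonnegative elements of $L^1(\vn(\Gamma))$ converging in $\|\cdot\|_1$ to $\ipr{C}{C}_2$, and the positive part of $L^1(\vn(\Gamma))$ is norm-closed, so $\ipr{C}{C}_2\ge0$; moreover, by continuity of $\tau$ on $L^1(\vn(\Gamma))$, $\tau(\ipr{C}{C}_2)=\sum_{j\in\ind}\tau(C_j^*C_j)=\sum_{j\in\ind}\|C_j\|_2^2$, so $\ipr{C}{C}_2=0$ forces $C_j=0$ for every $j$, i.e. $C=0$. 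This last computation also identifies the module norm $\|C\|_\E=\|\ipr{C}{C}_2^{\frac12}\|_2$ with the Hilbert direct-sum norm $\big(\sum_{j\in\ind}\|C_j\|_2^2\big)^{\frac12}$, with respect to which $\ell_2(\ind,L^2(\vn(\Gamma)))$ is complete, so the pair $(\ell_2(\ind,L^2(\vn(\Gamma))),\ipr{\cdot}{\cdot}_2)$ is an $L^2(\vn(\Gamma))$-Hilbert module, namely the principal one of \cite[\S 3]{JungeSherman05}. No step here presents a genuine obstacle; the only technical care is in the $L^1$-convergence and in the identification of norms, which is why a reference to \cite{JungeSherman05} suffices.
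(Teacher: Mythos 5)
Your verification is correct, but it is worth noting that the paper does not actually prove this proposition at all: it simply declares the space to be the \emph{principal} $L^2(\vn(\Gamma))$-Hilbert module and defers entirely to \cite[\S 3]{JungeSherman05} ``for proof and details.'' Your argument is therefore a genuinely different (and more self-contained) route: you check axioms M1--M5 directly, establish well-definedness of the componentwise action and the absolute $L^1$-convergence of $\sum_j D_j^*C_j$ via the noncommutative H\"older inequality combined with Cauchy--Schwarz in $\ell_2(\ind)$, obtain positivity from the norm-closedness of the positive cone of $L^1(\vn(\Gamma))$, definiteness from faithfulness of $\tau$, and completeness by identifying $\|C\|_\E=\|\ipr{C}{C}_2^{1/2}\|_2$ with the Hilbert direct-sum norm -- each of these steps is sound, and the norm identification is exactly what justifies treating the module norm and the Hilbert space norm interchangeably later in the paper. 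What the citation buys the authors is the placement of this example inside the general Junge--Sherman framework (where it plays the role analogous to the standard countably generated Hilbert $C^*$-module), together with further structural facts they use elsewhere, e.g.\ the Cauchy--Schwarz-type factorization $\ipr{\varPhi}{\varPsi}=\ipr{\varPsi}{\varPsi}^{1/2}B\ipr{\varPhi}{\varPhi}^{1/2}$ invoked in Proposition \ref{prop:outerinner}; what your direct check buys is an elementary, reference-free proof of precisely the statement at hand.
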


The condition that we will need in this subsection is the following. We say that a countable family $\Phi = \{\varPhi_j\}_{j \in \ind}$ in an $L^2(\vn(\Gamma))$-Hilbert module $\E$ is \emph{modular square integrable} if
\begin{equation}\label{eq:modularsquareintegrability}
T_j = \sum_{k \in \ind} |\ipr{\varPhi_j}{\varPhi_k}|^2 \in L^1(\vn(\Gamma)) \quad \forall \, j \in \ind .
\end{equation}

\begin{theo}\label{theo:modulardenseGram}
The modular Gram operator $\mGrop_\Phi$ associated to a countable family $\Phi = \{\varPhi_j\}_{j \in \ind} \subset \E$ is densely defined from $\ell_2(\ind,L^2(\vn(\Gamma)))$ to itself, and its domain contains $\ell_0(\ind,\vn(\Gamma))$, if and only if $\Phi$ is modular square integrable.
\end{theo}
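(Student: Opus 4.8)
The plan is to transcribe the commutative argument of Lemma~\ref{lem:generalsquareintegrability} to the present modular setting. The first thing I would note is that $\ell_0(\ind,\vn(\Gamma))$ is dense in $\ell_2(\ind,L^2(\vn(\Gamma)))$, since finite sequences are dense and $\vn(\Gamma)$ is $\|\cdot\|_2$-dense in $L^2(\vn(\Gamma))$; hence the assertion is equivalent to the single claim that $\mGrop_\Phi$ maps $\ell_0(\ind,\vn(\Gamma))$ into $\ell_2(\ind,L^2(\vn(\Gamma)))$ if and only if (\ref{eq:modularsquareintegrability}) holds. Throughout I would work with the explicit formula $(\mGrop_\Phi C)_j=\sum_k\ipr{\varPhi_k}{\varPhi_j}C_k$ from Definition~\ref{def:modularAG}, which for a sequence $C$ with a single nonzero entry $C_k\in\vn(\Gamma)$ at position $k$ produces the sequence $\{\ipr{\varPhi_k}{\varPhi_j}C_k\}_{j\in\ind}$.

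For the ``only if'' direction I would test $\mGrop_\Phi$ on the finite sequence $C^{(j)}$ that equals $\Id=\rr(\id)$ at position $j$ and $0$ elsewhere. Then $\mSy_\Phi C^{(j)}=\varPhi_j$, so $\mGrop_\Phi C^{(j)}=\mAn_\Phi\varPhi_j=\{\ipr{\varPhi_j}{\varPhi_k}\}_{k\in\ind}$, which by hypothesis is an element of the $L^2(\vn(\Gamma))$-Hilbert module $\ell_2(\ind,L^2(\vn(\Gamma)))$. Computing its modular inner product with itself and using $\ipr{\varPhi_j}{\varPhi_k}^*=\ipr{\varPhi_k}{\varPhi_j}$ gives $\ipr{\mGrop_\Phi C^{(j)}}{\mGrop_\Phi C^{(j)}}_2=\sum_k\ipr{\varPhi_j}{\varPhi_k}^*\ipr{\varPhi_j}{\varPhi_k}=T_j$; since inner products in an $L^2(\vn(\Gamma))$-Hilbert module take values in $L^1(\vn(\Gamma))$, this shows $T_j\in L^1(\vn(\Gamma))$ for every $j$, i.e. $\Phi$ is modular square integrable.

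For the ``if'' direction I would first record that $0\le|\ipr{\varPhi_j}{\varPhi_k}|^2\le T_j\in L^1(\vn(\Gamma))$ forces each $\ipr{\varPhi_j}{\varPhi_k}$ into $L^2(\vn(\Gamma))$, and that by normality of $\tau$ one has $\sum_k\|\ipr{\varPhi_j}{\varPhi_k}\|_2^2=\sum_k\tau(|\ipr{\varPhi_j}{\varPhi_k}|^2)=\tau(T_j)<\infty$. By linearity of $\mGrop_\Phi$ on finite sequences it then suffices to bound each one-term contribution: for $C_k\in\vn(\Gamma)$ the noncommutative H\"older inequality $\|AB\|_2\le\|A\|_2\|B\|_\infty$ yields $\sum_j\|\ipr{\varPhi_k}{\varPhi_j}C_k\|_2^2\le\|C_k\|_\infty^2\,\tau(T_k)<\infty$. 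Adding up the finitely many terms of a general $C\in\ell_0(\ind,\vn(\Gamma))$ shows $\mGrop_\Phi C\in\ell_2(\ind,L^2(\vn(\Gamma)))$, and the density observation of the first paragraph concludes that $\mGrop_\Phi$ is densely defined.

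I expect the only real care to be needed in the operator-theoretic bookkeeping — verifying that the brackets $\ipr{\varPhi_j}{\varPhi_k}$ genuinely lie in $L^2(\vn(\Gamma))$ so that the $\|\cdot\|_2$-sums are meaningful, and invoking normality of the trace to interchange $\tau$ with the countable sum defining $T_j$ — rather than in any deeper difficulty. There is no role here for the outer-coefficient machinery, since we are working in the unweighted principal module $\ell_2(\ind,L^2(\vn(\Gamma)))$, and the argument is a faithful modular copy of the commutative one.
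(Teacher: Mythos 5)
Your proposal is correct and follows essentially the same route as the paper, which proves the statement by factoring $\mGrop_\Phi=\mAn_\Phi\mSy_\Phi$ and showing (Lemma \ref{lem:modularanalysisdense}) that $\mAn_\Phi$ maps $\vsp_{\vn(\Gamma)}\,\Phi$ into $\ell_2(\ind,L^2(\vn(\Gamma)))$ if and only if (\ref{eq:modularsquareintegrability}) holds: your ``only if'' step (testing on the canonical sequences $C^{(j)}$) and your ``if'' step (H\"older $\|AB\|_2\le\|A\|_2\|B\|_\infty$ plus normality of $\tau$) are the same computations, with your single-term decomposition and triangle inequality in $\ell_2(\ind,L^2(\vn(\Gamma)))$ replacing, a bit more cleanly, the paper's cruder $2^{|\Lambda|}$ estimate inside the trace. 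No gaps.
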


The proof strongly relies on the following modular analogous of Lemma \ref{lem:generalsquareintegrability} that proves that having a modular analysis operator that maps $\vsp_{\vn(\Gamma)}\, \Phi$ into $\ell_2(\ind,L^2(\vn(\Gamma)))$ is equivalent to the modular square integrability condition.
\begin{lem}\label{lem:modularanalysisdense}
The modular analysis operator $\mAn_\Phi$ for $\Phi = \{\varPhi_j\}_{j \in \ind} \subset \E$ sends $\vsp_{\vn(\Gamma)}\, \Phi$ to $\ell_2(\ind,L^2(\vn(\Gamma)))$ if and only if condition (\ref{eq:modularsquareintegrability}) holds. 
\end{lem}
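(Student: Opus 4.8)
The plan is to follow, almost verbatim, the proof of Lemma \ref{lem:generalsquareintegrability}, replacing scalars by elements of $\vn(\Gamma)$, the modulus of a complex number by the noncommutative absolute value, and using the noncommutative H\"older inequality on $L^2(\vn(\Gamma))$ together with the triangle inequality and the normality of $\tau$. The preliminary remark is that, since the partial sums of the positive series defining $T_j$ form an increasing net of positive operators affiliated to the finite von Neumann algebra $\vn(\Gamma)$, condition (\ref{eq:modularsquareintegrability}) is equivalent to the convergence of $\sum_{k \in \ind}\|\ipr{\varPhi_j}{\varPhi_k}\|_2^2$ for every $j$: indeed, by normality of $\tau$,
$$
\tau(T_j) = \sum_{k \in \ind}\tau\big(|\ipr{\varPhi_j}{\varPhi_k}|^2\big) = \sum_{k \in \ind}\|\ipr{\varPhi_j}{\varPhi_k}\|_2^2 ,
$$
and the supremum of the partial sums of $T_j$ lies in $L^1(\vn(\Gamma))$ precisely when this series converges.

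For sufficiency, I would assume (\ref{eq:modularsquareintegrability}) and take $\varPsi \in \vsp_{\vn(\Gamma)}\Phi$, written as $\varPsi = \sum_{k \in \Lambda}\varPhi_k F_k$ with $\Lambda \subset \ind$ finite and $F_k \in \vn(\Gamma)$. By the linearity of the inner product in its first entry and the identity $\ipr{\varPhi_k F_k}{\varPhi_j} = \ipr{\varPhi_k}{\varPhi_j}F_k$ (which follows from properties $ii.$ and $iv.$ of Definition \ref{def:Hmodules}), one gets $(\mAn_\Phi \varPsi)_j = \ipr{\varPsi}{\varPhi_j} = \sum_{k \in \Lambda}\ipr{\varPhi_k}{\varPhi_j}F_k$, where each $\ipr{\varPhi_k}{\varPhi_j}$ lies in $L^2(\vn(\Gamma))$ by the preliminary remark. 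Then H\"older's inequality $\|AB\|_2 \leq \|A\|_2\|B\|_\infty$, the triangle inequality, and Cauchy--Schwarz applied to $\{\|\ipr{\varPhi_k}{\varPhi_j}\|_2\|F_k\|_\infty\}_{k \in \Lambda}$ and $\{1\}_{k \in \Lambda}$ give
$$
\|(\mAn_\Phi \varPsi)_j\|_2^2 \leq |\Lambda|\sum_{k \in \Lambda}\|F_k\|_\infty^2\,\|\ipr{\varPhi_k}{\varPhi_j}\|_2^2 ,
$$
and summing over $j \in \ind$ and using the reformulation of (\ref{eq:modularsquareintegrability}),
$$
\sum_{j \in \ind}\|(\mAn_\Phi \varPsi)_j\|_2^2 \leq |\Lambda|\Big(\max_{k \in \Lambda}\|F_k\|_\infty^2\Big)\sum_{k \in \Lambda}\tau(T_k) < \infty ,
$$
so that $\mAn_\Phi \varPsi \in \ell_2(\ind,L^2(\vn(\Gamma)))$.

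For necessity, I would note that each $\varPhi_j$ belongs to $\vsp_{\vn(\Gamma)}\Phi$ (take the coefficient $\Id$ at position $j$ and $0$ elsewhere, using axiom M4), so by hypothesis $\mAn_\Phi \varPhi_j = \{\ipr{\varPhi_j}{\varPhi_k}\}_{k \in \ind} \in \ell_2(\ind,L^2(\vn(\Gamma)))$; this says exactly that $\sum_{k \in \ind}\|\ipr{\varPhi_j}{\varPhi_k}\|_2^2 < \infty$, which by the preliminary remark is equivalent to $T_j \in L^1(\vn(\Gamma))$ for all $j \in \ind$. The only delicate point, which I expect to be the main (though mild) obstacle, is the interchange of $\tau$ with the infinite sum and the identification of the supremum of the partial sums of $T_j$ as an element of $L^1(\vn(\Gamma))$; this is where the normality and finiteness of $\tau$ — and hence the choice to phrase (\ref{eq:modularsquareintegrability}) as an $L^1(\vn(\Gamma))$ membership rather than a scalar summability condition — come in, and it follows from the standard theory of noncommutative integration over finite von Neumann algebras.
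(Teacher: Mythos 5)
Your proof is correct and follows essentially the same route as the paper: expand $\ipr{\varPsi}{\varPhi_j}$ by modular linearity, estimate with noncommutative H\"older and the finiteness of $\tau(T_k)$, and obtain the converse by applying $\mAn_\Phi$ to each $\varPhi_j$ itself. The only cosmetic difference is that you bound the $L^2$-norms termwise (getting a constant $|\Lambda|$) while the paper uses the operator-modulus inequality $(|A|+|B|)^2\leq 2(|A|^2+|B|^2)$ inside the trace (getting $2^{|\Lambda|}$), and your preliminary remark makes explicit the monotone-convergence/normality point that the paper uses implicitly.
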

\begin{proof}
Let $\Lambda \subset \ind$ be a finite set of indices, let $\varPsi = \sum_{j \in \Lambda} \varPhi_j C_j \in \vsp_{\vn(\Gamma)}\, \Phi$ for some $\{C_j\}_{j \in \Lambda} \in \ell_0(\ind,\vn(\Gamma))$, and let us first assume (\ref{eq:modularsquareintegrability}). Then\footnote{Observe that the operator modulus satisfies the triangular inequality $|A + B| \leq |A| + |B|$ by definition, and $(|A| + |B|)^2 \leq 2(|A|^2 + |B|^2)$ holds because
$$
\begin{array}{r}
(|A| + |B|)^2 = |A|^2 + |B|^2 + |A||B| + |B||A|\\
(|A| - |B|)^2 = |A|^2 + |B|^2 - |A||B| - |B||A|
\end{array}
\Rightarrow \ (|A| + |B|)^2 + (|A| - |B|)^2 = 2(|A|^2 + |B|^2) .
$$
}
\begin{align*}
\|\mAn_\Phi \varPsi&\|^2_{\ell_2(\ind,L^2(\vn(\Gamma)))} = \tau\Big(\sum_{k \in \ind} \Big| \ipr{\sum_{j \in \Lambda} \varPhi_j C_j}{\varPhi_k}\Big|^2\Big) = \tau\Big(\sum_{k \in \ind} \Big| \sum_{j \in \Lambda} \ipr{\varPhi_j}{\varPhi_k}C_j\Big|^2\Big)\\
& \leq 2^{|\Lambda|} \tau\Big(\sum_{k \in \ind} \sum_{j \in \Lambda} | \ipr{\varPhi_j}{\varPhi_k}C_j |^2\Big) = 2^{|\Lambda|} \sum_{k \in \ind} \sum_{j \in \Lambda} \tau(| \ipr{\varPhi_j}{\varPhi_k}C_j|^2)\\
& \leq 2^{|\Lambda|} \sum_{j \in \Lambda} \sum_{k \in \ind} \|C_j\|^2_\infty \|\ipr{\varPhi_j}{\varPhi_k}\|^2_2 = 2^{|\Lambda|} \sum_{j \in \Lambda} \|C_j\|^2_\infty \|T_j\|_1\\
& \leq |\Lambda| 2^{|\Lambda|} \ \max_{j \in \Lambda} \|C_j\|^2_{\infty} \ \max_{j \in \Lambda} \|T_j\|_1 < \infty .
\end{align*}
On the other hand, if $\mAn_\Phi : \vsp_{\vn(\Gamma)}\, \Phi \to \ell_2(\ind,L^2(\vn(\Gamma)))$, then
$$
\|\mAn_\Phi \varPhi_j\|^2_{\ell_2(\ind,L^2(\vn(\Gamma)))} = \tau(T_j) < \infty \quad \forall j \in \ind
$$
which coincides with condition (\ref{eq:modularsquareintegrability}).
\end{proof}

\begin{proof}[Proof of Theorem \ref{theo:modulardenseGram}]
Observe first that $\ell_0(\ind,\vn(\Gamma))$ is dense in $\ell_2(\ind,L^2(\vn(\Gamma)))$, and that the modular synthesis operator of a family $\Phi = \{\varPhi_j\}_{j \in \ind} \subset \E$ sends finite sequences of $\vn(\Gamma)$ coefficients into the $\vn(\Gamma)$-linear span of $\Phi$
\begin{equation}\label{eq:mSydense}
\mSy_\Phi : \ell_0(\ind,\vn(\Gamma)) \to \vsp_{\vn(\Gamma)}\, \Phi .
\end{equation}
Then the proof of the Theorem follows then by Lemma \ref{lem:modularanalysisdense} and the definition of modular Gram operator.
\end{proof}

Finally we observe that, even if $\mSy_\Phi$ is densely defined from $\ell_2(\ind,L^2(\vn(\Gamma)))$ to $\E$ in the sense of (\ref{eq:mSydense}), one can not obtain in this space a modular analogous of the useful classical result given by \cite[Lemma 3.2.1]{Christensen03} without assuming that the weights $\{\ipr{\varPhi_j}{\varPhi_j}\}_{j \in \ind}$ belong to $\vn(\Gamma)$. In the next section we will discuss how the modular analogous of the Riesz and Bessel condition, that are stronger than modular square integrability, allow to replace the weighted spaces of coefficients with $\ell_2(\ind,L^2(\vn(\Gamma)))$. Here we conclude with the following simple observation.
\begin{cor}\label{cor:modularWellDefined}
Let $\Phi = \{\varPhi_j\}_{j \in \ind}\subset \E$ be a modular square integrable countable family, let $\Proj$ be an orthogonal projection of $\ell_2(\ind,L^2(\vn(\Gamma)))$. If
$$
\langle C , \mGrop_\Phi C\rangle_{\ell_2(\ind,L^2(\vn(\Gamma)))} \leq \langle C , \Proj C\rangle_{\ell_2(\ind,L^2(\vn(\Gamma)))} \quad \forall \, C \in \ell_0(\ind,\vn(\Gamma))
$$
then the modular synthesis operator $\mSy_\Phi$ is bounded from $\ell_2(\ind,L^2(\vn(\Gamma)))$ to $\E$.
\end{cor}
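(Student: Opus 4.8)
The plan is to turn the hypothesis into a uniform bound on $\|\mSy_\Phi C\|_\E$ for finite coefficient sequences $C$, via the quadratic-form identity of Lemma~\ref{lem:ncquadratic}, and then extend $\mSy_\Phi$ by density.

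First I would fix an arbitrary $C \in \ell_0(\ind,\vn(\Gamma))$. Since $\Phi$ is modular square integrable, Theorem~\ref{theo:modulardenseGram} guarantees that $\mGrop_\Phi C$ is a well-defined element of $\ell_2(\ind,L^2(\vn(\Gamma)))$, so the number $\langle C, \mGrop_\Phi C\rangle_{\ell_2(\ind,L^2(\vn(\Gamma)))}$ makes sense and is finite. The first step is to identify it with $\|\mSy_\Phi C\|_\E^2$. Unwinding the definition of the inner product of $\ell_2(\ind,L^2(\vn(\Gamma)))$ (which, by the convention of this paper, carries the adjoint on the second slot) one gets $\langle C, \mGrop_\Phi C\rangle_{\ell_2(\ind,L^2(\vn(\Gamma)))} = \tau\big(\sum_{j \in \ind}(\mGrop_\Phi C)_j^{\,*} C_j\big)$; using the factorization $\mGrop_\Phi = \mAn_\Phi\mSy_\Phi$, the traciality of $\tau$, and comparison with the module quadratic form $\ipr{\cdot}{\cdot}_{\mGrop_\Phi}$ of Lemma~\ref{lem:ncquadratic}, this equals $\tau\big(\ipr{C}{C}_{\mGrop_\Phi}\big)$, which by identity~(\ref{eq:modularisometry}) is exactly $\|\mSy_\Phi C\|_\E^2$. (Any ambiguity from placing the adjoint on one slot rather than the other is harmless, since all the quantities involved are real and nonnegative.)

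Next I would invoke the hypothesis. As $\Proj$ is an orthogonal projection, $\langle C, \Proj C\rangle_{\ell_2(\ind,L^2(\vn(\Gamma)))} = \|\Proj C\|_{\ell_2(\ind,L^2(\vn(\Gamma)))}^2 \le \|C\|_{\ell_2(\ind,L^2(\vn(\Gamma)))}^2$, so the assumed inequality yields
\[
\|\mSy_\Phi C\|_\E^2 = \langle C, \mGrop_\Phi C\rangle_{\ell_2(\ind,L^2(\vn(\Gamma)))} \le \langle C, \Proj C\rangle_{\ell_2(\ind,L^2(\vn(\Gamma)))} \le \|C\|_{\ell_2(\ind,L^2(\vn(\Gamma)))}^2
\]
for every finite sequence $C$. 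Thus $\mSy_\Phi$ is linear and contractive on $\ell_0(\ind,\vn(\Gamma))$, which is dense in $\ell_2(\ind,L^2(\vn(\Gamma)))$ (as observed in the proof of Theorem~\ref{theo:modulardenseGram}), so it extends uniquely to a bounded operator from $\ell_2(\ind,L^2(\vn(\Gamma)))$ to $\E$; since $\mSy_\Phi$ admits the densely defined adjoint $\mAn_\Phi$ it is closable, and this bounded extension is its closure, which we keep denoting $\mSy_\Phi$.

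The only step requiring care is the identification in the first paragraph — matching the $\ell_2(\ind,L^2(\vn(\Gamma)))$-pairing of $C$ with $\mGrop_\Phi C$ to the $L^1(\vn(\Gamma))$-valued form $\ipr{\cdot}{\cdot}_{\mGrop_\Phi}$ and checking, through traciality, that both traces collapse to $\|\mSy_\Phi C\|_\E^2$. Everything after that is a routine density/extension argument.
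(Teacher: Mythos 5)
Your proposal is correct and follows essentially the same route as the paper: identify $\langle C,\mGrop_\Phi C\rangle_{\ell_2(\ind,L^2(\vn(\Gamma)))}$ with $\|\mSy_\Phi C\|_\E^2$ via identity (\ref{eq:modularisometry}), bound it by $\|C\|^2_{\ell_2(\ind,L^2(\vn(\Gamma)))}$ using that $\Proj$ is an orthogonal projection, and extend by density of $\ell_0(\ind,\vn(\Gamma))$. The extra remarks on slot conventions and closability are harmless but not needed.
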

\begin{proof}
This is a consequence of the density of $\ell_0(\ind,\vn(\Gamma))$ in $\ell_2(\ind,L^2(\vn(\Gamma)))$ and of identity (\ref{eq:modularisometry}), observing that
\begin{displaymath}
\|\mSy_\Phi C\|^2_\E = \sum_{i, j \in \ind}\tau(C_j^*\ipr{\varPhi_i}{\varPhi_j}C_i) = \langle C , \mGrop_\Phi C \rangle_{\ell_2(\ind,L^2(\vn(\Gamma)))} .\qedhere
\end{displaymath}
\end{proof}

\section{Noncommutative reproducing systems}\label{sec:noncommutativereproducing}

Frames in Hilbert $C^*$-modules were introduced in \cite{FrankLarson2002}, following works of \cite{DaiLarson98}, and constitute the content of several subsequent works (see e.g. \cite{RaeburnThompson03, Jing06, HanJingLarsonMohapatra08, KaftalLarsonZhang09}). These notions were also developed to address the study of multiresolution analysis and of spaces invariant under unitary representations in \cite{PackerRieffel04, Roysland11}. The Hilbert modular structure considered in the present paper is different, because the inner product does not take values in the algebra of coefficients but rather in a larger space of unbounded operators. While several aspects of the $C^*$ theory are preserved in this setting, others require a different treatment mainly due to the appearance of unbounded operators. The main advantage, which is also the leading motivation to develop this framework, is the natural correspondence with $(\Gamma,\Pi)$-invariant spaces, which also gives the possibility to include previous results concerning discrete shifts in locally compact abelian groups, together with the associated notions and techniques of shift-invariant spaces. In particular, we will prove a characterization of reproducing systems in invariant spaces in terms of their analogous modular conditions on the generators.

\subsection{Riesz and frame modular sequences}

Let $(\E,\{\cdot,\cdot\})$ be an $L^2(\vn(\Gamma))$-Hilbert module, and let $\Phi = \{\varPhi_j\}_{j \in \ind} \subset \E$ be a countable family. We introduce the following notion of Riesz sequence of operators in such modules as direct generalization of the usual one.

\begin{defi}
We say that $\Phi = \{\varPhi_j\}_{j \in \ind} \subset \E$ is a \emph{modular Riesz sequence} with Riesz bounds $0 < A \leq B < \infty$ if it satisfies
\begin{equation}\label{eq:ncRiesz}
A \sum_{j \in \ind} |C_j|^2 \leq \bigg\{\sum_{j\in \ind} \varPhi_j C_j , \sum_{j \in \ind} \varPhi_j C_j\bigg\} \leq B \sum_{j \in \ind} |C_j|^2
\end{equation}
for all finite sequence $\{C_j\}_{j \in \ind}$ in $\ell_0(\ind,\vn(\Gamma))$.
\end{defi}
The modular Riesz condition implies the following numerical condition, which shows in particular that the Hilbert space $\ell_2(\ind,L^2(\vn(\Gamma)))$ is the natural space of coefficients for modular Riesz sequences.
\begin{lem}\label{lem:ncRiesz}
If $\Phi = \{\varPhi_j\}_{j \in \ind} \subset \E$ is a modular Riesz sequence with Riesz bounds $A$ and $B$, then the modular synthesis operator $\mSy_\Phi$ extends to a bounded invertible operator from $\ell_2(\ind,L^2(\vn(\Gamma)))$ to $\E_\Phi$ satisfying
\begin{equation}\label{eq:ncRiesznumerical}
A \|C\|^2_{\ell_2(\ind,L^2(\vn(\Gamma)))} \leq \|\mSy_\Phi C\|^2_\E \leq B \|C\|^2_{\ell_2(\ind,L^2(\vn(\Gamma)))}
\end{equation}
for all $C \in \ell_2(\ind,L^2(\vn(\Gamma)))$.
\end{lem}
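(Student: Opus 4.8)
The plan is to deduce the numerical inequality (\ref{eq:ncRiesznumerical}) from the operator inequality (\ref{eq:ncRiesz}) by applying the trace $\tau$, and then to extend the resulting estimate from finite sequences to all of $\ell_2(\ind,L^2(\vn(\Gamma)))$ by density and continuity.

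First I would fix a finite sequence $C = \{C_j\}_{j \in \ind} \in \ell_0(\ind,\vn(\Gamma))$. Then $\mSy_\Phi C = \sum_{j \in \ind} \varPhi_j C_j$ is a well-defined element of $\E$, so $\ipr{\mSy_\Phi C}{\mSy_\Phi C} \in L^1(\vn(\Gamma))$, while $\sum_{j \in \ind}|C_j|^2$ is a finite sum of elements of $\vn(\Gamma) \subset L^1(\vn(\Gamma))$. Since $\tau$ is a positive linear functional on $L^1(\vn(\Gamma))$, applying it to (\ref{eq:ncRiesz}) preserves both inequalities. Using that $\|\mSy_\Phi C\|^2_\E = \tau(\ipr{\mSy_\Phi C}{\mSy_\Phi C})$ — which is the definition of the $\E$-norm in Definition \ref{def:Hmodules} for $p=2$, consistent with (\ref{eq:Xscalarprod}) and (\ref{eq:modularisometry}) — together with $\tau\big(\sum_{j \in \ind}|C_j|^2\big) = \sum_{j \in \ind}\tau(|C_j|^2) = \sum_{j \in \ind}\|C_j\|_2^2 = \|C\|^2_{\ell_2(\ind,L^2(\vn(\Gamma)))}$, one obtains (\ref{eq:ncRiesznumerical}) for every $C \in \ell_0(\ind,\vn(\Gamma))$.

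It then remains to pass to the whole space. The subspace $\ell_0(\ind,\vn(\Gamma))$ is dense in $\ell_2(\ind,L^2(\vn(\Gamma)))$, since $\vn(\Gamma)$ is dense in $L^2(\vn(\Gamma))$ and finite sequences are dense in the Hilbert direct sum. The right inequality in (\ref{eq:ncRiesznumerical}) shows that $\mSy_\Phi$ is bounded (by $\sqrt{B}$) on this dense subspace, hence extends uniquely to a bounded operator on $\ell_2(\ind,L^2(\vn(\Gamma)))$; by continuity both inequalities of (\ref{eq:ncRiesznumerical}) persist for all $C$. The left inequality then states that the extended $\mSy_\Phi$ is bounded below, so it is injective with closed range and with $\|\mSy_\Phi^{-1}\| \leq A^{-1/2}$ on that range. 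Finally, on $\ell_0(\ind,\vn(\Gamma))$ the operator $\mSy_\Phi$ has range $\vsp_{\vn(\Gamma)}\,\Phi$ by (\ref{eq:mSydense}); the range of the extension is closed, contains $\vsp_{\vn(\Gamma)}\,\Phi$, and (by continuity and closedness of $\E_\Phi$) is contained in $\E_\Phi = \ol{\vsp_{\vn(\Gamma)}\,\Phi}^\E$ from (\ref{eq:closedmodularspan}); being closed and containing a dense subset of $\E_\Phi$, it equals $\E_\Phi$. Hence $\mSy_\Phi : \ell_2(\ind,L^2(\vn(\Gamma))) \to \E_\Phi$ is a bounded invertible operator satisfying (\ref{eq:ncRiesznumerical}).

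I do not anticipate a serious obstacle here. The step that deserves the most care is the first one: that the trace may legitimately be applied to the $L^1(\vn(\Gamma))$-valued operator inequalities (\ref{eq:ncRiesz}) term by term — which is exactly the positivity and linearity of $\tau$ on $L^1(\vn(\Gamma))$ together with the observation that, for finite $C$, all operators involved genuinely lie in $L^1(\vn(\Gamma))$ — and the accompanying identification of the closure of the range of the extended $\mSy_\Phi$ with $\E_\Phi$ rather than with some a priori smaller submodule. Neither of these introduces a real difficulty.
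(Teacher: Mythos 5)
Your argument is correct and follows essentially the same route as the paper's proof: apply the trace $\tau$ to the modular Riesz inequality (\ref{eq:ncRiesz}) to obtain (\ref{eq:ncRiesznumerical}) on $\ell_0(\ind,\vn(\Gamma))$, then conclude by density of $\ell_0(\ind,\vn(\Gamma))$ in $\ell_2(\ind,L^2(\vn(\Gamma)))$. You merely spell out details the paper leaves implicit (boundedness below, closed range, and the identification of the range with $\E_\Phi$), and these are filled in correctly.
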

\begin{proof}
By taking the trace in the modular Riesz condition (\ref{eq:ncRiesz}), and using the definition of norm in the $L^2(\vn(\Gamma))$-Hilbert modules $\E$ and $\ell_2(\ind,L^2(\vn(\Gamma)))$, one gets condition (\ref{eq:ncRiesznumerical}) for all $\{C_j\}_{j \in \ind}$ in $\ell_0(\ind,\vn(\Gamma))$. The conclusion then follows by the density of $\ell_0(\ind,\vn(\Gamma))$ in $\ell_2(\ind,L^2(\vn(\Gamma)))$.
\end{proof}

Next lemma shows that the boundedness of the modular synthesis operator allows to prove that the modular analysis operator is its adjoint.

\begin{lem}\label{lem:Rieszmodadj}
Let $\Phi = \{\varPhi_j\}_{j \in \ind} \subset \E$ be a modular Riesz sequence. Then the adjoint of the modular synthesis operator $\mSy_\Phi$ is the bounded operator from $\E$ to $\ell_2(\ind,L^2(\vn(\Gamma)))$ given by
\begin{equation}\label{eq:ncRieszadj}
\mAn_\Phi \varPsi = \big\{\ipr{\varPsi}{\varPhi_j}\big\}_{j \in \ind} \ , \quad \varPsi \in \E .
\end{equation}
\end{lem}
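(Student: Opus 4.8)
The plan is to obtain this from Lemma~\ref{lem:ncRiesz} together with the multiplicativity identity of the inner product, and then to pin down the candidate adjoint one component at a time by the $L^1(\vn(\Gamma))$ uniqueness theorem.

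First I would record, using Lemma~\ref{lem:ncRiesz}, that the modular Riesz hypothesis makes $\mSy_\Phi$ a bounded operator from $\ell_2(\ind,L^2(\vn(\Gamma)))$ into $\E_\Phi\subset\E$. Since both $\ell_2(\ind,L^2(\vn(\Gamma)))$ and $\E$ are Hilbert spaces for the scalar products $\tau(\ipr{\cdot}{\cdot}_2)$ and $\tau(\ipr{\cdot}{\cdot})$ (Definition~\ref{def:Hmodules} with $p=2$ and the proposition preceding it), $\mSy_\Phi$ has a bounded Hilbert-space adjoint $\mSy_\Phi^*\colon\E\to\ell_2(\ind,L^2(\vn(\Gamma)))$. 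All that remains is to verify the formula $\mSy_\Phi^*\varPsi=\{\ipr{\varPsi}{\varPhi_j}\}_{j\in\ind}$; once this is done, the right-hand side lies automatically in $\ell_2(\ind,L^2(\vn(\Gamma)))$ — note that a priori, by Definition~\ref{def:modularAG}, $\mAn_\Phi\varPsi$ was only known to live in the dual of $\ell_1^{2,\Phi}(\ind,\vn(\Gamma))$ — and $\mAn_\Phi=\mSy_\Phi^*$ is bounded, as asserted.

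To carry out the verification, fix $\varPsi\in\E$ and $k\in\ind$, and for $F\in\vn(\Gamma)$ let $C^{(k,F)}\in\ell_0(\ind,\vn(\Gamma))$ be the finite sequence whose $k$-th entry equals $F$ and whose other entries vanish, so that $\mSy_\Phi C^{(k,F)}=\varPhi_kF$. Pairing the adjoint relation against $C^{(k,F)}$ gives
\begin{align*}
\tau\big((\mSy_\Phi^*\varPsi)_k^*\,F\big)
&= \big\langle C^{(k,F)},\mSy_\Phi^*\varPsi\big\rangle_{\ell_2(\ind,L^2(\vn(\Gamma)))}
 = \big\langle\mSy_\Phi C^{(k,F)},\varPsi\big\rangle_\E \\
&= \tau\big(\ipr{\varPhi_kF}{\varPsi}\big) = \tau\big(\ipr{\varPhi_k}{\varPsi}F\big),
\end{align*}
where the last equality uses Properties $ii.$ and $iv.$ of the inner product (equivalently, Proposition~\ref{prop:outerinner}). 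Specializing $F=\rr(\gamma)$ shows, via~(\ref{eq:Fouriercoefficients}), that $(\mSy_\Phi^*\varPsi)_k^*$ and $\ipr{\varPhi_k}{\varPsi}$ have identical Fourier coefficients; since $(\mSy_\Phi^*\varPsi)_k\in L^2(\vn(\Gamma))\subset L^1(\vn(\Gamma))$ and $\ipr{\varPhi_k}{\varPsi}\in L^1(\vn(\Gamma))$, the $L^1(\vn(\Gamma))$ uniqueness theorem forces $(\mSy_\Phi^*\varPsi)_k^*=\ipr{\varPhi_k}{\varPsi}$, hence $(\mSy_\Phi^*\varPsi)_k=\ipr{\varPsi}{\varPhi_k}$ by Property $ii.$ As $k$ was arbitrary, $\mSy_\Phi^*\varPsi=\{\ipr{\varPsi}{\varPhi_j}\}_{j\in\ind}$, which is exactly~(\ref{eq:ncRieszadj}).

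I expect the only genuinely delicate point to be the discrepancy between the a priori codomain of $\mAn_\Phi$ — the dual of a weighted $\ell_1$-space, as in Definition~\ref{def:modularAG} — and the Hilbert space $\ell_2(\ind,L^2(\vn(\Gamma)))$ in which the adjoint of $\mSy_\Phi$ necessarily lives. This is precisely what the modular Riesz bound provides through Lemma~\ref{lem:ncRiesz}, and it is resolved by the component-wise $L^1(\vn(\Gamma))$-uniqueness identification above; the rest is the routine Hilbert-space computation of an adjoint on the dense subspace $\ell_0(\ind,\vn(\Gamma))$.
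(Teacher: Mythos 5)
Your proof is correct, and its skeleton is the same as the paper's: invoke Lemma \ref{lem:ncRiesz} to get boundedness of $\mSy_\Phi$ on $\ell_2(\ind,L^2(\vn(\Gamma)))$, then identify the Hilbert-space adjoint with the sequence $\{\ipr{\varPsi}{\varPhi_j}\}_{j\in\ind}$, which in particular forces that sequence into $\ell_2(\ind,L^2(\vn(\Gamma)))$. Where you genuinely diverge is in the identification step. The paper pairs $\mAn_\Phi\varPsi$ against an arbitrary $C\in\ell_2(\ind,L^2(\vn(\Gamma)))$ and pulls the (generally unbounded) coefficients $C_j$ out of the bracket via Proposition \ref{prop:outerinner}; to justify that, it first extracts from the modular Riesz condition the pointwise bound $A\,\Id\leq\ipr{\varPhi_j}{\varPhi_j}\leq B\,\Id$, so that $L^2(\vn(\Gamma),\ipr{\varPhi_j}{\varPhi_j})\approx L^2(\vn(\Gamma))$ and the outer-coefficient calculus applies. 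You instead test only against single-slot sequences with bounded entries $F\in\vn(\Gamma)$, for which property iv.\ of Definition \ref{def:Hmodules} suffices, and then pin down each component by specializing $F=\rr(\gamma)$ and applying the $L^1(\vn(\Gamma))$ Uniqueness Theorem (both operators being in $L^1$, since $L^2\subset L^1$ by finiteness of $\tau$). Your route is slightly more economical -- it bypasses the weighted-space equivalence and never needs Proposition \ref{prop:outerinner} for unbounded coefficients -- at the cost of an extra Fourier-uniqueness argument; the paper's route gives the pairing formula $\langle\mAn_\Phi\varPsi,C\rangle=\tau\big(\textstyle\sum_j C_j^*\ipr{\varPsi}{\varPhi_j}\big)$ for all $\ell_2$-coefficients directly, which is also the form reused later (e.g.\ in Lemma \ref{lem:modadj}). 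Both arguments are complete and yield the stated conclusion.
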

\begin{proof}
Suppose that $\Phi = \{\varPhi_j\}_{j \in \ind} \subset \E$ is a modular Riesz sequence with Riesz bounds $A$ and $B$. 
The modular Riesz condition implies in particular that
$$
A \Id \leq \ipr{\varPhi_j}{\varPhi_j} \leq B \Id \quad \forall \, j\in\ind
$$
where $\Id$ stands for the identity operator on $\ell_2(\Gamma)$. It can be obtained from (\ref{eq:ncRiesz}) when $\{C_j\}_{j \in \ind}$ consists of only one element, and this element is the identity operator. Then, in particular, $s_{\ipr{\varPhi_j}{\varPhi_j}} = \Id$ so, by Lemma \ref{lem:weightmap}, we have
\begin{equation}\label{eq:ncRieszbrackets}
L^2(\vn(\Gamma),\ipr{\varPhi_j}{\varPhi_j}) \approx L^2(\vn(\Gamma)) \quad \textnormal{for all} \ j \in \ind
\end{equation}
in the sense that the two spaces coincide as sets and their norms are equivalent up to constants $A$ and $B$. Now, by Lemma \ref{lem:ncRiesz} the modular synthesis operator $\mSy_\Phi$ extends to a bounded operator between the Hilbert spaces $\ell_2(\ind,L^2(\vn(\Gamma)))$ and $\E$. Its adjoint operator $\mAn_\Phi$ satisfies
$$
\langle \mAn_\Phi \varPsi, C\rangle_{\ell_2(\ind,L^2(\vn(\Gamma)))} = \langle \varPsi, \mSy_\Phi C\rangle_\E
$$
for all $\varPsi \in \E$ and all $C \in \ell_2(\ind,L^2(\vn(\Gamma)))$. Therefore,
\begin{align*}
\langle \mAn_\Phi \varPsi, C\rangle_{\ell_2(\ind,L^2(\vn(\Gamma)))} & = \langle \varPsi, \sum_{j \in \ind} \varPhi_j C_j\rangle_\E = \tau\Big(\ipr{\varPsi}{\sum_{j \in \ind} \varPhi_j C_j}\Big)\\
& = \tau\Big(\sum_{j \in \ind} \ipr{\varPsi}{\varPhi_jC_j}\Big) = \tau\Big(\sum_{j \in \ind} C_j^*\ipr{\varPsi}{\varPhi_j}\Big)
\end{align*}
where the last identity is due to Proposition \ref{prop:outerinner}, which can be applied due to (\ref{eq:ncRieszbrackets}). Since $\mSy_\Phi$ is bounded, the sequence $\big\{\ipr{\varPsi}{\varPhi_j}\big\}_{j \in \ind}$ belongs to $\ell_2(\ind,L^2(\vn(\Gamma)))$.
\end{proof}

A similar modular generalization of the notion of Bessel sequence is the following.
\begin{defi}
We say that $\Phi = \{\varPhi_j\}_{j \in \ind} \subset \E$ is a \emph{modular Bessel sequence} if there exists $B > 0$ such that
\begin{equation}\label{eq:ncBessel}
\sum_{j \in \ind} |\ipr{\varPsi}{\varPhi_j}|^2 \leq B \ipr{\varPsi}{\varPsi} \quad \forall \, \varPsi \in \E_\Phi, \vspace{-4pt}
\end{equation}
where the convergence of the series is in $L^1(\vn(\Gamma))$.
\end{defi}

\begin{rem}\label{rem:ncBesselextension}
One can not extend the modular Bessel condition from the (outer) span to the whole space relying on the same argument used in the usual Hilbert space case. Indeed, in Hilbert spaces, if $\sum_j |\langle \psi, \phi_j\rangle_\Hil|^2 \leq B \|\psi\|^2$ for all $\psi \in \Hil_\Phi = \csp\{\phi_j\}_j$, then the same holds for any $\varphi \in \Hil$ because $\langle \varphi, \phi_j\rangle_\Hil = \langle \Proj_{\Hil_\Phi}\varphi, \phi_j\rangle_\Hil$, and $\|\Proj_{\Hil_\Phi}\varphi\|^2 \leq \|\varphi\|^2$. But in a Hilbert module the notion of orthogonality provided by the inner product $\ipr{\cdot}{\cdot}$ is weaker than the one provided by the scalar product $\langle \cdot, \cdot \rangle_\E$, and the whole module is not in general a sum of orthogonal complements (as for Hilbert $C^*$-modules, see e.g. \cite{Lance95}). More precisely, if $M \subset \E$ is a closed submodule, define
$$
M^{\bot} = \{\varPsi \in \E \,|\, \langle \varPsi , \varPhi \rangle_\E = 0 \ \forall \, \varPhi \in M\} \, , \ M^{\bot}_{\ipr{}{}} = \{\varPsi \in \E \,|\, \ipr{\varPsi}{\varPhi} = 0 \ \forall \, \varPhi \in M\} .
$$
Then $M^{\bot}_{\ipr{}{}} \subset M^\bot$ since by definition $\ipr{\varPsi}{\varPhi} = 0$ implies $\langle \varPsi , \varPhi \rangle_\E = 0$, but the converse is not true in general because $\tau(F) = 0$ does not imply $F = 0$.
\end{rem}

The modular Bessel condition clearly implies modular square integrability, and is also strictly stronger than the ordinary Bessel condition.
\begin{cor}\label{cor:modBessel}
Let $\Phi = \{\varPhi_j\}_{j \in \ind} \subset \E$ be a modular Bessel sequence. Then it is an ordinary Bessel sequence in the Hilbert space $\E$ with the same constant.
\end{cor}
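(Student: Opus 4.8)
The plan is to deduce the scalar Bessel inequality from the operator-valued modular one by applying the trace $\tau$ and exploiting its positivity together with the $L^1(\vn(\Gamma))$-convergence that is built into the definition of a modular Bessel sequence.

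First I would fix $\varPsi \in \E_\Phi$. The modular Bessel hypothesis (\ref{eq:ncBessel}) says that $\sum_{j \in \ind} |\ipr{\varPsi}{\varPhi_j}|^2 \leq B\ipr{\varPsi}{\varPsi}$ with the series converging in $L^1(\vn(\Gamma))$; in particular each summand $|\ipr{\varPsi}{\varPhi_j}|^2$ lies in $L^1(\vn(\Gamma))$, which forces $\ipr{\varPsi}{\varPhi_j} \in L^2(\vn(\Gamma))$ with $\|\ipr{\varPsi}{\varPhi_j}\|_2^2 = \tau(|\ipr{\varPsi}{\varPhi_j}|^2)$. Recalling that $\tau(F) = \langle F, \Id\rangle_2$ and $\|\Id\|_2 = \tau(\Id)^{\frac12} = 1$, the Cauchy--Schwarz inequality in $L^2(\vn(\Gamma))$ together with (\ref{eq:Xscalarprod}) gives
\begin{equation*}
|\langle \varPsi, \varPhi_j\rangle_\E| = |\tau(\ipr{\varPsi}{\varPhi_j})| = |\langle \ipr{\varPsi}{\varPhi_j}, \Id\rangle_2| \leq \|\ipr{\varPsi}{\varPhi_j}\|_2 ,
\end{equation*}
so that $|\langle \varPsi, \varPhi_j\rangle_\E|^2 \leq \tau(|\ipr{\varPsi}{\varPhi_j}|^2)$ for every $j \in \ind$.

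Summing over $j$, the plan is to interchange the sum with $\tau$: writing $S = \sum_{j \in \ind} |\ipr{\varPsi}{\varPhi_j}|^2$ and letting $S_N$ be its partial sums, we have $S - S_N \geq 0$, hence $\|S - S_N\|_1 = \tau(S - S_N) \to 0$ by the assumed $L^1(\vn(\Gamma))$-convergence, giving $\sum_{j \in \ind} \tau(|\ipr{\varPsi}{\varPhi_j}|^2) = \tau(S)$. Finally, positivity of $\tau$ applied to the nonnegative operator $B\ipr{\varPsi}{\varPsi} - S$ yields $\tau(S) \leq B\tau(\ipr{\varPsi}{\varPsi}) = B\|\varPsi\|_\E^2$ by (\ref{eq:Xscalarprod}). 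Chaining the estimates gives $\sum_{j \in \ind} |\langle \varPsi, \varPhi_j\rangle_\E|^2 \leq B\|\varPsi\|_\E^2$ for all $\varPsi \in \E_\Phi$; composing with the Hilbert-space orthogonal projection $\Proj_{\E_\Phi}$ and using $\varPhi_j \in \E_\Phi$ extends the bound to all of $\E$ with the same constant $B$, which is the ordinary Bessel condition.

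There is no serious obstacle: the only step needing a little care is the interchange of the infinite sum with $\tau$, and this is precisely what the $L^1(\vn(\Gamma))$-convergence in the definition of a modular Bessel sequence guarantees. Note that the subtlety flagged in Remark \ref{rem:ncBesselextension} does not interfere here, since we are only weakening an operator inequality to a numerical one on the same domain $\E_\Phi$, rather than trying to enlarge the set on which the operator inequality is assumed to hold.
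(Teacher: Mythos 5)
Your proof is correct and follows essentially the same route as the paper's: the pointwise estimate $|\langle \varPsi,\varPhi_j\rangle_\E|\leq \tau(|\ipr{\varPsi}{\varPhi_j}|^2)^{\frac12}$ (your Cauchy--Schwarz against $\Id$ in $L^2(\vn(\Gamma))$ is just the paper's H\"older-plus-finite-trace argument), then summing, taking the trace in (\ref{eq:ncBessel}), and extending from $\E_\Phi$ to $\E$ by the standard projection argument. Your explicit justification of interchanging the sum with $\tau$ via the $L^1(\vn(\Gamma))$-convergence is a welcome detail the paper leaves implicit.
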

\begin{proof}
By H\"older inequality and the finiteness of $\tau$, we have
$$
|\langle \varPsi, \varPhi_j\rangle_\E| = |\tau(\ipr{\varPsi}{\varPhi_j})| \leq \tau(|\ipr{\varPsi}{\varPhi_j}|) \leq \tau(|\ipr{\varPsi}{\varPhi_j}|^2)^\frac12 \quad \forall \ \varPsi \in \E .
$$
Then, for all $\varPsi \in \E_\Phi$ we have
$$
\sum_{j \in \ind} |\langle \varPsi, \varPhi_j\rangle_\E|^2 \leq \sum_{j \in \ind} \tau(|\ipr{\varPsi}{\varPhi_j}|^2) \leq B \tau(\ipr{\varPsi}{\varPsi}) = B \|\varPsi\|_\E^2
$$
where the last inequality is obtained by taking the trace in the modular Bessel condition (\ref{eq:ncBessel}). So $\Phi$ is an ordinary Bessel sequence in the Hilbert space $\E_\Phi$, that can be extended to the whole $\E$ by standard arguments, see Remark \ref{rem:ncBesselextension}.
\end{proof}
\newpage

One important property of modular Bessel sequences is that the modular analysis and synthesis operators are bounded between the Hilbert spaces $\E_\Phi$ and $\ell_2(\ind,L^2(\vn(\Gamma)))$, and are the adjoint of one another. 
\begin{theo}\label{theo:ncBessel}
Let $\Phi = \{\varPhi_j\}_{j \in \ind} \subset \E$ be a modular Bessel sequence. Then
\begin{itemize}
\item[i.] the modular analysis operator $\mAn_\Phi$ is bounded from $\E_\Phi$ to $\ell_2(\ind,L^2(\vn(\Gamma)))$
\item[ii.] the modular synthesis operator $\mSy_\Phi$ extends to a bounded operator from $\ell_2(\ind,L^2(\vn(\Gamma)))$ to $\E_\Phi$ that concides with the adjoint of $\mAn_\Phi$.
\end{itemize}
\end{theo}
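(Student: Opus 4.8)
The plan is to derive both assertions from the modular Bessel inequality (\ref{eq:ncBessel}) together with the bilinear pairing that already defines $\mAn_\Phi$ in (\ref{eq:properlydefmodularanalysis}).

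\emph{For i.}, the starting point is that for $\varPsi \in \E_\Phi$ and each $j \in \ind$ one has the operator inequalities
$$
0 \ \le\ |\ipr{\varPsi}{\varPhi_j}|^2 \ \le\ \sum_{k \in \ind} |\ipr{\varPsi}{\varPhi_k}|^2 \ \le\ B\,\ipr{\varPsi}{\varPsi},
$$
where the middle term converges in $L^1(\vn(\Gamma))$ by hypothesis and the right-hand side belongs to $L^1(\vn(\Gamma))$; by monotonicity of the (extended) trace, $\tau(|\ipr{\varPsi}{\varPhi_j}|^2) \le B\,\tau(\ipr{\varPsi}{\varPsi}) < \infty$, so $\ipr{\varPsi}{\varPhi_j} \in L^2(\vn(\Gamma))$. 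Applying $\tau$ to the Bessel inequality and using the normality of $\tau$ to commute it with the $L^1(\vn(\Gamma))$-convergent series then yields
$$
\sum_{j \in \ind} \|\ipr{\varPsi}{\varPhi_j}\|_2^2 = \tau\Big(\sum_{j \in \ind} |\ipr{\varPsi}{\varPhi_j}|^2\Big) \le B\,\tau(\ipr{\varPsi}{\varPsi}) = B\,\|\varPsi\|_\E^2 ,
$$
which is exactly the statement that $\mAn_\Phi\varPsi = \{\ipr{\varPsi}{\varPhi_j}\}_{j \in \ind}$ lies in $\ell_2(\ind,L^2(\vn(\Gamma)))$ with $\|\mAn_\Phi\varPsi\|_{\ell_2(\ind,L^2(\vn(\Gamma)))} \le \sqrt B\,\|\varPsi\|_\E$.

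\emph{For ii.}, once i.\ is established $\mAn_\Phi$ is a bounded operator between the Hilbert spaces $\E_\Phi$ and $\ell_2(\ind,L^2(\vn(\Gamma)))$ (with their scalar products $\langle\cdot,\cdot\rangle_\E = \tau(\ipr{\cdot}{\cdot})$ and the componentwise one), so it admits a bounded Hilbert-space adjoint $A : \ell_2(\ind,L^2(\vn(\Gamma))) \to \E_\Phi$, characterized by $\langle AC, \varPsi\rangle_\E = \langle C, \mAn_\Phi\varPsi\rangle_{\ell_2(\ind,L^2(\vn(\Gamma)))}$. The next step is to check that $\mSy_\Phi$ agrees with $A$ on finite sequences: for $C \in \ell_0(\ind,\vn(\Gamma))$ and $\varPsi \in \E_\Phi$, using (\ref{eq:Xscalarprod}), linearity of the bracket in its first argument over finite sums, property iv.\ (together with ii.) of Definition \ref{def:Hmodules}, and the traciality of $\tau$,
\begin{align*}
\langle \mSy_\Phi C, \varPsi\rangle_\E
&= \tau\Big(\sum_{j \in \ind} \ipr{\varPhi_j C_j}{\varPsi}\Big)
= \sum_{j \in \ind} \tau\big(\ipr{\varPhi_j}{\varPsi}\,C_j\big)
= \sum_{j \in \ind} \langle C_j, \ipr{\varPsi}{\varPhi_j}\rangle_2\\
&= \langle C, \mAn_\Phi\varPsi\rangle_{\ell_2(\ind,L^2(\vn(\Gamma)))}
= \langle AC, \varPsi\rangle_\E .
\end{align*}
Since $\mSy_\Phi C \in \vsp_{\vn(\Gamma)}\,\Phi \subset \E_\Phi$ and $AC \in \E_\Phi$, this forces $\mSy_\Phi C = AC$ for every finite $C$. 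As $\ell_0(\ind,\vn(\Gamma))$ is dense in $\ell_2(\ind,L^2(\vn(\Gamma)))$ (noted in the proof of Theorem \ref{theo:modulardenseGram}) and $A$ is continuous, $\mSy_\Phi$ extends uniquely to $A$; hence the extended $\mSy_\Phi$ is precisely the adjoint of $\mAn_\Phi$, and in particular $\mAn_\Phi = \mSy_\Phi^*$, which is ii.

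The routine steps are painless; what requires care are the two places where positivity together with the order-continuity (normality) of $\tau$ replace a naive Fubini argument --- the passage from $L^1(\vn(\Gamma))$ to $L^2(\vn(\Gamma))$ for each $\ipr{\varPsi}{\varPhi_j}$, and the interchange of $\tau$ with the infinite series $\sum_{j} |\ipr{\varPsi}{\varPhi_j}|^2$ --- together with the bookkeeping that the relevant adjoint is the Hilbert-space adjoint for the scalar products $\tau(\ipr{\cdot}{\cdot})$, not for the module inner products. Remark \ref{rem:ncBesselextension} is precisely a reminder that these two notions genuinely differ, so the whole argument must stay inside $\E_\Phi$, where (\ref{eq:ncBessel}) is assumed. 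A final minor point is that $\mSy_\Phi$ is a priori defined only on $\ell_0^{2,\Phi}(\ind,\vn(\Gamma))$, but $\ell_0(\ind,\vn(\Gamma))$ is contained both in that domain and in $\ell_2(\ind,L^2(\vn(\Gamma)))$, which is all the extension argument needs.
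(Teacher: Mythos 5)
Your proof is correct and follows essentially the same route as the paper: apply the trace to the modular Bessel inequality to bound $\mAn_\Phi$ from $\E_\Phi$ to $\ell_2(\ind,L^2(\vn(\Gamma)))$, then identify the Hilbert-space adjoint of $\mAn_\Phi$ with $\mSy_\Phi$ on a dense set of finite sequences and extend by continuity. The only (harmless) deviation is that you check the agreement on $\ell_0(\ind,\vn(\Gamma))$ using just the module axioms, whereas the paper first observes that the Bessel condition forces $\ipr{\varPhi_j}{\varPhi_j}\in\vn(\Gamma)$ with $\|\ipr{\varPhi_j}{\varPhi_j}\|_\infty\leq B$, so that it can work on $\ell_0(\ind,L^2(\vn(\Gamma)))$ via Proposition \ref{prop:outerinner}; both sets are dense in $\ell_2(\ind,L^2(\vn(\Gamma)))$, so the conclusion is the same.
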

\begin{proof}
The Bessel condition implies $\ipr{\varPhi_j}{\varPhi_j} \in \vn(\Gamma)$, with $\|\ipr{\varPhi_j}{\varPhi_j}\|_\infty \leq B$, for all $j \in \ind$, because (see also the argument in \cite[Proof of Th. A]{BHP14})
$$
\ipr{\varPhi_{j}}{\varPhi_{j}}^2 \leq \sum_{l \in \ind} |\ipr{\varPhi_{j}}{\varPhi_l}|^2 \leq B \ipr{\varPhi_{j}}{\varPhi_{j}} .
$$
Then, in particular, $L^2(\vn(\Gamma)) \subset L^2(\vn(\Gamma),\ipr{\varPhi_j}{\varPhi_j})$ for all $j \in \ind$, because if $F \in L^2(\vn(\Gamma))$ then by Holder's inequality we have that $\|F\|_{2,\ipr{\varPhi_j}{\varPhi_j}} \leq \sqrt{B} \|F\|_2$. Thus for all $p \geq 0$ we have
\begin{equation}\label{eq:Besselcoefficients}
\ell_p(\ind,L^2(\vn(\Gamma))) \subset \ell_p^{2,\Phi}(\ind,\vn(\Gamma)) .
\end{equation}

That $\mAn_\Phi : \E_\Phi \to \ell_2(\ind,L^2(\vn(\Gamma)))$ is bounded is a direct consequence of the Bessel condition, since, by Fubini's Theorem and the monotonicity of $\tau$
\begin{align*}
\|\mAn_\Phi\varPsi\|^2_{\ell_2(\ind,L^2(\vn(\Gamma)))} & = 
\sum_{j \in \ind} \tau\Big(|\ipr{\varPsi}{\varPhi_j}|^2\Big) = \tau \bigg( \sum_{j \in \ind} |\ipr{\varPsi}{\varPhi_j}|^2\bigg)\\
& \leq B \tau(\ipr{\varPsi}{\varPsi}) = B \|\varPsi\|_\E^2 .
\end{align*}
Let us now denote with $(\mAn_\Phi)^* : \ell_2(\ind,L^2(\vn(\Gamma))) \to \E_\Phi$ the adjoint of $\mAn_\Phi$. By definition, for all $\varPsi \in \E$ and all $C \in \ell_2(\ind,L^2(\vn(\Gamma)))$, we have
\begin{align}\label{eq:intermezzo2}
\langle \varPsi, (\mAn_\Phi)^* C\rangle_\E & = \langle \mAn_\Phi \varPsi, C\rangle_{\ell_2(\ind,L^2(\vn(\Gamma)))} = \sum_{j \in \ind} \tau\bigg( C_j^*\ipr{\varPsi}{\varPhi_j}\bigg)\nonumber\\
& = \tau\bigg(\sum_{j \in \ind} \ipr{\varPsi}{\varPhi_j C_j}\bigg)
\end{align}
where the last identity is due to Fubini's Theorem and Proposition \ref{prop:outerinner}. When $C \in \ell_0(\ind,L^2(\vn(\Gamma)))$, identity (\ref{eq:intermezzo2}) implies
$$
\langle \varPsi, (\mAn_\Phi)^* C\rangle_\E = \tau\bigg(\Big\{\varPsi , \sum_{j \in \ind} \varPhi_j C_j\Big\}\bigg) = \langle \varPsi, \mSy_\Phi C\rangle_\E
$$
so that $(\mAn_\Phi)^*$ coincides with $\mSy_\Phi$ on $\ell_0(\ind,L^2(\vn(\Gamma)))$, where $\mSy_\Phi$ is defined because of (\ref{eq:Besselcoefficients}). Since this is a dense subset of $\ell_2(\ind,L^2(\vn(\Gamma)))$, then $(\mAn_\Phi)^*$ provides a bounded extension of $\mSy_\Phi$, that we will still denote with the same symbol $\mSy_\Phi$.
\end{proof}

We now can prove that the modular Bessel property implies that the modular analysis operator is adjointable as an operator between the Hilbert modules $(\E_\Phi,\ipr{\cdot}{\cdot})$ and $(\ell_2(\ind,L^2(\vn(\Gamma))),\ipr{\cdot}{\cdot}_2)$, and its modular adjoint is the bounded extension of the modular synthesis operator provided by Theorem \ref{theo:ncBessel}.

\begin{lem}\label{lem:modadj}
If $\Phi = \{\varPhi_j\}_{j \in \ind} \subset \E$ is a modular Bessel sequence, then
$$
\ipr{\mAn_\Phi \varPsi}{C}_{2} = \ipr{\varPsi}{\mSy_\Phi C} \quad \forall \, \varPsi \in \E_\Phi , \ \forall \, C \in \ell_2(\ind,L^2(\vn(\Gamma))) .
$$
\end{lem}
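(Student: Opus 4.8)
The plan is to prove the identity first for finitely supported $C$ and then extend by density, mimicking the classical proof that the synthesis operator is the adjoint of the analysis operator, but keeping the $L^1(\vn(\Gamma))$-valued inner products in place of their traces. First I would record that both sides are well-defined elements of $L^1(\vn(\Gamma))$: by Theorem \ref{theo:ncBessel} the sequence $\mAn_\Phi\varPsi = \{\ipr{\varPsi}{\varPhi_j}\}_{j \in \ind}$ belongs to $\ell_2(\ind,L^2(\vn(\Gamma)))$, so $\ipr{\mAn_\Phi\varPsi}{C}_2 = \sum_{j \in \ind} C_j^*\ipr{\varPsi}{\varPhi_j}$ converges in $\|\cdot\|_1$, and $\mSy_\Phi$ extends to a bounded map $\ell_2(\ind,L^2(\vn(\Gamma))) \to \E_\Phi$, so that $\ipr{\varPsi}{\mSy_\Phi C}$ makes sense.

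For the finite case I would take $C \in \ell_0(\ind,L^2(\vn(\Gamma)))$, which is an admissible space of outer coefficients here: the modular Bessel hypothesis forces $\ipr{\varPhi_j}{\varPhi_j} \in \vn(\Gamma)$ (as recalled in the proof of Theorem \ref{theo:ncBessel}) and hence $L^2(\vn(\Gamma)) \subset L^2(\vn(\Gamma),\ipr{\varPhi_j}{\varPhi_j})$, so the compositions $\varPhi_j C_j$ of Proposition \ref{prop:coefficients} are defined. By sesquilinearity of the module inner product, $\ipr{\varPsi}{\mSy_\Phi C} = \sum_{j \in \ind}\ipr{\varPsi}{\varPhi_j C_j}$, a finite sum. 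The one algebraic step that matters is $\ipr{\varPsi}{\varPhi_j C_j} = C_j^*\ipr{\varPsi}{\varPhi_j}$, obtained by applying Proposition \ref{prop:outerinner} in the form $\ipr{\varPhi_j C_j}{\varPsi} = \ipr{\varPhi_j}{\varPsi}C_j$ and then conjugate symmetry (Property ii. of Definition \ref{def:Hmodules}); care is needed here only to check that the adjoint lands on the \emph{left} of the factor, matching the convention $\ipr{C}{D}_2 = \sum_j D_j^* C_j$. Summing over $j$ gives $\ipr{\varPsi}{\mSy_\Phi C} = \sum_{j \in \ind} C_j^*\ipr{\varPsi}{\varPhi_j} = \ipr{\mAn_\Phi\varPsi}{C}_2$.

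To pass to an arbitrary $C \in \ell_2(\ind,L^2(\vn(\Gamma)))$, I would approximate $C$ by finitely supported $C^{(n)}$ in the $\ell_2(\ind,L^2(\vn(\Gamma)))$-norm and argue that both sides of the identity depend $\|\cdot\|_1$-continuously on $C$. On the left this follows from the Cauchy--Schwarz inequality valid in any $L^2(\vn(\Gamma))$-Hilbert module, itself a consequence of \cite[Prop. 3.2]{JungeSherman05} together with H\"older's inequality: $\|\ipr{\mAn_\Phi\varPsi}{C - C^{(n)}}_2\|_1 \le \|\mAn_\Phi\varPsi\|_{\ell_2(\ind,L^2(\vn(\Gamma)))}\,\|C - C^{(n)}\|_{\ell_2(\ind,L^2(\vn(\Gamma)))}$. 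On the right, $\mSy_\Phi C^{(n)} \to \mSy_\Phi C$ in $\E$ by boundedness of $\mSy_\Phi$ (Theorem \ref{theo:ncBessel}), and $\varPhi \mapsto \ipr{\varPsi}{\varPhi}$ is bounded from $\E$ to $L^1(\vn(\Gamma))$ (again \cite[Prop. 3.2]{JungeSherman05}, adjunction being isometric on $L^1(\vn(\Gamma))$), so $\ipr{\varPsi}{\mSy_\Phi C^{(n)}} \to \ipr{\varPsi}{\mSy_\Phi C}$ in $L^1(\vn(\Gamma))$. Combining these limits with the finite-case identity $\ipr{\mAn_\Phi\varPsi}{C^{(n)}}_2 = \ipr{\varPsi}{\mSy_\Phi C^{(n)}}$ and uniqueness of limits finishes the argument. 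I do not anticipate any genuine obstacle; the whole thing is routine once one is careful about which side of the outer coefficient the adjoint sits on and about the admissibility of $\ell_0(\ind,L^2(\vn(\Gamma)))$ as coefficients, both of which are handled by the modular Bessel condition.
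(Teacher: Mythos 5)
Your proof is correct and follows essentially the same route as the paper's: establish the identity on finitely supported coefficients via Proposition \ref{prop:outerinner} (together with the inclusion $\ell_0(\ind,L^2(\vn(\Gamma)))\subset\ell_0^{2,\Phi}(\ind,\vn(\Gamma))$ coming from the Bessel condition), then pass to general $C\in\ell_2(\ind,L^2(\vn(\Gamma)))$ by density, using the boundedness of $\mSy_\Phi$ from Theorem \ref{theo:ncBessel} and the $L^1(\vn(\Gamma))$-continuity of the inner products in each argument. Your version merely makes the continuity step slightly more explicit (via the modular Cauchy--Schwarz estimate) than the paper does.
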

\begin{proof}
By (\ref{eq:Besselcoefficients}) and Proposition \ref{prop:outerinner}, for all $\varPsi \in \E_\Phi$ and all $C \in \ell_2(\ind,L^2(\vn(\Gamma)))$ we have that $C_j^*\ipr{\varPsi}{\varPhi_j} = \ipr{\varPsi}{\varPhi_j C_j}$ for all $j$, so
$$
\ipr{\mAn_\Phi \varPsi}{C}_{2} = \sum_{j \in \ind} C_j^*\ipr{\varPsi}{\varPhi_j} = \sum_{j \in \ind} \ipr{\varPsi}{\varPhi_j C_j} .
$$
Let $\{C^{(k)}\}_{k \in \N} \subset \ell_0(\ind,L^2(\vn(\Gamma)))$ be a sequence converging to $C$ in $\ell_2(\ind,L^2(\vn(\Gamma)))$, where each $C^{(k)} = \{C^{(k)}_j\}_{j \in \ind}$ is a finite sequence of $L^2(\vn(\Gamma))$ elements.
Then
$$
\ipr{\mAn_\Phi \varPsi}{C^{(k)}}_{2} = \sum_{j \in \ind} \ipr{\varPsi}{\varPhi_j C^{(k)}_j} = \ipr{\varPsi}{\sum_{j \in \ind} \varPhi_j C^{(k)}_j} = \ipr{\varPsi}{\mSy_{\Phi}C^{(k)}} .
$$
Since the inner products at both sides are, by definition, continuous in the topology of $L^1(\vn(\Gamma))$ with respect to each one of their arguments, and since, by Theorem \ref{theo:ncBessel}, $\{\mSy_{\Phi}C^{(k)}\}_{k \in \N}$ converges to $\mSy_{\Phi}C$, then the claim is proved.
\end{proof}

A crucial property of modular Bessel sequences is that they allow us to use inner products arising from the modular analysis operator as coefficients for linear combinations. Indeed, if $\Phi \subset \E$ is a modular Bessel sequence, then the operator $\mFrame_\Phi = \mSy_\Phi \mAn_\Phi$ is bounded on $\E_\Phi$. It reads
$$
\mFrame_\Phi \varPsi = \sum_{j \in \ind} \varPhi_j \ipr{\varPsi}{\varPhi_j}
$$
and we will call it \emph{modular frame operator}.

We can now provide a notion of modular frames as follows.
\begin{defi}
We say that $\Phi = \{\varPhi_j\}_{j \in \ind} \subset \E$ is a \emph{modular frame sequence} with frame bounds $0 < A \leq B < \infty$ if it satisfies
\begin{equation}\label{eq:ncframe}
A \ipr{\varPsi}{\varPsi} \leq \sum_{j \in \ind} |\ipr{\varPsi}{\varPhi_j}|^2 \leq B \ipr{\varPsi}{\varPsi} \quad \forall \, \varPsi \in \E_\Phi,
\end{equation}
where the convergence of the series in the middle term is in  $L^1(\vn(\Gamma))$.
\end{defi}

Similarly to what we have done with Riesz systems, by applying the trace to (\ref{eq:ncframe}) one can obtain a numerical necessary condition for modular frames.
\begin{lem}\label{lem:ncframe}
Let $\Phi = \{\varPhi_j\}_{j \in \ind} \subset \E$ be a modular frame sequence. Then
\begin{equation}\label{eq:ncframenumerical}
A \|\varPsi\|^2_\E \leq \langle \varPsi, \mFrame_\Phi \varPsi\rangle_\E \leq B \|\varPsi\|^2_\E \quad \forall \, \varPsi \in \E_\Phi .
\end{equation}
\end{lem}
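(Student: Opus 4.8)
The plan is to reduce the statement to a single application of the trace $\tau$ to the operator-valued chain of inequalities (\ref{eq:ncframe}), once the middle and outer terms there have been rewritten in the language of (\ref{eq:ncframenumerical}). So the two things I need are: an expression of $\langle \varPsi, \mFrame_\Phi \varPsi\rangle_\E$ as $\tau\big(\sum_{j} |\ipr{\varPsi}{\varPhi_j}|^2\big)$, and the monotonicity of $\tau$.

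First I would observe that a modular frame sequence is in particular a modular Bessel sequence, since the right-hand inequality of (\ref{eq:ncframe}) is exactly the modular Bessel condition (\ref{eq:ncBessel}). Hence Theorem \ref{theo:ncBessel} applies: the modular analysis operator $\mAn_\Phi : \E_\Phi \to \ell_2(\ind,L^2(\vn(\Gamma)))$ is bounded, the modular synthesis operator $\mSy_\Phi$ extends to a bounded operator on these spaces, and $\mSy_\Phi = (\mAn_\Phi)^*$. Consequently $\mFrame_\Phi = \mSy_\Phi\mAn_\Phi = (\mAn_\Phi)^*\mAn_\Phi$ is a bounded positive selfadjoint operator on $\E_\Phi$, and for every $\varPsi \in \E_\Phi$
\[
\langle \varPsi, \mFrame_\Phi \varPsi\rangle_\E = \langle \mAn_\Phi\varPsi, \mAn_\Phi\varPsi\rangle_{\ell_2(\ind,L^2(\vn(\Gamma)))} = \sum_{j\in\ind} \tau\big(|\ipr{\varPsi}{\varPhi_j}|^2\big) = \tau\Big(\sum_{j\in\ind}|\ipr{\varPsi}{\varPhi_j}|^2\Big),
\]
where the last equality uses the normality of $\tau$ together with the fact, built into the definition of a modular frame sequence, that the series $\sum_{j}|\ipr{\varPsi}{\varPhi_j}|^2$ converges in $L^1(\vn(\Gamma))$.

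Then I would apply $\tau$ to the chain of operator inequalities (\ref{eq:ncframe}). Since $\tau$ is a positive, hence order-preserving, linear functional on $L^1(\vn(\Gamma))$, this yields $A\,\tau(\ipr{\varPsi}{\varPsi}) \le \tau\big(\sum_{j}|\ipr{\varPsi}{\varPhi_j}|^2\big) \le B\,\tau(\ipr{\varPsi}{\varPsi})$ for all $\varPsi\in\E_\Phi$. Using $\tau(\ipr{\varPsi}{\varPsi}) = \|\varPsi\|_\E^2$ from (\ref{eq:Xscalarprod}) and the identity for $\langle \varPsi, \mFrame_\Phi\varPsi\rangle_\E$ established above, one obtains precisely (\ref{eq:ncframenumerical}). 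I do not expect a serious obstacle here: this is the modular counterpart of the classical passage from the frame inequalities to bounds on the frame operator. The only point requiring a little care is the interchange of $\tau$ with the infinite sum, which is legitimate by normality of $\tau$ and $L^1$-convergence of the series; everything else is routine.
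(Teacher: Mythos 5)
Your proposal is correct and follows essentially the same route as the paper: apply the trace to the operator inequalities (\ref{eq:ncframe}) and then identify $\|\mAn_\Phi\varPsi\|^2_{\ell_2(\ind,L^2(\vn(\Gamma)))}$ with $\langle\varPsi,\mFrame_\Phi\varPsi\rangle_\E$ via Theorem \ref{theo:ncBessel}. Your extra remarks (that a modular frame sequence is modular Bessel, and that normality of $\tau$ justifies the interchange with the $L^1$-convergent series) simply make explicit what the paper leaves implicit.
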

\begin{proof}
By applying the trace to the inequalities (\ref{eq:ncframe}), one obtains
$$
A \|\varPsi\|^2_\E \leq \|\mAn_\Phi \varPsi\|^2_{\ell_2(\ind,L^2(\vn(\Gamma)))} \leq B \|\varPsi\|^2_\E
$$
so that (\ref{eq:ncframenumerical}) can be obtained by Theorem  \ref{theo:ncBessel} observing that
\begin{displaymath}
\langle \mAn_\Phi \varPsi, \mAn_\Phi \varPsi\rangle_{\ell_2(\ind,L^2(\vn(\Gamma)))} = \langle \varPsi, \mFrame_\Phi \varPsi\rangle_\E . \qedhere
\end{displaymath}
\end{proof}

\begin{rem}
We note explicitly that Lemma \ref{lem:ncframe} does not imply that a modular frame sequence $\Phi$ is an ordinary frame of the Hilbert space $(\E_\Phi,\langle\cdot,\cdot\rangle_\E)$.
Indeed, this would give us the possibility to express any $\varPsi \in \E_\Phi$ as an ordinary linear combination of $\Phi$ with scalar coefficients, which is not true even for a family $\Phi$ consisting of only one element. In particular, one has the inequality
$$
\Frame_\Phi \leq \mFrame_\Phi \quad \textnormal{on} \ \E_\Phi .
$$
Indeed, by Minkowski and Holder's inequalities and the finiteness of $\tau$ we obtain\vspace{-4pt}
$$
\begin{array}{rcl}
\langle \varPsi, \Frame_\Phi \varPsi\rangle_\E & = & \displaystyle\sum_{j \in \ind} |\tau(\ipr{\varPsi}{\varPhi_j})|^2 \leq \tau\bigg(\Big(\sum_{j \in \ind} |\ipr{\varPsi}{\varPhi_j}|^2\Big)^\frac12\bigg)^2 = \|\ipr{\varPsi}{\mFrame_\Phi \varPsi}^\frac12\|^2_1\vspace{2pt}\\
& \leq & \displaystyle\|\ipr{\varPsi}{\mFrame_\Phi \varPsi}^\frac12\|^2_2 = \tau(\ipr{\varPsi}{\mFrame_\Phi \varPsi}) = \langle \varPsi, \mFrame_\Phi \varPsi\rangle_\E \quad \forall \, \varPsi \in \E_\Phi .\vspace{-4pt}
\end{array}
$$
On the other hand, the fact that $\ol{\vsp_\C \ \Phi}^\E$ is in general strictly smaller than $\E_\Phi$, so that modular frames are not necessarily ordinary frames, implies that the reversed inequality does not hold in general.
\end{rem}

\subsection{Reproducing properties in \texorpdfstring{$L^2(\vn(\Gamma))$}{L2}-Hilbert modules}

In this subsection we will obtain the reproducing properties of modular Riesz and frame sequences. They are based on a general argument that allows to see that the numerical conditions (\ref{eq:ncRiesznumerical}) and (\ref{eq:ncframenumerical}) are not only necessary, but also sufficient for having respectively modular Riesz and modular frame sequences. This relies on a notion of invariance on the introduced modular structure.

Let us define the following action of $\vn(\Gamma)$ on $L^1(\vn(\Gamma))$
\begin{equation}\label{eq:action}
\begin{array}{rccl}
\mathcal{A} : & \vn(\Gamma) \times L^1(\vn(\Gamma)) & \to & L^1(\vn(\Gamma))\\
& (p, F) & \mapsto & \mathcal{A}_p(F) = p^* F p .
\end{array}
\end{equation}
The map $\mathcal{A}$ satisfies $\mathcal{A}_{p_1}\mathcal{A}_{p_2} = \mathcal{A}_{p_2 p_1}$ for all $p_1, p_2 \in \vn(\Gamma)$ and, for any fixed $p \in \vn(\Gamma)$, the operator $\mathcal{A}_p$ is continuous in $L^1(\vn(\Gamma))$.

\begin{defi}
By $\vn(\Gamma)$-\emph{conjugacy set} we mean a set of selfadjoint operators $\Upsilon \subset L^1(\vn(\Gamma))$ that is invariant under the action (\ref{eq:action}), i.e.
$$
\mathcal{A}_p \Upsilon \subset \Upsilon \quad \forall \, p \in \vn(\Gamma) .
$$
\end{defi}

The property that we will need about these conjugacy sets is that positivity of all their elements can be established in terms of the positivity of the traces, which is equivalent to \cite[Lemma 2.3]{BHP14} when applied to $\vn(\Gamma)$-modules.

\begin{lem}\label{lem:invariance}
Let $\Upsilon$ be an $\vn(\Gamma)$-conjugacy set. Then the following are equivalent
\begin{itemize}
\item[i.] $\tau(F) \geq 0$ for all $F \in \Upsilon$
\item[ii.] $F \geq 0$ for all $F \in \Upsilon$.
\end{itemize}
\end{lem}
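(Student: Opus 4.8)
My plan is to prove the two implications separately. The implication $ii. \Rightarrow i.$ is immediate, since $\tau$ is a positive linear functional on $L^1(\vn(\Gamma))$: if every $F \in \Upsilon$ satisfies $F \geq 0$, then in particular $\tau(F) \geq 0$ for all $F \in \Upsilon$. So the content of the lemma lies in the converse.

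For $i. \Rightarrow ii.$ I would argue by contradiction. Assume $\tau(F) \geq 0$ for all $F \in \Upsilon$, but suppose some $F_0 \in \Upsilon$ fails to be nonnegative. Since $\Upsilon$ consists of selfadjoint operators and $F_0 \in L^1(\vn(\Gamma))$ is affiliated to $\vn(\Gamma)$, the failure of $F_0 \geq 0$ means there is $\varepsilon > 0$ such that the spectral projection $p = \chi_{(-\infty,-\varepsilon]}(F_0)$ is a nonzero element of $\vn(\Gamma)$, as recalled in the introduction for selfadjoint operators affiliated to $\vn(\Gamma)$. Being a spectral projection of $F_0$, the operator $p$ commutes with $F_0$, so $\mathcal{A}_p(F_0) = p^* F_0 p = p F_0 p = F_0 p$, and by functional calculus $F_0 p \leq -\varepsilon\, p$. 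By the $\vn(\Gamma)$-conjugacy of $\Upsilon$ we have $\mathcal{A}_p(F_0) \in \Upsilon$, hence by hypothesis $\tau(F_0 p) \geq 0$; on the other hand $\tau(F_0 p) \leq -\varepsilon\, \tau(p) < 0$, because $\tau$ is faithful and $p \neq 0$ forces $\tau(p) > 0$. This contradiction establishes $F \geq 0$ for every $F \in \Upsilon$.

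The only technical points to check are that $F_0 p$ is again an element of $L^1(\vn(\Gamma))$ — which follows from H\"older's inequality, since $p$ is bounded — and that the spectral calculus keeps all the operations within the operators affiliated to $\vn(\Gamma)$; neither presents a real obstacle. Indeed this is simply the modular analogue, phrased for the action $\mathcal{A}$ on $L^1(\vn(\Gamma))$, of the argument behind \cite[Lemma 2.3]{BHP14}, and the only step needing a little attention is the passage to the spectral projection $p$, which is precisely where the hypothesis of invariance under $\mathcal{A}_p$ is used.
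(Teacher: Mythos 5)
Your proof is correct and follows essentially the same route as the paper: reduce to $i.\Rightarrow ii.$, take a spectral projection $p$ of $F$ onto the negative part of the spectrum (which lies in $\vn(\Gamma)$ by \cite[Prop. 5.3.4]{Sunder97}), use the $\mathcal{A}_p$-invariance of $\Upsilon$ to place $p^*Fp$ back in $\Upsilon$, and derive a contradiction with the positivity of the traces via faithfulness of $\tau$. Your use of the cutoff $\chi_{(-\infty,-\varepsilon]}(F_0)$ is a slight refinement that makes the strict inequality $\tau(pF_0p)\leq-\varepsilon\,\tau(p)<0$ fully explicit, but the argument is otherwise the paper's.
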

\begin{proof}
We only need to prove that $i.$ implies $ii.$ In order to show this, let
$$
p = \chi_{(-\infty,0)}(F)
$$
be the spectral projection of an $F \in \Upsilon$ on the negative real axis. Assume by contradiction that $ii.$ does not hold, so that $p \neq 0$. Then
$$
\langle F p u, p u\rangle_{\ell_2(\Gamma)} < 0 \quad \forall \, u \in \ell_2(\Gamma).
$$
This means that $p^*Fp < 0$, hence implying that $\tau(p^*Fp) < 0$, but this contradicts $i.$ because $p \in \vn(\Gamma)$ by \cite[Prop. 5.3.4]{Sunder97}, so $p^*Fp \in \Upsilon$.
\end{proof}

\begin{theo}\label{theo:ncnumerical}
Let $\E$ be an $L^2(\vn(\Gamma))$-Hilbert module.
\begin{itemize}
\item[i.] A countable family $\Phi = \{\varPhi_j\}_{j \in \ind}\subset \E$ is a modular Riesz sequence with Riesz bounds $0 < A \leq B < \infty$ if and only if for all $C \in \ell_0(\ind,\vn(\Gamma))$
\begin{equation}\label{eq:ncRiesznumerical2}
A \sum_{j \in \ind} \tau(|C_j|^2) \leq \tau(\ipr{\mSy_{\Phi}C}{\mSy_{\Phi}C}) \leq B \sum_{j \in \ind} \tau(|C_j|^2)
\end{equation}
where $\mSy_{\Phi}$ is the modular synthesis operator associated to $\Phi$.
\item[ii.] A countable family $\Phi = \{\varPhi_j\}_{j \in \ind}\subset \E$ is a modular frame sequence with frame bounds $0 < A \leq B < \infty$ if and only if for all $\varPsi \in \E_\Phi$ it holds
\begin{equation}\label{eq:ncframenumerical2}
A \tau(\ipr{\varPsi}{\varPsi}) \leq \sum_{j \in \ind} \tau(|\ipr{\varPsi}{\varPhi_j}|^2) \leq B \tau(\ipr{\varPsi}{\varPsi}) .
\end{equation}
\end{itemize}
\end{theo}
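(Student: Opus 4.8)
The plan is to reduce each operator-valued inequality to its scalar (trace) counterpart via Lemma~\ref{lem:invariance}, after recognizing that the relevant families of ``defect operators'' are $\vn(\Gamma)$-conjugacy sets. In both parts one implication is routine: if $\Phi$ is a modular Riesz (resp.\ frame) sequence, applying the order-preserving functional $\tau$ to (\ref{eq:ncRiesz}) (resp.\ (\ref{eq:ncframe})) and using $\tau\big(\sum_j |C_j|^2\big) = \sum_j \tau(|C_j|^2)$ and $\tau\big(\sum_j |\ipr{\varPsi}{\varPhi_j}|^2\big) = \sum_j \tau(|\ipr{\varPsi}{\varPhi_j}|^2)$ (normality of $\tau$), together with the identity $\tau(\ipr{\mSy_\Phi C}{\mSy_\Phi C}) = \|\mSy_\Phi C\|_\E^2$, gives (\ref{eq:ncRiesznumerical2}) (resp.\ (\ref{eq:ncframenumerical2})); this is essentially the content of Lemma~\ref{lem:ncRiesz} and Lemma~\ref{lem:ncframe}.

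For the converse in i., fix $A \le B$ and, for each finite $C \in \ell_0(\ind,\vn(\Gamma))$, put $F_C = \ipr{\mSy_\Phi C}{\mSy_\Phi C} - A\sum_{j\in\ind}|C_j|^2$ and $G_C = B\sum_{j\in\ind}|C_j|^2 - \ipr{\mSy_\Phi C}{\mSy_\Phi C}$, both selfadjoint elements of $L^1(\vn(\Gamma))$. The key computation is that for $p \in \vn(\Gamma)$, writing $Cp := \{C_j p\}_{j\in\ind} \in \ell_0(\ind,\vn(\Gamma))$, one has $\mSy_\Phi(Cp) = (\mSy_\Phi C)p$ by module axioms M1 and M3, and from Property~iv of Definition~\ref{def:Hmodules} the identities $\ipr{xp}{y} = \ipr{x}{y}p$ and $\ipr{xp}{xp} = p^*\ipr{x}{x}p$, whence $\ipr{\mSy_\Phi(Cp)}{\mSy_\Phi(Cp)} = p^*\ipr{\mSy_\Phi C}{\mSy_\Phi C}p$, while $\sum_j |C_j p|^2 = p^*\big(\sum_j|C_j|^2\big)p$. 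Thus $\mathcal{A}_p(F_C) = F_{Cp}$ and $\mathcal{A}_p(G_C) = G_{Cp}$, so $\{F_C : C\in\ell_0(\ind,\vn(\Gamma))\}$ and $\{G_C : C\in\ell_0(\ind,\vn(\Gamma))\}$ are $\vn(\Gamma)$-conjugacy sets. By the hypothesis (\ref{eq:ncRiesznumerical2}) every such $F_C$ and $G_C$ has nonnegative trace, so Lemma~\ref{lem:invariance} gives $F_C \ge 0$ and $G_C \ge 0$ for all $C$, which is precisely (\ref{eq:ncRiesz}).

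For the converse in ii.\ one argues the same way with $\varPsi \in \E_\Phi$ in place of $C$. Assuming (\ref{eq:ncframenumerical2}), first observe that for each $\varPsi \in \E_\Phi$ the partial sums of $\sum_j |\ipr{\varPsi}{\varPhi_j}|^2$ form an increasing sequence of positive operators with traces bounded by $B\,\tau(\ipr{\varPsi}{\varPsi}) < \infty$, so by normality of $\tau$ the series converges in $L^1(\vn(\Gamma))$; this makes $F_\varPsi := \sum_j |\ipr{\varPsi}{\varPhi_j}|^2 - A\ipr{\varPsi}{\varPsi}$ and $G_\varPsi := B\ipr{\varPsi}{\varPsi} - \sum_j |\ipr{\varPsi}{\varPhi_j}|^2$ well-defined selfadjoint elements of $L^1(\vn(\Gamma))$. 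Since $\E_\Phi$ is a submodule, $\varPsi p \in \E_\Phi$ for $p \in \vn(\Gamma)$, and the identities $\ipr{\varPsi p}{\varPhi_j} = \ipr{\varPsi}{\varPhi_j}p$ and $\ipr{\varPsi p}{\varPsi p} = p^*\ipr{\varPsi}{\varPsi}p$ give $\mathcal{A}_p(F_\varPsi) = F_{\varPsi p}$ and $\mathcal{A}_p(G_\varPsi) = G_{\varPsi p}$; hence $\{F_\varPsi : \varPsi\in\E_\Phi\}$ and $\{G_\varPsi : \varPsi\in\E_\Phi\}$ are $\vn(\Gamma)$-conjugacy sets whose elements have nonnegative trace by (\ref{eq:ncframenumerical2}), and Lemma~\ref{lem:invariance} then yields $F_\varPsi \ge 0$ and $G_\varPsi \ge 0$ for all $\varPsi \in \E_\Phi$, i.e.\ (\ref{eq:ncframe}).

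The module-algebraic identities used above ($\ipr{xp}{y} = \ipr{x}{y}p$, $\ipr{xp}{xp} = p^*\ipr{x}{x}p$, $\mSy_\Phi(Cp) = (\mSy_\Phi C)p$) are immediate from Definitions~\ref{def:modules} and \ref{def:Hmodules} and the definition of the modular synthesis operator, so the heart of the matter is simply spotting the $\vn(\Gamma)$-conjugacy set structure so that Lemma~\ref{lem:invariance} does the real work. The one step needing a little care is, in the reverse direction for frames, extracting $L^1(\vn(\Gamma))$-convergence of $\sum_j |\ipr{\varPsi}{\varPhi_j}|^2$ from the finite-trace estimate before the defect operators $F_\varPsi, G_\varPsi$ can even be written down; this is handled by the normality of $\tau$ as indicated.
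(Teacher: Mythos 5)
Your proposal is correct and follows essentially the same route as the paper: both directions reduce to Lemma \ref{lem:invariance} by exhibiting the defect operators as members of $\vn(\Gamma)$-conjugacy sets, using the covariance identities $\ipr{\mSy_\Phi(Cp)}{\mSy_\Phi(Cp)} = p^*\ipr{\mSy_\Phi C}{\mSy_\Phi C}p$ and $\sum_j|\ipr{\varPsi p}{\varPhi_j}|^2 = \mathcal{A}_p\big(\sum_j|\ipr{\varPsi}{\varPhi_j}|^2\big)$, and handling the $L^1(\vn(\Gamma))$-convergence of the series in the frame case exactly as the paper does. The only cosmetic difference is that the paper packages the defect operators into a single family $\alpha\ipr{\mSy_\Phi C}{\mSy_\Phi C}+\beta\sum_j|C_j|^2$ (resp. its frame analogue) for arbitrary real $\alpha,\beta$, whereas you treat the two sets $\{F_C\}$ and $\{G_C\}$ separately, which is immaterial.
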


\begin{proof}
To prove $i.$, observe first that if $\Phi$ is a modular Riesz sequence then (\ref{eq:ncRiesznumerical2}) holds by Lemma \ref{lem:ncRiesz}. Assume now (\ref{eq:ncRiesznumerical2}). Then, by Lemma \ref{lem:invariance}, the proof is concluded if we can show that for any $\alpha,\beta \in \R$ the set
$$
\Upsilon_1 = \Big\{F \in L^1(\vn(\Gamma)) \, | \, F = \alpha\ipr{\mSy_{\Phi}C}{\mSy_{\Phi}C} + \beta \sum_{j \in \ind} |C_j|^2 \, , \ C \in \ell_0(\ind,\vn(\Gamma))\Big\}
$$
is an $\vn(\Gamma)$-conjugacy set. Since $\Upsilon_1$ is a space of selfadjoint $L^1(\vn(\Gamma))$ operators, we only need to check the $\mathcal{A}$-invariance, which holds true by the modular invariance of the inner product $\ipr{\cdot}{\cdot}$. Indeed, if $C \in \ell_0(\ind,\vn(\Gamma))$, we have
$$
\mathcal{A}_p \Big(\sum_{j \in \ind} |C_j|^2\Big) = \sum_{j \in \ind} |C_j p|^2 \ , \ \ \mathcal{A}_p \left(\ipr{\mSy_{\Phi}C}{\mSy_{\Phi}C}\right) = \ipr{\mSy_{\Phi}Cp}{\mSy_{\Phi}Cp}
$$
so $\Upsilon_1$ is $\mathcal{A}_p$ invariant because $Cp \in \ell_0(\ind,\vn(\Gamma))$.

To prove $ii.$, by Lemma \ref{lem:ncframe} if $\Phi$ is a modular frame sequence then (\ref{eq:ncframenumerical2}) holds. Assume now (\ref{eq:ncframenumerical2}). The right inequality reads equivalently
$$
\Big\| \sum_{j \in \ind} |\ipr{\varPsi}{\varPhi_j}|^2\Big\|_1 \leq B \|\varPsi\|_\E^2
$$
so the sum $\sum_{j \in \ind} |\ipr{\varPsi}{\varPhi_j}|^2$ converges in $L^1(\vn(\Gamma))$. For any $\alpha,\beta \in \R$ the set
$$
\Upsilon_2 = \Big\{F \in L^1(\vn(\Gamma)) \, | \, F = \alpha\sum_{j \in \ind} |\ipr{\varPsi}{\varPhi_j}|^2 + \beta \ipr{\varPsi}{\varPsi} \, , \ \varPsi \in \E_\Phi\Big\}
$$
is then a well defined set of selfadjoint $L^1(\vn(\Gamma))$ operators. By Lemma \ref{lem:invariance}, the proof is then finished if we show that it is $\mathcal{A}$-invariant. Again, this holds true by the modular invariance of the inner product $\ipr{\cdot}{\cdot}$ and the continuity of $\mathcal{A}_p$, because
\begin{align*}
\mathcal{A}_p \Big(\sum_{j \in \ind} |\ipr{\varPsi}{\varPhi_j}|^2 \Big) & = \sum_{j \in \ind} \mathcal{A}_p(|\ipr{\varPsi}{\varPhi_j}|^2) = \sum_{j \in \ind} p^*\ipr{\varPhi_j}{\varPsi}\ipr{\varPsi}{\varPhi_j}p\\
& = \sum_{j \in \ind} \ipr{\varPhi_j}{\varPsi p}\ipr{\varPsi p}{\varPhi_j} = \sum_{j \in \ind} |\ipr{\varPsi p}{\varPhi_j}|^2
\end{align*}
and $\varPsi p \in \E_\Phi$ because $\E_\Phi$ is an $\vn(\Gamma)$-module.
\end{proof}

An important consequence of this theorem is the following characterizations of modular Riesz and frame sequences in terms of the modular Gram operator.
\begin{theo}\label{theo:ncGramianCharacterization}
Let $\Phi = \{\varPhi_j\}_{j \in \ind}\subset \E$ be a modular square integrable countable family, and let $\mGrop_\Phi : \ell_0(\ind,\vn(\Gamma)) \to \ell_2(\ind,L^2(\vn(\Gamma)))$ be the associated modular Gram operator (see Definition \ref{def:modularAG} and Theorem \ref{theo:modulardenseGram}). Then the sequence $\Phi$ is
\begin{itemize}
\item[i.] a modular Riesz sequence with Riesz bounds $0 \!<\! A\! \leq \!B \!< \infty$ if and only if
\begin{equation}\label{eq:ncRieszGram}
A \Id_{\ell_2(\ind,L^2(\vn(\Gamma)))} \leq \mGrop_\Phi \leq B \Id_{\ell_2(\ind,L^2(\vn(\Gamma)))}
\end{equation}
where $\Id_{\ell_2(\ind,L^2(\vn(\Gamma)))}$ stands for the identity operator on $\ell_2(\ind,L^2(\vn(\Gamma)))$.
\item[ii.] a modular frame sequence with frame bounds $0 \!<\! A\! \leq \!B \!< \infty$ if and only if
\begin{equation}\label{eq:ncframeGram}
A \Proj_{V_\Phi} \leq \mGrop_\Phi \leq B \Proj_{V_\Phi}
\end{equation}
where $\Proj_{V_\Phi}$ stands for the orthogonal projection in $\ell_2(\ind,L^2(\vn(\Gamma)))$ onto the submodule $V_\Phi = \ol{\Ran(\mAn_\Phi)} = \Ker(\mSy_\Phi)^\bot$.
\end{itemize}
\end{theo}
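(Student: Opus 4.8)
The plan is to reduce both equivalences to the numerical characterizations of Theorem~\ref{theo:ncnumerical}, using the quadratic-form identity
$$
\langle C, \mGrop_\Phi C\rangle_{\ell_2(\ind,L^2(\vn(\Gamma)))} = \tau\big(\ipr{\mSy_\Phi C}{\mSy_\Phi C}\big) = \|\mSy_\Phi C\|_\E^2 \ , \qquad C \in \ell_0(\ind,\vn(\Gamma)) ,
$$
which is the identity already recorded in the proof of Corollary~\ref{cor:modularWellDefined} (and which underlies Lemma~\ref{lem:ncquadratic}), together with the abstract operator duality of Lemma~\ref{lem:duallemma}. Throughout I would keep in mind that $\|C\|^2_{\ell_2(\ind,L^2(\vn(\Gamma)))} = \sum_{j \in \ind}\tau(|C_j|^2)$ and $\|\varPsi\|^2_\E = \tau(\ipr{\varPsi}{\varPsi})$, so that conditions (\ref{eq:ncRiesznumerical2}) and (\ref{eq:ncframenumerical2}) of Theorem~\ref{theo:ncnumerical} are, verbatim, quadratic-form bounds for $\mGrop_\Phi$ and (after Lemma~\ref{lem:duallemma}) for $\mFrame_\Phi$.

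For part $i.$, if $\Phi$ is a modular Riesz sequence then Lemma~\ref{lem:ncRiesz} provides a bounded extension of $\mSy_\Phi$ to $\ell_2(\ind,L^2(\vn(\Gamma)))$, and Lemma~\ref{lem:Rieszmodadj} identifies $\mAn_\Phi$ as its adjoint; hence $\mGrop_\Phi = \mAn_\Phi\mSy_\Phi = \mSy_\Phi^*\mSy_\Phi$ is bounded and selfadjoint, and the displayed identity turns the numerical bounds of part $i.$ of Theorem~\ref{theo:ncnumerical} into the operator inequality (\ref{eq:ncRieszGram}). Conversely, reading (\ref{eq:ncRieszGram}) on the dense subspace $\ell_0(\ind,\vn(\Gamma))$ and applying the displayed identity gives precisely condition (\ref{eq:ncRiesznumerical2}) (in particular the upper bound forces $\mSy_\Phi$, hence $\mGrop_\Phi$, to be bounded), so part $i.$ of Theorem~\ref{theo:ncnumerical} yields that $\Phi$ is a modular Riesz sequence with the same bounds.

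For part $ii.$ I would set $K = \mSy_\Phi$, $\Hil_1 = \ell_2(\ind,L^2(\vn(\Gamma)))$, $\Hil_2 = \E_\Phi$ in Lemma~\ref{lem:duallemma}, so that $G = K^*K = \mGrop_\Phi$, $F = KK^* = \mFrame_\Phi$, $\ol{\Ran(K)} = \E_\Phi$ and $\ol{\Ran(K^*)} = \ol{\Ran(\mAn_\Phi)} = V_\Phi = \Ker(\mSy_\Phi)^\bot$. Boundedness of $K$ is available in both directions: if $\Phi$ is a modular frame sequence then the upper frame bound is the modular Bessel condition, so Theorem~\ref{theo:ncBessel} gives the bounded extension of $\mSy_\Phi$ together with $\mAn_\Phi = \mSy_\Phi^*$ on $\E_\Phi$; while if (\ref{eq:ncframeGram}) holds, its upper inequality read on $\ell_0(\ind,\vn(\Gamma))$ via the displayed identity already gives $\|\mSy_\Phi C\|_\E^2 \le B\|C\|^2$. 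In either case Lemma~\ref{lem:duallemma} makes (\ref{eq:ncframeGram}) --- which is condition $i.$ of that lemma, written as $A\Proj_{\ol{\Ran(K^*)}} \le G \le B\Proj_{\ol{\Ran(K^*)}}$ --- equivalent to $A\|\varPsi\|_\E^2 \le \langle \mFrame_\Phi\varPsi,\varPsi\rangle_\E \le B\|\varPsi\|_\E^2$ for all $\varPsi\in\E_\Phi$, i.e., using $\langle\mAn_\Phi\varPsi,\mAn_\Phi\varPsi\rangle = \langle\varPsi,\mFrame_\Phi\varPsi\rangle_\E$, to the numerical frame condition (\ref{eq:ncframenumerical2}). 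By part $ii.$ of Theorem~\ref{theo:ncnumerical} this is in turn equivalent to $\Phi$ being a modular frame sequence with frame bounds $A$ and $B$, which finishes the argument.

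The step I expect to demand the most care is the bookkeeping with unbounded operators: checking that under the stated spectral bounds $\mSy_\Phi$ genuinely extends to a bounded operator so that the factorizations $\mGrop_\Phi = \mSy_\Phi^*\mSy_\Phi$ and $\mFrame_\Phi = \mSy_\Phi\mSy_\Phi^*$ are legitimate, that $\mAn_\Phi$ is the Hilbert-space adjoint $\mSy_\Phi^*$ on $\E_\Phi$ and not merely the formal transpose of Definition~\ref{def:modularAG}, and that $V_\Phi$ is correctly identified with $\ol{\Ran(\mSy_\Phi^*)}$ so that Lemma~\ref{lem:duallemma} applies verbatim; once this is in place the remaining algebraic identities and the spectral rewriting are routine.
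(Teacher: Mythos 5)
Your proposal is correct and follows essentially the same route as the paper: part $i.$ via the quadratic-form identity together with Lemma \ref{lem:ncRiesz}, Lemma \ref{lem:Rieszmodadj}, Corollary \ref{cor:modularWellDefined} and Theorem \ref{theo:ncnumerical}, and part $ii.$ by passing through the frame-operator bounds (\ref{eq:ncframeOp}) using Theorem \ref{theo:ncBessel} and Lemma \ref{lem:duallemma}. The only difference is that you spell out the factorizations $\mGrop_\Phi=\mSy_\Phi^*\mSy_\Phi$, $\mFrame_\Phi=\mSy_\Phi\mSy_\Phi^*$ and the identification of $V_\Phi$ more explicitly than the paper does, which is exactly the bookkeeping its proof leaves implicit.
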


\begin{proof}
To prove $i.$ assume first that $\Phi$ is a modular Riesz sequence. Then, using Lemma \ref{lem:Rieszmodadj}, the inequalities in (\ref{eq:ncRiesznumerical2}) read equivalently
\begin{equation}\label{eq:ncRieszGramequiv}
A \|C\|^2_{\ell_2(\ind,L^2(\vn(\Gamma)))} \leq \langle C , \mGrop_{\Phi}C\rangle_{\ell_2(\ind,L^2(\vn(\Gamma)))} \leq B \|C\|^2_{\ell_2(\ind,L^2(\vn(\Gamma)))}
\end{equation}
which proves (\ref{eq:ncRieszGram}). Conversely, if we assume (\ref{eq:ncRieszGram}), then by Lemma \ref{cor:modularWellDefined} the modular synthesis operator $\mSy_\Phi$ is bounded on $\ell_2(\ind,L^2(\vn(\Gamma)))$, and $\mAn_\Phi$ is its bounded adjoint. Thus condition (\ref{eq:ncRieszGram}), which can be written equivalently as (\ref{eq:ncRieszGramequiv}), implies (\ref{eq:ncRiesznumerical2}), and hence $\Phi$ is a modular Riesz sequence.

The proof of $ii.$ can be carried on similarly, with the aid of Lemma \ref{lem:duallemma}. Assume first that $\Phi$ is a modular frame sequence. Then point $ii.$ of Theorem \ref{theo:ncnumerical} together with point $ii.$ of Theorem \ref{theo:ncBessel} implies that
\begin{equation}\label{eq:ncframeOp}
A \Id_{\E_\Phi} \leq \mFrame_\Phi \leq B \Id_{\E_\Phi}
\end{equation}
where $\Id_{\E_\Phi}$ is the identity operator on $\E_\Phi$. This, together with Lemma \ref{lem:duallemma}, imply (\ref{eq:ncframeGram}). Conversely, assume (\ref{eq:ncframeGram}). Then the modular Gram, synthesis and analysis operators are bounded, and this together with Lemma \ref{lem:duallemma} imply (\ref{eq:ncframeOp}). Then, by Theorem \ref{theo:ncnumerical}, $\Phi$ is a modular frame sequence.
\end{proof}

An immediate consequence is the following.
\begin{cor}\label{cor:ncRieszisframe}
Let $\Phi = \{\varPhi_j\}_{j \in \ind} \subset \E$ be a modular Riesz sequence. Then it is a modular frame sequence with the same frame bounds.
\end{cor}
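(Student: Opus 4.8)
The plan is to read off the statement from the Gramian characterization of Theorem \ref{theo:ncGramianCharacterization}, mimicking the classical argument recalled after Corollary \ref{cor:principalframes}: there, $A\Id \le \Grop_\Psi \le B\Id$ forces $\sigma(\Grop_\Psi) \subset [A,B]$, hence $A\Proj_{V_\Psi} \le \Grop_\Psi \le B\Proj_{V_\Psi}$ since $\Proj_{V_\Psi} = \Id$ whenever $\Grop_\Psi$ is invertible. Here the same mechanism works verbatim at the modular level.

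First I would check that a modular Riesz sequence $\Phi = \{\varPhi_j\}_{j\in\ind}$ is modular square integrable, so that Theorem \ref{theo:ncGramianCharacterization} is applicable. This is quick: by Lemma \ref{lem:ncRiesz} the modular synthesis operator $\mSy_\Phi$ extends to a bounded operator on $\ell_2(\ind,L^2(\vn(\Gamma)))$, and by Lemma \ref{lem:Rieszmodadj} its adjoint is the bounded modular analysis operator $\mAn_\Phi$; in particular $\mAn_\Phi\varPhi_j = \{\ipr{\varPhi_j}{\varPhi_k}\}_{k\in\ind} \in \ell_2(\ind,L^2(\vn(\Gamma)))$ for each $j$, which is exactly the condition $T_j = \sum_{k\in\ind}|\ipr{\varPhi_j}{\varPhi_k}|^2 \in L^1(\vn(\Gamma))$.

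Next, by Theorem \ref{theo:ncGramianCharacterization}.i the modular Riesz condition with bounds $0 < A \le B < \infty$ is equivalent to (\ref{eq:ncRieszGram}), i.e. $A\,\Id \le \mGrop_\Phi \le B\,\Id$ on $\ell_2(\ind,L^2(\vn(\Gamma)))$. The lower bound shows $\mGrop_\Phi$ is injective; since $\Ker(\mSy_\Phi) \subseteq \Ker(\mAn_\Phi\mSy_\Phi) = \Ker(\mGrop_\Phi) = \{0\}$, we get $V_\Phi = \Ker(\mSy_\Phi)^\bot = \ell_2(\ind,L^2(\vn(\Gamma)))$, so $\Proj_{V_\Phi} = \Id$. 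Therefore the same operator inequality reads $A\,\Proj_{V_\Phi} \le \mGrop_\Phi \le B\,\Proj_{V_\Phi}$, which is precisely (\ref{eq:ncframeGram}); applying the converse direction of Theorem \ref{theo:ncGramianCharacterization}.ii yields that $\Phi$ is a modular frame sequence with frame bounds $A$ and $B$.

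I do not expect a genuine obstacle here — the work has all been done upstream. The only point requiring a word of care is the verification that a modular Riesz sequence falls under the standing hypothesis of Theorem \ref{theo:ncGramianCharacterization} (modular square integrability) and that the Gram operator is actually bounded rather than merely densely defined, both of which are immediate from Lemmas \ref{lem:ncRiesz} and \ref{lem:Rieszmodadj}.
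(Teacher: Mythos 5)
Your proposal is correct and follows the paper's intended route: the paper derives this corollary as an immediate consequence of Theorem \ref{theo:ncGramianCharacterization}, exactly as you do, with the Riesz bound $A\,\Id \leq \mGrop_\Phi$ forcing $\Ker(\mSy_\Phi)=\{0\}$ so that $\Proj_{V_\Phi}=\Id$ and (\ref{eq:ncRieszGram}) becomes (\ref{eq:ncframeGram}). Your preliminary check that a modular Riesz sequence is modular square integrable (via Lemmas \ref{lem:ncRiesz} and \ref{lem:Rieszmodadj}) is a sensible added verification of the theorem's standing hypothesis, not a deviation.
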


On the other hand, we can also deduce the following modular variant of a classical result, contained e.g. in \cite[Th. 6.1.1]{Christensen03}.

\begin{prop}\label{prop:ncframeRiesz}
Let $\Phi = \{\varPhi_j\}_{j \in \ind} \subset \E$ be a modular frame sequence. Then $\Phi$ is a modular Riesz sequence if and only if $\mSy_\Phi$ is injective.
\end{prop}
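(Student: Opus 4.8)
The plan is to reduce everything to the Gramian characterization of modular Riesz and modular frame sequences given by Theorem~\ref{theo:ncGramianCharacterization}. The first observation is that a modular frame sequence is in particular a modular Bessel sequence, hence modular square integrable, so Theorem~\ref{theo:ncGramianCharacterization} applies to $\Phi$; moreover, by Theorem~\ref{theo:ncBessel}, both $\mSy_\Phi$ and $\mAn_\Phi$ extend to bounded operators between $\ell_2(\ind,L^2(\vn(\Gamma)))$ and $\E_\Phi$ that are adjoint to one another. In particular the statement ``$\mSy_\Phi$ is injective'' should be read for this bounded extension, and one then has the standard Hilbert space identity $V_\Phi = \ol{\Ran(\mAn_\Phi)} = \Ker(\mSy_\Phi)^\bot$ used in Theorem~\ref{theo:ncGramianCharacterization}.

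For the forward implication, if $\Phi$ is a modular Riesz sequence then by Lemma~\ref{lem:ncRiesz} the operator $\mSy_\Phi$ extends to a bounded \emph{invertible} operator from $\ell_2(\ind,L^2(\vn(\Gamma)))$ onto $\E_\Phi$, hence is injective. (Alternatively, part~i.\ of Theorem~\ref{theo:ncGramianCharacterization} gives $A\,\Id_{\ell_2(\ind,L^2(\vn(\Gamma)))} \leq \mGrop_\Phi$, and together with the identity $\|\mSy_\Phi C\|_\E^2 = \langle C, \mGrop_\Phi C\rangle_{\ell_2(\ind,L^2(\vn(\Gamma)))}$ this bounds $\mSy_\Phi$ from below, so it is injective.) This direction is immediate.

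For the converse, assume $\Phi$ is a modular frame sequence and $\mSy_\Phi$ is injective. Then $\Ker(\mSy_\Phi) = \{0\}$, so $V_\Phi = \Ker(\mSy_\Phi)^\bot = \ell_2(\ind,L^2(\vn(\Gamma)))$ and $\Proj_{V_\Phi} = \Id_{\ell_2(\ind,L^2(\vn(\Gamma)))}$. By part~ii.\ of Theorem~\ref{theo:ncGramianCharacterization} the modular frame bounds $A,B$ give $A\,\Proj_{V_\Phi} \leq \mGrop_\Phi \leq B\,\Proj_{V_\Phi}$, which under this identification is precisely condition~(\ref{eq:ncRieszGram}); by part~i.\ of the same theorem, $\Phi$ is a modular Riesz sequence (with the same bounds).

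The argument is therefore short, essentially a bookkeeping exercise once Theorems~\ref{theo:ncBessel}, \ref{theo:ncnumerical} and~\ref{theo:ncGramianCharacterization} are available. The only point that deserves care — and the closest thing to an obstacle — is to make sure that injectivity of $\mSy_\Phi$ is understood for the bounded extension to $\ell_2(\ind,L^2(\vn(\Gamma)))$, and that the identification $V_\Phi = \Ker(\mSy_\Phi)^\bot$ is legitimate; both follow from the modular Bessel property of $\Phi$ via Theorem~\ref{theo:ncBessel}, and once these are in place no genuinely hard step remains.
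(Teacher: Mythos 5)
Your argument is correct and follows essentially the same route as the paper: the forward direction via Lemma~\ref{lem:ncRiesz}, and the converse by noting that injectivity of $\mSy_\Phi$ forces $\Proj_{V_\Phi}=\Id_{\ell_2(\ind,L^2(\vn(\Gamma)))}$, so that the frame characterization (\ref{eq:ncframeGram}) of Theorem~\ref{theo:ncGramianCharacterization} turns into the Riesz characterization (\ref{eq:ncRieszGram}). Your extra care about reading injectivity for the bounded extension of $\mSy_\Phi$ is a reasonable clarification but not a departure from the paper's proof.
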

\begin{proof}
By Lemma \ref{lem:ncRiesz}, if $\Phi$ is a modular Riesz sequence then $\mSy_\Phi$ is injective (without requiring the modular frame condition, which is actually deduced from Corollary \ref{cor:ncRieszisframe}). To prove the converse, consider a modular frame $\Phi$. By Theorem \ref{theo:ncGramianCharacterization} the modular Gram operator $\mGrop_\Phi$ satisfies (\ref{eq:ncframeGram}). If $\mSy_\Phi$ is injective, then $\Ker(\mSy_\Phi) = 0$, so the projection $\Proj_{V_\Phi}$ coincides with the identity operator, hence providing the modular Riesz condition (\ref{eq:ncRieszGram}).
\end{proof}

\begin{rem}
Riesz sequences were introduced in the modular setting of \cite{FrankLarson2002} as frame sequences satisfying a linear independence condition that is in general weaker than the one involved in the present definition of modular Riesz sequence. Indeed, the notion of linear independence considered in \cite{FrankLarson2002} takes into account an issue of zero divisors that on one side is natural when dealing with operators, but on the other side does not appear in the present setting. Indeed, the injectivity of the modular synthesis operator means that modular Riesz sequences are linearly independent families in $\ell_2(\ind,L^2(\vn(\Gamma)))$, in the sense that $\mSy_\Phi C = 0$ implies $C = 0$ for all $C \in \ell_2(\ind,L^2(\vn(\Gamma)))$.
\end{rem}

We can now address the issue of reproducing formulas. The starting point is the following direct consequence of condition (\ref{eq:ncframeOp}).
\begin{cor}\label{cor:modtight}
A countable family $\Phi = \{\varPhi_j\}_{j \in \ind} \subset \E$ is a modular tight frame sequence, i.e.\! a modular frame sequence with frame bounds $A \!= \!B$, if and only if
$$
\mFrame_\Phi \varPsi = A \varPsi \quad \forall \, \varPsi \in \E_\Phi .
$$
\end{cor}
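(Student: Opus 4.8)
The plan is to deduce both implications from the numerical characterization of modular frames in Lemma~\ref{lem:ncframe} (equivalently, from the operator inequality (\ref{eq:ncframeOp}) established inside the proof of Theorem~\ref{theo:ncGramianCharacterization}) together with the operator-valued identities for the bracket collected in Proposition~\ref{prop:outerinner}. In both directions $\mFrame_\Phi$ is defined, which means $\Phi$ is a modular Bessel sequence; hence by Theorem~\ref{theo:ncBessel} one has $\ipr{\varPhi_j}{\varPhi_j}\in\vn(\Gamma)$, each $\ipr{\varPsi}{\varPhi_j}$ lies in $L^2(\vn(\Gamma))\subset L^2(\vn(\Gamma),\ipr{\varPhi_j}{\varPhi_j})$ for $\varPsi\in\E_\Phi$, and $\mSy_\Phi$ is the adjoint of $\mAn_\Phi$, so that $\mFrame_\Phi = \mSy_\Phi\mAn_\Phi = (\mAn_\Phi)^{*}\mAn_\Phi$ is a bounded positive selfadjoint operator on the Hilbert space $(\E_\Phi,\langle\cdot,\cdot\rangle_\E)$.

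For the direct implication, suppose $\Phi$ is a modular tight frame sequence, i.e.\ a modular frame sequence with $A=B$. Then Lemma~\ref{lem:ncframe} gives $\langle\varPsi,\mFrame_\Phi\varPsi\rangle_\E = A\|\varPsi\|_\E^2 = \langle\varPsi,A\varPsi\rangle_\E$ for every $\varPsi\in\E_\Phi$, so the quadratic form of the bounded selfadjoint operator $\mFrame_\Phi - A\,\Id_{\E_\Phi}$ vanishes identically on $\E_\Phi$. By polarization a bounded selfadjoint operator with vanishing quadratic form is zero, whence $\mFrame_\Phi\varPsi = A\varPsi$ for all $\varPsi\in\E_\Phi$.

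For the converse, assume $\mFrame_\Phi\varPsi = A\varPsi$ for all $\varPsi\in\E_\Phi$. By definition of the modular frame operator the series $\sum_{j\in\ind}\varPhi_j\ipr{\varPsi}{\varPhi_j}$ converges to $\mFrame_\Phi\varPsi$ in $\E_\Phi$, and since $\ipr{\cdot}{\cdot}$ is continuous in each argument for the $L^1(\vn(\Gamma))$ topology, one may interchange $\ipr{\cdot}{\varPsi}$ with the sum (as in the proof of Lemma~\ref{lem:modadj}). Using the identity (\ref{eq:outerinner}) of Proposition~\ref{prop:outerinner} and the symmetry $\ipr{\varPhi_j}{\varPsi} = \ipr{\varPsi}{\varPhi_j}^{*}$, I would then compute
\begin{align*}
\ipr{\mFrame_\Phi\varPsi}{\varPsi} &= \sum_{j\in\ind}\ipr{\varPhi_j\ipr{\varPsi}{\varPhi_j}}{\varPsi}
= \sum_{j\in\ind}\ipr{\varPhi_j}{\varPsi}\ipr{\varPsi}{\varPhi_j}\\
&= \sum_{j\in\ind}\ipr{\varPsi}{\varPhi_j}^{*}\ipr{\varPsi}{\varPhi_j}
= \sum_{j\in\ind}\bigl|\ipr{\varPsi}{\varPhi_j}\bigr|^{2},
\end{align*}
the series converging in $L^1(\vn(\Gamma))$. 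On the other hand $\ipr{\mFrame_\Phi\varPsi}{\varPsi} = \ipr{A\varPsi}{\varPsi} = A\ipr{\varPsi}{\varPsi}$ by linearity of $\ipr{\cdot}{\cdot}$ in its first slot. Comparing yields $\sum_{j\in\ind}\bigl|\ipr{\varPsi}{\varPhi_j}\bigr|^{2} = A\ipr{\varPsi}{\varPsi}$ for every $\varPsi\in\E_\Phi$, which is exactly (\ref{eq:ncframe}) with $A=B$; hence $\Phi$ is a modular tight frame sequence.

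The only step that needs genuine care is this chain of equalities in the converse direction: one must justify pulling $\ipr{\cdot}{\varPsi}$ inside the infinite sum (which rests on the $L^1(\vn(\Gamma))$-continuity of the bracket together with the modular Bessel property), and, crucially, one must invoke the \emph{operator-valued} relation $\ipr{\varPhi_j F}{\varPsi} = \ipr{\varPhi_j}{\varPsi}F$ rather than merely its trace, so that the output is the bracket-valued identity (\ref{eq:ncframe}) and not only its numerical version. Everything else is an immediate consequence of Lemma~\ref{lem:ncframe} and of the elementary fact that a bounded selfadjoint operator is determined by its quadratic form.
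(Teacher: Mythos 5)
Your proposal is correct, and its forward direction is exactly the paper's: the paper reads the corollary off the two-sided operator inequality (\ref{eq:ncframeOp}), $A \Id_{\E_\Phi} \leq \mFrame_\Phi \leq B \Id_{\E_\Phi}$, which with $A=B$ forces $\mFrame_\Phi = A\Id_{\E_\Phi}$ — the same content as your ``vanishing quadratic form plus polarization'' step via Lemma \ref{lem:ncframe}. Where you genuinely diverge is the converse. In the paper this direction is implicit in the equivalence, established inside Theorem \ref{theo:ncGramianCharacterization}, between (\ref{eq:ncframeOp}) and the modular frame condition, and that equivalence rests on the numerical characterization of Theorem \ref{theo:ncnumerical}, i.e.\ on the $\vn(\Gamma)$-conjugacy positivity argument of Lemma \ref{lem:invariance}. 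You instead bypass Theorem \ref{theo:ncnumerical} altogether: using Proposition \ref{prop:outerinner} (equivalently Lemma \ref{lem:modadj}) you compute the bracket-valued identity $\sum_{j}|\ipr{\varPsi}{\varPhi_j}|^2 = \ipr{\mFrame_\Phi\varPsi}{\varPsi} = A\ipr{\varPsi}{\varPsi}$ directly, which is (\ref{eq:ncframe}) with $A=B$. This is legitimate — it is essentially the same computation the paper performs later when proving (\ref{eq:HWframes}) for the canonical dual — and your justifications (modular Besselness so that $\ipr{\varPsi}{\varPhi_j}\in L^2(\vn(\Gamma))\subset L^2(\vn(\Gamma),\ipr{\varPhi_j}{\varPhi_j})$, $L^1$-continuity of the bracket to pass it through the convergent sum, selfadjointness of $\mFrame_\Phi=(\mAn_\Phi)^*\mAn_\Phi$ from Theorem \ref{theo:ncBessel}) are the right ones. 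The trade-off: the paper's route is shorter once Theorem \ref{theo:ncGramianCharacterization} is in place, while yours is more self-contained, upgrades the scalar identity to the operator-valued one by direct computation rather than by the conjugacy-set argument, and makes explicit the (implicit, but necessary) Bessel hypothesis under which $\mFrame_\Phi$ is even defined in the ``if'' direction — a point the paper leaves tacit.
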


Proceeding as in \cite[\S 8.1]{HW96}, we obtain general reproducing formulas for frame and Riesz sequences by means of a modular notion of canonical dual frame.

\begin{prop}
Let $\Phi = \{\varPhi_j\}_{j \in \ind} \subset \E$ be a modular frame with frame bounds $0 < A \leq B < \infty$, and let us denote with $\mathring\Phi = \{\mathring\varPhi_j = \mFrame_\Phi^{-1}\varPhi_j\}_{j \in \ind}$. Then
\begin{itemize}
\item[i.] $\E_\Phi = \E_{\mathring\Phi}$
\item[ii.] the system $\mathring\Phi$ is a modular frame with frame bounds $0 < \frac{1}{B} \leq \frac{1}{A} < \infty$, that we will call the \emph{canonical modular dual} of $\Phi$.
\end{itemize}
\end{prop}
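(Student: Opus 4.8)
The plan is to exploit that, for a modular frame $\Phi$, the modular frame operator $\mFrame_\Phi$ is a bounded, boundedly invertible operator on the Hilbert space $\E_\Phi$ which is moreover a homomorphism of $\vn(\Gamma)$-modules and modular-selfadjoint. First I would record that, combining the modular frame property with point ii of Theorem \ref{theo:ncnumerical} and point ii of Theorem \ref{theo:ncBessel}, one has $A\Id_{\E_\Phi}\le\mFrame_\Phi\le B\Id_{\E_\Phi}$ on $\E_\Phi$ (this is \eqref{eq:ncframeOp}), so $\mFrame_\Phi$ is invertible on $\E_\Phi$ with $\tfrac1B\Id_{\E_\Phi}\le\mFrame_\Phi^{-1}\le\tfrac1A\Id_{\E_\Phi}$. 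Next, Lemma \ref{lem:modadj} says that $\mAn_\Phi$ and $\mSy_\Phi$ are modular adjoints of one another (between $\E_\Phi$ and $\ell_2(\ind,L^2(\vn(\Gamma)))$), so $\mFrame_\Phi=\mSy_\Phi\mAn_\Phi$ is modular-selfadjoint, $\ipr{\mFrame_\Phi\varPsi}{\varTheta}=\ipr{\varPsi}{\mFrame_\Phi\varTheta}$ for all $\varPsi,\varTheta\in\E_\Phi$, and hence so is $\mFrame_\Phi^{-1}$. Using the nondegeneracy of $\ipr{\cdot}{\cdot}$ (Definition \ref{def:Hmodules} iii together with the Cauchy--Schwarz inequality of \cite[Prop. 3.2]{JungeSherman05}), any modular-adjointable operator is a $\vn(\Gamma)$-module homomorphism; in particular $\mFrame_\Phi^{-1}(\varPsi F)=(\mFrame_\Phi^{-1}\varPsi)F$ for all $F\in\vn(\Gamma)$ and $\varPsi\in\E_\Phi$.

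For item $i.$, since $\mFrame_\Phi^{-1}$ is a bounded bijective module homomorphism of $\E_\Phi$ onto itself, it maps $\vsp_{\vn(\Gamma)}\,\Phi$ onto $\vsp_{\vn(\Gamma)}\,\mathring\Phi$: indeed $\mFrame_\Phi^{-1}\big(\sum_j\varPhi_jF_j\big)=\sum_j\mathring\varPhi_jF_j$ and, conversely, $\sum_j\mathring\varPhi_jF_j=\mFrame_\Phi^{-1}\big(\sum_j\varPhi_jF_j\big)$, for finite $\{F_j\}\subset\vn(\Gamma)$. Being a homeomorphism of the Banach space $\E_\Phi$, it maps closures to closures, so $\E_\Phi=\mFrame_\Phi^{-1}(\E_\Phi)=\mFrame_\Phi^{-1}\big(\ol{\vsp_{\vn(\Gamma)}\,\Phi}^\E\big)=\ol{\vsp_{\vn(\Gamma)}\,\mathring\Phi}^\E=\E_{\mathring\Phi}$, recalling \eqref{eq:closedmodularspan}.

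For item $ii.$, fix $\varPsi\in\E_{\mathring\Phi}=\E_\Phi$. Modular-selfadjointness of $\mFrame_\Phi^{-1}$ gives $\ipr{\varPsi}{\mathring\varPhi_j}=\ipr{\varPsi}{\mFrame_\Phi^{-1}\varPhi_j}=\ipr{\mFrame_\Phi^{-1}\varPsi}{\varPhi_j}$. Applying Lemma \ref{lem:modadj} with $C=\mAn_\Phi\varTheta$ yields the identity $\sum_{j\in\ind}|\ipr{\varTheta}{\varPhi_j}|^2=\ipr{\mAn_\Phi\varTheta}{\mAn_\Phi\varTheta}_2=\ipr{\varTheta}{\mFrame_\Phi\varTheta}$ for $\varTheta\in\E_\Phi$; taking $\varTheta=\mFrame_\Phi^{-1}\varPsi$ gives, with convergence in $L^1(\vn(\Gamma))$ since $\Phi$ is modular Bessel,
\[
\sum_{j\in\ind}|\ipr{\varPsi}{\mathring\varPhi_j}|^2=\ipr{\mFrame_\Phi^{-1}\varPsi}{\mFrame_\Phi\mFrame_\Phi^{-1}\varPsi}=\ipr{\varPsi}{\mFrame_\Phi^{-1}\varPsi}
\]
(so $\mathring\Phi$ is modular Bessel and $\mFrame_{\mathring\Phi}=\mFrame_\Phi^{-1}$). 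Taking the trace, and using $\tau(\ipr{\varPsi}{\varPsi})=\|\varPsi\|_\E^2$ together with $\tfrac1B\Id_{\E_\Phi}\le\mFrame_\Phi^{-1}\le\tfrac1A\Id_{\E_\Phi}$, we obtain
\[
\frac1B\,\tau(\ipr{\varPsi}{\varPsi})\le\sum_{j\in\ind}\tau\big(|\ipr{\varPsi}{\mathring\varPhi_j}|^2\big)\le\frac1A\,\tau(\ipr{\varPsi}{\varPsi})\qquad\forall\,\varPsi\in\E_{\mathring\Phi},
\]
which by point $ii.$ of Theorem \ref{theo:ncnumerical}, applied to $\mathring\Phi$, is exactly the statement that $\mathring\Phi$ is a modular frame sequence with bounds $\tfrac1B$ and $\tfrac1A$.

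The main obstacle is the first paragraph: making sure that the correspondence ``modular-adjointable $\Longleftrightarrow$ $\vn(\Gamma)$-module homomorphism'', standard in Hilbert $C^*$-module theory, survives in the present setting despite the inner product taking values in $L^1(\vn(\Gamma))$ and carrying possibly unbounded operators. This rests on the nondegeneracy of $\ipr{\cdot}{\cdot}$ and on Cauchy--Schwarz from \cite[Prop. 3.2]{JungeSherman05}, and on Lemma \ref{lem:modadj}, whose hypotheses are met because a modular frame is in particular modular Bessel. Once $\mFrame_\Phi$ and $\mFrame_\Phi^{-1}$ are known to be modular-selfadjoint module homomorphisms, the remainder is the classical canonical-dual argument transported to traces via Theorem \ref{theo:ncnumerical}.
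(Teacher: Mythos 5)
Your proposal is correct and follows essentially the same route as the paper's proof: the operator bounds $A\,\Id_{\E_\Phi}\leq\mFrame_\Phi\leq B\,\Id_{\E_\Phi}$ and their inversion, modular selfadjointness of $\mFrame_\Phi$ (hence of $\mFrame_\Phi^{-1}$) via Lemma \ref{lem:modadj}, the key identity $\sum_{j\in\ind}|\ipr{\varPsi}{\mathring\varPhi_j}|^2=\ipr{\mFrame_\Phi^{-1}\varPsi}{\varPsi}$, and the passage to traces concluded by Theorem \ref{theo:ncnumerical}, with item $i.$ obtained from the $\vn(\Gamma)$-linearity and bounded invertibility of $\mFrame_\Phi^{-1}$ exactly as in the paper. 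Your only addition is the explicit verification, via adjointability and nondegeneracy of $\ipr{\cdot}{\cdot}$, that $\mFrame_\Phi^{-1}$ is a $\vn(\Gamma)$-module homomorphism, a fact the paper invokes without proof.
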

\begin{proof}
To prove $i.$, observe first that since $\mFrame_\Phi$ is bounded and invertible on $\E_\Phi$, then $\mathring\Phi_j \in \E_\Phi$ for all $j$, so that $\E_{\mathring\Phi} \subset \E_\Phi$. On the other hand, starting from (\ref{eq:closedmodularspan}), and using that $\mFrame_\Phi^{-1}$ is $\vn(\Gamma)$-linear, we have
\begin{align*}
\E_\Phi & = \mFrame_\Phi^{-1} \E_\Phi = \mFrame_\Phi^{-1} (\ol{\vsp_{\vn(\Gamma)}\ \Phi}^\E) \subset \ol{\mFrame_\Phi^{-1}(\vsp_{\vn(\Gamma)}\, \Phi)}^\E\\
& = \ol{\vsp_{\vn(\Gamma)}\, \mFrame_\Phi^{-1}\Phi}^\E = \ol{\vsp_{\vn(\Gamma)}\, \mathring\Phi}^\E = \E_{\mathring\Phi} .
\end{align*}
To prove $ii.$, since $\mFrame_\Phi$ is bounded and invertible on $\E_\Phi$, its inverse $\mFrame_\Phi^{-1}$ satisfies
$$
\frac{1}{B} \Id_{\E_\Phi} \leq \mFrame_\Phi^{-1} \leq \frac{1}{A} \Id_{\E_\Phi} .
$$
Now it suffices to show that
\begin{equation}\label{eq:HWframes}
\sum_{j \in \ind} |\ipr{\varPsi}{\mathring\varPhi_j}|^2 = \ipr{\mFrame_\Phi^{-1}\varPsi}{\varPsi} \quad \forall \, \varPsi \in \E_\Phi \, ,
\end{equation}
because, if this holds, by applying the trace at both sides we have
$$
\sum_{j \in \ind} \tau(|\ipr{\varPsi}{\mathring\varPhi_j}|^2) = \langle \mFrame_\Phi^{-1}\varPsi, \varPsi\rangle_{\E} \quad \forall \, \varPsi \in \E_\Phi
$$
and hence
$$
\frac{1}{B} \|\varPsi\|_\E^2 \leq \sum_{j \in \ind} \tau(|\ipr{\varPsi}{\mathring\varPhi_j}|^2) \leq \frac{1}{A} \|\varPsi\|_\E^2 \quad \forall \, \varPsi \in \E_\Phi \, ,
$$
that, by Theorem \ref{theo:ncnumerical}, is equivalent to the modular frame condition.
\newpage
In order to prove (\ref{eq:HWframes}) we first observe that, since $\mFrame_\Phi$ is modular selfadjoint by Lemma \ref{lem:modadj}, then also $\mFrame_\Phi^{-1}$ is modular selfadjoint, in the sense that $\ipr{\varPsi_1}{\mFrame_\Phi^{-1}\varPsi_2} = \ipr{\mFrame_\Phi^{-1}\varPsi_1}{\varPsi_2}$ for all $\varPsi_1, \varPsi_2 \in \E_\Phi$, because
$$
\ipr{\varPsi_1}{\mFrame_\Phi^{-1}\varPsi_2} = \ipr{\mFrame_\Phi\mFrame_\Phi^{-1}\varPsi_1}{\mFrame_\Phi^{-1}\varPsi_2} = \ipr{\mFrame_\Phi^{-1}\varPsi_1}{\mFrame_\Phi\mFrame_\Phi^{-1}\varPsi_2} = \ipr{\mFrame_\Phi^{-1}\varPsi_1}{\varPsi_2}.
$$
This implies in particular that
$$
(\mAn_{\mathring\Phi}\varPsi)_j = \ipr{\varPsi}{\mathring\varPhi_j} = \ipr{\varPsi}{\mFrame_\Phi^{-1}\varPhi_j} = \ipr{\mFrame_\Phi^{-1}\varPsi}{\varPhi_j} = (\mAn_\Phi \mFrame_\Phi^{-1}\varPsi)_j .
$$
Now, by making use of Lemma \ref{lem:modadj}, we have
\begin{align*}
\sum_{j\in\ind} |\ipr{\varPsi}{\mathring\varPhi_j}|^2 & = \ipr{\mAn_{\mathring\Phi}\varPsi}{\mAn_{\mathring\Phi}\varPsi}_2 = \ipr{\mAn_\Phi \mFrame_\Phi^{-1}\varPsi}{\mAn_\Phi \mFrame_\Phi^{-1}\varPsi}_2\\
& = \ipr{\mFrame_\Phi^{-1}\varPsi}{\mSy_\Phi \mAn_\Phi \mFrame_\Phi^{-1}\varPsi} = \ipr{\mFrame_\Phi^{-1}\varPsi}{\varPsi}
\end{align*}
which proves (\ref{eq:HWframes}).
\end{proof}

The notion of canonical dual allows to obtain a general reproducing formula for modular frame sequences (hence also for modular Riesz sequences).
\begin{theo}\label{theo:reproducing}
Let $\Phi = \{\varPhi_j\}_{j \in \ind} \subset \E$ be a modular frame sequence, and let $\mathring\Phi = \{\mathring\varPhi_j\}_{j \in \ind}$ be its canonical modular dual. Then
$$
\varPsi = \sum_{j \in \ind} \varPhi_j \ipr{\varPsi}{\mathring\varPhi_j} = \sum_{j \in \ind} \mathring\varPhi_j \ipr{\varPsi}{\varPhi_j} \quad \forall \, \varPsi \in \E_\Phi .
$$
\end{theo}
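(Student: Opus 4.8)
The plan is to reduce both identities to operator relations among the bounded modular synthesis, analysis and frame operators attached to $\Phi$ and to its canonical modular dual $\mathring\Phi$, exploiting that both families are modular frame sequences (the latter by the preceding proposition) and hence in particular modular Bessel sequences. First I would record the standing facts: since $\Phi$ and $\mathring\Phi$ are modular Bessel sequences and $\E_\Phi=\E_{\mathring\Phi}$, Theorem \ref{theo:ncBessel} provides bounded operators $\mSy_\Phi,\mSy_{\mathring\Phi}:\ell_2(\ind,L^2(\vn(\Gamma)))\to\E_\Phi$ and $\mAn_\Phi,\mAn_{\mathring\Phi}:\E_\Phi\to\ell_2(\ind,L^2(\vn(\Gamma)))$, with each $\mSy_\bullet$ the Hilbert-space adjoint of the corresponding $\mAn_\bullet$; moreover $\mFrame_\Phi=\mSy_\Phi\mAn_\Phi$ is bounded and invertible on $\E_\Phi$ by (\ref{eq:ncframeOp}). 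For a fixed $\varPsi\in\E_\Phi$ the sequences $\mAn_\Phi\varPsi=\{\ipr{\varPsi}{\varPhi_j}\}_{j\in\ind}$ and $\mAn_{\mathring\Phi}\varPsi=\{\ipr{\varPsi}{\mathring\varPhi_j}\}_{j\in\ind}$ therefore lie in $\ell_2(\ind,L^2(\vn(\Gamma)))$, so the two series in the statement converge in $\E$, to $\mSy_\Phi\mAn_{\mathring\Phi}\varPsi$ and to $\mSy_{\mathring\Phi}\mAn_\Phi\varPsi$ respectively.

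For the first identity I would invoke the relation $\mAn_{\mathring\Phi}\varPsi=\mAn_\Phi\mFrame_\Phi^{-1}\varPsi$, already established in the proof of the preceding proposition from the modular self-adjointness of $\mFrame_\Phi^{-1}$; then $\sum_{j\in\ind}\varPhi_j\ipr{\varPsi}{\mathring\varPhi_j}=\mSy_\Phi\mAn_\Phi\mFrame_\Phi^{-1}\varPsi=\mFrame_\Phi\mFrame_\Phi^{-1}\varPsi=\varPsi$. For the second identity I would first check that $\mSy_{\mathring\Phi}=\mFrame_\Phi^{-1}\mSy_\Phi$ on all of $\ell_2(\ind,L^2(\vn(\Gamma)))$: on a finite sequence $C\in\ell_0(\ind,\vn(\Gamma))$ this is immediate from the $\vn(\Gamma)$-linearity of $\mFrame_\Phi^{-1}$ together with $\mathring\varPhi_j=\mFrame_\Phi^{-1}\varPhi_j$, namely $\mSy_{\mathring\Phi}C=\sum_{j\in\ind}\mFrame_\Phi^{-1}(\varPhi_jC_j)=\mFrame_\Phi^{-1}\mSy_\Phi C$, and both sides are bounded operators that agree on the dense subspace $\ell_0(\ind,\vn(\Gamma))$, hence everywhere; consequently $\sum_{j\in\ind}\mathring\varPhi_j\ipr{\varPsi}{\varPhi_j}=\mSy_{\mathring\Phi}\mAn_\Phi\varPsi=\mFrame_\Phi^{-1}\mSy_\Phi\mAn_\Phi\varPsi=\mFrame_\Phi^{-1}\mFrame_\Phi\varPsi=\varPsi$.

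The routine points here are the convergence of the series (handled by the boundedness in Theorem \ref{theo:ncBessel}) and the density argument for $\mSy_{\mathring\Phi}=\mFrame_\Phi^{-1}\mSy_\Phi$; the genuinely load-bearing inputs are the already-available identity $\mAn_{\mathring\Phi}\varPsi=\mAn_\Phi\mFrame_\Phi^{-1}\varPsi$ and the invertibility of $\mFrame_\Phi$ on $\E_\Phi$. I expect the only delicate step to be the justification that $\mFrame_\Phi^{-1}$ may be moved in and out of the a priori infinite modular synthesis, which is precisely what the boundedness-plus-density step settles; no obstacle of substance is anticipated beyond this.
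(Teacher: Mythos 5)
Your proposal is correct, but it follows a genuinely different route from the paper's. The paper argues by a change of inner product in the spirit of \cite[\S 8.1]{HW96}: it introduces $\ipr{\varPsi_1}{\varPsi_2}_\sharp = \ipr{\mFrame_\Phi^{-1}\varPsi_1}{\varPsi_2}$ on $\E_\Phi$, uses (\ref{eq:HWframes}) to see that the systems become modular tight frames with constant $1$ after this renorming, and concludes from Corollary \ref{cor:modtight}. You instead stay entirely at the level of the bounded operators $\mSy$, $\mAn$, $\mFrame$: the first identity is $\mSy_\Phi\mAn_{\mathring\Phi}=\mSy_\Phi\mAn_\Phi\mFrame_\Phi^{-1}=\Id_{\E_\Phi}$, using the relation $\mAn_{\mathring\Phi}=\mAn_\Phi\mFrame_\Phi^{-1}$ already isolated in the proof of the canonical-dual proposition, and the second follows from $\mSy_{\mathring\Phi}=\mFrame_\Phi^{-1}\mSy_\Phi$, which you check on $\ell_0(\ind,\vn(\Gamma))$ via the $\vn(\Gamma)$-linearity of $\mFrame_\Phi^{-1}$ (a fact the paper itself invokes, resting on Proposition \ref{prop:outerinner}) and extend by the boundedness from Theorem \ref{theo:ncBessel} plus density. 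Both arguments consume the same inputs from the preceding proposition (invertibility of $\mFrame_\Phi$ on $\E_\Phi$, $\E_\Phi=\E_{\mathring\Phi}$, modular selfadjointness of $\mFrame_\Phi^{-1}$), but your version makes the convergence of the two series and, in particular, the second reproducing formula completely explicit; note that, read literally, the tightness of $\mathring\Phi$ in the paper's argument holds with respect to $\ipr{\mFrame_\Phi\,\cdot}{\cdot}$ (the $\sharp$-product built from $\mathring\Phi$, whose modular frame operator is $\mFrame_\Phi^{-1}$) rather than with respect to $\ipr{\mFrame_\Phi^{-1}\cdot}{\cdot}$ itself, a subtlety your operator-identity route bypasses. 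What the paper's route buys in exchange is the structural interpretation that canonical duality is simply tightness after a change of modular inner product.
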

\begin{proof}
Let us define $\ipr{\cdot}{\cdot}_\sharp : \E_\Phi \times \E_\Phi \to L^1(\vn(\Gamma))$ as $\ipr{\varPsi_1}{\varPsi_2}_\sharp = \ipr{\mFrame_\Phi^{-1}\varPsi_1}{\varPsi_2}$.
Such a map provides another inner product on $\E_\Phi$, in terms of which (\ref{eq:HWframes}) reads
$$
\sum_{j \in \ind} |\ipr{\varPsi}{\varPhi_j}_\sharp|^2 = \ipr{\varPsi}{\varPsi}_\sharp
$$
so that both $\Phi$ and $\mathring\Phi$ are modular tight frames with constant 1 with respect to it. The conclusion now follows from Corollary \ref{cor:modtight}.
\end{proof}

\subsection{Dimensionality reduction in \texorpdfstring{$(\Gamma,\Pi)$}{shift}-invariant spaces}

This section is devoted to prove a characterization of Riesz and frame sequences of $(\Gamma,\Pi)$-invariant systems in terms of reduced conditions, in operator spaces, on the Helson map of their generators. This characterization provides a generalization to nonabelian groups and general unitary representations of the results of \cite{RonShen95, Bownik00, CabrelliPaternostro10}.

The proof of Theorem \ref{theo:main} is based on the following result, which basically relies on Plancherel Theorem between $\ell_2(\Gamma)$ and $L^2(\vn(\Gamma))$.
\begin{prop}\label{prop:equivalence}
Under the same assumptions of Theorem \ref{theo:main}, the system $E$ is
\begin{itemize}
\item[i.] a Riesz sequence with Riesz bounds $0 < A \leq B < \infty$ if and only if
$$
\hspace{-10pt} A \tau\Big(\sum_{j \in \ind} |C_j|^2\Big) \leq \tau\Big(\ipr{\mSy_{\Phi}C}{\mSy_{\Phi}C}_\Kil\Big) \leq B \tau\Big(\sum_{j \in \ind} |C_j|^2\Big) \quad \forall \ C \in \ell_0(\ind,\vn(\Gamma))
$$
\item[ii.] a frame sequence with frame bounds $0 < A \leq B < \infty$ if and only if
$$
A \tau([f,f]) \leq \sum_{j \in \ind} \tau(|[f, \phi_j]|^2) \leq B \tau([f,f]) \quad \forall \ f \in \csp E .
$$
\end{itemize}
\end{prop}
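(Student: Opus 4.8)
The plan is to prove both equivalences by transporting the defining inequalities for $E$ in $\Hil$ — the Riesz inequality $(\ref{eq:Riesz})$ and the frame inequality $(\ref{eq:frames})$ — through the Helson map $\iso$ and through Plancherel's theorem between $L^2(\vn(\Gamma))$ and $\ell_2(\Gamma)$, so that in each case the transported inequality is seen, term by term, to be the asserted condition on $\Phi$. The recurring tools will be: that $\iso$ is an isometry; the identity $\tau(\ipr{\varPhi}{\varPhi}_\Kil)=\|\iso^{-1}[\varPhi]\|_\Hil^2$, which follows from $(\ref{eq:Kilinnerproduct})$ and Property III of the bracket map; the intertwining identity of Proposition \ref{prop:fundamental}; and the explicit formula of Corollary \ref{cor:correspondence}.

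For part i, I would associate to each finite sequence $C=\{C_j\}_{j\in\ind}\in\ell_0(\ind,\vn(\Gamma))$ the vector $f_C=\sum_{j\in\ind}\qroj_{C_j}\phi_j\in\Hil$, which by Corollary \ref{cor:correspondence} equals $\sum_{j\in\ind}\sum_{\gamma\in\Gamma}\wh{C_j}(\gamma)\,\Pi(\gamma)\phi_j$. Proposition \ref{prop:fundamental}\,(i) gives $\iso[f_C]=\sum_{j}\iso[\phi_j]\,C_j=\mSy_\Phi C$, so that $\tau(\ipr{\mSy_\Phi C}{\mSy_\Phi C}_\Kil)=\tau([f_C,f_C])=\|f_C\|_\Hil^2$; on the other side, since $\{\rr(\gamma)\}_{\gamma\in\Gamma}$ is an orthonormal basis of $L^2(\vn(\Gamma))$, Plancherel gives $\tau\big(\sum_j|C_j|^2\big)=\sum_j\|C_j\|_2^2=\sum_{j}\sum_{\gamma}|\wh{C_j}(\gamma)|^2$. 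Thus the inequalities in i.\ are precisely $A\sum_{j,\gamma}|\wh{C_j}(\gamma)|^2\le\|f_C\|_\Hil^2\le B\sum_{j,\gamma}|\wh{C_j}(\gamma)|^2$. I would then close the argument by a density step: if $E$ is a Riesz sequence, $(\ref{eq:Riesz})$ extends from finite sequences to all of $\ell_2(\ind\times\Gamma)$, and evaluating it on the $\ell_2$ sequence $\{\wh{C_j}(\gamma)\}_{j,\gamma}$ (finite in $j$, each $\wh{C_j}\in\ell_2(\Gamma)$ since $\vn(\Gamma)\subset L^2(\vn(\Gamma))$), whose synthesis is $f_C$, yields exactly these bounds; conversely, given a finite scalar sequence $\{c_{j,\gamma}\}$, the choice $C_j=\sum_\gamma c_{j,\gamma}\rr(\gamma)^*$ (a trigonometric polynomial, with $\wh{C_j}=c_{j,\cdot}$ and $f_C=\sum_{j,\gamma}c_{j,\gamma}\Pi(\gamma)\phi_j$) shows that condition i.\ for this $C$ is $(\ref{eq:Riesz})$ for $\{c_{j,\gamma}\}$.

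For part ii, I would use dual integrability together with $(\ref{eq:Fouriercoefficients})$ to write, for every $f\in\csp E$, every $\gamma\in\Gamma$ and every $j\in\ind$, $\langle f,\Pi(\gamma)\phi_j\rangle_\Hil=\tau([f,\phi_j]\rr(\gamma))=\wh{[f,\phi_j]}(\gamma)$, hence $\sum_{\gamma}|\langle f,\Pi(\gamma)\phi_j\rangle_\Hil|^2=\sum_\gamma|\wh{[f,\phi_j]}(\gamma)|^2$. By Plancherel's theorem — combined with the $L^1(\vn(\Gamma))$ uniqueness theorem, which guarantees that finiteness of the right-hand side forces $[f,\phi_j]\in L^2(\vn(\Gamma))$ — this common quantity equals $\tau(|[f,\phi_j]|^2)$, as an identity in $[0,+\infty]$. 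Summing over $j$ and using $\tau([f,f])=\|f\|_\Hil^2$ (Property III of the bracket map), the inequalities $(\ref{eq:frames})$ for $E$ coincide line for line with the inequalities in ii., which is the claim.

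The parts I expect to be routine are the algebraic manipulations above; the genuinely delicate points, which I would treat with care, are (a) in part ii, the fact that $[f,\phi_j]$ — a priori only an element of $L^1(\vn(\Gamma))$ — actually lies in $L^2(\vn(\Gamma))$ once the relevant series converge: this is precisely where the upper (Bessel) bound enters, through Plancherel and $L^1$ uniqueness; and (b) in part i, the interplay between finite scalar sequences and finite sequences of operators in $\vn(\Gamma)$, i.e.\ the density/extension argument identifying the two families of test sequences. Neither is deep, but both require keeping close track of which operators lie in $L^2(\vn(\Gamma))$ versus $L^1(\vn(\Gamma))$.
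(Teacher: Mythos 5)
Your proposal is correct and follows essentially the same route as the paper's proof: both sides are identified through the Helson isometry together with (\ref{eq:Kilnormidentity}), Proposition \ref{prop:fundamental} (via Corollary \ref{cor:correspondence}) and Plancherel's theorem, so that i.\ becomes the Riesz inequality (\ref{eq:Riesz}) and ii.\ becomes the frame inequality (\ref{eq:frames}). The only difference is that you spell out two points the paper leaves implicit — the density/extension step matching finite scalar sequences with finite $\vn(\Gamma)$-valued sequences in i., and the $L^2(\vn(\Gamma))$-membership of $[f,\phi_j]$ in ii. — and both are handled correctly.
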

\newpage
\begin{proof}
To prove $i.$, let $C \in \ell_0(\ind,\vn(\Gamma))$ be given by $C_j = \sum_{\gamma \in \Gamma} c_j(\gamma) \rr(\gamma)^*$ for a given sequence $\{c_j(\gamma)\}_{\myatop{\gamma \in \Gamma}{j \in \ind}} \in \ell_0(\ind,\ell_2(\Gamma))$. By Plancherel Theorem we have that
$$
\sum_{\myatop{\gamma \in \Gamma}{j \in \ind}} |c_j(\gamma)|^2 = \sum_{j \in \ind} \tau(|C_j|^2)
$$
while
\begin{align*}
\bigg\|\sum_{\myatop{\gamma \in \Gamma}{j \in \ind}} c_j(\gamma) \Pi(\gamma)\phi_j\bigg\|_\Hil^2 & = \normbig{\iso\Big[\sum_{{\myatop{\gamma \in \Gamma}{j \in \ind}}} c_j(\gamma) \Pi(\gamma)\phi_j\Big]}^2 = \normbig{\sum_{j \in \ind} \iso[\phi_j]C_j}^2 .
\end{align*}
Now the conclusion follows because, by (\ref{eq:Kilnormidentity}), we have
$$
\norm{F}^2 = \tau(\ipr{F}{F}_\Kil) .
$$
To prove $ii.$, we only need to observe that, by the definition of bracket map and Plancherel Theorem in $L^2(\vn(\Gamma))$, for all $f \in \csp\{E\}$ we have
\begin{displaymath}
\sum_{\gamma \in \Gamma} |\langle f, \Pi(\gamma)\phi\rangle_\Hil|^2 = \sum_{\gamma \in \Gamma} |\tau(\rr(\gamma)[f,\phi])|^2 = \tau \big(|[f,\phi]|^2\big) . \qedhere
\end{displaymath}
\end{proof}

\begin{proof}[Proof of Theorem \ref{theo:main}]
It suffices to combine Proposition \ref{prop:equivalence} with Theorem \ref{theo:ncnumerical}, recalling the definition of the inner product in $\Kil$ given by (\ref{eq:Kilinnerproduct}).
\end{proof}

\begin{rem}
Observe in particular that, by (\ref{eq:Kilinnerproduct}) and (\ref{eq:Helsonbracket}), we can explicitly write the inner product of $\Kil$ as
$$
\ipr{\varPhi}{\varPsi}_\Kil = \int_\M \varPsi(x)^* \varPhi(x) d\nu(x) \quad \forall \ \varPhi, \varPsi \in \Kil .
$$
\end{rem}

The frame operator of a $(\Gamma,\Pi)$-invariant system can actually be written in terms of the modular frame operator associated to its generators and the Helson map, as follows.
\begin{cor}\label{cor:dualGramian}
Let $\{\phi_j\}_{j \in \ind} \subset \Hil$ be a countable family and let $(\Gamma,\Pi,\Hil)$ be a dual integrable triple. Suppose that $E = \{\Pi(\gamma)\phi_j\}_{{\myatop{\gamma \in \Gamma}{j \in \ind}}} \subset \Hil$ is a Bessel sequence. Then, calling $\Phi = \{\iso[\phi_j]\}_{j \in \ind} \subset \Kil$, we have
$$
\mFrame_{\Phi} = \iso \Frame_{E} \iso^{-1} .
$$
\end{cor}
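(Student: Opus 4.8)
The plan is to factor both $\Frame_E=\Sy_E\An_E$ and $\mFrame_\Phi=\mSy_\Phi\mAn_\Phi$ through one and the same pair of unitaries. First I would fix the componentwise Plancherel unitary
\[
\mathcal{U}:\ell_2(\ind,\ell_2(\Gamma))\to\ell_2(\ind,L^2(\vn(\Gamma))),\qquad \mathcal{U}\big(\{c_j\}_{j\in\ind}\big)=\Big\{\sum_{\gamma\in\Gamma}c_j(\gamma)\rr(\gamma)^*\Big\}_{j\in\ind},
\]
which is an isometric isomorphism because $\{\rr(\gamma)^*\}_{\gamma\in\Gamma}$ is an orthonormal basis of $L^2(\vn(\Gamma))$ (Plancherel between $\ell_2(\Gamma)$ and $L^2(\vn(\Gamma))$ applied in each coordinate). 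Identifying the index set of $E$ with $\Gamma\times\ind$, the Bessel hypothesis makes $\Sy_E:\ell_2(\ind,\ell_2(\Gamma))\to\Hil$ bounded with adjoint $\An_E$, and by Theorem \ref{theo:coefficients} one has $\iso[\csp E]=\E_\Phi$. One also records, via the conjugacy set argument behind Theorem \ref{theo:ncnumerical} together with Lemma \ref{lem:invariance} (using Property ii of the bracket to see that $[\qroj_p f,\qroj_p f]=p^*[f,f]p$ and $|[\qroj_p f,\phi_j]|^2=p^*|[f,\phi_j]|^2 p$ for $p\in\vn(\Gamma)$), that $E$ Bessel forces $\Phi$ to be modular Bessel, so that $\mSy_\Phi$, $\mAn_\Phi$ and $\mFrame_\Phi$ are well defined and bounded by Theorem \ref{theo:ncBessel}.

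Next I would establish $\mSy_\Phi\circ\mathcal{U}=\iso\circ\Sy_E$ on the dense subspace of finitely supported sequences: for a finite $\{c_j(\gamma)\}$ the Helson property (\ref{eq:Helsonproperty}) gives $\iso[\Pi(\gamma)\phi_j]=\iso[\phi_j]\rr(\gamma)^*$, hence
\[
\iso\Big[\sum_{j,\gamma}c_j(\gamma)\,\Pi(\gamma)\phi_j\Big]=\sum_{j\in\ind}\iso[\phi_j]\Big(\sum_{\gamma\in\Gamma}c_j(\gamma)\rr(\gamma)^*\Big)=\mSy_\Phi\big(\mathcal{U}\{c_j(\gamma)\}\big).
\]
Since $\iso\Sy_E$ is bounded and $\mathcal{U}$ is unitary, this extends to all of $\ell_2(\ind,L^2(\vn(\Gamma)))$, giving $\mSy_\Phi=\iso\Sy_E\mathcal{U}^{-1}$. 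Taking adjoints and using that $\iso$ is a surjective isometry onto $\Kil$ (so $\iso^*=\iso^{-1}$) yields $(\mSy_\Phi)^*=\mathcal{U}\An_E\iso^{-1}$. It remains to identify $(\mSy_\Phi)^*$ with $\mAn_\Phi$, which I would check directly: for $\varPsi\in\Kil$ and a finite $C$, expanding $\langle\varPsi,\mSy_\Phi C\rangle_\Kil=\tau\big(\ipr{\varPsi}{\sum_j\iso[\phi_j]C_j}_\Kil\big)$ by Proposition \ref{prop:outerinner} (in the form (\ref{eq:outerinner})) and Property I) of the bracket gives $\sum_j\tau\big(C_j^*\,\ipr{\varPsi}{\iso[\phi_j]}_\Kil\big)=\langle\{\ipr{\varPsi}{\iso[\phi_j]}_\Kil\}_j,C\rangle$, so that $(\mSy_\Phi)^*\varPsi=\{\ipr{\varPsi}{\varPhi_j}_\Kil\}_{j\in\ind}=\mAn_\Phi\varPsi$.

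Putting the pieces together,
\[
\mFrame_\Phi=\mSy_\Phi\mAn_\Phi=\big(\iso\Sy_E\mathcal{U}^{-1}\big)\big(\mathcal{U}\An_E\iso^{-1}\big)=\iso\,\Sy_E\An_E\,\iso^{-1}=\iso\,\Frame_E\,\iso^{-1},
\]
which is the claim. The computation is essentially bookkeeping once $\mathcal{U}$ is introduced; the one point that genuinely needs care is the passage from the identities on finitely supported sequences to bounded operators, i.e.\ making sure that $\mSy_\Phi$ really extends to all of $\ell_2(\ind,L^2(\vn(\Gamma)))$ and that this extension is exactly the one whose adjoint produces $\mAn_\Phi$ — which is precisely where the Bessel assumption on $E$ (equivalently, the modular Bessel property of $\Phi$) enters. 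A more computational alternative would bypass the factorization and instead verify $\langle\Frame_E\varphi,\psi\rangle_\Hil=\langle\iso^{-1}\mFrame_\Phi\iso[\varphi],\psi\rangle_\Hil$ for all $\varphi,\psi\in\Hil$, both sides collapsing by the bracket identity and Plancherel to $\sum_{j\in\ind}\tau\big([\phi_j,\psi][\varphi,\phi_j]\big)$; but the factorization keeps the structural role of $\iso$ transparent and handles convergence cleanly.
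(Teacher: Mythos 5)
Your proposal is correct, but it is organized quite differently from the paper's argument. The paper's proof is a three-line direct computation: apply $\iso$ to the defining series $\Frame_E\psi=\sum_{j,\gamma}\langle\psi,\Pi(\gamma)\phi_j\rangle_\Hil\,\Pi(\gamma)\phi_j$, use the Helson relation (\ref{eq:Helsonproperty}) to regroup it as $\sum_j\iso[\phi_j]\sum_\gamma\langle\psi,\Pi(\gamma)\phi_j\rangle_\Hil\,\rr(\gamma)^*$, and observe that the Bessel condition puts the inner coefficient sequence in $\ell_2(\Gamma)$, so by Plancherel the inner series is exactly the Fourier series of $[\psi,\phi_j]=\ipr{\iso[\psi]}{\iso[\phi_j]}_\Kil$; the formula for $\mFrame_\Phi$ then appears on the nose. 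In particular the paper never needs to know that $\Phi$ is modular Bessel, nor that $\mSy_\Phi$ and $\mAn_\Phi$ are bounded, nor any adjoint identification. You instead factorize at the level of the fundamental operators: you introduce the componentwise Plancherel unitary $\mathcal{U}$, prove $\mSy_\Phi=\iso\,\Sy_E\,\mathcal{U}^{-1}$ on a dense set and extend by boundedness, take adjoints, and compose. This forces you to first establish the modular Bessel property of $\Phi$ (your conjugacy-set argument, which is in effect the upper half of the ``only if'' direction of Theorem \ref{theo:main} via Proposition \ref{prop:equivalence} and Lemma \ref{lem:invariance}, is correct, and Proposition \ref{prop:fundamental} gives the identities $[\qroj_pf,\qroj_pf]=p^*[f,f]p$ you use), so that Theorem \ref{theo:ncBessel} supplies the bounded extension of $\mSy_\Phi$ whose adjoint is $\mAn_\Phi$ -- a step you could simply have cited instead of rechecking by duality, though your direct check also covers $\varPsi\in\Kil\setminus\E_\Phi$. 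What your route buys is a cleaner treatment of convergence (all identities are between bounded operators agreeing on dense subspaces, avoiding the series regrouping the paper performs tacitly), the intermediate identity $\mSy_\Phi=\iso\,\Sy_E\,\mathcal{U}^{-1}$, which is of independent interest, and the conclusion on all of $\Kil$ rather than just on $\E_\Phi$; what the paper's computation buys is brevity and minimal prerequisites, since it runs entirely on the Helson property, the Bessel condition, and the Fourier expansion of the bracket.
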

\begin{proof}
The frame operator of $E$ on $\Hil$ is the bounded operator
$$
\Frame_{E} \psi = \sum_{{\myatop{\gamma \in \Gamma}{j \in \ind}}} \langle \psi, \Pi(\gamma)\phi_j\rangle_\Hil \Pi(\gamma)\phi_j
$$
so its composition with $\iso$ is also bounded, and reads
$$
\iso \Frame_{E} \psi = \sum_{{\myatop{\gamma \in \Gamma}{j \in \ind}}} \langle \psi, \Pi(\gamma)\phi_j\rangle_\Hil \iso[\Pi(\gamma)\phi_j] = \sum_{j \in \ind} \iso[\phi_j] \sum_{\gamma \in \Gamma} \langle \psi, \Pi(\gamma)\phi_j\rangle_\Hil \rr(\gamma)^* .
$$
The Bessel condition implies that $\{\langle \psi, \Pi(\gamma)\phi_j\rangle_\Hil\}_{\gamma \in \Gamma} \in \ell_2(\Gamma)$, so
\begin{displaymath}
\iso \Frame_{E} \psi = \sum_{j \in \ind} \iso[\phi_j] [\psi,\phi_j] = \sum_{j \in \ind} \iso[\phi_j] \ipr{\iso[\psi]}{\iso[\phi_j]} = \mFrame_{\Phi}\iso[\psi]. \qedhere
\end{displaymath}
\end{proof}

\begin{rem}\label{rem:dualGramian}
When considering integer translations, the same argument used to relate the fiberization mapping (\ref{eq:Fourierperiodization}) with the Helson map (\ref{eq:integerisometry}) allows to see that the modular frame operator described by Corollary \ref{cor:dualGramian} is related to the so-called \emph{dual Gramian} operator of \cite{BoorVoreRon93, BoorVoreRon94, RonShen95, Bownik00} by the Pontryagin isomorphism between the character group and the von Neumann algebra.
Analogously, the modular Gram operator over the $L^2(\vn(\Gamma))$-Hilbert module $\Kil$ obtained from the Helson map (\ref{eq:integerisometry}) coincides, under Pontryagin isomorphism, with the so-called \emph{Gramian} matrix.
This shows in particular that Theorem \ref{theo:ncGramianCharacterization}, together with Theorem \ref{theo:main}, provide a direct generalization of results such as \cite[Th. 3.2.3, Th. 3.5.3]{RonShen95} \cite[Th. 2.5]{Bownik00}, \cite[Prop. 4.9]{CabrelliPaternostro10}.
\end{rem}

\vspace{20pt}
\noindent
\textbf{Davide Barbieri}\\
Universidad Aut\'onoma de Madrid, 28049 Madrid, Spain\\
\href{mailto:davide.barbieri@uam.es}{\tt davide.barbieri@uam.es}

\

\noindent
\textbf{Eugenio Hern\'andez}\\
Universidad Aut\'onoma de Madrid, 28049 Madrid, Spain\\
\href{mailto:eugenio.hernandez@uam.es}{\tt eugenio.hernandez@uam.es}

\

\noindent
\textbf{Victoria Paternostro}\\
Universidad de Buenos Aires and 
IMAS-CONICET, Consejo  Nacional de Investigaciones Cient\'ificas y T\'ecnicas, 1428 Buenos Aires,  Argentina\\
\href{mailto:vpater@dm.uba.ar}{\tt vpater@dm.uba.ar}


\begin{thebibliography}{99}

\bibitem{AldroubiCabrelliMolter08} A. Aldroubi, C. Cabrelli, U. Molter, \emph{Optimal non-linear models for sparsity and sampling}.
J. Fourier Anal. Appl. 14:793-812 (2008)

\bibitem{AldroubiGrochenig01} A. Aldroubi, K. Grochenig, \emph{Nonuniform Sampling and Reconstruction in Shift-Invariant Spaces}. SIAM Rev. 43:585–620 (2001)

\bibitem{Aronszajn50} N. Aronszajn, \emph{Theory of reproducing kernels}. Trans. Amer. Math. Soc. 68:337-404 (1950).

\bibitem{BHM14} D. Barbieri, E. Hern\'andez, A. Mayeli, \emph{Bracket map for the Heisenberg group and the characterization of cyclic subspaces}. Appl. Comput. Harmon. Anal. 37:218-234 (2014).

\bibitem{BHP14} D. Barbieri, E. Hern\'andez, J. Parcet, \emph{Riesz and frame systems generated by unitary actions of discrete groups}. To appear on Appl. Comput. Harmon. Anal. (2014).

\bibitem{Zak-SIS} D. Barbieri, E. Hern\'andez, V. Paternostro, \emph{The Zak transform and the structure of spaces invariant by the action of an LCA group}. To appear on J. Funct. Anal. (2015).

\bibitem{BenedettoLi93} J. J. Benedetto, S. Li, \emph{Multiresolution analysis frames with applications}. ``ICASSP'93'', Minneapolis, III:304-307  (1993).

\bibitem{BenedettoLi98} J. J. Benedetto, S. Li, \emph{The theory of multiresolution analysis frames and applications to filter banks}. Appl. Comput. Harmon. Anal. 5:389-427 (1998).

\bibitem{BlecherLeMerdy04} P. Blecher, C. Le Merdy, \emph{Operator algebras and their modules. An operator space approach.} Oxford University Press 2004.

\bibitem{KostrikinShafarevich91} L. A. Bokhut', I. V. L'vov, V. K. Kharchenko, \emph{Noncommutative rings}. In A. I. Kostrikin, I. R. Shafarevich (eds.), \emph{Algebra II}. Springer 1991.

\bibitem{BoorVoreRon93} C. de Boor, R. A. DeVore, A. Ron, \emph{Approximation from shift invariant subspaces of $L^2(\R^d)$}. Trans. Amer. Math. Soc. 341:787-806 (1994).

\bibitem{BoorVoreRon94} C. de Boor, R. A. DeVore, A. Ron, \emph{The structure of finitely generalized shift-invariant spaces in $L^2(\R^d)$}. J. Funct. Anal. 119:37-78 (1994).

\bibitem{Bownik00} M. Bownik, \emph{The structure of shift-invariant subspaces of $L^2(R^n)$}. J. Funct. Anal. 177 (2):282-309 (2000).

\bibitem{BownikRoss13} M. Bownik, K. A. Ross, \emph{The structure of translation-invariant spaces on locally compact abelian groups}. To appear on J. Fourier Anal. Appl. (2015).

\bibitem{CabrelliPaternostro10} C. Cabrelli, V. Paternostro, \emph{Shift-invariant spaces on LCA groups}. J. Funct. Anal. 258:2034-2059 (2010).

\bibitem{CabrelliPaternostro11} C. Cabrelli, V. Paternostro, \emph{Shift-modulation invariant spaces on LCA groups}. Studia Math. 211:1-19 (2011).

\bibitem{CaiDongOsherShen10} J.-F. Cai, B. Dong, S. Osher, Z. Shen, \emph{Image restoration: Total variation, wavelet frames, and beyond}. J. Amer. Math. Soc. 25:1033-1089 (2012).

\bibitem{CasazzaLammers99} P. G. Casazza, M. C. Lammers, \emph{Bracket products for Weyl-Heisenberg frames}. In \emph{Advances in Gabor analysis}, H. G. Feichtinger and T. Strohmer (eds.), Chapt. 4, Springer 2003.

\bibitem{Christensen03} O. Christensen, \emph{An introduction to frames and Riesz bases}. Birkh\"auser 2003.

\bibitem{ChristensenEldar05} O. Christensen, Y. C. Eldar, \emph{Generalized Shift-Invariant Systems and Frames for Subspaces}. J. Fourier Anal. Appl. 11:299-313 (2005).

\bibitem{Connes94} A. Connes, \emph{Noncommutative geometry}. Academic Press 1994.

\bibitem{Conway90} J. B. Conway, \emph{A course in functional analysis}. Springer $2^{\textnormal{nd}}$ Edition 1990.

\bibitem{Conway00} J. B. Conway, \emph{A course in operator theory}. AMS 2000.

\bibitem{CurreyMayeliOussa14} B. Currey, A. Mayeli, V. Oussa, \emph{Characterization of Shift-Invariant Spaces on a Class of Nilpotent Lie Groups with Applications}. J. Fourier Anal. Appl. 20:384-340 (2014).

\bibitem{DaiLarson98} X. Dai, D. R. Larson, \emph{Wandering vectors for unitary systems and orthogonal wavelets}. Memoirs AMS 1998.

\bibitem{Daubechies92} I. Daubechies, \emph{Ten lectures on wavelets}. SIAM 1992.

\bibitem{DaubechiesHanRonShen03} I. Daubechies, B. Han, A. Ron, Z. Shen, \emph{Framelets: MRA-based constructions of wavelet frames}. Appl. Comp. Harmon. Anal. 14:1-46 (2003).

\bibitem{DevoreLorentz} R. A. DeVore, G. G. Lorentz, \emph{Constructive approximation}. Springer 1993.


\bibitem{DutkayHanLarson09} D. Dutkay, D. Han, D. R. Larson, \emph{A duality principle for groups}. J. Funct. Anal. 257:1133-1143 (2009).

\bibitem{EnockSchwartz92} M. Enock, J. M. Schwartz, \emph{Kac algebras and duality of locally compact groups}. Springer 1992.

\bibitem{Falcone00} T. Falcone, \emph{$L^2$-von Neumann modules, their relative tensor products and the spatial derivative}. Illinois J. Math. 44:407-437 (2000).

\bibitem{Folland95} G. B. Folland, \emph{A course in abstract harmonic analysis}. CRC Press 1995.

\bibitem{FrankBiblio} M. Frank, \emph{Hilbert $C^*$-modules and related subjects - a guided reference overview}. Available at \href{http://www.imn.htwk-leipzig.de/~mfrank/hilmod.html}{www.imn.htwk-leipzig.de/$\sim$mfrank/hilmod.html} 

\bibitem{FrankLarson2002} M. Frank, D. R. Larson, \emph{Frames in Hilbert $C^*$-modules and $C^*$-algebras}. J. Operator Theory 48:273-314 (2002).

\bibitem{FuhrXian14} H. F\"uhr, J. Xian, \emph{Relevant sampling in finitely generated shift-invariant spaces}. Preprint 2014. Available at \href{http://arxiv.org/abs/1410.4666}{arxiv.org/abs/1410.4666}

\bibitem{GarciaMedinaVillalon08} A. G. Garc\'ia, M. A. Hern\'andez-Medina, G. P\'erez-Villal\'on, \emph{Generalized sampling in shift-invariant spaces with multiple stable generators}. J. Math. Anal. Appl. 337:69-84 (2008)

\bibitem{HanJingLarsonMohapatra08} D. Han, W. Jing, D. R. Larson, R. N. Mohapatra, \emph{Riesz bases and their dual modular frames in Hilbert $C^∗$-modules}. J. Math. Anal. Appl. 343:246-256 (2008).

\bibitem{Helson64} H. Helson, \emph{Lectures on invariant subspaces}. Academic Press 1964.

\bibitem{Helson86} H. Helson, \emph{The spectral theorem}. Springer 1986.

\bibitem{HLWW02} E. Hern\'andez, D. Labate, G. Weiss, E. Wilson, \emph{A unified characterization of reproducing systems generated by a finite family, II}. J. Geom. Anal. 12:615-662 (2002).

\bibitem{HSWW10} E. Hern\'andez, H. \v{S}iki\'{c}, G. Weiss, E. Wilson, \emph{Cyclic subspaces for unitary representations of LCA groups; generalized Zak transform}. Colloq. Math. 118:313-332 (2010).

\bibitem{HW96} E. Hern\'andez, G. Weiss, \emph{A first course on wavelets}. CRC Press 1996.

\bibitem{Iverson14} J. W. Iverson, \emph{Subspaces of $L^2(G)$ invariant under translations by an abelian subgroup}. J. Funct. Anal. 269:865-913 (2015).

\bibitem{JakobsenLemvig2014} M. S. Jakobsen, J. Lemvig, \emph{Reproducing formulas for generalized translation invariant systems on locally compact abelian groups}. To appear on Trans. Amer. Math. Soc. (2015)

\bibitem{Janssen82} A. J. E. M. Janssen, \emph{Bargmann transform, Zak transform, and coherent states}. J. Math. Phys. 23:720-731 (1982).

\bibitem{Jing06} W. Jing, \emph{Frames in $C^*$-modules}. PhD Thesis (2006).

\bibitem{JungeSherman05} M. Junge, D. Sherman, \emph{Noncommutative $L^p$ modules}. J. Operator Theory 53:3-34 (2005).

\bibitem{KadisonRingrose83} R. V. Kadison, J. R. Ringrose, \emph{Fundamentals of the theory of operator algebras}, Vol.1 and Vol. 2. Academic Press 1983.

\bibitem{KaftalLarsonZhang09} V. Kaftal, D. R. Larson, S. Zhang, \emph{Operator-valued frames}. Trans. Amer. Math. Soc. 361:6349-6385 (2009). 

\bibitem{KamyabiRaisi08} R. A. Kamyabi Gol, R. Raisi Tousi, \emph{The structure of shift invariant spaces on a locally compact abelian group}. J. Math. Anal. Appl. 340:219-225 (2008).

\bibitem{Kaplansky53} I. Kaplansky, \emph{Modules over operator algebras}. Amer. J. Math. 75:839-858 (1953)

\bibitem{Keating98} M. E. Keating, \emph{A first course in module theory}. Imperial College Press 1998.

\bibitem{Lance95} E. C. Lance, \emph{Hilbert $C^*$-modules. A toolkit for operator algebraists}. Cambridge University Press 1995.

\bibitem{Larson97} D. R. Larson, \emph{Von Neumann algebras and wavelets}. In \emph{Operator algebras and applications}, A. Katavolos (ed.), Chapt. 9, Springer 1997.

\bibitem{Luef11} F. Luef, \emph{Projections in noncommutative tori and Gabor frames}. Proc. Amer. Math. Soc. 139:571-582 (2011).

\bibitem{Mallat12} S. Mallat, \emph{Group invariant scattering}. Comm. Pure Appl. Math. 65:1331-1398 (2012).

\bibitem{ManuilovTroitsky05} V. M. Manuilov, E. V. Troitsky, \emph{Hilbert $C^*$-modules}. AMS 2005.

\bibitem{Meyer93} Y. Meyer, \emph{Wavelets and Operators}. Cambridge University Press, 1992.

\bibitem{NashedSun10} M. Z. Nashed, Q. Sun, \emph{Sampling and reconstruction of signals in a reproducing kernel subspace of $L^p$}. J. Funct. Anal. 258:2422-2452 (2010)

\bibitem{Nelson74} E. Nelson, \emph{Notes on Non-commutative integration}. J. Funct. Anal. 15:103-116 (1974).

\bibitem{PackerRieffel04} J. A. Packer, M. A. Rieffel, \emph{Projective multi-resolution analyses for $L^2(\R^2)$}. J. Fourier Anal. Appl. 10:439-464 (2004)

\bibitem{Paschke73} W. L. Paschke, \emph{Inner product modules over $B^*$-algebras}. Trans. Amer. Math. Soc. 182:443-468 (1973).

\bibitem{PisierXu03} G. Pisier, Q. Xu, \emph{Non-Commutative $L^p$ spaces}. In \emph{Handbook of the geometry of Banach spaces}, Vol. 2, W. B. Johnson and J. Lindenstrauss (eds.), Chapt. 34, Elsevier 2003.

\bibitem{Popa86} S. Popa, \emph{Correspondences}. INCREST Preprint 56/1986. Available at \href{http://www.math.ucla.edu/~popa/popa-correspondences.pdf}{www.math.ucla.edu/$\sim$popa/popa-correspondences.pdf}

\bibitem{RaeburnThompson03} I. Raeburn, S. J. Thompson, \emph{Countably generated Hilbert modules, the Kasparov Stabilisation Theorem, and frames in Hilbert modules}. Proc. Amer. Math. Soc. 131:1557-1564 (2003).

\bibitem{Rieffel74} M. A. Rieffel, \emph{Induced representations of $C^*$-algebras}. Adv. Math. 13:176-257 (1974).

\bibitem{RonShen95} A. Ron, Z. Shen, \emph{Frames and stable bases for shift-invariant subspaces of $L^2(R^d)$}. Canad. J. Math. 47:1051-1094 (1995).

\bibitem{RonShen05} A. Ron, Z. Shen, \emph{Generalized shift-invariant systems}. Constr. Approx. 22:1-45 (2005).

\bibitem{Roysland11} K. R\o{}ysland, \emph{Frames generated by actions of countable discrete groups}. Trans. Amer. Math. Soc. 363:95-108 (2011).

\bibitem{Saitoh97} S. Saitoh, \emph{Integral transforms, reproducing kernels and their applications}. CRC Press 1997.

\bibitem{Saliani14} S. Saliani, \emph{Linear independence of translates implies linear independence of affine Parseval frames on LCA groups}. Preprint 2014. Available at \href{http://arxiv.org/abs/1411.1252}{arxiv.org/abs/1411.1252}

\bibitem{Sunder97} V. S. Sunder, \emph{Functional analysis. Spectral theory}. Birkh\"auser 1997.

\bibitem{Takesaki03} M. Takesaki, \emph{Theory of operator algebras}, Vol. 1 and Vol. 2. Springer 2003.

\bibitem{Terp81} M. Terp, \emph{$L^p$-spaces associated with von Neumann Algebras}. Notes, Copenhagen University 1981.

\bibitem{Weil64} A. Weil, \emph{Sur certaines groupes d'operateurs unitaires}. Acta Math. 111:143-211 (1964).

\bibitem{Wegge93} N. E. Wegge-Olsen, \emph{K-Theory and $C^*$-algebras}. Oxford University Press 1993.

\bibitem{Wood04} P. J. Wood, \emph{Wavelets and Hilbert Modules}. J. Fourier Anal. Appl. 10:573-598 (2004).

\bibitem{Zak67} J. Zak, \emph{Finite translations in solid state physics}. Phys. Rev. Lett. 19:1385-1387 (1967).

\end{thebibliography}
\end{document}